\definecolor{darkred}{rgb}{1,0,0} 
\definecolor{darkgreen}{rgb}{0,1,0}
\definecolor{darkblue}{rgb}{0,0,1}
\newcommand{\udots}{\mathinner{\mskip1mu\raise1pt\vbox{\kern7pt\hbox{.}}
\mskip2mu\raise4pt\hbox{.}\mskip2mu\raise7pt\hbox{.}\mskip1mu}}
\newcommand{\SC}{{\mathcal{C}}}
\newcommand{\SD}{{\mathcal{D}}}
\newcommand{\SE}{{\mathcal{E}}}
\newcommand{\SF}{{\mathcal{F}}}
\newcommand{\SH}{{\mathcal{H}}}
\newcommand{\SI}{{\mathcal{I}}}
\newcommand{\SL}{{\mathcal{L}}}
\newcommand{\SM}{{\mathcal{M}}}
\newcommand{\SO}{{\mathcal{O}}}
\newcommand{\SP}{{\mathcal{P}}}
\newcommand{\ST}{{\mathcal{T}}}
\newcommand{\SU}{{\mathcal{U}}}
\newcommand{\SX}{{\mathcal{X}}}
\newcommand{\LL}{\mathbb{L}}
\newcommand{\ZZ}{\mathbb{Z}}
\newcommand{\CC}{\mathbb{C}}
\newcommand{\RR}{\mathbb{R}}
\newcommand{\HH}{\mathbb{H}}
\newcommand{\Ext}{\operatorname{Ext}}
\newcommand{\Gr}{\operatorname{Gr}}
\newcommand{\Spec}{\operatorname{Spec}}
\newcommand{\codim}{\operatorname{codim}}
\newcommand{\Hom}{\operatorname{Hom}}
\newcommand{\Aut}{\operatorname{Aut}}
\newcommand{\Sym}{\operatorname{Sym}}
\newcommand{\id}{\operatorname{Id}}
\newcommand{\im}{\operatorname{Im}}
\newcommand{\rk}{\operatorname{rk}}
\newcommand{\End}{\operatorname{End}}
\newcommand{\tr}{\operatorname{tr}}
\newcommand{\GL}{\operatorname{GL}}
\newcommand{\U}{\operatorname{U}}
\newcommand{\Id}{\operatorname{Id}}
\newcommand{\Jac}{\operatorname{Jac}}
\newcommand{\SSL}{\operatorname{SL}}
\newcommand{\VHS}{\operatorname{VHS}}
\newcommand{\HC}{\operatorname{HC}}
\newcommand{\coeff}{\mathop{\mathrm{coeff}}}
\newcommand{\VarC}{\mathcal{V}ar_{\CC}}
\newcommand{\Diff}{\operatorname{Diff}}
\newcommand{\Dol}{\operatorname{Dol}}
\newcommand{\Hod}{\operatorname{Hod}}
\newcommand{\fa}{\mathfrak{a}}
\newcommand{\op}{\operatorname}
\newtheorem{proposition}{Proposition}[section]
\newtheorem{theorem}[proposition]{Theorem}
\newtheorem{definition}[proposition]{Definition}
\newtheorem{lemma}[proposition]{Lemma}
\newtheorem{conjecture}[proposition]{Conjecture}
\newtheorem{corollary}[proposition]{Corollary}
\newtheorem{remark}[proposition]{Remark}
\newtheorem{example}[proposition]{Example}
\numberwithin{equation}{section}
\newcommand{\smtrx}[1]{\left (\begin{smallmatrix}#1\end{smallmatrix}\right)}
\definecolor{fillgrey}{rgb}{0.94, 0.94, 0.95}
\definecolor{arrowred}{rgb}{0.95, 0.95, 0.96}
\title[Lie algebroid connections, twisted Higgs bundles and motives of moduli spaces]{Lie algebroid connections, twisted Higgs bundles and motives of moduli spaces}
\author[D. Alfaya]{David Alfaya}
\address{D. Alfaya, 
\newline\indent
Department of Applied Mathematics and Institute for Research in Technology, ICAI School of Engineering, Comillas Pontifical University, C/Alberto Aguilera 25, 28015 Madrid, Spain}
\email{dalfaya@comillas.edu}
\author[A. Oliveira]{Andr\'e Oliveira}
\address{A. Oliveira, 
\newline\indent 
Centro de Matem\'atica da Universidade do Porto, Faculdade de Ci\^encias da Universidade do Porto, Rua do Campo Alegre s/n, 4169-007 Porto, Portugal \newline\indent \textsl{and}\newline\indent Departamento de Matem\'atica, Universidade de Tr\'as-os-Montes e Alto Douro, UTAD, 
Quinta dos Prados, 5000-911 Vila Real, Portugal}
\email{andre.oliveira@fc.up.pt,\ agoliv@utad.pt}
\keywords{Lie algebroid connections, Higgs bundles, moduli space, motive, motivic class, Hodge structure, E-polynomial}
\subjclass[2020]{14D20, 14H60, 19E08, 14C35, 14C15}
\begin{document}

\begin{abstract}
Let $\SL=(L,[\cdot\,,\cdot],\delta)$ be an algebraic Lie algebroid over a smooth projective curve $X$ of genus $g\geq 2$ such that $L$ is a line bundle whose degree is less than $2-2g$. Let $r$ and $d$ be coprime numbers. We prove that the motivic class of the moduli space of $\SL$-connections of rank $r$ and degree $d$ over $X$ does not depend on the Lie algebroid structure $[\cdot\,,\cdot]$ and $\delta$ of $\SL$ and neither on the line bundle $L$ itself, but only on the degree of $L$ (and of course on $r$, $d$ and $X$). In particular it is equal to the motivic class of the moduli space of $K_X(D)$-twisted Higgs bundles of rank $r$ and degree $d$, for $D$ any effective divisor with the appropriate degree. As a consequence, similar results (actually slightly stronger) are obtained for the corresponding $E$-polynomials.
Some applications of these results are then deduced.
\end{abstract}

\maketitle

\tableofcontents

\section{Introduction}

In this paper we consider moduli spaces of flat Lie algebroid connections on a compact hyperbolic Riemann surface $X$. Lie algebroid connections are closely related to Simpson's notion of $\Lambda$-modules \cite{Si94,Tor11, Tor12} and constitute a simultaneous generalization of several classes of geometric structures which are broadly used in differential geometry, algebraic geometry and mathematical physics \cite{CM04,LM12}, such as Higgs bundles \cite{HitchinHiggs, SimpsonHiggs, Si94}, twisted Higgs bundles \cite{Ni}, flat connections \cite{SimpsonHiggs} and logarithmic or meromorphic connections \cite{De,Ni93,Boalch02,BS13}. By approaching the study of all these different types of moduli spaces from the perspective of a single unifying framework, we can transfer some of the geometric properties and analysis techniques from the better known moduli spaces of Higgs bundles to other types of moduli spaces whose geometries are less understood (like moduli spaces of possibly irregular meromorphic connections). For a general choice of a Lie algebroid, the geometry of moduli spaces of Lie algebroid connections was mostly unknown, but here we obtain properties of their motives, including explicit formulas in low ranks, when the Lie algebroid is of rank one and degree strictly less than the Euler characteristic of $X$. In particular, we prove a motivic correspondence between moduli spaces of twisted Higgs bundles and of flat Lie algebroid connections. In fact, we prove that the motive of the moduli space of Lie algebroid connections does not depend on the algebroid structure nor on its underlying bundle, but just on its degree. These results then yield counterpart identifications of their $E$-polynomials (in particular, of their Betti numbers) as well as their Hodge structures. Along the way, we compute their dimension, prove smoothness and irreducibility. In addition, our techniques also allow us to compute various higher homotopy groups of these moduli spaces, and to verify computationally a conjecture by Mozgovoy on the motives of moduli spaces of twisted Higgs bundles \cite{Moz12} for small values of the invariants (genus, rank and degree of the twisting bundle). As applications of this analysis, we also obtain new information about the geometry of the moduli spaces of meromorphic connections and of the moduli spaces of $\U(2,2)$-Higgs bundles or $\U(3,3)$-Higgs bundles with maximal Toledo invariant, which are important in higher Teichmüller theory.

\subsection{On the moduli spaces of Lie algebroid connections}

A Lie algebroid $\SL=(V,[\cdot\,,\cdot],\delta)$ is a vector bundle $V\to X$ endowed with a $\CC$-linear Lie bracket $[\cdot\,,\cdot]:V\otimes_\CC V\to V$ and an anchor map $\delta:L\to T_X$ to the tangent bundle $T_X$, which are compatible with each other, in the sense that the bracket satisfies the Jacobi identity and a ``twisted'' version of Leibniz rule involving the anchor; see Definition \ref{def:Liealgebroid}. Given such a gadget, one defines an $\SL$-connection as a pair $(E,\nabla)$, where $E\to X$ is a vector bundle and $\nabla:E\to E\otimes V^*$ is a map which generalises the notion of a connection on $E$ in the sense that $\nabla$ satisfies a version of Leibniz rule in which the canonical differential is replaced by the differential of the Chevalley-Eilenberg-de Rham complex of the Lie algebroid $\SL$; cf.~Definition \ref{def:L-connect}. The first example of a Lie algebroid is the tangent bundle $T_X$ with the Lie bracket of vector fields and the identity anchor. Denote this Lie algebroid by $\mathcal{T}_X$. Then a $\mathcal{T}_X$-connection $(E,\nabla)$ is just a connection on $E$. 

Here we consider algebraic (i.e.\ holomorphic) Lie algebroids $\SL$ on $X$, algebraic $\SL$-connections and the corresponding moduli space $\SM_{\SL}(r,d)$ of flat $\SL$-connections on $X$ of rank $r$ and degree $d$ (the rank and the degree are the ones of the underlying bundle $E$). As we mentioned, for some choices of $\SL$, like the canonical Lie algebroid structures on the tangent bundle $\SL=\mathcal{T}_X$ or $T_X(-D)=T_X\otimes\mathcal{O}_X(-D)$ for an effective and reduced divisor $D$ on $X$, these moduli are classical spaces (moduli spaces of flat connections and integrable logarithmic connections respectively) which have been extensively studied; cf.~\cite{Si94,Ni93}. For a general $\SL$, the spaces $\SM_{\SL}(r,d)$ have been constructed by analytic methods in \cite{Kr08,Kr10} and they have also been identified with certain algebraic moduli spaces of $\Lambda$-modules in \cite{Si94,Tor11,Tor12}, proving that they are quasi-projective varieties. However, even though Lie algebroids are quite classical objects in geometry, very little is known a priori on the geometry and topology of $\SM_{\SL}(r,d)$. The following theorem, containing some of the results proven in this paper, constitutes a first step towards filling this gap.

\begin{theorem}\label{thm:geo.top-moduli-Lconnections}
 Let $X$ be a smooth projective complex curve of genus $g\geq 2$. Let $\SL=(L,[\cdot\,,\cdot],\delta)$ be an algebraic Lie algebroid on $X$ such that $L$ is a line bundle of degree $\deg(L)<2-2g$. Suppose $r,d$ are coprime integers. Then $\SM_{\SL}(r,d)$ is a non-empty, smooth and connected quasi-projective variety such that its:
\begin{enumerate}
\item complex dimension is $1-r^2\deg(L)$;
\item homotopy groups, up to order $2(g-1)(r-1)-2$ and for $(r,g)\neq (2,2)$, are given as follows:
\begin{enumerate}
\item $\pi_1(\SM_{\SL}(r,d))\cong H_1(X,\ZZ)\cong\ZZ^{2g}$;
\item $\pi_2(\SM_{\SL}(r,d))\cong \ZZ$;
\item $\pi_k(\SM_{\SL}(r,d))\cong \pi_{k-1}(\mathcal{G}(\mathbb{E}))$, for every $k=3,\ldots,2(g-1)(r-1)-2$. Here $\mathbb{E}$ is the unique (up to diffeomorphism) $C^\infty$ Hermitian vector bundle on $X$ of rank $r$ and degree $d$, and $\mathcal{G}(\mathbb{E})$ is the corresponding unitary gauge group.
\end{enumerate}
\item motivic class $[\SM_{\SL}(r,d)]$ in the Grothendieck ring of varieties depends on $\deg(L)$ but not on the remaining structure of the Lie algebroid $\SL$. The same is true for its Voevodsky and Chow motives. Moreover, for $r=2,3$, $[\SM_{\SL}(r,d)]$ is explicitly given by the formulas presented in Corollary \ref{cor:K-class moduli rk2,3}.
\item Hodge structure is pure and its $E$-polynomial $E(\SM_{\SL}(r,d))$ depends on $r$ and $\deg(L)$ but not on the remaining structure of the Lie algebroid $\SL$ nor on the degree $d$. Moreover, for $r=2,3$, $E(\SM_{\SL}(r,d))$ is explicitly given by the formulas presented in Corollary \ref{cor:E-poly_r=2,3}.
\end{enumerate}
\end{theorem}

Our main motivation for this paper was to study the Grothendieck motives of $\SM_\SL(r,d)$ and items (3) and (4) of the previous theorem may be seen as the main results of the paper.

The above listed topological and geometrical properties are obtained as we now roughly describe (contextualised details are provided below). We show that there is a family of moduli spaces, parameterised by $\CC$, such that the moduli space given by any $z\in\CC^*$ is isomorphic to $\SM_{\SL}(r,d)$, while the one given by $0\in\CC$ is isomorphic to the twisted Dolbeault moduli space $\SM^{\Dol}_{L^{-1}}(r,d)$ of $L^{-1}$-twisted Higgs bundles of rank $r$ and degree $d$ over $X$ (here $L^{-1}$ is the dual of $L$). We prove that this family is actually a moduli space --- the $\SL$-Hodge moduli space --- which is a smooth semiprojective variety. Using this fact we then prove that we can deduce properties of its fibres over a non-zero complex number, thus of $\SM_{\SL}(r,d)$, from properties over the zero fibre, i.e.\ $\SM^{\Dol}_{L^{-1}}(r,d)$. Since this latter space has been extensively studied, there are plenty of algebraic tools to study its geometry and topology, in particular its motives, and we apply them here. We now provide more details on these techniques.  

\subsection{The classical case: moduli spaces of flat connections and of Higgs bundles}

Let $\mathcal{T}_X$ denote the canonical Lie algebroid on the tangent bundle $T_X$ of the compact Riemann surface $X$. In \cite{Si94}, Simpson provided a geometric invariant theory construction of the moduli space of rank $r$ and degree $0$ holomorphic flat connections $(E,\nabla)$, also known as local systems, on $X$. Denote it by $\SM^{\op{dR}}(r,0)$ reflecting the fact that Simpson named it as the de Rham moduli space. Note that in terms of Lie algebroid connections, and using the notation employed in the previous section, $\SM^{\op{dR}}(r,0)=\SM_{\mathcal{T}_X}(r,0)$.

In the same paper, Simpson also constructed what he called the Dolbeault moduli space, namely the moduli of rank $r$ and degree $0$ Higgs bundles over $X$. These are pairs $(E,\varphi)$ where $E$ is an algebraic rank $r$ and degree $0$ vector bundle and $\varphi:E\to E\otimes K_X$ an algebraic bundle map --- the Higgs field --- where $K_X=T_X^*$ is the holomorphic cotangent bundle, i.e.\ the canonical line bundle of $X$. Denote this moduli as $\SM^{\Dol}_{K_X}(r,0)$ or simply as $\SM^{\Dol}(r,0)$.

As proved in \cite{Si94}, the moduli spaces of Higgs bundles and of connections, $\SM^{\Dol}(r,0)$ and $\SM^{\op{dR}}(r,0)$, are, respectively, the fibre over $0$ and $1$ of an algebraic family over $\CC$, such that any other fibre is also isomorphic to $\SM^{\op{dR}}(r,0)$. This family, called the Hodge moduli space, is obtained by taking the moduli space $\SM^{\Hod}_\lambda(r,0)$ of $\lambda$-connections $(E,\nabla,\lambda)$ of rank $r$ and degree $0$, where $\lambda\in\CC$. A $\lambda$-connection is an object very similar to a connection but where, in Leibniz rule, the summand with the differential gets multiplied by the scalar  $\lambda$. More precisely, for every local section $s$ and every local regular complex function $f$ the $\lambda$-connection $\nabla:E\to E\otimes \Omega^1_X$ satisfies
$$\nabla(fs)=f\nabla(s)+\lambda s df$$
Thus $\SM^{\Hod}_0(r,0)\cong\SM^{\Dol}(r,0)$ and $\SM^{\Hod}_1(r,0)\cong\SM^{\op{dR}}(r,0)$. By letting $\lambda$ vary in $\CC$, Simpson then constructed a variety $\SM^{\Hod}(r,0)$ endowed with the projection $\SM^{\Hod}(r,0)\to\CC$ mapping to $\lambda$, yielding the mentioned family.

It turns out that $\SM^{\Dol}(r,0)$ and $\SM^{\op{dR}}(r,0)$ are singular moduli spaces. One can work with smooth spaces by taking instead $d$ coprime with $r$ and the corresponding moduli spaces as follows. For Higgs bundles, let $\SM^{\Dol}(r,d)$ be the moduli of degree $d$ and rank $r$ Higgs bundles. For connections, fix any point $x\in X$ and consider the moduli space $\SM^{\op{dR}}(r,d)$ of logarithmic connections, where poles at $x\in X$ are allowed, but with fixed residue depending on $d$ \cite{De,Ni93}. 
A complete analogue of the picture of the preceding paragraph holds as well. Hence there is a Hodge moduli space $\SM^{\Hod}(r,d)$ parameterising logarithmic $\lambda$-connections certain given fixed residues which has a natural mapping to $\CC$ so that the fibre over $0$ is $\SM^{\Dol}(r,d)$ and any other fibre is isomorphic to $\SM^{\op{dR}}(r,d)$. 

For any $d$ (possibly $0$), the family $\SM^{\Hod}(r,d)\to\CC$ admits certain preferred sections --- the real twistor lines --- which realise the famous non-abelian Hodge correspondence \cite{HitchinHiggs,SimpsonHiggs,Si94,Si95}, providing a homeomorphism between $\SM^{\Dol}(r,d)$ and $\SM^{\op{dR}}(r,d)$.

However, when $r$ and $d$ are coprime, Hausel and Thaddeus proved in \cite[Theorem 6.2]{HT03} that $\SM^{\Dol}(r,d)$ and $\SM^{\op{dR}}(r,d)$ actually share deep geometrical invariants other than just topological ones. Namely, both spaces share the same $E$-polynomial and their Hodge structures are pure\footnote{This was actually done in \cite{HT03} for the fixed determinant (and traceless) versions of these moduli spaces, but the argument also works in the non-fixed determinant case.}. An important feature of their proof is the use of the fact that, as proved in \cite{Si94, Si97}, $\SM^{\Hod}(r,d)$ is endowed with a natural $\CC^*$-action with proper fixed point sets and such that every orbit has a limit when $\CC^*\ni z\to 0$. This means by definition that the Hodge moduli is a semiprojective variety \cite{HRV15}.

More recently, it was observed that the semiprojectivity of  $\SM^{\Hod}(r,d)$ implies the existence of another fundamental correspondence between $\SM^{\Dol}(r,d)$ and $\SM^{\op{dR}}(r,d)$. Namely, in \cite{HL21}, Hoskins and Lehaleur established what they called a ``motivic non-abelian Hodge correspondence'' by proving an equality of the Voevodsky motives of  $\SM^{\Dol}(r,d)$ and $\SM^{\op{dR}}(r,d)$. In fact, by considering $d=0$ and the stacky version of these moduli spaces, a similar result was proved before in \cite{FSS17}, for motivic classes in the Grothendieck ring of stacks, but through a completely different technique, namely point counting. This was recently generalised to the parabolic setting in \cite{FSSsigma}.

\subsection{This paper's contribution}

In the present paper, we generalise the results mentioned in the previous section to the case of any rank one Lie algebroid of degree less than that of $T_X$ and we show that the motives and E-polynomials of these moduli spaces actually exhibit broader invariance properties with respect to the choice of the Lie algebroid used in the construction of the moduli space.

In order to understand better the types of moduli spaces of Lie algebroid connections which arise from rank one Lie algebroids, we obtain a full classification of isomorphism classes of Lie algebroid structures on line bundles and derive non-emptiness results on the corresponding moduli spaces of Lie algebroid connections (cf.~Proposition \ref{prop:moduliNonEmpty}, Theorem \ref{thm:classificationLieAlgebroidsRank1}, Corollary \ref{cor:classificationLieAlgebroidsRank1} and Corollary \ref{cor:classificationLieAlgebroidsRank1Meromorphic}).
\begin{theorem}
\label{thm:classificationLieAlgebroidsRank1-intro}
Let $X$ be a smooth complex projective curve and let $L$ be a line bundle on $X$. Then there is a one-to-one correspondence between isomorphism classes of Lie algebroid structures $\SL=(L,[\cdot\,,\cdot],\delta)$ on $L$ and orbits of the scaling $\CC^*$-action on $H^0(L^{-1}\otimes T_X)$ sending
$$\lambda \cdot \delta = \lambda\delta \quad \forall \lambda \in \CC^* \,\, \forall \delta\in H^0(L^{-1}\otimes T_X).$$
Moreover:
\begin{itemize}
\item If $\deg(L)>2-2g$, or $\deg(L)=2-2g$ but $L\ne T_X$, then the only Lie algebroid structure admitted by $L$ is the trivial one $\SL=(L,0,0)$. Thus, the only existing moduli spaces of $\SL$-connections are moduli spaces of $L^{-1}$-twisted Higgs bundles.
\item $L=T_X$ only admits two nonisomorphic Lie algebroid structures: the trivial one $(T_X,0,0)$ and the canonical one given by the Lie bracket of vector fields $\ST_X=(T_X,[\cdot\,,\cdot]_{\op{Lie}},\id)$. Thus, the only existing non-empty moduli spaces of $\SL$-connections are the classical moduli spaces $\SM^{\Dol}(r,d)$ of Higgs bundles or $\SM^{\op{dR}}(r,0)$ of flat connections.  
\item If $\deg(L)<2-2g$ and $\SL=(L,[\cdot\,,\cdot],\delta)$ is any Lie algebroid structure on $L$, then the moduli space $\SM_\SL(r,d)$ of semistable flat $\SL$-connections is always non-empty. A similar non-emptiness result is obtained if $\SL$ is any intransitive Lie algebroid on a vector bundle $V$ of arbitrary rank.
\end{itemize}
As a consequence, if $L$ is a line bundle then for each Lie algebroid structure $\SL=(L,[\cdot,\cdot],\delta)$ on $L$, the moduli space of $\SL$-connections is isomorphic to either the moduli space of:
\begin{itemize}
\item $L^{-1}$-twisted Higgs bundles,
\item flat connections,
\item integrable logarithmic connections with poles over a fixed reduced effective divisor $D$, or
\item integrable meromorphic connections with poles prescribed by a fixed non-reduced effective divisor $D$.
\end{itemize}
In the last two items, $D$ lies in the linear system associated to the line bundle $L^{-1}\otimes T_X$.
\end{theorem}

We will now give a general overview on the proof of Theorem \ref{thm:geo.top-moduli-Lconnections}, together with the main results leading to it. But before we make a remark  regarding also the preceding theorem.

\begin{remark}
It is important to stress here that the classification Theorem \ref{thm:classificationLieAlgebroidsRank1-intro} plays no role in the proof of Theorem \ref{thm:geo.top-moduli-Lconnections}. Their proofs are completely independent. In other words, Theorem \ref{thm:geo.top-moduli-Lconnections} is proved purely in the setup of Lie algebroid connections for any `abstract' rank one Lie algebroid $\SL$ under the conditions stated in loc.~cit.  Actually, some of the results we prove along the way to Theorem \ref{thm:geo.top-moduli-Lconnections} hold for higher rank Lie algebroids, while others, although are not proved for higher rank, are expected to be valid in this more general setup as well (see for instance Remark \ref{rmk:higherrk}).
\end{remark}

The main idea behind the proof of Theorem \ref{thm:geo.top-moduli-Lconnections}, and in particular of the motivic invariance properties stated there, can then be summarized as follows. In principle, the moduli space $\SM_\SL(r,d)$ is expected to vary when the Lie algebroid $\SL$ changes. Nevertheless, we prove that many topological properties and motivic invariants of the moduli space actually remain constant with respect to the choice of $\SL$. If we look at a trivial Lie algebroid structure $\SL=(L,0,0)$, so that $\SM_\SL(r,d)$ corresponds to a moduli space of twisted Higgs bundles, we could distinguish two complementary ways in which $\SL$ could change. On one hand, one could keep the underlying bundle $L$ fixed and modify the Lie algebroid structure on it to be a nontrivial one. According to Theorem \ref{thm:classificationLieAlgebroidsRank1-intro}, this corresponds to moving across the space of $\CC^*$-orbits of the vector space $H^0(L^{-1}\otimes T_X)$ and going through different kinds of moduli spaces of logarithmic and meromorphic connections with poles over different divisors on $X$, which can instead be interpreted as generalized connections sharing the same fixed twisting bundle $L^{-1}$. This corresponds on moving the moduli spaces over each $H^0(L^{-1}\otimes T_X)$ and $H^0(L'^{-1}\otimes T_X)$ in Figure \ref{fig:mainidea} below. On the other hand, one could change $L$ into a different line bundle $L'$ of the same degree $d_L$, so moving in the corresponding `Jacobian' of degree $d_L$ line bundles on  $\Jac^{d_L}(X)$ of $X$, but now keeping the trivial Lie algebroid structure. This means considering just moduli spaces of twisted Higgs bundles, but for different twisting bundles. We prove, through different techniques, that both types of modifications to the Lie algebroid structure leave the motives and other properties of the moduli spaces unchanged, thus proving that they remain invariant for all the moduli spaces $\SM_\SL(r,d)$ independently on $\SL$.

\begin{figure}[h!]
\begin{center}
\begin{tikzpicture}[scale=.9,domain=0:4] 
  \draw[thick] (-5.5, -4.5) -- (-5.5, 4.5);
  \node[left] at (-5.5,0) {${\scriptstyle \mathrm{Jac}^{d_L}(X)}$};
  \fill (-5.5,2.5) circle (2pt);
  \fill (-5.5,-2.5) circle (2pt);
  \node[left] at (-5.5,2.5) {${\scriptscriptstyle L}$};
  \node[left] at (-5.5,-2.5) {${\scriptscriptstyle L'}$};
  \coordinate (A1) at (-4,-3.5);
  \coordinate (A2) at (4,-3.5);
  \coordinate (A3) at (6,-1.5);
  \coordinate (A4) at (-2,-1.5);
  \draw[thick,fill=fillgrey] (A1)--(A2)--(A3)--(A4)--cycle;
  \draw[domain=-1.5:3.5,blue,variable=\x] plot ({\x}, {-1/3*\x-13/6});
      \node at (1+.25,-2.5+.35) {${\scriptscriptstyle \SM^{\Dol}_{(L')^{-1}}(r,d)}$};
       \node[below,blue] at (1,-2.5-.05) {${\scriptscriptstyle 0}$};
      \node at (0-0.85,-13/6-0.1) {${\scriptscriptstyle \SM_{\SL'}(r,d)}$};
            \node[blue,above,rotate=-19] at (2.5+.2,-3-.15) {${\scriptscriptstyle  \SM_{\SL'}^{\Hod}(r,d)}$};
      \node[left,blue] at (-1.5,-5/3-.1) {${\scriptscriptstyle \mathbb{C}}$};
  
   \fill[black] (1,-2.5) circle (2pt);
  \fill[black] (0,-13/6) circle (2pt);
  
    \node at (6,-3) {${\scriptscriptstyle H^0((L')^{-1}\otimes T_X)}$};
    
     \draw[<-,dashed] (-5.4,-2.5)--(-3,-2.5);

  \coordinate (B1) at (-4,1.5);
  \coordinate (B2) at (4,1.5);
  \coordinate (B3) at (6,3.5);
  \coordinate (B4) at (-2,3.5);
  \draw[thick,fill=fillgrey] (B1)--(B2)--(B3)--(B4)--cycle;

  \draw[domain=-3:5,blue,variable=\x] plot ({\x}, {0.2*\x-2.7+5});
      \node at (1-.15,-2.5-.4+5) {${\scriptscriptstyle \SM^{\Dol}_{L^{-1}}(r,d)}$};
      \node[above,blue] at (1,-2.5+.1+5) {${\scriptscriptstyle 0}$};
      \node at (3+.5,-2.1+5-.25) {${\scriptscriptstyle \SM_\SL(r,d)}$};
        \node[blue,above,rotate=10] at (-1.5,-3+5-.05) {${\scriptscriptstyle  \SM_{\SL}^{\Hod}(r,d)}$};
         \node[left,blue] at (-3,-33/10+.1+5) {${\scriptscriptstyle \mathbb{C}}$};

        \fill[black] (1,-2.5+5) circle (2pt);
  \fill[black] (3,-2.1+5) circle (2pt);

    \node at (6,-3+5) {${\scriptscriptstyle H^0(L^{-1}\otimes T_X)}$};
    
         \draw[<-,dashed] (-5.4,-2.5+5)--(-3,-2.5+5);
         
                  \draw[<->,dashed]  (1,-1.75)--(1,1.75);
                    \node[right] at (1,0) {\scriptsize  \begin{tabular}{c}Bialynicki-Birula \\stratification analysis\end{tabular}};

\end{tikzpicture}
\end{center}
\caption{For any pair of line bundles $L$ and $L'$ of the same degree $d_L$, all moduli spaces of Lie algebroid connections parameterized by both  spaces $H^0(L^{-1}\otimes T_X)$ and $H^0((L')^{-1}\otimes T_X)$ share the same motives (and other properties).}
\label{fig:mainidea}
\end{figure}

We address first the invariance with respect to the Lie algebroid structure by considering what we call $\SL$-Hodge moduli spaces. These are a generalization of the classical Hodge moduli space which provides families of moduli spaces of $\SL$-connections which degenerate into a moduli space of twisted Higgs bundles in a sufficiently regular way.

More precisely, out of a given Lie algebroid $\SL=(V,[\cdot\,,\cdot ],\delta)$ on $X$, one can consider flat $(\lambda,\SL)$-connections $(E,\nabla_\SL,\lambda)$ for each $\lambda\in\CC$, these being the analogues of the above mentioned $\lambda$-connections corresponding to $\SL=\ST_X$. Then a flat $(0,\SL)$-connection is just a $V^*$-twisted Higgs bundle (with an integrability condition on the Higgs field) on $X$; note that it is the same as a Lie algebroid connection by taking the trivial Lie algebroid structure on $V$. On the other hand, a flat $(1,\SL)$-connection is a flat $\SL$-connection. Then the generalised version of the above Hodge moduli space is the $\SL$-Hodge moduli space $\SM^{\Hod}_\SL(r,d)$ of flat $(\lambda,\SL)$-connections on $X$, where a $(\lambda,\SL)$-connection is the same thing as a Lie algebroid connection for the Lie algebroid structure obtained from $\SL$ by multiplying the Lie bracket and the anchor by $\lambda$. 

These moduli spaces are constructed as a particular case of a general construction by Simpson \cite{Si94}, arising from moduli spaces $\SM_{\Lambda}(r,d)$ of $\Lambda$-modules (see Definition \ref{def:Lambda-module}), where $\Lambda$ is a sheaf of rings of differential operators on $X$. As proved by Tortella in \cite{Tor11,Tor12}, given a Lie algebroid $\SL$ on $X$, there is an equivalence of categories between integrable $\SL$-connections and $\Lambda_\SL$-modules, where $\Lambda_\SL$ a certain sheaf of rings of differential operators uniquely corresponding to $\SL$. Then this equivalence yields an isomorphism between the $\Lambda_\SL$-modules moduli space $\SM_{\Lambda_\SL}(r,d)$ and the moduli  $\SM_{\SL}(r,d)$ of flat $\SL$-connections. 

From $\Lambda_\SL$ one then constructs a sheaf of rings of differential operators $\Lambda_\SL^{\op{red}}$ on $X\times \CC$ relative to $\CC$ and obtain a moduli space $\SM_{\Lambda_\SL^{\op{red}}}(r,d)\to \CC$ of $\Lambda_\SL^{\op{red}}$-modules which can be identified with the desired $\SL$-Hodge moduli space $\SM_{\SL}^{\Hod}(r,d)$. This moduli is equipped with a natural map $\SM_{\SL}^{\Hod}(r,d)\to\CC$, $(E,\nabla_\SL,\lambda)\mapsto\lambda$, whose fibre over zero is hence the moduli space of $V^*$-twisted Higgs bundles of rank $r$ and degree $d$, and every other fibre is isomorphic to the fibre over $1$, namely to the moduli of integrable $\SL$-connections of rank $r$ and degree $d$. Then we prove following (see Theorem \ref{thm:semiprojectiveHodge}).

\begin{theorem}\label{thm:smothsemiproj}
Let $\SL=(L,[\cdot\,,\cdot],\delta)$ be a Lie algebroid such that $L$ is a line bundle with $\deg(L)<2-2g$, where $g\geq 2$ is the genus of $X$. If $(r,d)=1$, then the $\SL$-Hodge moduli space $\SM_{\SL}^{\Hod}(r,d)\cong \SM_{\Lambda_\SL^{\op{red}}}(r,d)$ is a smooth semiprojective variety. Moreover the projection  $\SM_{\SL}^{\Hod}(r,d)\to\CC$ is a $\CC^*$-equivariant surjective submersion.
\end{theorem}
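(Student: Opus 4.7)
The plan is to follow Simpson's template for the ordinary Hodge moduli space $\mathcal{M}^{Hod}(r,d)$ in \cite{Si94, Si97}, adapting each step to the Lie algebroid setting. The hypotheses simplify matters considerably: because $L$ has rank $1$ the integrability condition $\nabla_\SL^2=0$ is automatic, and the positivity $\deg L^*>2g-2$ (equivalent to $\deg L<2-2g$) plays the role that $\deg K_X>0$ plays in the classical case.

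For smoothness, I would use that $\gcd(r,d)=1$ forces every semistable $(\lambda,\SL)$-connection to be stable, so the moduli space is fine and its tangent/obstruction theory at $(E,\nabla_\SL,\lambda)$ is computed by the hypercohomology of a two-term complex with terms $\mathcal{E}nd(E)$ and $\mathcal{E}nd(E)\otimes L^*$, plus a one-dimensional piece coming from the freedom to vary $\lambda$. By Serre duality the obstruction group is dual to sections of a bundle twisted by $L\otimes K_X$, which has negative degree by hypothesis; combined with stability this forces the obstructions to vanish.

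For semiprojectivity I would verify the three Hausel--Rodriguez-Villegas axioms for the $\CC^*$-action $t\cdot(E,\nabla_\SL,\lambda)=(E,t\nabla_\SL,t\lambda)$, which preserves stability and integrability trivially. The existence of $\lim_{t\to 0}$ at every point would be established by adapting Simpson's Rees/filtration construction from $\Lambda$-modules to $\Lambda_\SL^{\op{red}}$-modules: a Griffiths-type filtration of $E$ yields a one-parameter degeneration whose limit is $\CC^*$-fixed and lies in the $\lambda=0$ fiber, namely a system of Hodge-type objects for $\SL$. The fixed locus thus sits inside the $L^*$-twisted Higgs moduli space, in fact in its nilpotent cone, whose properness under $\deg L^*>2g-2$ follows by the standard argument that the Hitchin-type map is projective in this twisted setting.

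Finally, surjectivity of $\SM_{\Lambda_\SL^{\op{red}}}(r,d)\to\CC$ reduces to non-emptiness of the $\lambda=0$ fiber (twisted Higgs, classical) and the $\lambda=1$ fiber (integrable $\SL$-connections, obtained by a direct construction using the anchor $\delta$ to transport a connection on a suitable stable vector bundle). The submersion property then follows from smoothness of the total space combined with flatness over $\CC$ with smooth fibers of the same dimension, or equivalently by checking that $d\lambda$ never vanishes on cotangent spaces. The principal obstacle I anticipate is making the $\CC^*$-limit construction rigorous in the $\Lambda_\SL^{\op{red}}$ setting: Simpson's argument uses the full strength of $\Lambda$-module filtrations and the valuative criterion of properness, and one must verify that the chosen filtration preserves both $\SL$-integrability and the stability and numerical class of the limit.
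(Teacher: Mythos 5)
Your outline of the semiprojectivity half essentially matches the paper: the existence of $\lim_{t\to 0}$ is obtained by applying Simpson's semistable-extension theorem \cite[Theorem 10.1]{Si97} to a $\CC^*$-flat family of modules — and here your anticipated obstacle dissolves more cheaply than you expect, because $\Lambda_\SL^{\op{red}}$ is itself a sheaf of rings of differential operators on $X\times\CC$ over $\CC$ (the Rees construction), so Simpson's result applies verbatim to $\Lambda_\SL^{\op{red}}$-module families with integrability packaged into the module structure (and automatic in rank $1$); properness of the fixed locus follows, as you say, from properness of the twisted Hitchin map since all fixed points lie in $\pi^{-1}(0)$; and surjectivity reduces to non-emptiness of the fibers via the anchor/Atiyah-class construction (Proposition \ref{prop:existenceConnection} and Corollary \ref{cor:existsConnection}).

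The smoothness step, however, contains a genuine gap. The differential $[-,\nabla_\SL]$ of the deformation complex $C^\bullet(E,\nabla_\SL):\End(E)\to\End(E)\otimes L^*$ is a \emph{first-order differential operator} whenever the anchor is nonzero (the Leibniz terms do not cancel), so Serre duality does not identify $\HH^2(C^\bullet)$ with the dual of a space of sections of a coherent sheaf twisted by $L\otimes K_X$; and the cruder route — forcing $\HH^2=0$ via $H^1(\End(E)\otimes L^*)\cong H^0(\End(E)\otimes L\otimes K_X)^*=0$ — fails because the underlying bundle $E$ of a stable $\SL$-connection need not be semistable, and an unstable $\End(E)$ of slope zero can admit sections after a negative twist. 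Relatedly, your description of the tangent space at points over $\lambda=0$ as ``two-term hypercohomology plus a one-dimensional piece'' presumes a splitting that can fail: Lemma \ref{lemma:HodgeTangent} only yields a \v{C}ech presentation whose cocycle conditions involve an extra class $\omega$ built from $d_\SL g_{\alpha\beta}$, and the splitting $T\cong\HH^1\times\CC$ holds only when $E$ itself carries an $\SL$-connection, so that $\omega$ is exact — the Remark following that lemma stresses that no natural cohomological description is available in general. The paper circumvents both problems with a semicontinuity sandwich rather than direct obstruction vanishing: extend the orbit section $\gamma(t)=(E,t\nabla_\SL,t)$ across $t=0$ using the limit lemma; bound the tangent dimension at $\gamma(0)$ above by $\dim\ker d\pi+1$, where $\ker d\pi\cong T\SM_{L^{-1}}(r,d)$ has dimension $1-r^2\deg(L)$ by the known smoothness of the twisted Higgs moduli \cite{BGL11}; bound it below by $2-r^2\deg(L)+\dim\HH^2(C^\bullet(E,\nabla_\SL))$ via upper semicontinuity along $\gamma$; and compare, which forces $\HH^2=0$ at every point of $\SM_{\Lambda_\SL}(r,d)$. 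A further closure/semicontinuity argument on $\overline{\pi^{-1}(\CC^*)}$ then gives smoothness of the total space and the submersion property. Without this (or some substitute), your Serre-duality vanishing step does not go through.
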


Part of the proof of this theorem includes the computation of the dimension of $\SM_\SL(r,d)$, establishing (1) of Theorem \ref{thm:geo.top-moduli-Lconnections}. We also stress that the two equivalent interpretations of the ``same object'' --- $\SL$-connections and $\Lambda_{\SL}$-modules --- are actually required in this proof. For instance, in order to prove smoothness (which is the most intricate part to prove), we make use of the deformation theory for integrable $\SL$-connections, developed in \cite{Tor12}, since deformation theory for $\Lambda$-modules is not yet well-understood in the required generality.  On the other hand, the proof of the existence of limits of a natural $\CC^*$-action on the $\SL$-Hodge moduli (which is a condition for being semiprojective) is carried out by explicitly using $\Lambda_\SL$-modules rather than $\SL$-connections.

The geometric and topological properties listed in Theorem \ref{thm:geo.top-moduli-Lconnections} are obtained by exploiting the setting arising from Theorem 
 \ref{thm:smothsemiproj}. For instance, we prove the following general result by making use of Bialynicki-Birula stratifications \cite{{BB73}} (cf.~Theorem \ref{thm:semiprojectiveMotive}) and their relations with motives.

\begin{theorem}
\label{thm:semiprojectiveMotive-int}
Let $Y$ be a smooth semiprojective variety together with a surjective $\CC^*$-equivariant submersion $\pi: Y \to \CC$. Consider the motivic classes of $\pi^{-1}(0)$ and $\pi^{-1}(1)$ in the completion of the Grothendieck ring of varieties $\hat{K}(\VarC)$. Then such classes coincide.
\end{theorem}
 
Theorem \ref{thm:semiprojectiveMotive-int} allows us to shift the study of the motives of $\SM_\SL(r,d)$ to that of the zero fibre of the $\SL$-Hodge moduli, namely to the moduli space $\SM^{\Dol}_{L^{-1}}(r,d)$ of $L^{-1}$-twisted Higgs bundles. Then, working directly on $\SM^{\Dol}_{L^{-1}}(r,d)$ and studying the Bialynicki-Birula decomposition of the moduli space, we prove that the motives of these twisted Higgs moduli spaces do not depend on the line bundle $L$ but rather on its degree (see Figure \ref{fig:mainidea} above). Combining both results, we prove item (1) of the theorem below (cf.~Theorem \ref{thm:equalMotive}). The other items are proved within the same spirit (cf.~Theorem \ref{thm:equalMotive-Chow-Voevodsky}).

\begin{theorem}\label{thm:motivicclasses}
 Let $X$ be a smooth projective complex curve of genus $g\geq 2$. Let $\SL=(L,[\cdot\,,\cdot ],\delta)$ and $\mathcal{L'}=(L',[\cdot\,,\cdot ]',\delta')$ be any two Lie algebroids on $X$, such that $L$ and $L'$ are line bundles with  $\deg(L)=\deg(L')<2-2g$. Suppose $(r,d)=1$. Then
$$\SI(\SM_{\SL}(r,d)) = \SI(\SM_{\SL'}(r,d)),$$ 
where $\SI(X)$ denotes one of the following:
\begin{enumerate}
\item The motivic class $[X]\in \hat{K}(\VarC)$;
\item The Voevodsky motive $M(X)\in \op{DM}^{\op{eff}}(\CC,R)$ for any ring $R$. In this case, moreover, the motives are pure;
\item The Chow motive $h(X)\in \op{Chow}^{\op{eff}}(\CC,R)$ for any ring $R$;
\item The Chow ring $\op{CH}^\bullet(X,R)$ for any ring $R$.
\end{enumerate}
Moreover, the mixed Hodge structures of the moduli spaces are pure and, if $d'$ is any integer coprime with $r$, then 
$$E(\SM_{\SL}(r,d)) = E(\SM_{\SL'}(r,d')).$$
Finally, if $L=L'=T_X(-D)$ for some effective divisor $D$, then there is an actual isomorphism of pure mixed Hodge structures
$$H^\bullet(\SM_{\SL}(r,d)) \cong  H^\bullet(\SM_{\SL'}(r,d')).$$
\end{theorem}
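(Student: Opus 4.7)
The plan is to apply a Bialynicki--Birula (BB) argument to the $\SL$-Hodge moduli $\SM_{\Lambda_\SL^{\op{red}}}(r,d)$, which by the preceding theorem is smooth semiprojective with a surjective submersion to $\CC$. The canonical $\CC^*$-action scales both $\lambda$ and $\nabla$, so its fixed locus is contained in $\{\lambda = 0\}$ and coincides with the ordinary Higgs $\CC^*$-fixed locus in the central fiber $\SM_{L^{-1}}(r,d)$. At each such fixed point, the $\lambda$-coordinate contributes exactly one additional positive-weight tangent direction over the Higgs positive directions. Writing $\{F_i\}$ for the common fixed components and $n_i^+$ for the positive-weight dimensions inside the Higgs tangent space, BB gives
\[
[\SM_{\Lambda_\SL^{\op{red}}}(r,d)] = \sum_i [F_i]\,\LL^{n_i^+ + 1} = \LL\cdot[\SM_{L^{-1}}(r,d)].
\]
Using the $\CC^*$-scaling of $\lambda$ to trivialize the family over $\CC^*\subset\CC$ via $(E,\nabla,\lambda)\mapsto(\lambda,(E,\lambda^{-1}\nabla,1))$ yields the stratification identity $[\SM_{\Lambda_\SL^{\op{red}}}(r,d)] = [\SM_{L^{-1}}(r,d)] + (\LL-1)\cdot[\SM_{\Lambda_\SL}(r,d)]$. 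Cancelling $\LL-1$, which is invertible in $\hat K(\VarC)$ because $\LL^{-1}$ is topologically nilpotent for the dimension completion, gives $[\SM_{\Lambda_\SL}(r,d)] = [\SM_{L^{-1}}(r,d)]$.

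To compare $\SL$ and $\SL'$ with $\deg L = \deg L'$, I reduce as above to showing that $[\SM_{L^{-1}}(r,d)]\in\hat K(\VarC)$ depends only on $\deg L$. Running BB on the twisted Higgs moduli itself, its fixed components are moduli of holomorphic chains with morphisms twisted by $L^{-1}$. Since the geometry of these chains is governed by Hom-spaces whose dimensions depend only on the numerical data $(r_i,d_i,\deg L,g)$ via Riemann--Roch (using $\deg L^{-1} > 2g-2$), an induction on chain length shows their motivic classes depend only on $\deg L$. The rank $1$ base case $\SM_{L^{-1}}(1,d)\cong\op{Pic}^d(X)\times H^0(X,L^{-1})$ has class $[\op{Pic}^d(X)]\cdot\LL^{-\deg L+1-g}$, already degree-only. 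I expect this chain-motive step to be the main technical obstacle, since one must check that the GIT construction of the chain moduli behaves uniformly with respect to varying $L$ of fixed degree at the motivic level.

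For parts (2)--(4), the same BB computation is repeated in the categories of effective Voevodsky motives (for any coefficient ring $R$), effective Chow motives, and Chow rings. The motivic BB decomposition for a smooth semiprojective variety with $\CC^*$-action provides an isomorphism $M(\SM_{\Lambda_\SL^{\op{red}}}(r,d))\cong\bigoplus_i M(F_i)(n_i^++1)[2(n_i^++1)]$, and purity is inherited from the fact that each $F_i$ is smooth projective with a pure motive (by the same chain-length induction). The cancellation identity in the respective Grothendieck groups of motives, combined with realization functors, yields the identifications. Statement (4) then follows from (3) by extracting morphisms from $h(\SM_{\Lambda_\SL}(r,d))$ to Tate twists of the unit, preserving the intersection ring structure.

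The equality of $E$-polynomials for the same $d$ is immediate from (1) and purity. Independence on $d$ for $d,d'$ coprime to $r$ reduces to the analogous property for twisted Higgs moduli, which I would establish by the same BB/chain induction combined with the action of $\op{Pic}^0(X)$ by tensoring (identifying moduli for $d$ and $d+r$) and symmetry arguments at the chain level. Finally, when $L=L'=K(D)$, the Hodge moduli is the moduli of logarithmic $\lambda$-connections, and the Hausel--Thaddeus argument applied to the smooth semiprojective fibration with surjective submersion to $\CC$ upgrades the $E$-polynomial equality to a genuine isomorphism of mixed (in fact pure) Hodge structures on cohomology, via a $\CC^*$-equivariant deformation retraction of every fiber onto the common core of the Hodge moduli space.
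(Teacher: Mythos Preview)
Your Bialynicki--Birula argument on the $\SL$-Hodge moduli and the reduction $[\SM_{\Lambda_\SL}(r,d)]=[\SM_{L^{-1}}(r,d)]$ are essentially the paper's approach. For the step showing $[\SM_{L^{-1}}(r,d)]$ depends only on $\deg L$, your ``induction on chain length via Riemann--Roch'' is vague; the paper handles this more directly. One untwists: sending a fixed component of type $(\overline r,\overline d)$ to the moduli of ordinary (untwisted) chains via $E_i\mapsto E_i\otimes L^{i-k}$ gives an isomorphism $\VHS_{L^{-1}}(\overline r,\overline d)\cong \HC^{\alpha}(\overline r,\overline d')$ where the stability parameter $\alpha$ and the shifted multidegree $\overline d'$ depend only on $\deg L$. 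The Morse indices (hence the exponents $n_i^+$) likewise depend only on $\deg L$ by a direct Euler-characteristic computation. No induction is needed, and one should not claim the $F_i$ are projective --- they are smooth quasi-projective, and purity of their motives comes from their own semiprojectivity rather than projectivity.

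The genuine gap is your treatment of $d$-independence. Tensoring by $\op{Pic}^0(X)$ only relates degrees in the same residue class modulo $r$, and adding duality $d\mapsto -d$ still does not act transitively on $(\ZZ/r\ZZ)^\times$ once $r\ge 5$ (e.g.\ for $r=5$ the orbits are $\{1,4\}$ and $\{2,3\}$). So ``symmetry arguments at the chain level'' cannot give $E(\SM_{L^{-1}}(r,d))=E(\SM_{L^{-1}}(r,d'))$ in general. The paper imports this from the $\chi$-independence theorem of Maulik--Shen (or Groechenig--Wyss--Ziegler), a deep result not accessible by the BB/chain machinery. The same applies to the final claim: the isomorphism of Hodge structures $H^\bullet(\SM_{\Lambda_\SL}(r,d))\cong H^\bullet(\SM_{\Lambda_{\SL'}}(r,d'))$ with $d\ne d'$ requires Maulik--Shen; the Hausel--Thaddeus deformation-retraction argument you invoke only gives $H^\bullet(\SM_{\Lambda_\SL}(r,d))\cong H^\bullet(\SM_{L^{-1}}(r,d))$ for the \emph{same} $d$.
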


The above theorem is the main result of the paper. It is a more precise and expanded version of parts of items (3) and (4) of Theorem \ref{thm:geo.top-moduli-Lconnections}. Its motivic statements constitute a generalization of the ``motivic non-abelian Hodge correspondence'' of \cite{HL21} to any Lie algebroid $\SL$ on the given conditions.

According to Theorem \ref{thm:classificationLieAlgebroidsRank1-intro}, the above theorem applies precisely to the moduli spaces of logarithmic connections (without fixed residues on the poles) and meromorphic connections (without fixing the corresponding Stokes data), by taking $\ST_X(-D)\subset \ST_X$, for $D$ an is an effective reduced (logarithmic case) or non-reduced (meromorphic case) divisor on $X$. Hence, we have the following direct application of the above theorem (see Corollary \ref{cor:motivlog=irreg}), which then deals with both situations (logarithmic and meromorphic) at the same time.

\begin{corollary}
Let $X$ be as above. Then all the motivic classes, Chow ring, $E$-polynomial and Hodge structure mentioned in Theorem \ref{thm:motivicclasses}, of any moduli space of \emph{meromorphic} connections on $X$, of coprime rank and degree, equal that of any moduli space of \emph{logarithmic} connections $X$ of the same rank and degree, with singular divisor of the same degree.
\end{corollary}

The remaining parts of the statements (3) and (4) of  Theorem \ref{thm:geo.top-moduli-Lconnections} are also obtained using Theorem \ref{thm:motivicclasses}. Indeed, we establish explicit formulas for the motivic classes and for the $E$-polynomials of the moduli spaces of $L^{-1}$-twisted Higgs bundles for $r=2,3$ and so, by Theorem \ref{thm:motivicclasses}, the same formulas also hold for  $\SM_{\SL}(r,d)$; cf.~Corollaries \ref{cor:K-class moduli rk2,3} and \ref{cor:E-poly_r=2,3}. We also note that Corollary \ref{cor:K-class moduli rk2,3} should prove Conjecture 3 of \cite{Moz12} (giving a general formula for the motive in any rank; see Conjecture \ref{conj:motivicADHM}) for $r=2,3$. We verified, by computer methods, that this is indeed the case for both $r=2,3$, $g\leq 11$ and $-18-2g\leq\deg(L)\leq 1-2g$ (see Section \ref{sec:Mozgovoy} and \cite{A22}). These formulas also allow us to compute the motives and $E$-polynomials of the maximal components of the moduli spaces of $\U(2,2)$- and $U(3,3)$-Higgs bundles for degrees coprime with the rank (see Theorem \ref{thm:UnnMotive}).

The homotopy groups computation of (2) of Theorem \ref{thm:geo.top-moduli-Lconnections} is also obtained in a similar fashion. We use Theorem \ref{thm:smothsemiproj} together with the Bialynicki-Birula stratifications of both the $\SL$-Hodge moduli $\SM_{\SL}^{\Hod}(r,d)$ and the $L^{-1}$-twisted Higgs bundles moduli $\SM^{\Dol}_{L^{-1}}(r,d)$ to show that there is a big open subset  of the $\SL$-connections moduli $\SM_\SL(r,d)$ which retracts onto the moduli space of vector bundles and whose homotopy groups are the ones listed in Theorem \ref{thm:geo.top-moduli-Lconnections}. The codimension computation of such open set yields the result; see Theorem \ref{thm:homotopygrps}.

Finally, we also give some correspondent results in the fixed determinant setting.

\subsection{Organization}

Here is a brief description of the contents of the paper.

In Section \ref{section:Higgs} we give a quick introduction to $V$-twisted Higgs bundles.

In Section \ref{section:LambdaModules} we introduce Lie algebroids, sheaves of rings of differential operators, $\SL$-connections and $\Lambda$-modules. Following \cite{Tor11,Tor12}, we give a short overview of the equivalence of categories between integrable $\SL$-connections and $\Lambda_\SL$-modules, where $\Lambda_\SL$ is a sheaf of rings of differential operators associated to $\SL$. This is done over any base variety. Then we introduce the moduli spaces of $\Lambda_\SL$-modules / integrable $\SL$-connections over a curve $X$, we prove some non-emptiness results about them, and we introduce the $\SL$-Hodge moduli spaces. Finally, as the work is mainly focused on rank 1 Lie algebroids, an explicit classification of Lie algebroid structures on line bundles and a subsequent description of the corresponding moduli spaces of Lie algebroid connections are obtained.

Sections \ref{sec:smoothn-semiproj-L-Hodge} to \ref{section:HiggsPolynomials} are the core of the paper. In Section \ref{sec:smoothn-semiproj-L-Hodge} we prove the first main result of the paper, namely the fact that the $\SL$-Hodge moduli space is smooth and semiprojective, under the given conditions. In Section \ref{section:motives} we introduce the Grothendieck ring of varieties and motives. Then we prove a general result on motives of smooth semiprojective varieties and apply it to show that the motivic class of the moduli spaces of Lie algebroid connections equals that of the corresponding twisted Higgs bundle moduli space. We also show that similar results hold for the $E$-polynomials and for the Hodge structures. In Section \ref{section:HiggsPolynomials} we show that the motivic class of the twisted Higgs bundles moduli is independent of the twisting (in the sense that it only depends on its degree) and hence the motive of the moduli spaces of Lie algebroid connections only depend on the degree of the line bundle underlying the Lie algebroid. Similar (actually, stronger) results are also proven for $E$-polynomials and Hodge structures.

In Section \ref{section:mainResults}, we combine the results from the previous sections and a study of the geometry of the Bialynicki-Birula stratification of the $\SL$-Hodge moduli space to complete the proof of the main motivic invariance results of the paper announced in Theorem \ref{thm:motivicclasses}, including versions of the main theorems for Voevodsky motives, Chow motives and Chow rings, as well as the results on the topology of the moduli spaces of Lie algebroid connections described in Theorem \ref{thm:geo.top-moduli-Lconnections}. As an application, we present some conclusions on the topology and motivic classes of the moduli spaces of meromorphic and logarithmic connections.

In Section \ref{section:motiveComputations} we further apply the previous geometric analysis to derive explicit formulas for the motives and E-polynomials of the moduli spaces in ranks $2$ and $3$. Using computer-assisted techniques, we utilize these formulas to verify Mozgovoy's conjecture on the motive of twisted Higgs bundles in low genus, rank and degree of the twisting bundle. As an application, the formulas also yield the motives and E-polynomials of the maximal components of the moduli spaces of $U(2,2)$ and $U(3,3)$ Higgs bundles.

Finally, in Section \ref{section:fixedDeterminant} we show how to achieve similar results for the same moduli spaces of $\SL$-connections $(E,\nabla_\SL)$ but where the determinant of $E$ and the trace of $\nabla_\SL$ are fixed.

\subsection{Open questions and future work}

The techniques we employ and the results we obtain give rise to some natural questions. First of all, in this paper, we have used the rich geometry of the degenerating families given by $\SL$-Hodge moduli spaces to derive some properties about the moduli space of $\SL$-connections like connectivity and smoothness, but we believe that a deeper study on these structures can actually be used to obtain many other geometrical properties of these moduli spaces.

Moreover, Theorem \ref{thm:smothsemiproj} opens the door for a whole new ``playground'' to explore questions vastly generalising those studied in the classical Hodge moduli space $\SM^{\Hod}(r,d)$, for example like the study of opers or of conformal limits. 

At the ``further developments'' section of Tortella's thesis \cite[Section 7.3]{Tor11}, Tortella suggested that, if there existed a versal family of Lie algebroids on the space of Lie algebroid structures on $T_X$, then understanding the geometry the corresponding family of moduli spaces of Lie algebroid connections could ``encode important information on the moduli space of Higgs bundles''. The results from this paper reinforce that idea and takes some concrete steps into that program. Theorem \ref{thm:classificationLieAlgebroidsRank1-intro} shows that, in rank $1$, the space $H^0(L^{-1}\otimes T_X)$ provides a possible encoding of all possible deformations of the trivial Lie algebroid structure. In this work we have only considered deformations of the trivial Lie algebroids in ``radial directions'' in that space (corresponding to divisors in the linear system $\mathbb{P}H^0(L^{-1}\otimes T_X)$), but we could, as well, vary the Lie algebroid structures in any other non-trivial way (see Remark \ref{rmk:defo-hypercoh}). The construction done in the proof of that theorem can actually be used to build a versal family of deformations of the trivial Lie algebroid and, through Simpson's construction \cite{Si94}, a versal family of moduli spaces of Lie algebroid connections could be built over $H^0(L^{-1}\otimes T_X)$ representing all possible deformations of the twisted Higgs moduli corresponding to Lie algebroid deformations. We believe that a further analysis on these deformations can shed some new light on the exceptional geometric properties of the twisted Higgs moduli spaces, as well as on the degeneration of moduli spaces of logarithmic connections into moduli spaces of meromorphic connections.
 
On the other hand, even though we prove some results in the case of higher rank Lie algebroids, mostly concerning non-emptiness of the moduli spaces of $\SL$-connections (cf.\ Section \ref{sec:moduli-LandLambda}), our main results hold for Lie algebroids whose underlying bundle is a line bundle. However, we expect that at least some of them to hold for higher rank Lie algebroids. The main obstacle to extend the results to this setting is that a good knowledge of deformation theory of $V$-Higgs bundles, for $\rk(V)>1$, has not yet been completely  achieved (see Remark  \ref{rmk:higherrk}). 

\vspace{.5cm}

\noindent\textbf{Acknowledgments.} 
This research was funded by MICINN (grants MTM2016-79400-P, PID2019-108936GB-C21, PID2022-142024NB-I00, RED2022-134463-T  and ``Severo Ochoa Programme for Centres of Excellence in R\&D'' SEV-2015-0554 and CEX2019-000904-S) and by CMUP -- Centro de Matem\'atica da Universidade do Porto -- under the project with reference UIDB/00144/2020, and by the project ``Mirror symmetry on Higgs bundles moduli spaces'' -- EXPL/MAT-PUR/1162/2021, both financed by national funds through FCT -- Funda\c{c}\~ao para a Ci\^encia e a Tecnologia, I.P.. The first author was also supported by a postdoctoral grant from ICMAT Severo Ochoa project. He would also like to thank the hospitality of CMUP during the research visits which took place in the course of the development of this work. Finally, we thank Luis \'Alvarez-C\'onsul, Philip Boalch, Carlos Florentino, Jos\'e \'Angel Gonz\'alez-Prieto, Peter Gothen, Sergey Mozgovoy, Vicente Mu\~noz and Jaime Silva for helpful discussions, and we also thank the referee for useful comments. 

\section{Moduli spaces of twisted Higgs bundles}
\label{section:Higgs}

Throughout the paper we will only be considering algebraic objects (vector bundles, Lie algebroids, connections, etc.) on smooth projective varieties over $\CC$, this being implicitly assumed whenever the corresponding adjective is missing. We will also always take the usual identification between algebraic vector bundles and locally free sheaves.

Let $Y$ be a smooth complex projective variety and let $V$ be an algebraic vector bundle on $Y$, thus a locally free $\SO_Y$-module. 

\begin{definition}\label{def:Higgsbundle}
A \emph{$V$-twisted Higgs bundle} on $Y$ is a pair $(E,\varphi)$ consisting by an algebraic vector bundle $E$ on $Y$ and a map of $\SO_Y$-modules
$$\varphi:E\longrightarrow E\otimes V,$$
called the \emph{Higgs field}, such that $\varphi\wedge\varphi=0\in H^0(\End(E)\otimes \Lambda^2 V)$. 
\end{definition}

If $\varphi\in H^0(\End(E)\otimes V)$, by $\varphi\wedge\varphi\in H^0(\End(E)\otimes \Lambda^2 V)$ in the previous definition we mean the following. Let $p:V\otimes V\to\Lambda^2V$ be the quotient map. Then 
$$\varphi\wedge\varphi=(\id_E\otimes p)\circ(\varphi\otimes\id_V)\circ\varphi.$$ A local version is given as follows. Take an open set $U\subset Y$ where both $E$ and $V$ are trivialised, and let $w_1,\ldots, w_k$ be a local trivialising basis of $V$; then we can write
$\varphi|_U=\sum_i G_i\otimes w_i$, with $G_i$ a local section of $\End(E)$, so an $\SO_Y(U)$-valued matrix. Then
$$(\varphi\wedge\varphi)|_U=\sum_{i<j}[G_i,G_j]\otimes w_i\wedge w_j.$$

A map between $V$-twisted Higgs bundles $(E,\varphi)$ and $(E',\varphi')$ is an algebraic map $f:E\to E'$ such that $(f\otimes\id_V)\circ\varphi=\varphi'\circ f$, and if there is such an $f$ which is an isomorphism, then $(E,\varphi)$ and $(E',\varphi')$ are \emph{isomorphic}.

Observe that the integrability condition $\varphi\wedge\varphi=0$, implies that providing a Higgs field on $E$ is equivalent to endowing a $\Sym^\bullet(V^*)$-module structure on $E$ compatible with the $\SO_Y$-module structure.

Consider now the case where $Y$ is a smooth projective curve $X$ of genus at least $2$. This is going to be the case we will mostly consider, but we will also allow $Y$ to be any smooth projective variety in some particular cases. Let also $K_X$ denote the canonical line bundle of $X$.

Higgs bundles were introduced by Hitchin in \cite{HitchinHiggs} for $V=K_X$, in the context of gauge theory, namely as objects which are naturally associated to solutions to the now known as Hitchin equations.

Given a vector bundle $E$ over $X$, let
$$\mu(E):= \frac{\deg(E)}{\rk(E)}$$
be the \emph{slope} of $E$. Then $E$ is called \emph{(semi)stable} if for every subbundle $0\ne F\subsetneq E$ we have
$$\mu(F)<\mu(E) \quad (\text{resp.}~\le).$$
Similarly, a $V$-twisted Higgs bundle $(E,\varphi)$ on $X$ is \emph{(semi)stable} if for every subbundle $0\ne F\subsetneq E$ preserved by $\varphi$ (i.e.\ such that $\varphi(F)\subseteq F\otimes V$), we have 
$$\mu(F)<\mu(E) \quad (\text{resp.}~\le).$$

By \emph{degree} of a $V$-twisted Higgs bundle $(E,\varphi)$ on the curve $X$, we mean the degree of the underlying bundle $E$. In analogy with the classical $K_X$-twisted case, let $\SM^{\Dol}_V(r,d)$ denote the moduli space of ($S$-equivalence classes of) semistable $V$-twisted Higgs bundles $(E,\varphi)$ on $X$ of rank $r$ and degree $d$. When $V$ is a line bundle this moduli space was first constructed, via GIT, by Nitsure \cite{Ni}. A gauge theoretic construction for $V=K_X$ is given in \cite{HitchinHiggs}. In general, $\SM^{\Dol}_V(r,d)$ exists as a consequence of Simpson's GIT construction of the moduli space of $\Lambda$-modules \cite{Si94} (the general notion of $\Lambda$-module will be reviewed in Section \ref{section:LambdaModules}). It is a quasi-projective complex algebraic variety and, moreover, we have the following Lemma which is a direct consequence of \cite[Proposition 3.3 and Theorem 1.2]{BGL11}.

\begin{lemma}
\label{lemma:smoothModuliHiggs}
Suppose that $r\ge 1$ and $d$ are coprime and that $L$ is a line bundle with $\deg(L)>2g-2$, where $g\ge 2$ is the genus of $X$. Then the moduli space $\SM^{\Dol}_L(r,d)$ is a smooth connected, so irreducible, quasi-projective variety of dimension $1+r^2\deg(L)$.
\end{lemma}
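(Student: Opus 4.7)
The plan is to invoke the standard deformation theory of twisted Higgs bundles together with a Serre duality argument that uses the hypothesis $\deg(L)>2g-2$, and then to handle connectedness separately (by reducing to a known result).

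First, I would record the formal properties: quasi-projectivity of $\SM_L(r,d)$ is a consequence of the GIT construction (Nitsure's, or as a special case of Simpson's $\Lambda$-module construction, which is also mentioned in the paper before the lemma). The coprimality hypothesis $(r,d)=1$ ensures that semistability and stability coincide, so every point of $\SM_L(r,d)$ corresponds to an actual isomorphism class of a stable $L$-twisted Higgs bundle $(E,\varphi)$ with only scalar automorphisms.

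Next comes smoothness and the dimension formula. The infinitesimal deformations and obstructions of a stable Higgs bundle $(E,\varphi)$ are governed by the hypercohomology of the two-term complex
\[
C^{\bullet}(E,\varphi)\;:\;\End(E)\xrightarrow{\;[\,\cdot\,,\varphi]\;}\End(E)\otimes L,
\]
placed in degrees $0$ and $1$. One has $\mathbb{H}^{0}(C^{\bullet})=\Hom((E,\varphi),(E,\varphi))=\CC$ by stability, the tangent space at $(E,\varphi)$ equal to $\mathbb{H}^{1}(C^{\bullet})$, and the obstruction space contained in $\mathbb{H}^{2}(C^{\bullet})$. By Serre duality for hypercohomology of complexes on a curve,
\[
\mathbb{H}^{2}(C^{\bullet})^{*}\;\cong\;\Hom\bigl((E,\varphi),(E,\varphi)\otimes K_{X}\otimes L^{-1}\bigr).
\]
Under the assumption $\deg(L)>2g-2$ the twisting line bundle $K_{X}\otimes L^{-1}$ has negative degree, so any nonzero morphism of Higgs bundles on the right-hand side would violate stability (slopes of $(E,\varphi)$ and of $(E,\varphi)\otimes K_{X}L^{-1}$ would be strictly decreasing). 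Hence $\mathbb{H}^{2}(C^{\bullet})=0$, which gives smoothness. A Riemann--Roch computation for the two-term complex yields
\[
\chi(C^{\bullet})=\chi(\End E)-\chi(\End E\otimes L)=r^{2}(1-g)-r^{2}(\deg L+1-g)=-r^{2}\deg(L),
\]
and the identity $\dim\mathbb{H}^{1}=\dim\mathbb{H}^{0}+\dim\mathbb{H}^{2}-\chi(C^{\bullet})=1+r^{2}\deg(L)$ gives the stated dimension.

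The remaining, and main, obstacle is connectedness (equivalently, irreducibility, since the space is already smooth). This does not follow from deformation theory; I would invoke \cite[Proposition~3.3 and Theorem~1.2]{BGL11}, which establish connectedness of moduli spaces of $L$-twisted Higgs bundles under exactly the hypotheses $\deg(L)>2g-2$ and $(r,d)=1$. The idea behind such results is typically either to use the properness of the Hitchin map $\SM_{L}(r,d)\to\bigoplus_{i=1}^{r}H^{0}(L^{i})$ together with connectedness of its generic fibre (the Jacobian of the spectral curve), or to exploit the $\CC^{*}$-action and reduce to connectedness of the fixed-point locus, but here we are content to quote the statement. Combining smoothness with connectedness gives irreducibility, completing the proof.
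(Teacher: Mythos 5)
Your proposal is correct and takes essentially the same route as the paper, which proves this lemma purely by citing \cite[Proposition 3.3 and Theorem 1.2]{BGL11}: your deformation-theoretic portion (identifying the tangent space with $\HH^1$ of the two-term complex, killing $\HH^2$ via Serre duality plus stability since $\deg(K_XL^{-1})<0$, and extracting the dimension $1+r^2\deg(L)$ from Riemann--Roch) is exactly the argument behind Proposition 3.3 of that reference, and you defer connectedness to its Theorem 1.2 just as the paper does.
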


\begin{remark}
There are choices of the twist $V$ which do not satisfy the assumptions of the Lemma and for which $\SM^{\Dol}_V(r,d)$ is nonetheless a smooth variety. For instance, the classical case $V=K_X$ from \cite{HitchinHiggs} and which was also considered in \cite[Proposition 3.3]{BGL11}. 

However, a condition similar to the one presented in the previous lemma is to be expected in general since there exist many examples of low degree twists for which the moduli space is not smooth and even not irreducible.

For example, if $L=\SO_X(x)$ for a curve $X$ with genus at least 4 and $x\in X$, then $\SM^{\Dol}_{\SO_X(x)}(r,d)$ is, in general, a singular scheme. For generic stable vector bundles $E$, we have $H^0(\End(E)(x))\cong \CC$ by \cite[Lemma 2.2]{BGM13} so $\SM^{\Dol}_{\SO_X(x)}(r,d)$ contains an open subvariety $\SU\subset \SM^{\Dol}_{\SO_X(x)}(r,d)$ which is a line bundle over the open locus of such bundles in the moduli space $\mathrm{\mathbf{M}}(r,d)$ of stable vector bundles. Nevertheless, suppose that we pick the curve $X$ so that there exists a stable vector bundle $F$ on $X$ with a nonzero section $\varphi \in H^0(\End_0(F)(x))$, $\varphi\ne 0$. Then $(F,\varphi)\in \SM^{\Dol}_{\SO_X(x)}(r,d)$ will not belong to the closure of $\SU$, so the moduli space will not be irreducible, and the point $(F,0)\in \SM^{\Dol}_{\SO_X(x)}(r,d)$ will belong to two different irreducible components, making the moduli space a singular scheme.
\end{remark}

Consider a $V$-twisted Higgs bundle $(E,\varphi)$ on $X$. Since $\varphi\wedge\varphi=0$, the Higgs field $\varphi:E\to E\otimes V$ induces maps
$$\wedge^i \varphi: \Lambda^i E  \longrightarrow \Lambda^i E \otimes S^iV,$$
for $i=1,\ldots,r$. If we define
$$s_i=\tr(\wedge^i\varphi)\in H^0(S^iV),$$ then this yields the \emph{Hitchin map}
\begin{equation}\label{eq:Hitchinmap}
H:\SM^{\Dol}_V(r,d)\longrightarrow \bigoplus_{i=1}^r H^0(S^iV), \ \ \ H(E,\varphi)=(s_1,\ldots,s_r).
\end{equation}
We call $W:=\bigoplus_{i=1}^r H^0(S^iV)$ the \emph{Hitchin base}.

\begin{lemma}
\label{lemma:HitchinProper}
The Hitchin map $H:\SM^{\Dol}_V(r,d)\to W$ is proper.
\end{lemma}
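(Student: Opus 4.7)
The plan is to prove properness via the valuative criterion. Since $\SM_V(r,d)$ is quasi-projective and $W$ is affine, it suffices to verify the lifting condition for any discrete valuation ring $R$ with fraction field $K$: given a morphism $\Spec K \to \SM_V(r,d)$ whose composition with $H$ extends to $\Spec R \to W$, one must produce a (unique) extension $\Spec R \to \SM_V(r,d)$. Concretely, I am given a semistable $V$-twisted Higgs bundle $(E_K, \varphi_K)$ on $X_K := X \times \Spec K$ such that the characteristic coefficients $s_i(\varphi_K) \in H^0(X_K, S^i V_K)$ extend to sections over $X_R := X \times \Spec R$, and I must extend $(E_K,\varphi_K)$ to a semistable family over $X_R$.

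First I would extend $E_K$ to a coherent, flat, torsion-free $\SO_{X_R}$-module $\tilde E$; since $X_R$ is regular of dimension $2$ and $X_K$ is open in $X_R$, a locally free extension can be chosen. The Higgs field $\varphi_K$ then extends to a rational morphism $\tilde\varphi : \tilde E \dashrightarrow \tilde E \otimes V$ which a priori has poles along the special fibre $X_0 \subset X_R$. The crucial observation is that, by Cayley--Hamilton, $\tilde\varphi$ satisfies the polynomial equation $\tilde\varphi^{\,r} + s_1\tilde\varphi^{\,r-1} + \cdots + s_r = 0$ whose coefficients $s_i$ are regular on the whole $X_R$. Hence the ``eigenvalues'' of $\tilde\varphi$ stay bounded, and any order of pole of $\tilde\varphi$ along $X_0$ must come from a genuine Hecke-type defect in the choice of extension $\tilde E$.

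Next I would iteratively eliminate these poles by elementary (Hecke) modifications of $\tilde E$ along $X_0$: if $\tilde\varphi$ has a pole of order $\ge 1$ along $X_0$, replace $\tilde E$ by the subsheaf $\tilde E'$ equal to the kernel of the composition $\tilde E \twoheadrightarrow \tilde E|_{X_0} \twoheadrightarrow Q$, where $Q$ is the image of the leading pole term. Standard bookkeeping (as in Nitsure \cite{Ni} for $V$ a line bundle, and Simpson \cite{Si94} in general via the $\Lambda$-module formalism reviewed in the next section) shows that this strictly reduces an integer-valued discrete invariant measuring the total pole order, so after finitely many steps one obtains an honest $V$-twisted Higgs sheaf $(\tilde E, \tilde\varphi)$ on $X_R$ extending $(E_K,\varphi_K)$. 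The finiteness is exactly where boundedness of the characteristic coefficients is used.

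At this point I have a flat family of $V$-twisted Higgs sheaves over $\Spec R$ whose generic fibre is semistable, but the special fibre may not be. I would then apply Langton's semistable reduction theorem, adapted to the twisted Higgs setting (this adaptation is routine once one notes that Higgs-subsheaves are preserved by the modification procedure), to replace $(\tilde E,\tilde\varphi)$ by a family with semistable special fibre while keeping the generic fibre fixed. This produces the required $\Spec R$-point of $\SM_V(r,d)$; uniqueness follows from the separatedness of $\SM_V(r,d)$. The main obstacle is the pole-removal step: one has to check carefully that each Hecke modification actually decreases the pole invariant and that the process converges, which is precisely the content controlled by the boundedness of the characteristic polynomial.
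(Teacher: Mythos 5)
Your proposal is correct and takes essentially the same approach as the paper: the paper's proof simply cites Nitsure \cite{Ni} for line-bundle twists and Faltings' Langton-style argument \cite{Fal93}, noting it works word by word for an arbitrary twisting $V$, and your valuative-criterion argument --- Cayley--Hamilton boundedness of the characteristic coefficients, iterated Hecke modifications along the special fibre to remove poles, then Langton semistable reduction --- is precisely that argument written out. The only caveats are the standard ones your sketch already defers to the references: since $\SM_V(r,d)$ is a coarse moduli space one may need a finite extension of the DVR to realize the $K$-point by an actual family, and the pole-removal bookkeeping (nilpotency of the leading term and strict decrease of the discrete invariant) must be carried out via the kernel filtration of the nilpotent residue as in loc.\ cit.
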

\begin{proof}
When $V$ is a line bundle, this was proven by Nitsure \cite[Theorem 6.1]{Ni}. In the case $V=K_X$, besides being proven by Hitchin \cite{HitchinHiggs}, it was also proven by Faltings \cite[Theorem I.3]{Fal93} following an argument by Langton, and the same argument works word by word for an arbitrary twisting $V$.
\end{proof}

\section{Lie algebroid connections, $\Lambda$-modules and moduli spaces}
\label{section:LambdaModules}

\subsection{Lie algebroids and $\SL$-connections}

Let $Y$ be a smooth complex projective variety with tangent bundle $T_Y$. We will be mostly interested in the case $Y$ is the smooth projective curve $X$, but we will actually need to consider a higher dimensional variety in  sections \ref{sec:L-Hodge moduli} and \ref{section:semiprojectivityHodge}.

\begin{definition}\label{def:Liealgebroid}
An algebraic \emph{Lie algebroid} on $Y$ is a triple $\SL=(V,[\cdot\,,\cdot],\delta)$ consisting on
\begin{itemize}
\item an algebraic vector bundle $V$ on $Y$,
\item a $\CC$-bilinear and skew-symmetric map $[\cdot\,,\cdot]:V\otimes_\CC V \to V$, called the \emph{Lie bracket},
\item a vector bundle map $\delta:V\to T_Y$, called the \emph{anchor},
\end{itemize}
satisfying the following properties:
\begin{enumerate}
\item $[u,[v,w]]+[v,[w,u]]+[w,[u,v]]=0$\ \emph{(Jacobi identity)},
\item $[u,fv]=f[u,v]+\delta(u)(f) v$\ \emph{(Leibniz rule)},
\end{enumerate}
for any local sections $u,v,w$ of $V$ and any local function $f$ in $\SO_Y$.

The \emph{rank} of a Lie algebroid $\SL$, denoted by $\rk(\SL)$, is the rank of the underlying vector bundle $V$.
\end{definition}

\begin{example}\mbox{}
\begin{enumerate}
\item The canonical example of a Lie algebroid is the tangent bundle $T_Y$, together with the Lie bracket of vector fields and the identity anchor. Denote it as $\ST_Y=(T_Y,[\cdot\,,\cdot]_{\op{Lie}},\id)$.
\item More generally, an algebraic foliation $F_Y\subset T_Y$ gives also rise to a Lie algebroid $\SF_Y$ simply by restricting the Lie bracket $[\cdot\,,\cdot]_{\op{Lie}}$ and by taking the inclusion $F_Y\hookrightarrow T_Y$ as the anchor map.
\item Any algebraic vector bundle can be seen as a Lie algebroid with the zero bracket and the zero map as the anchor; this is called a \emph{trivial algebroid}.
\end{enumerate}

\end{example}

The Lie bracket of a Lie algebroid $\SL=(V,[\cdot\,,\cdot],\delta)$  endows $V$ with a structure of a sheaf of $\CC$-Lie algebras, which is not a sheaf of $\SO_Y$-Lie algebras unless the anchor $\delta$ is zero. 

A \emph{Lie algebroid map} $f:\SL\to\SL'$ between $\SL=(V,[\cdot\,,\cdot]_V,\delta_V)$ and $\SL'=(V',[\cdot\,,\cdot]_{V'},\delta_{V'})$ is an algebraic $\CC$-Lie algebra bundle map $f:V\to V'$ such $\delta_{V'}\circ f=\delta_V$.
For example, the anchor $\delta: V \to T_Y$ is a Lie algebroid map $\delta:\SL\to\ST_Y$. A \emph{Lie algebroid isomorphism} is a Lie algebroid map which is an isomorphism of the underlying bundles; in that case, the Lie algebroids are said to be \emph{isomorphic}.

 Let $\SL=(V,[\cdot\,,\cdot],\delta)$ be a Lie algebroid. We now define a differential on the complex of exterior powers $\Omega_\SL^\bullet=\Lambda^\bullet V^*$,
$$d_\SL: \Omega_\SL^k \longrightarrow \Omega_\SL^{k+1},$$
generalising the classical de Rham complex $d:\Omega_Y^k\to \Omega_Y^{k+1}$ on $\Omega_Y^\bullet =\Lambda^\bullet T^*_Y$. In degree $0$, define $d_\SL : \SO_Y \to V^*$ as the composition of the canonical differential $d:\SO_Y \to T_Y^*$ with the dual of the anchor, $\delta^t: T^*_Y \to V^*$. Thus, given $v\in V$ and $f$ a local algebraic function on $Y$,
\begin{equation}\label{eq:d_L(f)}
d_\SL(f)(v)=df(\delta(v))=\delta(v)(f).
\end{equation}
The map $d_\SL:\SO_Y\to V^*$ is clearly a $V^*$-valued derivation. Now we extend it to higher order exterior powers through the usual recursive equation, but using the anchor map $\delta$.
For $\omega\in \Omega_\SL^n$ and $v_1, \ldots,v_{n+1}$ local sections of $V$, take
\begin{equation}\label{eq:d_L(omega)}
\begin{split}
d_\SL(\omega)(v_1,\ldots,v_{n+1}) =& \sum_{i=1}^{n+1} (-1)^{i+1} \delta(v_i)(\omega(v_1,\ldots,\hat{v_i},\ldots,v_{n+1}))\\
&+\sum_{1\le i<j\le {n+1}} (-1)^{i+j} \omega([v_i,v_j],v_1,\ldots,\hat{v_i},\ldots,\hat{v_j},\ldots,v_{n+1}).
\end{split}
\end{equation}
This differential satisfies $d_\SL^2=0$, so $(\Omega_\SL^\bullet,d_\SL)$ is a complex, called the \emph{Chevalley--Eilenberg--de Rham complex} of $\SL$. Note that $d_\SL=0$ for a trivial algebroid. 

From $d_\SL$ we now define a generalization of an algebraic connection.

\begin{definition}\label{def:L-connect}
Let $\SL=(V,[\cdot\,,\cdot],\delta)$ be a Lie algebroid on $Y$. An \emph{$\SL$-connection} on $Y$ is a pair $(E,\nabla_\SL)$, where $E$ is an algebraic vector bundle and where $\nabla_\SL$ is a $\CC$-linear algebraic vector bundle map
$$\nabla_\SL:E \longrightarrow E \otimes \Omega_\SL^1=E\otimes V^*,$$
such that
$$\nabla_\SL(fs)=f\nabla_\SL(s) + s\otimes d_\SL(f),$$
for $s$ a local section of $E$ and $f$ a local algebraic function on $Y$. The \emph{rank} of an $\SL$-connection is the rank of the underlying algebraic vector bundle.
\end{definition}

We will also refer to $\nabla_\SL:E \to E \otimes \Omega_\SL^1$ as an \emph{$\SL$-connection on the algebraic vector bundle $E$}.

Note that, for a trivial algebroid $\SL$, the map $\nabla_\SL:E\to E\otimes\Omega_\SL^1$ is actually  $\SO_Y$-linear rather than just $\CC$-linear.

Any $\SL$-connection $(E,\nabla_\SL)$ can be extended to a map
$$\nabla_\SL: E\otimes \Omega_\SL^\bullet \longrightarrow E\otimes \Omega_\SL^{\bullet+1},$$
as
$$\nabla_\SL(s\otimes \omega)=\nabla_\SL(s) \wedge \omega + s\otimes d_\SL(\omega).$$

\begin{definition}
Let  $(E,\nabla_\SL)$ be an $\SL$-connection on $Y$.
The composition $\nabla_\SL^2:E\to E\otimes \Omega_\SL^2$ is called the \emph{curvature} of $(E,\nabla_\SL)$. An $\SL$-connection is \emph{flat} if its curvature vanishes.
\end{definition}

\begin{example}\label{eg-Liealgebroids}\mbox{}
\begin{enumerate}
\item Recall the canonical Lie algebroid $\ST_Y=(T_Y,[\cdot\,,\cdot]_{\op{Lie}},\id)$ given by the tangent bundle of $Y$. Then, a flat $\ST_Y$-connection is just a flat algebraic connection in the usual sense. 
\item If $\SL$ is a  trivial algebroid, i.e.~$\SL=(V,0,0)$, then a flat $\SL$-connection is simply a pair $(E,\nabla_\SL)$ formed by a vector bundle $E$ and an $\SO_Y$-linear map $\nabla_\SL:E\to E\otimes V^*$ such that $\nabla_\SL\wedge \nabla_\SL=0$. This is precisely a $V^*$-twisted Higgs bundle from Definition \ref{def:Higgsbundle}.
\end{enumerate}
\end{example}

Alternatively, we can think of $\SL$-connections, for any Lie algebroid $\SL=(V,[\cdot\,,\cdot],\delta)$, from a slightly different point of view. An $\SL$-connection $\nabla_\SL:E\to E\otimes \Omega_\SL^1$ induces an $\SO_Y$-linear map
\begin{equation}\label{eq:barnabla}
\bar{\nabla}_\SL:V\longrightarrow \End_\CC(E),\ \ \ \ v \mapsto \nabla_{\SL,v},
\end{equation}
which, by \eqref{eq:d_L(f)}, satisfies
\begin{equation}\label{eq:Leibniz-nabla_v}
\nabla_{\SL,v}(fs)=f\nabla_{\SL,v}(s)+\delta(v)(f) s.
\end{equation}
In particular, identifying the local function $f\in \SO_Y$ with the endomorphism $E\to E$, $s\mapsto fs$, we have that $\nabla_{\SL,v}\circ f - f \circ \nabla_{\SL,v}$ is $\SO_Y$-linear,
$$\nabla_{\SL,v}\circ f - f \circ \nabla_{\SL,v} \in \End_{\SO_Y}(E).$$
Hence, $\nabla_{\SL,v}$ is a section of the locally free sheaf $\Diff^1(E)$ of differentials of order at most $1$.
Note that, since $\End_\CC(E)$ is a sheaf of associative algebras, the vector bundle $\Diff^1(E)$ inherits canonically a Lie algebroid structure $\SD(E)=(\Diff^1(E),[\cdot\,,\cdot]_{\SD(E)},\delta_{\SD(E)})$ through the commutator, by taking
$$[A,B]_{\SD(E)}=AB-BA\in \Diff^1(E),$$
for each $A,B\in \Diff^1(E)$, and, by making use of the $\SO_Y$-module structure of $\Diff^1(E)$,
$$\delta_{\SD(E)}(A)(f)=Af-fA\in\SO_Y\subseteq \End_{\SO_Y}(E),$$
for every  $f\in \SO_Y$.
\begin{definition}
The $\SL$-connection $\nabla_\SL$ is said to be \emph{integrable} if
$$[\nabla_{\SL,u},\nabla_{\SL,v}]_{\SD(E)}=\nabla_{\SL,[u,v]},$$
for all $u,v\in V$.
\end{definition}

\begin{remark}
It follows from Section 2.3 of \cite{Tor12} that an $\SL$-connection is flat if and only if it is integrable. Hence, in the remaining part of the paper we will freely interchange both terms.
\end{remark}

Now, it follows by \eqref{eq:Leibniz-nabla_v} that, for every $v$,
$$\delta_{\SD(E)}(\nabla_{\SL,v})=\delta(v).$$
We conclude that the map \eqref{eq:barnabla} can be canonically upgraded to a Lie algebroid map $\bar\nabla_\SL:\SL\to\SD(E)$, sometimes called a \emph{representation} of $\SL$ in $\SD(E)$, if and only if $\nabla_\SL$ is integrable.
Thus, the correspondence $\nabla_\SL\mapsto\bar\nabla_\SL$ yields a bijective correspondence
\begin{equation}\label{eq:intL-conn-repsinDE}
\big\{\text{integrable }\SL\text{-connections on }E\big\}\longleftrightarrow\big\{\text{representations of $\SL$ in $\SD(E)$}\big\}.
  \end{equation}

\subsection{$\Lambda$-modules and $\SL$-connections}
In \cite[\S 3]{Tor11} and \cite[\S 4]{Tor12}, Tortella proved that there is a very close relation between integrable $\SL$-connections and Simpson's notion of $\Lambda$-modules \cite{Si94}, to be introduced below. This will be important for us to deduce properties of the corresponding moduli spaces.

Let $S$ be a smooth variety and let $p:\SX\to S$ be a smooth variety over $S$. We start with the notion of sheaf of rings of differential operators as defined in \cite{Si94}.

\begin{definition}
\label{def:SRDO}
A \emph{sheaf of rings of differential operators} on $\SX$ over $S$ is a sheaf of $\SO_\SX$-algebras $\Lambda$ over $\SX$, with a filtration by subalgebras $\Lambda_0 \subseteq \Lambda_1 \subseteq \cdots\subseteq\Lambda$, verifying the following properties:
\begin{enumerate}
\item $\Lambda=\bigcup_{i=0}^{\infty} \Lambda_i$ and $\Lambda_i \cdot \Lambda_j \subseteq \Lambda_{i+j}$, for every $i,j$;
\item\label{n2} $\Lambda_0=\SO_\SX$;
\item the image of $p^{-1}(\SO_S)$ in $\SO_\SX$ lies in the center of $\Lambda$;
\item the left and right $\SO_\SX$-module structures on $\Gr_i(\Lambda):=\Lambda_i/\Lambda_{i-1}$ are equal;
\item the sheaves of $\SO_\SX$-modules $\Gr_i(\Lambda)$ are coherent;
\item the morphism of sheaves
$$\Gr_1(\Lambda) \otimes \cdots \otimes \Gr_1(\Lambda) \to \Gr_i(\Lambda)$$
induced by the product is surjective.
\end{enumerate}

Moreover, $\Lambda$ is said to be \emph{polynomial} if $\Lambda\cong \Sym^\bullet(\Gr_1(\Lambda))$ and \emph{almost polynomial} if $\Gr_\bullet(\Lambda) \cong \Sym^\bullet(\Gr_1(\Lambda))$.
\end{definition}

\begin{remark}
Simpson's definition is a slightly more general in point \eqref{n2}, allowing $\Lambda_0$ to be a quotient of $\SO_\SX$ and, therefore, allowing $\Lambda$ to be supported on a subscheme of $\SX$. However, we will only be interested on sheaves of rings supported on the whole scheme $\SX$.
\end{remark}

\begin{definition}\label{def:Lambda-module}
Let $\Lambda$ be a sheaf of rings of differential operators over $\SX$, flat over $S$. A \emph{$\Lambda$-module} is a pair $(E,\nabla_\Lambda)$ consisting of an algebraic vector bundle $E$ on $\SX$, flat on $S$, with an action $\nabla_\Lambda:\Lambda\otimes_{\SO_\SX} E\to E$ satisfying the usual module conditions. Moreover, the $\SO_\SX$-module structure on $E$ induced by the inclusion $\SO_\SX\subseteq\Lambda$ coincides with its inherent $\SO_\SX$-module structure as a locally free sheaf. The \emph{rank} of a $\Lambda$-module is the rank of the underlying bundle.
\end{definition}

This notion can be generalized by allowing $E$ to be a coherent $\SO_\SX$-module, but in this paper we stick to locally free case. 

The notions of maps and isomorphisms between $\Lambda$-modules over $\SX$ are the obvious ones, as in the Lie algebroids and Higgs bundles cases.

Let us now briefly recall the relation between integrable  $\SL$-connections and $\Lambda$-modules. For details, see \cite[\S 3]{Tor11} and \cite[\S 4]{Tor12}. 

Fix a Lie algebroid $\SL=(V,[\cdot\,,\cdot],\delta)$ on $Y$. Consider the associated Lie algebroid $$\widehat\SL=(\SO_Y\oplus V,[\cdot\,,\cdot]_1,\delta_1)$$ given by
$$[(f,u),(g,v)]_1 = (\delta(u)(g)-\delta(v)(f), [u,v]),\ \ \ \ \delta_1(f,u)=\delta(u),$$ 
with $u,v \in V$ and $\ f,g \in \SO_Y$.
Note that this means that, if $\SO_Y$ also denotes the trivial algebroid $(\SO_Y,0,0)$, then we have a \emph{split} short exact sequence 
\begin{equation}\label{eq:sesLiealg}	
0\longrightarrow \SO_Y\longrightarrow\widehat\SL\longrightarrow\SL\longrightarrow 0
\end{equation}
of Lie algebroids, where by this we mean $\widehat\SL\cong\SO_Y\oplus\SL$ as Lie algebroids or, equivalently that there is a splitting $\zeta:\SL\to\widehat\SL$ of the sequence \eqref{eq:sesLiealg}, which in this case is simply given by $\zeta(v)=(0,v)$, $v\in V$.

From the Lie algebroid $\widehat\SL$, hence from $\SL$, there is an associated almost polynomial sheaf of rings of differential operators $\Lambda_\SL$ whose weight $1$ piece $\Lambda_{\SL,1}\subset \Lambda_\SL$ in the corresponding filtration is isomorphic to $\SO_Y\oplus V$. 
This is roughly constructed as follows. The universal enveloping algebra of the Lie algebra $(\SO_Y\oplus V,[\cdot\,,\,\cdot]_1)$ is the sheaf of $\SO_Y$-algebras, defined by
$$U=T^\bullet( \SO_Y \oplus V) / \left\langle x\otimes y -y\otimes x - [x,y]_1  \, | \,  x,y\in \SO_Y\oplus V \right\rangle,$$
with $T^\bullet( \SO_Y \oplus V)$ being the tensor algebra on $\SO_Y\oplus V$.
If $i:\SO_Y\oplus V \hookrightarrow U$ is the canonical inclusion, then let $U^\dagger \subset U$ be the subalgebra generated by $i(\SO_Y\oplus V)$ and take
$$\Lambda_\SL = U^\dagger /\left ( i(f,0)\cdot i(g,v) - i(fg,fv)   \, \middle | \,   f,g\in \SO_Y, v\in V \right ).$$
This is the \emph{universal enveloping algebra of the Lie algebroid $\SL$}.
Then the graded structure from the tensor algebra $T^\bullet (\SO_Y \oplus V)$ induces a filtered algebra structure on $\Lambda_\SL$ which satisfies the axioms of an almost polynomial sheaf of rings of differential operators.
We thus get the correspondence 
\begin{equation}\label{eq:Liealg->srdo}
\SL\mapsto\Lambda_\SL.
\end{equation}

Conversely, take an almost polynomial sheaf of rings of differential operators $\Lambda$. Then the $\SO_Y$-module $\Lambda_1\subset \Lambda$ inherits a Lie algebroid structure in the following way \cite[Proposition 28]{Tor11} \cite[\S 4.1]{Tor12}.
Given local sections $u,v$ of $\Lambda_1$ and $f$ of $\SO_Y$, define
\begin{equation}\label{eq:LiealgLambda1}
[u,v]_{\Lambda_1}=uv-vu\ \ \ \ \ \text{ and }\ \ \ \ \ \delta_{\Lambda_1}(u)(f)=uf-fu,
\end{equation}
using the product in $\Lambda$. Let us prove that $[u,v]_{\Lambda_1}\in \Lambda_1$ and that $\delta_{\Lambda_1}(u)(f)\in \SO_Y$. As $\Lambda$ is almost polynomial, then $\Gr_\bullet(\Lambda)$ is abelian. Let $\op{sb}:\Lambda \to \Gr_\bullet(\Lambda)$ be the symbol map sending an element to the highest graded class where it is nonzero. One can check that $\op{sb}$ is multiplicative. We have
$$\op{sb}(uv)-\op{sb}(vu)=\op{sb}(u)\op{sb}(v)-\op{sb}(v)\op{sb}(u)=0\in \Gr_\bullet(\Lambda)$$
As $uv,vu\in \Lambda_2$, we conclude that $uv\equiv vu \mod \Lambda_1$, that is, $uv-vu\in \Lambda_1$, proving the first claim. For the second claim, we use the fact that the left and right $\SO_Y$-module structures on $\Gr_1(\Lambda)=\Lambda_1/\Lambda_0$ agree. Hence $uf-fu\in \Lambda_0=\SO_Y$ and the second claim follows.
Now, using the associativity of $\Lambda$ and the fact that elements of $\SO_Y$ commute with all elements of $\Lambda$, one sees that $[\cdot\,,\cdot]_{\Lambda_1}$ is $\SO_Y$-bilinear, skew-symmetric and verifies the Jacobi identity and that $\delta_{\Lambda_1}$ is so that Leibniz rule also holds. So \eqref{eq:LiealgLambda1} gives the mentioned Lie algebroid structure on $\Lambda_1$. 

Now, it is clear that the induced Lie algebroid structure on $\SO_Y=\Lambda_0\subset\Lambda_1$ is trivial, thus \eqref{eq:LiealgLambda1} descends to give a Lie algebroid structure on $\Gr_1(\Lambda)$, which we denote by $\SL_\Lambda$:
$$\SL_\Lambda=(\Gr_1(\Lambda),[\cdot\,,\cdot]_{\Gr_1(\Lambda_1)},\delta_{\Gr_1(\Lambda_1)}).$$
Moreover, we have the following short exact sequence of Lie algebroids
\begin{equation}\label{eq:sesLiealg2}
0\longrightarrow (\SO_X,0,0) \longrightarrow (\Lambda_1,[\cdot\,,\cdot]_{\Lambda_1},\delta_{\Lambda_1}) \longrightarrow \SL_\Lambda \longrightarrow 0.
\end{equation}

Thus, we have the correspondence 
\begin{equation}\label{eq:srdo->Liealg}
\Lambda\mapsto\SL_\Lambda.
\end{equation}

\begin{definition}\label{def:split}
An almost polynomial sheaf of rings of differential operators $\Lambda$ is \emph{split} if \eqref{eq:sesLiealg2} splits as a short exact sequence of Lie algebroids.
\end{definition}

By \cite[Theorem 4.4]{Tor12},  $\Lambda$ is split almost polynomial if and only if it is isomorphic to the universal enveloping algebra of $\Gr_1(\Lambda)$. Hence, the upshot of this discussion is the following theorem.
\begin{theorem}\label{thm:equiv:Liealge-sapsrdo}
The correspondences \eqref{eq:Liealg->srdo} and \eqref{eq:srdo->Liealg} induce inverse equivalences of categories:
\begin{equation}\label{eq:Liealg<->srdo}
\begin{split}
\left\{\begin{tabular}{c}
      \text{isomorphism classes of} \\
      \text{Lie algebroids on $Y$}
    \end{tabular}\right\}&\longleftrightarrow\left\{\begin{tabular}{c}
      \text{isomorphism classes of split} \\
      \text{almost polynomial sheaves of rings}  \\
      \text{of differential operators on $Y$}
      \end{tabular}\right\}.\\
\SL&\longmapsto\Lambda_\SL
\end{split}
\end{equation} 
\end{theorem}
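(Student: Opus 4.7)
The plan is to check that the two assignments $\SL\mapsto \Lambda_\SL$ and $\Lambda\mapsto \SL_\Lambda = \Gr_1(\Lambda)$ are functorial, and then that each composite is naturally isomorphic to the identity. The deeper of the two composite identifications is essentially the content of \cite[Theorem 4.4]{Tor12}, which is stated in the excerpt just before the theorem; so the main work is in verifying the functoriality and the easier direction, and then in extracting the stated equivalence from that cited reconstruction result.

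First I would address functoriality. Given a morphism of Lie algebroids $f:\SL\to\SL'$, the compatibility $\delta'\circ f=\delta$ together with $f[\cdot,\cdot]=[\cdot,\cdot]'(f\otimes f)$ makes $\id_{\SO_Y}\oplus f:\widehat\SL\to\widehat{\SL'}$ a morphism of Lie algebroids over $\SO_Y$; this induces a morphism on tensor algebras preserving the defining relations, hence a filtered $\SO_Y$-algebra map $\Lambda_\SL\to\Lambda_{\SL'}$. Conversely, a morphism $\Lambda\to\Lambda'$ of filtered sheaves of rings of differential operators restricts to $\Lambda_1\to\Lambda_1'$ fixing $\SO_Y=\Lambda_0$ pointwise, hence descends to $\Gr_1(\Lambda)\to\Gr_1(\Lambda')$. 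The fact that this descended map respects the bracket and anchor of \eqref{eq:LiealgLambda1} is automatic from multiplicativity.

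Next I would verify the composite $\SL\mapsto \Lambda_\SL\mapsto \SL_{\Lambda_\SL}$ is naturally isomorphic to the identity. By construction of the universal enveloping algebra, the canonical map $\SO_Y\oplus V\to \Lambda_{\SL,1}$ is an isomorphism of $\SO_Y$-modules; quotienting by $\Lambda_{\SL,0}=\SO_Y$ yields a natural $\SO_Y$-linear isomorphism $V\cong \Gr_1(\Lambda_\SL)$. To see it preserves the Lie algebroid structure, one just unpacks the defining relations: for local sections $u,v$ of $V$ and $f$ of $\SO_Y$ inside $\Lambda_\SL$ one has $uv-vu=[u,v]$ and $uf-fu=\delta(u)(f)$, by the very relations imposed in the universal enveloping algebra. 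These are precisely the bracket and anchor defined in \eqref{eq:LiealgLambda1}. Naturality in $\SL$ is immediate from the construction of morphisms above.

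For the other composite $\Lambda\mapsto\SL_\Lambda\mapsto\Lambda_{\SL_\Lambda}$, I would appeal directly to \cite[Theorem 4.4]{Tor12} as stated in the excerpt: a sheaf of rings of differential operators is split almost polynomial if and only if it is isomorphic, as a filtered $\SO_Y$-algebra, to the universal enveloping algebra of $\Gr_1(\Lambda)$ equipped with the Lie algebroid structure \eqref{eq:LiealgLambda1}. This gives a natural isomorphism $\Lambda_{\SL_\Lambda}\cong \Lambda$, with naturality following from the universal property characterizing $\Lambda_{\SL_\Lambda}$ as the initial filtered $\SO_Y$-algebra receiving the bracket/anchor structure on $\Gr_1(\Lambda)$. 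The main obstacle would have been proving this PBW-style reconstruction in full: the surjectivity axiom (6) in Definition \ref{def:SRDO} together with the almost polynomial hypothesis forces $\Lambda\to\Lambda_{\SL_\Lambda}$ to be an isomorphism on associated gradeds, hence an isomorphism on each filtration step; the split hypothesis ensures the $\SO_Y$-module splitting needed to lift $\Gr_1$ back into $\Lambda_1$ in a way compatible with the universal property. Since this is exactly the statement already established by Tortella, we invoke it rather than reprove it, and combining all three steps gives the claimed equivalence of categories.
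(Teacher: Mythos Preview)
Your proposal is correct and follows the same approach as the paper: the paper does not give a standalone proof but simply records the theorem as the upshot of the preceding discussion together with the citation of \cite[Theorem 4.4]{Tor12}, which is exactly the reconstruction result you invoke for the harder composite $\Lambda\mapsto\SL_\Lambda\mapsto\Lambda_{\SL_\Lambda}$. Your treatment is in fact more detailed than the paper's, since you explicitly verify functoriality and the natural isomorphism $V\cong\Gr_1(\Lambda_\SL)$ of Lie algebroids, whereas the paper leaves these implicit in the construction.
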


Moreover, \eqref{eq:Liealg<->srdo} induces a correspondence between flat $\SL$-connections and $\Lambda_\SL$-modules. This goes roughly as follows (see again the above mentioned references for details).

Let $\SL=(V,[\cdot\,,\cdot],\delta)$ be a Lie algebroid with corresponding sheaf $\Lambda_\SL$. So we have a short exact sequence $$0\to\SO_Y\to\Lambda_{\SL,1}\to\SL\to 0$$ of Lie algebroids, with 
$$\Lambda_{\SL,1}=\SO_Y\oplus\SL\subset\Lambda_\SL.$$

Given a $\Lambda_\SL$-module $\nabla_{\Lambda_\SL}:\Lambda_\SL\otimes E\to E$, define the $\SL$-connection $(E,\nabla_\SL)$ by taking 
\begin{equation}\label{eq:L-conn}
\nabla_\SL:E\to E\otimes V^*,\ \ \ \ \ \nabla_\SL(s)(v)=\nabla_{\Lambda_\SL}((0,v)\otimes s),
\end{equation}
for $s$ and $v$ local sections of $E$ and $V$ respectively. This is an integrable, hence flat, $\SL$-connection.

Conversely, from a flat $\SL$-connection $(E,\nabla_\SL)$, define 
$$\nabla_{\Lambda_{\SL,1}}:\Lambda_{\SL,1}\otimes E\to E,\ \ \ \ \nabla_{\Lambda_{\SL,1}}((f,v)\otimes s)=fs+\nabla_\SL(s)(v),$$ where $f\in\SO_Y$, $v\in V$ and $s\in E$. By successive compositions, this defines a $(\Lambda_{\SL,1})^{\otimes \bullet}$-module structure on $E$, which descends to a $\Lambda_\SL$-module structure 
\begin{equation}\label{eq:Lambda_L-modu}
\nabla_{\Lambda_\SL}:\Lambda_\SL\otimes E\to E
\end{equation}
because $\nabla_\SL$ is integrable and satisfies Leibniz.

Now, from \cite[Proposition 5.3]{Tor12}, we have the following.

\begin{theorem}\label{thm:equivcatL-conn-<>lambda-mod}
The correspondences \eqref{eq:L-conn} and \eqref{eq:Lambda_L-modu} induce inverse equivalences of categories:
\begin{equation}\label{eq:L-connect<->lambdamod}
\left\{\begin{tabular}{c}
      \text{isomorphism classes of} \\
      \text{flat $\SL$-connections on $Y$}
    \end{tabular}\right\}\longleftrightarrow
    \left\{\begin{tabular}{c}
      \text{isomorphism classes of} \\
      \text{$\Lambda_\SL$-modules on $Y$}
      \end{tabular}\right\}.
\end{equation} 
\end{theorem}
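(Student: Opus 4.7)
The plan is to reduce the statement to a universal-property argument that leverages the construction of $\Lambda_\SL$ as a quotient of the enveloping algebra of $\widehat\SL$, together with the already-established bijection \eqref{eq:intL-conn-repsinDE} between integrable $\SL$-connections on $E$ and Lie algebroid representations $\bar\nabla_\SL\colon\SL\to\SD(E)$. The idea is that both sides of \eqref{eq:L-connect<->lambdamod} are essentially parametrised by the same algebra maps out of $\Lambda_{\SL,1}=\SO_Y\oplus V$, so once this is spelled out, the equivalence follows almost formally.

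First, I would reformulate the $\Lambda_\SL$-module datum. Giving an action $\nabla_{\Lambda_\SL}\colon\Lambda_\SL\otimes E\to E$ extending the $\SO_Y$-structure on $E$ is equivalent to giving a morphism of sheaves of $\CC$-algebras $\rho\colon\Lambda_\SL\to\End_\CC(E)$ whose restriction to $\SO_Y=\Lambda_0$ is the canonical embedding. By the explicit presentation of $\Lambda_\SL$ recalled just before Theorem \ref{thm:equiv:Liealge-sapsrdo}, such a $\rho$ corresponds bijectively to a $\CC$-linear map $\tilde\rho\colon\SO_Y\oplus V\to\End_\CC(E)$ satisfying: (i) $\tilde\rho(f,0)$ equals multiplication by $f$; (ii) $\tilde\rho(x)\tilde\rho(y)-\tilde\rho(y)\tilde\rho(x)=\tilde\rho([x,y]_1)$ for all $x,y\in\SO_Y\oplus V$; and (iii) $\tilde\rho(f,0)\tilde\rho(g,v)=\tilde\rho(fg,fv)$ for $f,g\in\SO_Y$ and $v\in V$.

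Second, I would set $\nabla_{\SL,v}:=\tilde\rho(0,v)$. Specialising (ii) to $x=(f,0)$, $y=(0,v)$ yields the Leibniz rule $\nabla_{\SL,v}(fs)=f\nabla_{\SL,v}(s)+\delta(v)(f)s$; specialising it to $x=(0,u)$, $y=(0,v)$ yields the integrability identity $[\nabla_{\SL,u},\nabla_{\SL,v}]=\nabla_{\SL,[u,v]}$; condition (iii) is equivalent to $\SO_Y$-linearity of $v\mapsto\nabla_{\SL,v}$. These are precisely the conditions for $v\mapsto\nabla_{\SL,v}$ to extend to a Lie algebroid representation $\bar\nabla_\SL\colon\SL\to\SD(E)$, so by \eqref{eq:intL-conn-repsinDE} the data amounts to an integrable $\SL$-connection on $E$. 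A direct check confirms that the two translations match the explicit formulas \eqref{eq:L-conn} and \eqref{eq:Lambda_L-modu} and are mutually inverse. Functoriality is then immediate: a morphism $\phi\colon E\to E'$ of $\Lambda_\SL$-modules is an $\SO_Y$-linear map commuting with the action of every element of $\Lambda_\SL$, which by generation of $\Lambda_\SL$ over $\SO_Y$ by $\Lambda_{\SL,1}$ reduces to $\phi$ intertwining the operators $\nabla_{\SL,v}$ for all $v\in V$, i.e.\ being a morphism of $\SL$-connections.

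The main obstacle I expect is the bookkeeping in the first step: checking that the quotient relations cutting $\Lambda_\SL$ out of the tensor algebra $T^\bullet(\SO_Y\oplus V)$ encode exactly properties (i)--(iii), with no additional constraints. This amounts to invoking the standard universal property of the universal enveloping algebra of the Lie algebroid $\widehat\SL$, and then verifying that the further passage from $U$ to $U^\dagger$ and the quotienting by $i(f,0)\cdot i(g,v)-i(fg,fv)$ precisely kills the discrepancy between the left $\SO_Y$-module structure used in $T^\bullet$ and the $\SO_Y$-module structure of $V$. Once that is established, the equivalence is a purely formal unravelling of universal properties.
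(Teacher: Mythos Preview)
The paper does not give its own proof of this theorem; it simply cites \cite[Proposition 5.3]{Tor12} and, in the paragraphs preceding the statement, sketches the two constructions \eqref{eq:L-conn} and \eqref{eq:Lambda_L-modu} without verifying that they are mutually inverse or functorial. Your proposal is a correct and complete filling-in of that sketch: you identify a $\Lambda_\SL$-module structure with an algebra map out of $\Lambda_\SL$, reduce via the explicit presentation of $\Lambda_\SL$ to a map on $\Lambda_{\SL,1}=\SO_Y\oplus V$ subject to the relations (i)--(iii), and then check that these relations encode precisely the Leibniz rule, integrability, and $\SO_Y$-linearity of the associated representation $\SL\to\SD(E)$, landing on \eqref{eq:intL-conn-repsinDE}. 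This is the standard universal-enveloping-algebra argument and is exactly the route Tortella takes, so there is nothing to contrast.

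One small remark: the paper's Definition \ref{def:Lambda-module} includes the clause that $E$ be locally free as a $\Lambda$-module, which your argument does not address. In the present setting (a single variety $Y$, with $E$ already locally free over $\SO_Y$) this is harmless and typically automatic, but if you wanted a fully self-contained proof you might want a sentence explaining why this condition does not impose anything extra on the $\SL$-connection side.
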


\begin{remark}
The results of Tortella in \cite{Tor11} and \cite{Tor12} are actually more general, in the sense that there is no split condition on the almost polynomial sheaves of differential operators; cf.~Definition \ref{def:split}. In this more general version there is a certain cohomology class $Q$ which is the obstruction to the existence of a Lie algebroid splitting of the short exact sequence \eqref{eq:sesLiealg2}. So Theorem \ref{thm:equivcatL-conn-<>lambda-mod} corresponds to $Q=0$, and then $\Lambda_\SL$ is written $\Lambda_{\SL,[(0,0)]}$ in Tortella's notation. The generalized version of the correspondence \eqref{eq:L-connect<->lambdamod}, for any $Q$, still holds, but on the left-hand side one has to consider not flat (i.e.~ not integrable) connections, but rather projectively flat $\SL$-connections, that is, $\SL$-connections with constant central curvature, while on the right-hand side the sheaves $\Lambda_\SL$ are not necessarily split; see \cite[Proposition 5.3]{Tor12}.

In any case, in this paper we are only interested in the $Q=0$ case because this is the only case, under the assumption that \eqref{eq:sesLiealg2} splits as a sequence of $\SO_Y$-modules, for which the corresponding moduli spaces (to be introduced in the next section) are non-empty; see \cite[Proposition 39]{Tor11} and \cite[Corollary 5.4]{Tor12}. In other words, \cite[Corollary 5.4]{Tor12} implies that the only non-empty moduli space of semistable projectively flat $\SL$-connections on $Y$ is the one of flat $\SL$-connections.
\end{remark}

\subsection{Moduli spaces of $\SL$-connections and of $\Lambda$-modules}\label{sec:moduli-LandLambda}

Let us return now to the case where $Y$ is our fixed smooth complex projective curve $X$ and where $\SX\to S$ is an $S$-family of smooth complex projective curves over a scheme $S$. An $\SL$-connection $(E,\nabla_\SL)$ is \emph{(semi)stable} if for every subbundle $0\ne F\subsetneq E$ preserved by $\nabla_\SL$, i.e.~such that $\nabla_\SL(F)\subseteq F\otimes \Omega_\SL^1$, we have 
$$\mu(F)<\mu(E) \quad (\text{resp. } \le)$$
and that a $\Lambda$-module $(E,\nabla_\Lambda)$ on a fibre $X_s$ of $\SX\to S$ is (semi)stable if for every $0\ne F\subsetneq E$ preserved by $\nabla_\Lambda$, i.e.~such that $\nabla_\Lambda(\Lambda\otimes F)\subseteq F$ we have
$$\mu(F)<\mu(E) \quad (\text{resp. } \le).$$
Note that if the vector bundle $E$ is semistable, then any $\SL$-connection and any $\Lambda$-module are also semistable.
Clearly, an $\SL$-connection is (semi)stable if and only if the corresponding $\Lambda_\SL$-module is (semi)stable.  Define the \emph{degree} of an $\SL$-connection or of a $\Lambda$-module over $X$ to be the degree of the underlying vector bundle.

For any sheaf of rings of differential operators $\Lambda$ on $\SX$ over $S$, and any rank $r>0$ and $d\in\ZZ$, Simpson \cite{Si94} proved that there exists a moduli space $\SM_\Lambda(\SX,r,d)$ of semistable $\Lambda$-modules of rank $r$ and degree $d$ and that it is a complex quasi-projective variety over $S$. If the family $\SX$ is clear from the context, we denote the moduli space simply by $\SM_\Lambda(r,d)$. If $s\in S$ is a point, let $\Lambda_s$ be the pullback of $\Lambda$ to the fibre $X_s$ of $\SX\to S$. A closed point of $\SM_\Lambda(\SX,r,d)$ over a point $s\in S$ represents a semistable $\Lambda_s$-module $(E,\nabla_{\Lambda_s})$ over $X_s$.

Then, Theorem \ref{thm:equivcatL-conn-<>lambda-mod} also identifies $\SM_{\Lambda_\SL}(r,d)$ with the moduli space $\SM_{\SL}(r,d)$ of integrable $\SL$-connections of rank $r$ and degree $d$ \cite{Tor12}.

\begin{remark}
Alternatively, a moduli space of (possibly non-integrable) Lie algebroid connections was also constructed by Krizka \cite{Kr08} using analytic tools. When $\SL$ has rank $1$, integrability is automatic and, therefore, this construction gives an alternative description for the moduli space of integtrable Lie algebroid connections. On the other hand, in an unpublished work \cite{Kr10}, the same author works with flat connections and proposes a general analytical construction of such moduli space.
\end{remark}

We saw in Example \ref{eg-Liealgebroids} that when $\SL$ has the trivial algebroid structure $\SL=(V,0,0)$, an $\SL$-connection is precisely the same thing as a $V^*$-twisted Higgs bundle. Moreover, it is obvious that (semi)stability is preserved under this identification. Hence, the moduli space of integrable $\SL$-connections coincides with the moduli space of $V^*$-twisted Higgs bundles. Therefore, for a trivial Lie algebroid $\SL=(V,0,0)$, we have the following canonical identification
$$\SM_{\Lambda_{(V,0,0)}}(r,d)=\SM_{(V,0,0)}(r,d)=\SM^{\Dol}_{V^*}(r,d)$$
between the moduli spaces of integrable $\SL$-connections, $\Lambda_\SL$-modules and $V^*$-twisted Higgs bundles over $X$, all of the same rank $r$ and degree $d$.

Note that, depending on the choice of the rank, degree and $\Lambda$, the moduli space $\SM_\Lambda(\SX,r,d)$ may be singular or even empty. For instance, if $\Lambda$ is the sheaf of differential operators $\SD_X$ on a curve $X$, then $\SM_{\SD_X}(r,d)$ coincides with the moduli space of semistable ($\ST_X$-)connections on $X$ whose underlying vector bundle has rank $r$ and degree $d$. But the flatness condition (which is automatic since $\ST_X$ is a line bundle) implies that $d=0$, so the moduli space is empty for $d\neq 0$. Moreover, for $d=0$ and $r\geq 2$, the moduli space of flat connections is singular due to the existence of strictly semistable objects.
In the remaining part of this section we will prove sufficient conditions for these moduli spaces to be non-empty and smooth varieties.

The existence of algebraic connections (i.e.~$\ST_X$-connections in the language of Lie algebroids) on an algebraic vector bundle $E$ over $X$ has been studied by Atiyah in \cite{A57}. There it was proved the existence of a class in $\Ext^1(T_X,\End(E))$ --- now known as the Atiyah class of $E$ ---  whose vanishing is equivalent to the existence of such an algebraic connection.
A generalization of this picture to $\SL$-connections was carried out in \cite[Section 2.4.4]{Tor11} \cite[\S 4]{Tor12}. Keep considering the bundle $E$ and let $\SL=(V,[\cdot\,,\cdot],\delta)$ be a Lie algebroid. Let $\fa(E)\in \Ext^1(T_X,\End(E))$ be the Atiyah class of $E$. Define the \emph{$\SL$-Atiyah class of $E$} as
$$\fa_\SL(E)=\delta^*(\fa(E)) \in \Ext^1(V,\End(E)).$$

\begin{proposition}{\cite[Proposition 17]{Tor11}}
\label{prop:existenceConnection}
An algebraic vector bundle $E$ admits an algebraic $\SL$-connection if and only if $\fa_\SL(E)=0$.
\end{proposition}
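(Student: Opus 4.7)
The plan is to mimic Atiyah's original argument \cite{A57}, replacing $T_X$ by $V$ via pullback along the anchor $\delta$. Concretely, recall that the sheaf of first-order differential operators on $E$ sits in the Atiyah exact sequence
\begin{equation*}
0 \longrightarrow \End(E) \longrightarrow \Diff^1(E) \xrightarrow{\ \sigma\ } T_X \longrightarrow 0,
\end{equation*}
where $\sigma$ is the symbol, which in the notation of the excerpt agrees with the anchor $\delta_{\SD(E)}$ of the Lie algebroid $\SD(E)$. By definition, the extension class of this short exact sequence in $\Ext^1(T_X,\End(E))$ is the classical Atiyah class $\fa(E)$.

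First I would pull this sequence back along $\delta:V\to T_X$. Using the fibre product $\delta^*\Diff^1(E):=\Diff^1(E)\times_{T_X}V$, I obtain a short exact sequence
\begin{equation*}
0 \longrightarrow \End(E) \longrightarrow \delta^*\Diff^1(E) \longrightarrow V \longrightarrow 0,
\end{equation*}
whose extension class in $\Ext^1(V,\End(E))$ is, by functoriality of Ext under pullback, exactly $\delta^*(\fa(E))=\fa_\SL(E)$.

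Next I would identify $\SL$-connections on $E$ with bundle-theoretic splittings of this pulled-back sequence. Given an $\SL$-connection $\nabla_\SL$, the associated map $\bar\nabla_\SL:V\to\End_\CC(E)$ from \eqref{eq:barnabla} lands inside $\Diff^1(E)$ by \eqref{eq:Leibniz-nabla_v}, and the computation $\delta_{\SD(E)}(\nabla_{\SL,v})=\delta(v)$ explained in the excerpt means it factors through the fibre product, yielding an $\SO_Y$-linear splitting $V\to \delta^*\Diff^1(E)$. Conversely, any $\SO_Y$-linear splitting $s:V\to \delta^*\Diff^1(E)$ composed with the natural map to $\Diff^1(E)$ produces a map $V\to\Diff^1(E)$ whose symbol is $\delta$; unwinding the definition of $\Diff^1(E)$, this is precisely the data of a $\CC$-linear operator $\nabla_\SL:E\to E\otimes V^*$ satisfying the Leibniz rule with respect to $d_\SL$ (use \eqref{eq:d_L(f)} to match the differential). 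Hence splittings are in bijection with $\SL$-connections on $E$.

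Finally, since a short exact sequence of locally free sheaves splits if and only if its extension class in $\Ext^1$ vanishes, the existence of an $\SL$-connection on $E$ is equivalent to $\fa_\SL(E)=0$, which is the desired statement. The only subtle point, and the one I would spend the most care on, is the identification of splittings with $\SL$-connections: one must verify that the $\SO_Y$-linearity of the splitting translates correctly into the $\CC$-linearity plus Leibniz rule of $\nabla_\SL$, but this follows cleanly from the construction of $\Diff^1(E)$ and the Leibniz rule \eqref{eq:Leibniz-nabla_v}. No integrability is needed, consistent with the fact that Definition \ref{def:L-connect} does not impose flatness.
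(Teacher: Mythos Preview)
The paper does not supply a proof of this proposition; it is quoted from \cite[Proposition 17]{Tor11} and used as a black box. Your argument is correct and is essentially the standard one: pull back the Atiyah sequence $0\to\End(E)\to\Diff^1(E)\to T_X\to 0$ along the anchor $\delta$, identify $\SL$-connections with $\SO_Y$-linear splittings of the resulting extension (using the equivalence \eqref{eq:barnabla}--\eqref{eq:Leibniz-nabla_v} between $\SL$-connections and maps $V\to\Diff^1(E)$ lifting $\delta$), and conclude via the usual splitting-versus-extension-class dichotomy. This is precisely the approach one expects in Tortella's original proof, so there is nothing to compare.
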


From this we can exhibit some concrete examples of $\SL$-connections.

\begin{corollary}
\label{cor:existsConnection}
Let $E$ be a semistable vector bundle on the curve $X$. Let $\SL=(V,[\cdot\,,\cdot],\delta)$ be a Lie algebroid such that the vector bundle $V$ is semistable and $-\mu(V)>2g-2$. Then $E$ admits an $\SL$-connection. Moreover, if $\rk(\SL)=1$, then $E$ admits an integrable $\SL$-connection.
\end{corollary}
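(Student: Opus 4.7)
The plan is to invoke Proposition \ref{prop:existenceConnection} and show that under the given hypotheses the $\SL$-Atiyah class $\fa_\SL(E) \in \Ext^1(V, \End(E))$ must vanish, simply because the ambient $\Ext$ group is itself zero. By standard identifications on a curve, $\Ext^1(V, \End(E)) \cong H^1(X, \End(E) \otimes V^*)$, and Serre duality identifies this with the dual of $H^0(X, \End(E) \otimes V \otimes K_X)$. So the task reduces to showing that this last space of sections vanishes.

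For the vanishing, I would argue via slopes. Since $E$ is semistable and we are in characteristic zero, $\End(E) \cong E \otimes E^*$ is semistable of slope $0$, and tensoring with the semistable bundles $V$ and $K_X$ preserves semistability (in characteristic zero). Thus $\End(E) \otimes V \otimes K_X$ is semistable of slope
\[
\mu(\End(E) \otimes V \otimes K_X) = \mu(V) + (2g-2).
\]
The hypothesis $-\mu(V) > 2g-2$ rewrites as $\mu(V) + (2g-2) < 0$, so the bundle $\End(E) \otimes V \otimes K_X$ is semistable of strictly negative slope, hence has no nonzero global sections. Serre duality then gives $H^1(X, \End(E) \otimes V^*) = 0$, so $\fa_\SL(E) = 0$ and by Proposition \ref{prop:existenceConnection} an $\SL$-connection on $E$ exists.

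For the second assertion, I would observe that when $\rk(\SL) = 1$ the bundle $\Omega_\SL^2 = \Lambda^2 V^* = 0$, so the curvature $\nabla_\SL^2 : E \to E \otimes \Omega_\SL^2$ is identically zero. Therefore every $\SL$-connection is automatically integrable, and the $\SL$-connection produced in the previous paragraph is flat.

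The only step that requires any care is the preservation of semistability under tensor product, which relies on characteristic zero (the standard argument goes via polystability of the associated Narasimhan--Seshadri / Donaldson flat unitary structure, or via Ramanan--Ramanathan); everything else is a direct application of Serre duality and the stated hypothesis on $\mu(V)$. I do not expect any genuine obstacle.
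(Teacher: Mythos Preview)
Your proof is correct and follows essentially the same argument as the paper: both reduce to showing $\Ext^1(V,\End(E))=0$ via Serre duality and the fact that $\End(E)\otimes K_X\otimes V$ is semistable of negative slope, then invoke Proposition~\ref{prop:existenceConnection}, and handle the rank~$1$ case by noting $\Omega_\SL^2=0$. Your added remark about characteristic zero being needed for semistability of tensor products is a welcome point of care that the paper leaves implicit.
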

\begin{proof}
The $\SL$-Atiyah class $\fa(E)$ is an element of
$$\Ext^1(V,\End(E)) \cong H^1(\End(E)\otimes V^*) \cong H^0( \End(E)\otimes K_X\otimes V)^*,$$
by Serre duality.
Since both $E$ and $V$ are semistable then $\End(E)\otimes K_X \otimes V$ is also semistable. Moreover
$$\mu(\End(E)\otimes K_X \otimes V) = 2g-2+\mu(V)<0,$$
so $H^0( \End(E)\otimes K_X\otimes V)=0$. Thus $\fa(E)=0$, and the result follows from Proposition \ref{prop:existenceConnection}. 

If in addition $\rk(\SL)=1$, then $\Omega_\SL^2=\Lambda^2V^*=0$ thus any $\SL$-connection on $E$ is automatically flat.
\end{proof}

A Lie algebroid $\SL$ is called \emph{transitive} if the anchor $\delta$ is surjective and \emph{intransitive} otherwise.

\begin{corollary}
\label{cor:existsConnection-intransitive}
Let $\SL$ be any intransitive Lie algebroid on a curve $X$ and let $E$ be a semistable algebraic vector bundle. Then $E$ admits an integrable $\SL$-connection.
\end{corollary}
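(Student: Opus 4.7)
The plan is to leverage intransitivity of $\SL$ to factor the anchor $\delta\colon V\to T_X$ through a proper sub-line-bundle of $T_X$, and then to apply Proposition~\ref{prop:existenceConnection}. By that proposition, $E$ admits an $\SL$-connection if and only if the $\SL$-Atiyah class $\fa_\SL(E)=\delta^*\fa(E)\in\Ext^1(V,\End E)$ vanishes. Since $\SL$ is intransitive, $\delta$ is not surjective, and since $T_X$ is a line bundle on the curve $X$, the image of $\delta$ is either zero (when $\delta=0$) or a sub-line-bundle of the form $T_X(-D)\subset T_X$ for some effective divisor $D$ with $\deg D\geq 1$.

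If $\delta=0$, then $d_\SL=0$ identically, so the Leibniz rule reduces to $\SO_X$-linearity, and the zero map $\nabla_\SL=0\colon E\to E\otimes V^*$ is a (trivially integrable) $\SL$-connection. If $\delta\neq 0$, I would factor $\delta=\iota\circ\tilde\delta$ with $\tilde\delta\colon V\to T_X(-D)$ surjective and $\iota\colon T_X(-D)\hookrightarrow T_X$. The sub-line-bundle $T_X(-D)$ is closed under the Lie bracket of vector fields (a short local-coordinate check: if $f,g$ vanish to order $n$ at a point of $D$, so does $fg'-gf'$), so it inherits a Lie subalgebroid structure $\ST_X(-D)\subset\ST_X$, which is exactly the one already used in the paper to describe logarithmic connections. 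Since $\delta$ is an anchor (hence a Lie algebra map) and $\iota$ is injective, $\tilde\delta$ automatically respects brackets, so $\tilde\delta\colon\SL\to\ST_X(-D)$ is a morphism of Lie algebroids.

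Next, I would construct an integrable $\ST_X(-D)$-connection on $E$ and pull it back. By Proposition~\ref{prop:existenceConnection} applied to $\ST_X(-D)$, it suffices that $\iota^*\fa(E)\in\Ext^1(T_X(-D),\End E)$ vanishes. Computing,
$$\Ext^1(T_X(-D),\End E)\cong H^1(K_X(D)\otimes\End E)\cong H^0(\End(E)(-D))^*$$
by Serre duality (using $\End(E)^*\cong\End(E)$ via the trace pairing). Since $E$ is semistable, so is $\End(E)$, and it has slope $0$; therefore $\End(E)(-D)$ is semistable of slope $-\deg D<0$, and has no nonzero sections. Hence $\iota^*\fa(E)=0$, and $E$ admits a $\ST_X(-D)$-connection $\nabla'$, which is automatically flat because $\ST_X(-D)$ has rank $1$ and so $\Lambda^2 T_X(-D)^*=0$.

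Finally, I would transport the integrability through $\tilde\delta$. Via the correspondence \eqref{eq:intL-conn-repsinDE}, the flat $\ST_X(-D)$-connection corresponds to a Lie algebroid representation $\bar\nabla'\colon\ST_X(-D)\to\SD(E)$; composing with $\tilde\delta$ yields a Lie algebroid morphism $\bar\nabla'\circ\tilde\delta\colon\SL\to\SD(E)$, which corresponds to an integrable $\SL$-connection on $E$. The only delicate point is the bracket-closure of $T_X(-D)$ needed to endow it with a genuine Lie subalgebroid structure, and this is a direct local computation (already implicit in the paper's treatment of logarithmic connections); everything else is a formal consequence of the factorization of the anchor together with the Serre duality vanishing.
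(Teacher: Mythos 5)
Your proof is correct and takes essentially the same route as the paper's: factor the anchor through a logarithmic Lie subalgebroid $\ST_X(-D)\subset \ST_X$, obtain an integrable $\ST_X(-D)$-connection on $E$ from the vanishing of the Atiyah class (the paper invokes Corollary \ref{cor:existsConnection} with a single point $x$ where $\delta|_x=0$, while you inline the identical Serre-duality computation for the full vanishing divisor $D$), and pull it back through the representation correspondence \eqref{eq:intL-conn-repsinDE}. Your two deviations --- using the saturated image $T_X(-D)$ rather than $T_X(-x)$, and splitting off the $\delta=0$ case (which the paper's factorization handles automatically) --- are cosmetic and do not alter the argument.
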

\begin{proof}
As $\SL=(V,[\cdot\,,\cdot],\delta)$ is intransitive, the anchor map $\delta:V\to T_X$ is not surjective. Then, as $T_X$ is a line bundle, there exists a point $x\in X$ such that $\delta|_x=0$. Thus, $\delta$ factors through $\bar\delta:V\to T_X(-x)\subset T_X$. Actually, the line bundle $T_X(-x)$ inherits a natural Lie algebroid structure, denoted by $\ST(-x)$, from the one of $\ST=(T_X,[\cdot\,,\cdot]_{\op{Lie}},\id)$, since the Lie bracket of two local vector fields which annihilate at $x$ also annihilates at $x$. Thus, the anchor in $T_X(-x)$ is just the inclusion $T_X(-x)\hookrightarrow T_X$. Since we also know the anchor maps are also Lie algebroid maps, we have a commutative diagram of Lie algebroid maps
$$\begin{tikzcd}
&\SL\ar{r}{\delta}\ar{rd}[swap]{\bar\delta}&\ST\\
    &&\ST(-x).\ar[u,hook]
    \end{tikzcd}$$

Now, $T_X(-x)$ is a line bundle such that $-\deg(T_X(-x))=2g-1>2g-2$, so the previous corollary shows that $E$ admits an integrable $\ST(-x)$-connection. Moreover, by \eqref{eq:intL-conn-repsinDE} such connection is determined by a representation $\ST(-x)\to\SD(E)$. Pre-composing it with $\bar\delta$ yields a representation $\SL\to\SD(E)$ and, hence, again by \eqref{eq:intL-conn-repsinDE}, gives rise to an integrable $\SL$-connection on $E$. 
\end{proof}

The previous results can be now immediately used to prove, under certain conditions, non-emptiness of the moduli spaces of $\SL$-connections of any rank and degree.

\begin{proposition}
\label{prop:moduliNonEmpty}
For any rank $r$ and degree $d$ and any Lie algebroid $\SL$ such that either
\begin{enumerate}
\item $\rk(\SL)=1$ and $\deg(\SL)<2-2g$, or
\item $\SL$ is intransitive,
\end{enumerate}
the moduli space $\SM_{\SL}(r,d)$ is non-empty.
\end{proposition}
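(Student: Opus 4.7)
The plan is to exhibit, in each of the two cases, an honest semistable integrable $\SL$-connection whose underlying bundle has rank $r$ and degree $d$. By Theorem \ref{thm:equivcatL-conn-<>lambda-mod} this will produce a closed point of $\SM_{\Lambda_\SL}(r,d)$, which is what we need.

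First I would take any semistable vector bundle $E$ on $X$ of rank $r$ and degree $d$; such bundles exist for every pair $(r,d)$ with $r\ge 1$ on a smooth projective curve of genus $g\ge 2$ (one may appeal to the classical non-emptiness of the moduli space of stable bundles of any rank and degree, due to Narasimhan--Seshadri). The key observation, already recorded in the discussion preceding the proposition, is that if the underlying bundle $E$ is semistable then every $\SL$-connection on $E$ is automatically semistable as an $\SL$-connection, since any $\nabla_\SL$-invariant subbundle is in particular a subbundle and therefore satisfies the slope inequality.

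It then remains to show that this fixed $E$ admits an integrable $\SL$-connection in each case. In case (1), since $V$ is a line bundle it is automatically semistable, and the hypothesis $\deg(\SL)<2-2g$ rewrites as $-\mu(V)=-\deg(V)>2g-2$, so Corollary \ref{cor:existsConnection} applies and furnishes an integrable $\SL$-connection on $E$ (integrability is automatic as $\rk(\SL)=1$ forces $\Omega_\SL^2=0$). In case (2), Corollary \ref{cor:existsConnection-intransitive} applies directly to the intransitive Lie algebroid $\SL$ and the semistable bundle $E$, again producing an integrable $\SL$-connection on $E$.

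Combining the two paragraphs, we obtain in each case a semistable integrable $\SL$-connection $(E,\nabla_\SL)$ with $E$ of rank $r$ and degree $d$; by the equivalence of categories of Theorem \ref{thm:equivcatL-conn-<>lambda-mod} this corresponds to a semistable $\Lambda_\SL$-module of the same rank and degree, hence to a point of $\SM_{\Lambda_\SL}(r,d)$. There is no real obstacle here: the proof is essentially a packaging of the two existence corollaries together with the standard non-emptiness of the moduli of semistable vector bundles and the trivial observation that semistability of $E$ propagates to semistability of the pair $(E,\nabla_\SL)$.
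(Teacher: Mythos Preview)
Your proof is correct and follows essentially the same approach as the paper: choose a semistable vector bundle $E$ of the given rank and degree, apply Corollary~\ref{cor:existsConnection} in case~(1) or Corollary~\ref{cor:existsConnection-intransitive} in case~(2) to obtain an integrable $\SL$-connection on $E$, and observe that semistability of $E$ forces semistability of $(E,\nabla_\SL)$. The only addition you make is to spell out the (standard) existence of semistable bundles of arbitrary rank and degree and to invoke Theorem~\ref{thm:equivcatL-conn-<>lambda-mod} explicitly, both of which are implicit in the paper's version.
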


\begin{proof}
Let $\SL$ be any Lie algebroid verifying either of the two given conditions. Choose any semistable vector bundle $E$ over $X$. By Corollary \ref{cor:existsConnection}, if $\rk(\SL)=1$ and $\deg(\SL)<2-2g$, or by Corollary \ref{cor:existsConnection-intransitive}, if $\SL$ is intransitive, we conclude that $E$ admits an integrable $\SL$-connection $\nabla_\SL$. Moreover, since $E$ is semistable, then so is $(E,\nabla_\SL)$, which therefore represents a point in $\SM_{\SL}(r,d)$.
\end{proof}

\subsection{The $\SL$-Hodge moduli spaces}
\label{sec:L-Hodge moduli}

Let us now introduce the deformation of a split almost polynomial sheaf of rings of differential operators (cf.~Definition \ref{def:SRDO}) to its associated graded ring, as described in \cite[p. 86]{Si94}.

Let $\SL=(V,[\cdot\,,\cdot],\delta)$ be a Lie algebroid over $X$ and let $\Lambda_\SL$ be its associated split almost polynomial sheaf of rings of differential operators. By construction, we know that
$$\op{Gr}_\bullet(\Lambda_\SL) \cong \Sym^\bullet(V).$$
We can associate to $\Lambda_\SL$ a sheaf of rings of differential operators $\Lambda_\SL^{\op{red}}$ on $X\times \CC$ over $\CC$ whose fibre over $1$ is $\Lambda_\SL$ and whose fibre over $0$ is isomorphic to its graded algebra $\Sym^\bullet(V)$. Let $\lambda$ be the coordinate of $\CC$ and let $p_X: X\times \CC\to X$ be the projection. We define $\Lambda_\SL^{\op{red}}$ as the subsheaf of $p_X^*(\Lambda_\SL)$ generated by sections of the form $\sum_{i\ge 0} \lambda^i u_i$, where $u_i$ is a local section of $\Lambda_{\SL,i}\subseteq\Lambda_{\SL}$. This subsheaf is a sheaf of filtered algebras on $X\times \CC$ over $\CC$, which coincides with the Rees algebra construction of the sheaf of filtered algebras $\Lambda_\SL$ and one can verify that it satisfies all properties of Definition \ref{def:SRDO}, making it a sheaf of rings of differential operators on $X\times \CC$ over $\CC$.

On the other hand, given the Lie algebroid $\SL=(V,[\cdot\,,\cdot],\delta)$ and $\lambda\in \CC$, we can define another Lie algebroid $\SL_\lambda$ as 
\begin{equation}\label{eq:Llambda}
\SL_\lambda=(V,\lambda[\cdot\,,\cdot],\lambda\delta),
\end{equation}
Then, the following can be checked through direct computation
\begin{equation}\label{eq:Lambdared|lambda=Slambda}
\Lambda_\SL^{\op{red}}|_{X\times \{\lambda\}} \cong \Lambda_{\SL_\lambda}.
\end{equation}
Observe that if $\lambda\ne 0$ then multiplicacion by $\lambda$ defines an isomorphism of Lie algebroids
$$\SL_\lambda \stackrel{\sim}{\longrightarrow} \SL, \ \ \ \ \ v \mapsto \lambda v$$
and for $\lambda=0$, we have that $\SL_0=(V,0,0)$ is the trivial algebroid over $V$. This implies the following properties of the fibres of $\Lambda_\SL^{\op{red}}$ over $\lambda\in \CC$ which are also well known consequences of the Rees construction.
\begin{enumerate}
\item For every $\lambda\ne 0$ we have $\Lambda_\SL^{\op{red}}|_{X\times \{\lambda\}}\cong \Lambda_\SL^{\op{red}}|_{X\times \{1\}}\cong  \Lambda_\SL$.
\item If $\lambda=0$ then $\Lambda_\SL^{\op{red}}|_{X\times \{0\}}\cong \op{Gr}^\bullet(\Lambda_\SL) \cong \Sym^\bullet(V)$.
\end{enumerate}

Now, consider the moduli space $\SM_{\Lambda_\SL^{\op{red}}}(r,d)$ of $\Lambda_\SL^{\op{red}}$-modules of rank $r$ and degree $d$. By Simpson's construction, it is a quasiprojective variety with a map
\begin{equation}\label{eq:project->lambda}
\pi:\SM_{\Lambda_\SL^{\op{red}}}(r,d) \longrightarrow \CC
\end{equation}
and, by \eqref{eq:Lambdared|lambda=Slambda}, it parametrises $S$-equivalence classes of triples $(E,\nabla_{\SL_\lambda},\lambda)$ where $\lambda =\pi(E,\nabla_{\SL_\lambda},\lambda)\in\CC$ and $(E,\nabla_{\SL_\lambda})$ is a semistable integrable $\SL_\lambda$-connection of rank $r$ and degree $d$ on $X$.
By properties (1) and (2) above, it becomes clear that, for all $\lambda \in \CC^*$,
\begin{equation}\label{eq:isomfnonzeroibres}
\pi^{-1}(\lambda)=\SM_{\Lambda_{\SL_\lambda}}(r,d) \cong \SM_{\Lambda_\SL}(r,d)=\pi^{-1}(1)\ \ \text{  and  }
\ \ \pi^{-1}(0)\cong \SM_{V^*}^{\Dol}(r,d).
\end{equation}
so $\SM_{\Lambda_\SL^{\op{red}}}(r,d)$ is a variety which ``interpolates'' between the moduli space of $\SL$-connections and the moduli space of $V^*$-twisted Higgs bundles.

Let us explore more precisely what is an $\SL_\lambda$-connection, for $\SL_\lambda$ as in \eqref{eq:Llambda}. By \eqref{eq:d_L(f)} and \eqref{eq:d_L(omega)} we see that if $(\Omega_{\SL}^\bullet,d_\SL)$ is the Chevalley--Eilenberg--de Rham complex of $\SL$, then $\Omega_{\SL_\lambda}^\bullet=\Omega_{\SL}^\bullet$ and $(\Omega_{\SL_\lambda}^\bullet,\lambda d_\SL)$ is the Chevalley--Eilenberg--de Rham complex of $\SL_\lambda$. Hence, in an $\SL_\lambda$-connection $(E,\nabla_{\SL_\lambda})$, the map $\nabla_{\SL_\lambda}:E\to E\otimes V^*$ verifies 
\begin{equation}\label{eq:lambda-connect}
\nabla_{\SL_\lambda}(fs)=f\nabla_{\SL_\lambda}(s) + \lambda s\otimes d_\SL(f).
\end{equation}

Motivated by the classical example of a \emph{$\lambda$-connection} on an algebraic vector bundle $E$ (see the example below), we also call an $\SL_\lambda$-connection to be a $(\lambda,\SL)$-connection, and consider it as a triple $(E,\nabla_\SL,\lambda)$, where $\nabla_\SL:E\to E\otimes V^*$ verifies \eqref{eq:lambda-connect}. Then we identify $\SM_{\Lambda_\SL^{\op{red}}}(r,d)$ with the moduli space of $(\lambda,\SL)$-connections and, in analogy with the terminology for the moduli space of $\lambda$-connections, we call it the \emph{$\SL$-Hodge moduli space}. Furthermore, in order to simplify the notation, we will denote it by $\SM_\SL^{\Hod}(r,d)$.

\begin{example}\label{ex:lambda,L-conn}\mbox{}
\begin{enumerate}
\item A $(\lambda,\ST_X)$-connection is a \emph{$\lambda$-connection} on an algebraic vector bundle $E$.
\item An $\SL$-connection is a the same thing as a $(1,\SL)$-connection.
\item For $\SL=(V,[\cdot\,,\cdot],\delta)$, a $(0,\SL)$-connection is a $V^*$-twisted Higgs bundle.
\end{enumerate}
\end{example}

The moduli space $\SM_{\SL}^{\Hod}(r,d)$ is endowed with a $\CC^*$-action scaling the $(\lambda,\SL)$-connection,
\begin{equation}\label{eq:C*-action-LHodgemod}
t\cdot (E,\nabla_\SL,\lambda)=(E,t\nabla_\SL,t\lambda),\ t\in\CC.
\end{equation}
Indeed, since $(E,\nabla_\SL)$ is flat and semistable, then so is $(E,t\nabla_\SL)$ for every $t$.
Furthermore, $(E,t\nabla_{\SL_\lambda})$ is a $(t\lambda,\SL)$-connection. Thus the map $\pi$ is $\CC^*$-equivariant with respect to this $\CC^*$-action and the standard one on $\CC$.

Note that for $\lambda=0$, the action \eqref{eq:C*-action-LHodgemod} restricts to the usual $\CC^*$-action on the Higgs bundle moduli space by scaling the Higgs field (cf.~\eqref{eq:C*-actHiggs}), and which will play an important role in section \ref{section:HiggsPolynomials}.

Finally, observe that clearly $(E,t\nabla_\SL,t)$ is semistable if and only if $(E,\nabla_\SL,1)$ is, so the $\CC^*$-action \eqref{eq:C*-action-LHodgemod} induces an isomorphism
$$\pi^{-1}(\CC^*) \cong \pi^{-1}(1)\times \CC^*= \SM_{\SL}(r,d)\times \CC^*.$$

\subsection{Classification of Lie algebroid structures on line bundles}
\label{subsection:classification}
The main objects of interest for the remaining part of the paper will be moduli spaces of $\SL$-connections when $\SL$ is a rank 1 Lie algebroid over a smooth complex projective curve. Before continuing with the main part of our analysis, we dedicate this section to study the types of Lie algebroid structures that actually exist over line bundles on algebraic curves. Besides its intrinsic interest, this will help to clarify the classes of moduli spaces being considered in the main theorems of this, and also to provide a somehow more explicit description of how such spaces fit together in families.

So the following theorem provides an explicit classification of all possible Lie algebroid structures on a line bundle $L$.

\begin{theorem}
\label{thm:classificationLieAlgebroidsRank1}
Let $X$ be a smooth complex projective curve and let $L$ be a line bundle on $X$. Then there is a one-to-one correspondence between isomorphism classes of Lie algebroid structures $(L,[\cdot\,,\cdot],\delta)$ on $L$ and orbits of the $\CC^*$-action on $H^0(L^{-1}\otimes T_X)$ sending
$$\lambda \cdot \delta = \delta \quad \forall \lambda \in \CC^* \,\, \forall \delta\in H^0(L^{-1}\otimes T_X).$$
\end{theorem}

\begin{proof}
We will prove that, given a line bundle $L$, for every choice of an anchor $\delta\in H^0(\Hom(L,T_X))=H^0(L^{-1}\otimes T_X)$, there exists a unique Lie bracket $[\cdot\,,\cdot]_\delta:L\otimes_\CC L \to L$ such that $(L,[\cdot,\cdot]_\delta,\delta)$ is a Lie algebroid.

Let us start by proving uniqueness. Assume that $[\cdot,\cdot]_1$ and $[\cdot\,,\cdot]_2$ are Lie brackets on $L$ compatible with the same anchor $\delta$. Then, they both satisfy Leibniz rule, so for each local sections $u,v\in L$ and $f\in \SO_X$,
$$[u,fv]_1=f[u,v]_1+\delta(u)(f)v,$$
$$[u,fv]_2=f[u,v]_2+\delta(u)(f)v.$$
Subtracting both equations yields
$$[u,fv]_1-[u,fv]_2=f([u,v]_1-[u,v]_2)$$
so $g(u,v)=[u,v]_1-[u,v]_2$ defines a skew-symmetric $\SO_X$-linear map $g:L\wedge L \to L$. But $L$ is a line bundle, so $L\wedge L=0$ and $g\equiv 0$ and thus $[\cdot\,,\cdot]_1=[\cdot\,,\cdot]_2$.

Let us prove the existence of the Lie bracket. Let $\delta:L\to T_X$ be an anchor map. Let $U\subset X$ be an open subset such that $L|_U$ is trivial. Let $u\in H^0(U,L)$ be a nowhere-vanishing local section over $U$ trivializing $L$. Then for every couple of local sections $v,w\in H^0(U,L)$ there exist unique local functions $a,b\in H^0(U,\SO_X)$ such that $v=au$ and $w=bu$. Define
$$[v,w]_\delta=[au,bu]_\delta:=(a\delta(u)(b)-b\delta(u)(a))u.$$
First of all, observe that the previous definition does not depend on the choice of the trivializing section $u$. If $u'\in H^0(U,L)$ is another nowhere-vanishing local section then there exists a nowhere-vanishing local function $s\in H^0(U,\SO_X)$ such that $u=su'$. Thus, $v=asu'$ and $w=bsu'$ and, taking into account that $\delta$ is $\SO_X$-linear and that $\delta(u)$ is a derivation, we have
\begin{multline*}
[asu',bsu']_\delta=(as\delta(u')(bs)-bs\delta(u')(as))u' =(as^2\delta(u')(b)-bs^2\delta(u')(a))u'\\
 = (a\delta(su')(b)-b\delta(su')(a))su' = (a\delta(u)(b)-b\delta(u)(a))u = [au,bu]_\delta.
\end{multline*}
The independence from the trivialization and the fact that $\delta(u)$ is $\CC$-linear for each local section $u$, implies that $[\cdot\,,\cdot]_\delta:L\otimes_\CC L \to L$ is a well defined map of sheaves. Let us verify that it satisfies Jacobi and Leibniz. It is clearly enough to verify each identify for local sections over trivializing open subsets. Thus, it is enough to consider sections of the form $au,bu,cu\in H^0(U,L)$ where $a,b,c\in H^0(U,\SO_X)$ and $u\in H^0(U,L)$ is a nowhere-vanishing local section over a certain open subset $U\subset X$. Then we have
\begin{multline*}
\sum_{cyc}[au,[bu,cu]_\delta]_\delta=\sum_{cyc} \left(a\delta(u)(b\delta(u)(c)-c\delta(u)(a))-\delta(u)(a)(b\delta(u)(c)-c\delta(u)(b))\right)u\\
=u\underbrace{\left(\sum_{cyc}ab \delta(u)(\delta(u)(c)) - \sum_{cyc}ac\delta(u)(\delta(u)(b))\right)}_0
-u\underbrace{\left(\sum_{cyc}\delta(u)(a) b \delta(u)(c) - \sum_{cyc}\delta(u)(a)\delta(u)(b)c\right)}_0=0,
\end{multline*}
where $\sum_{cyc}$ indicates a cyclic summation (over $a,b,c$), so $[\cdot\,,\cdot]_\delta$ satisfies the Jacobi identity. On the other hand, if $a,b,f\in H^0(U,\SO_X)$, then
\begin{multline*}
[au,fbu]_\delta = (a\delta(u)(fb)-fb\delta(u)(a))u = (af\delta(u)(b)+ab\delta(u)(f)-fb\delta(u)(a))u\\
=f(a\delta(u)(b)-b\delta(u)(a))u + \delta(au)(f)bu = f[au,bu]_\delta +\delta(au)(f)bu,
\end{multline*}
so the bracket satisfies Leibniz rule.

Finally, let us consider when can two different $\delta,\delta'\in H^0(L^{-1}\otimes T_X)$ give rise two isomorphic Lie algebroid structures $(L,[\cdot\,,\cdot]_\delta,\delta)\cong (L,[\cdot\,,\cdot]_{\delta'},\delta')$. As $L$ is a line bundle, the automorphisms of $L$ are given by the scaling action of $\CC^*$ on its fibres. Thus, the two Lie algebroid structures can only be isomorphic if they are in the same $\CC^*$-orbit, i.e., if
$$(L,[\cdot\,,\cdot]_{\delta'},\delta') = (L,\lambda [\cdot\,,\cdot]_\delta,\lambda\delta)$$
for some $\lambda \in \CC^*$. By construction, it is clear that
$$[\cdot\,,\cdot]_{\lambda \delta} = \lambda [\cdot\,,\cdot]_\delta$$
so the $\CC^*$-action on $H^0(L^{-1}\otimes T_X)$ sending $\lambda \cdot \delta = \lambda \delta$ actually lifts to isomorphisms
$$(L,[\cdot\,,\cdot]_{\lambda \delta},\lambda \delta)=(L,\lambda[\cdot\,,\cdot]_{\delta},\lambda \delta)\cong (L,[\cdot\,,\cdot]_\delta,\delta)$$
and, therefore, the isomorphism classes of Lie algebroid structures on $L$ are in one-to-one correspondence with the orbits of such $\CC^*$-action on $H^0(L^{-1}\otimes T_X)$.
\end{proof}

\begin{corollary}
\label{cor:classificationLieAlgebroidsRank1}
Let $L$ be a line bundle on a genus $g$ smooth complex projective curve.
\begin{itemize}
\item If $\deg(L)>2-2g$, or $\deg(L)=2-2g$ but $L\ne T_X$, then the only Lie algebroid structure admitted by $L$ is the trivial one $(L,0,0)$.
\item $T_X$ only admits two nonisomorphic Lie algebroid structures: the trivial one $(T_X,0,0)$ and the canonical one given by the Lie bracket of vector fields $\ST_X=(T_X,[\cdot\,,\cdot]_{\op{Lie}},\id)$.
\end{itemize}
\end{corollary}

\begin{proof}
If $\deg(L)>2-2g=\deg(T_X)$, then $H^0(L^{-1}\otimes T_X)=0$, so $L$ only admits the trivial structure.

The same holds if $\deg(L)=\deg(T_X)$ but $L\ne T_X$, as then $L^{-1}\otimes T_X$ is a nontrivial degree $0$ line bundle which cannot have nontrivial sections.

If $L=T_X$, then the space of possible isomorphism classes of Lie algebroid structures on $T_X$ is parameterised by the orbits of the $\CC^*$-action on $H^0(\SO_X)\cong \CC$. There are only two such orbits, one corresponding to $\delta=0$ and another corresponding to $\delta=\id$. The Lie algebroid corresponding to $\delta=0$ is the trivial structure. The Lie algebroid corresponding to $\delta=\id$ corresponds to the canonical Lie bracket.
\end{proof}

As a consequence of this corollary, we observe that if $\deg(L)\ge 2-2g$ then the only possible moduli spaces of Lie algebroid connections that can be considered are either a moduli space of $L^{-1}$-twisted Higgs bundles or, if $L=T_X$, the moduli space of flat connections. Moreover, the latter is only non-empty if the degree is $0$. This justifies focusing our analysis on moduli spaces of $\SL$-connections when $\deg(L)<2-2g$, which is the only case where the space of possible Lie algebroid structures actually contain other different moduli spaces. Given $\delta\in H^0(L^{-1}\otimes T_X)$, the moduli space of $(L,[\cdot\,,\cdot]_\delta,\delta)$-connections can be given a very concrete description. It parameterises vector bundles $E$ on $X$ together with maps
$$\nabla:E\longrightarrow E\otimes L^{-1}$$
such that for each local section $s\in E$ and each local function $f\in \SO_X$ we have
$$\nabla(fs)=f\nabla(s)+s\otimes \delta^t(df).$$
which can be interpreted uniformly as moduli spaces of generalized $L^{-1}$-valued connections, where the opperator $d_\SL=\delta^t\circ d : \SO_X \longrightarrow L^{-1}$ is the first order Chevalley-Eilenberg-de Rham differential of $(L,[\cdot\,,\cdot]_\delta,\delta)$. However, there is an alternative interesting interpretation for these spaces. The following corollary shows that, if $\delta\ne 0$, then each of these moduli spaces is always isomorphic to a suitable moduli space of logarithmic or meromorphic connections.

Recall the canonical Lie algebroid $\ST_X=(T_X,[\cdot\,,\cdot]_{\op{Lie}},\id)$ on our smooth projective curve $X$. Consider an effective divisor $D=\sum_{i=1}^n k_i x_i$ on $X$, with $k_i\geq 1$. Let $\SM_{\op{conn}}(D,r,d)$ be the moduli space of rank $r$ and degree $d$ semistable \emph{singular} ($\ST_X$-)connections, with poles of order at most $k_i$ over each $x_i\in D$.  These connections are possibly irregular if $k_i>1$ for some $i$.

Take the Lie subalgebroid $\ST_X(-D)\subset\ST_X$, thus with underlying bundle $T_X(-D)\subset T_X$, the induced Lie bracket of vector fields, and the inclusion anchor map $i_D: T_X(-D)\hookrightarrow T_X$. Then $\SM_{\ST_X(-D)}(r,d)=\SM_{\op{conn}}(D,r,d)$.

Observe that, contrary to the moduli space of semistable integrable $\ST_X$-connections, which is only non-empty if $d=0$, the moduli space $\SM_{\ST_X(-D)}(r,d)$ of semistable integrable $\ST_X(-D)$-connections is always non-empty for any rank $r$ and degree $d$, as follows from Proposition \ref{prop:moduliNonEmpty}. So the same holds for $\SM_{\op{conn}}(D,r,d)$.

\begin{corollary}
\label{cor:classificationLieAlgebroidsRank1Meromorphic}
Let $\SL=(L,[\cdot,\cdot]_{\delta},\delta)$ be a rank 1 Lie algebroid with $\deg(L)<2-2g$ and $\delta\ne 0$. Then there exists a unique effective divisor $D$ in the linear system $|L^{-1}\otimes T_X|$ such that
$$\SM_{\SL}(r,d)\cong \SM_{\op{conn}}(D,r,d)$$
\end{corollary}

\begin{proof}
By Theorem \ref{thm:classificationLieAlgebroidsRank1}, the isomorphism classes of rank 1 Lie algebroid structures on $L$ are in bijective correspondence with the $\CC^*$-orbits of their anchors $\delta\in H^0(L^{-1}\otimes T_X)$. There is a one on one correspondence between the nonzero classes $[\delta]\in \mathbb{P}H^0(L^{-1}\otimes T_x)$ and effective divisors in the linear system $|L^{-1}\otimes T_X|$ associated to the line bundle $L^{-1}\otimes T_X$. Each nonzero class $[\delta]\in \mathbb{P}H^0(L^{-1}\otimes T_X)$ is associated to the unique effective divisor $D\in |L^{-1}\otimes T_X|$ representing the zeroes of the map $\delta:L\longrightarrow T_X$. Thus, the map $\delta$ induces an isomorphism $\bar{\delta}:L \stackrel{\cong}{\longrightarrow} T_X(-D)$ and the following diagram commutes
\begin{eqnarray*}
\xymatrix{
L \ar[r]^-{\bar{\delta}}_-{\cong} \ar[rd]_{\delta} & T_X(-D) \ar@{^(->}[d]^{i_D} \\
& T_X 
}\, .
\end{eqnarray*}
Dualizing and calling $\varphi:=(\bar{\delta}^t)^{-1}: L\stackrel{\cong}{\longrightarrow} K(D)$ we get
\begin{eqnarray*}
\xymatrix{
L^{-1} \ar[r]^-{\varphi}_-{\cong} & K_X(D) \\
& K_X \ar[ul]^{\delta^t}  \ar@{^(->}[u]_{i_D^t}
}\, .
\end{eqnarray*}
Then, $\varphi$ induces an isomorphism $\SM_{\SL}(r,d)\cong \SM_{\op{conn}}(D,r,d)$ as follows. Send an $\SL$-connection $(E,\nabla)$ with
$$\nabla: E\longrightarrow E\otimes L^{-1}$$
to $(E, (\id\otimes \varphi)\circ \nabla)$. We know that $\nabla$ is an $\SL$-connection if and only if for every local section $s$ of $E$ and every local regular function $f$ we have
$$\nabla(fs)=f\nabla(s)+s\otimes \delta^t(df)\, .$$
Then the map
$$(\id\otimes \varphi)\circ \nabla : E\longrightarrow E\otimes K(D)\, .$$
is a meromorphic connection with poles precisely over $D$ because we have
$$\left((\id \otimes \varphi)\circ \nabla\right) (fs) =  f \left((\id \otimes \varphi)\circ \nabla\right)(s) + s \otimes (\varphi \circ \delta^t)(df) = f (\id \otimes \varphi)(s) + s \otimes i_D^t(df)\, .$$
As the considered Lie algebroids have all rank 1, both the $\SL$-connection $\nabla$ and the resulting meromorphic connection $(\id\otimes \varphi)\circ \nabla$ are immediately integrable. Moreover, composing with $\id\otimes \varphi^{-1}$ clearly gives an inverse to this construction and both transformations work in families, so $\varphi$ induces the described isomorphism between the moduli spaces.
\end{proof}

As a consequence of the previous correspondence, we can then use this Lie algebroid connections formalism to treat homogeneously diverse moduli spaces of twisted Higgs bundles, logarithmic connections and meromorphic connections with poles over different divisors. This will allow us to find nontrivial relations between its geometries.

Notice now that the quotient space $H^0(L^{-1}\otimes T_X)/\CC^*$ is not Hausdorff and that the point $0$ (representing the trivial Lie algebroid and, thus, the moduli space of twisted Higgs bundles) lies in the closure of all the $\CC^*$ orbits in $H^0(L^{-1}\otimes T_X)$. In fact, it is the unique GIT-unstable point for the $\CC^*$-action. In the following sections, we will use this fact to deduce several properties of the moduli spaces of Lie algebroid connections by relating them to the moduli space of twisted Higgs bundles. The idea will be to use the $\SL$-Hodge moduli spaces described in Section \ref{sec:L-Hodge moduli} to provide algebraic families of moduli spaces of Lie algebroid connections over $\CC$ endowed with a $\CC^*$-action which lifts the $\CC^*$-action on $H^0(L^{-1}\otimes T_X)$. In particular, given some $\delta \in H^0(L^{-1}\otimes T_X)$, the corresponding Hodge moduli space is a family over $\CC$ whose fibre over $\lambda \in \CC$ is isomorphic to the moduli space of $(L,[\cdot\,,\cdot]_{\lambda \delta}, \lambda \delta)$-connections. The generic fibre in this family is isomorphic to the moduli space of $(L,[\cdot\,,\cdot]_\delta,\delta)$-connections (which we know is isomorphic to a certain moduli space of logarithmic or meromorphic connections for an appropriate divisor $D$), but the fibre over $0$ will always be isomorphic to the same fixed moduli space of $L^{-1}$-twisted Higgs bundles, independently on the choice of the initial structure $\delta$ (i.e.~on the choice of the divisor $D$ in the linear system $|L^{-1}\otimes T_X|$). By studying and analyzing the geometry of these ``gluing families'', we will be able to transfer the analysis of several topological (dimension, smoothness, irreducibility, homotopy groups) and algebraic (motives, Hodge structure, $E$-polynomials) properties of all moduli spaces of Lie algebroid connections to more manageable moduli spaces of twisted Higgs bundles.

\section{Smoothness and semiprojectivity of $\SL$-Hodge moduli spaces}\label{sec:smoothn-semiproj-L-Hodge}

Let $Y$ be a quasi-projective variety endowed with an algebraic $\CC^*$-action, denoted as $Y\to Y$, $x\mapsto t\cdot x$, $x\in Y$, $t\in\CC^*$.
\begin{definition}{\cite[Definition 1.1]{HRV15}}\label{def:semiproj}
The variety $Y$ is \emph{semiprojective} if the following conditions are satisfied:
\begin{enumerate}
\item for each $x\in Y$ the limit $\lim_{t\to 0} t\cdot x$ exists in $Y$;
\item the fixed-point locus of the $\CC^*$ action $Y^{\CC^*}$ is proper.
\end{enumerate}
\end{definition}

In this section we will prove that, under some conditions, the $\SL$-Hodge moduli space $\SM_{\SL}^{\Hod}(r,d)\cong \SM_{\Lambda_\SL^{\op{red}}}(r,d)$ is a smooth and semiprojective variety. This is a crucial result to prove the results on the geometrical and topological properties of the moduli spaces of $\SL$-connections, which will be obtained in the subsequent sections. The fact that the Higgs bundle moduli spaces are semiprojective is well-known  but, for completeness, we include the proof in the following section. This fact is used in the proof that $\SM_{\SL}^{\Hod}(r,d)$ is semiprojective.

\subsection{Semiprojectivity of the moduli space of Higgs bundles}

Let $L$ be a line bundle over the genus $g$ curve $X$. In this section we show the well-known fact that the moduli space $\SM^{\Dol}_L(r,d)$ of $L$-twisted Higgs bundles over  $X$ is a smooth semiprojective variety, under the usual conditions  on the degree of $L$ and on $r$ and $d$.
In the next section, we will prove the analogous result for the $\SL$-Hodge moduli space and that will need a more substantial amount of work. 

The moduli $\SM^{\Dol}_L(r,d)$ admits a natural $\CC^*$-action by scaling the Higgs field
\begin{equation}\label{eq:C*-actHiggs}
t\cdot (E,\varphi) =(E,t\varphi).
\end{equation}
Note that this is a particular case of \eqref{eq:C*-action-LHodgemod}.

Recall now the Hitchin map from \eqref{eq:Hitchinmap}. Then the Hitchin base $W=\bigoplus_{i=1}^r H^0(L^i)$ also admits a natural $\CC^*$-action given by
$$t\cdot (s_1,\ldots,s_r) = (t s_1, t^2s_2,\ldots, t^rs_r),$$
which makes the Hitchin map $H:\SM^{\Dol}_L(r,d)\to W$ a $\CC^*$-equivariant map.

Let us first prove that the $\CC^*$-action on $\SM^{\Dol}_L(r,d)$ verifies the first condition on Definition \ref{def:semiproj}.

\begin{lemma}
\label{lemma:limitExistsHiggs}
Let $(E,\varphi)$ be a semistable Higgs bundle on $X$. Then the limit $\lim_{t \to 0} (E,t \varphi)$ exists in $\SM^{\Dol}_L(r,d)$.
\end{lemma}
\begin{proof}
As $H:\SM^{\Dol}_L(r,d)\to W$ is $\CC^*$-equivariant, we have
$$\lim_{t\to 0} H(E,t\varphi)=\lim_{t\to 0} t\cdot H(E,\varphi)=0,$$
thus the map $\CC^*\to W$ given by $t\mapsto H(E,t\varphi)$ extends to $\CC\to W$. By Lemma \ref{lemma:HitchinProper}, $H$ is proper, so by the valuative criterion of properness the map $\CC^*\to \SM^{\Dol}_L(r,d)$ given by $t\mapsto (E,t\varphi)$ must also extend to a map $\CC\to \SM^{\Dol}_L(r,d)$, thus providing the desired limit.
\end{proof}

Now we consider the second condition on Definition \ref{def:semiproj}.

\begin{lemma}
\label{lemma:fixedProperHiggs}
The fixed-point set of the $\CC^*$-action on $\SM^{\Dol}_L(r,d)$ is a proper scheme contained in $H^{-1}(0)$.
\end{lemma}
\begin{proof}
Since the Hitchin map $H$ is $\CC^*$-equivariant, the fixed-point set $\SM^{\Dol}_L(r,d)^{\CC^*}$ must be a closed subset of $H^{-1}(W^{\CC^*})=H^{-1}(0)$. By Lemma \ref{lemma:HitchinProper}, $H$ is proper, so $H^{-1}(0)$ is proper and hence so is  
$\SM^{\Dol}_L(r,d)^{\CC^*}$.
\end{proof}

\begin{proposition}
\label{prop:semiprojectiveHiggs}
Suppose that $g\ge 2$ and that $r$ and $d$ are coprime. Suppose the line bundle $L$ is such that $\deg(L)>2g-2$. Then the moduli space $\SM^{\Dol}_L(r,d)$ is a smooth complex semiprojective variety.
\end{proposition}
\begin{proof}
By Lemma \ref{lemma:smoothModuliHiggs} the moduli space $\SM^{\Dol}_L(r,d)$ is a smooth complex variety. Then Lemmas \ref{lemma:limitExistsHiggs} and \ref{lemma:fixedProperHiggs} prove that the action $t\cdot (E,\varphi)=(E,t\varphi)$ satisfies the semiprojectivity conditions.
\end{proof}

\subsection{Smoothness and semiprojectivity of the $\SL$-Hodge moduli space}
\label{section:semiprojectivityHodge}

Our task now is to prove that the $\SL$-Hodge moduli space $\SM_\SL^{\Hod}(r,d)\cong\SM_{\Lambda_\SL^{\op{red}}}(r,d)$ is also a smooth semiprojective variety, whenever $\rk(\SL)=1$, $\deg(\SL)<2-2g$ and $r$ and $d$ are coprime. Most of the work will be dedicated to the proof of smoothness at points in $\pi^{-1}(0)$ (cf.~\eqref{eq:isomfnonzeroibres}), namely to see that the zero fibre smoothly glues with the non-zeros fibres. 

We will need to explicitly use both interpretations, provided by Theorem \ref{thm:equivcatL-conn-<>lambda-mod}, of the points parameterised by $\SM_{\Lambda_\SL^{\op{red}}}(r,d)\cong \SM_\SL^{\Hod}(r,d)$, namely semistable $\Lambda_\SL^{\op{red}}$-modules and semistable $(\lambda,\SL)$-connections (which are automatically flat since $\rk(\SL)=1$). For instance, the proof that condition (1) of Definition \ref{def:semiproj} holds is going to be proved by closely following an argument by Simpson, via $\Lambda$-modules, but all the arguments required to prove smoothness of the moduli will be carried out by taking the $\SL$-connections point of view, because the deformation theory of such objects has been developed, contrary to the deformation theory of $\Lambda$-modules. In order to help the exposition, both the notation of $\SL$-connections and of $\Lambda$-modules will be used for the objects and the moduli spaces appropriately as needed during these proofs, depending on whether we need to study the geometry and properties of these objects from one perspective or the other. Notice that several of the intermediate results will be proved for higher rank Lie algebroids and we expect that semiprojectivity of the $\SL$-Hodge moduli also holds in that case (cf.~Remark \ref{rmk:higherrk}).

Recall the $\CC^*$-action \eqref{eq:C*-action-LHodgemod} on the $\SL$-Hodge moduli space $\SM_{\SL}^{\Hod}(r,d)$ given by
$$t \cdot (E,\nabla_\SL,\lambda) \mapsto (E,t\nabla_\SL, t\lambda).$$
We will show that with this $\CC^*$-action  $\SM_{\SL}^{\Hod}(r,d)$ becomes a smooth semiprojective variety. Recall also the surjective map $\pi:\SM_{\SL}^{\Hod}(r,d)\to\CC$ defined in \eqref{eq:project->lambda}.
\begin{lemma}
\label{lemma:limitExists}
Let $\SL$ be any Lie algebroid on $X$. Let $(E,\nabla_\SL,\lambda)\in \SM_{\SL}^{\Hod}(r,d)$ be any $(\lambda,\SL)$-connection. Then the limit $\lim_{t \to 0} (E,t \nabla_\SL, t\lambda)$ exists in $\pi^{-1}(0)\subset \SM_{\Lambda_\SL^{\op{red}}}(r,d)$.
\end{lemma}
\begin{proof}
The proof is analogous to \cite[Corollary 10.2]{Si97}. We will use \eqref{eq:L-connect<->lambdamod} to consider the associated $\Lambda_{\SL_\lambda}$-module $(E,\nabla_{\Lambda_{\SL_\lambda}})$, with $\nabla_{\Lambda_{\SL_\lambda}}:\Lambda_{\SL_\lambda}\otimes E\to E$, instead of the $(\lambda,\SL)$-connection (i.e.~$\SL_\lambda$-connection) $(E,\nabla_\SL,\lambda)$.

Consider the $\CC^*$-flat family of relative $\Lambda_{\SL_\lambda}^{\op{red}}|_{X\times \CC^*}$-modules over $\pi_{\CC}: X\times \CC \to \CC$, where $\pi_\CC(x,t)=t\lambda$, given by 
$$\left (\SE,\nabla_{\Lambda_{\SL_\lambda}^{\op{red}}} \right)=\left (\pi_X^* E, t \pi_X^*\nabla_{\Lambda_{\SL_\lambda}} \right),$$
where $\pi_X: X\times \CC^* \to X$ is the projection. For $t\ne 0$, the fibre of the family is semistable, as for any $t\ne 0$ we clearly have that the corresponding $(t\lambda,\SL)$-connection $(E,t\nabla_\SL,t\lambda)$ is semistable if and only if $(E,\nabla_\SL,\lambda)$ is semistable. By \cite[Theorem 10.1]{Si97}, there exists a family $(\overline{\SE},\overline{\nabla_{\Lambda_{\SL_\lambda}^{\op{red}}}})$ of $\Lambda_{\SL_\lambda}^{\op{red}}$-modules over $\pi_\CC:X\times \CC \to \CC$, flat over $\CC$, such that $(\overline{\SE},\overline{\nabla_{\Lambda_{\SL_\lambda}^{\op{red}}}})|_{X\times \CC^*} \cong (\pi_X^*E,t\pi_X^*\nabla_{\Lambda_{\SL_\lambda}})$ and such that $(\overline{\SE},\overline{\nabla_{\Lambda_{\SL_\lambda}^{\op{red}}}})|_{X\times \{0\}}$ is semistable. Thus, $(\overline{\SE},\overline{\nabla_{\Lambda_{\SL_\lambda}^{\op{red}}}})|_{X\times \{0\}}\in \pi^{-1}(0)$ is the limit at $t=0$ of the $\CC^*$-orbit of $(E,\nabla_\SL,\lambda)$ in $\SM_{\Lambda_\SL^{\op{red}}}(r,d)\cong \SM_{\SL}^{\Hod}(r,d)$.
\end{proof}

Now we will focus on the regularity of the $\SL$-Hodge moduli space. We will switch to use the $\SL$-connections interpretation of the points of the moduli space in our study. We start with a simple lemma.

\begin{lemma}
\label{lemma:stableHom} Let $\SL$ be any Lie algebroid on the curve $X$, and let $(E,\nabla_\SL)$ and $(E',\nabla_\SL')$ be semistable $\SL$-connections.
\begin{enumerate}
\item If $\mu(E)>\mu(E')$ then $\Hom((E,\nabla_\SL),(E',\nabla_\SL'))=0$.
\item Suppose $(E,\nabla_\SL)$ and $(E',\nabla_\SL')$ are stable and $\mu(E)=\mu(E')$. Let  $\psi\in\Hom((E,\nabla_\SL),(E',\nabla_\SL'))$ be a non-zero map. Then it is an isomorphism.
\item If $(E,\nabla_\SL)$ is stable, then its only endomorphisms are the scalars, i.e.~$\End(E,\nabla_\SL)\cong \CC$.
\end{enumerate}
\end{lemma}
\begin{proof}
The proof is classical. (1) and (2) are completely analogous to \cite[Lemma 3.2]{BGL11}. To prove (3), let $\alpha:(E,\nabla_\SL)\to (E,\nabla_\SL)$ be any endomorphism. Choose any point $x\in X$. Then $\alpha$ induces an endomorphism $\alpha_x$ of the fibre $E_x$. Let $\lambda\in \CC$ be an eigenvalue of such morphism. As $\nabla_\SL$ is $\CC$-linear, then $\alpha-\lambda \id \in \End(E,\nabla_\SL)$. By (2), we know that this map is either zero or an isomorphism. Nevertheless, we know that $\lambda$ is an eigenvalue of $\alpha_x$, so $\alpha-\lambda \id$ has a nontrivial kernel at the fibre over $x$ and, therefore, it cannot be an isomorphism. Thus, $\alpha-\lambda \id=0$, so $\alpha=\lambda \id$.
\end{proof}

Given any Lie algebroid $\SL$, the deformation theory of flat $\SL$-connections was studied in Chapter 5 of \cite{Tor11}. In particular, it follows from Theorem 47 of loc. cit. that that the Zariski tangent space to the moduli space at an integrable $\SL$-connection $(E,\nabla_\SL)$ is isomorphic to $\HH^1(X, C^\bullet(E,\nabla_\SL))$, where $C^\bullet(E,\nabla_\SL)$ is the complex
\begin{equation}\label{eq:def-complex}
C^\bullet(E,\nabla_\SL) \, : \, \End(E) \stackrel{[-,\nabla_\SL]}{\longrightarrow} \End(E)\otimes \Omega_\SL^1\stackrel{[-,\nabla_\SL]}{\longrightarrow} \ldots \stackrel{[-,\nabla_\SL]}{\longrightarrow} \End(E)\otimes \Omega_\SL^{\rk(\SL)},
\end{equation}
and that the obstruction for the deformation theory lies in $\HH^2(X, C^\bullet(E,\nabla_\SL))$.

In the next lemma we only consider rank $1$ Lie algebroids.

\begin{lemma}
\label{lemma:dimensionTangent}
Let $\SL$ be a Lie algebroid of rank $1$ on $X$ and let $(E,\nabla_\SL)$ be a stable $\SL$-connection of rank $r$ and degree $d$. Then the dimension of the Zariski tangent space to $\SM_{\SL}(r,d)$ at $(E,\nabla_\SL)$ is given by
$$\dim T_{(E,\nabla_\SL)} \SM_{\SL}(r,d) = 1-r^2\deg(\SL)+\dim \left( \HH^2(C^\bullet(E,\nabla_\SL)) \right).$$
\end{lemma}

\begin{proof}
Since $\rk(\SL)=1$, then the deformation complex \eqref{eq:def-complex} only has two terms,
$$C^\bullet(E,\nabla_\SL) \, : \, \End(E) \stackrel{[-,\nabla_\SL]}{\longrightarrow} \End(E)\otimes \Omega_\SL^1,$$
thus its hypercohomology fits in the following exact sequence
\begin{multline*}
0\longrightarrow \HH^0(C^\bullet(E,\nabla_\SL)) \longrightarrow H^0(\End(E))\longrightarrow H^0(\End(E)\otimes \Omega_\SL^1) \longrightarrow \\
\HH^1(C^\bullet(E,\nabla_\SL)) \longrightarrow H^1(\End(E)) \longrightarrow H^1(\End(E)\otimes \Omega_\SL^1) \longrightarrow \HH^2(C^\bullet(E,\nabla_\SL)) \longrightarrow 0.
\end{multline*}
Therefore,
$$\dim(\HH^1(C^\bullet(E,\nabla_\SL))) = \dim(\HH^0(C^\bullet(E,\nabla_\SL)))+\dim(\HH^2(C^\bullet(E,\nabla_\SL)))+\chi(\End(E)\otimes \Omega_\SL^1)-\chi(\End(E)).$$
We can compute each term in the previous expression working in an analogous way to \cite[Proposition 3.3]{BGL11}. By construction, $\HH^0(C^\bullet(E,\nabla_\SL))$ corresponds to sections of $\End(E)$ belonging to the kernel of the commutator $[-,\nabla_\SL]$, so $\HH^0(C^\bullet(E,\nabla_\SL))\cong H^0(\End(E,\nabla_\SL))$. By stability of $(E,\nabla_\SL)$, point (3) of Lemma \ref{lemma:stableHom} shows that $\dim(\HH^0(C^\bullet(E,\nabla_\SL)))=\dim (H^0(\End(E,\nabla_\SL))) =1$.
On the other hand, $\chi(\End(E))=r^2(1-g)$ and $\chi(\End(E)\otimes \Omega_\SL^1)=-r^2\deg(\SL)+r^2(1-g)$.
Hence
$$\dim T_{(E,\nabla_\SL)}\SM_{\SL}(r,d)=\dim(\HH^1(C^\bullet(E,\nabla_\SL)))= 1-r^2\deg(\SL)+\dim \left (\HH^2(C^\bullet(E,\nabla_\SL)) \right),$$
as claimed.
\end{proof}

Let $\SL=(V,[\cdot\,,\cdot],\delta)$ be any Lie algebroid on $X$, so no constrains on the algebraic vector bundle $V$. Recall the associated Lie algebroid $\SL_\lambda$ given by \eqref{eq:Llambda}. Then $\SL_0=(V,0,0)$ is the trivial algebroid with underlying bundle $V$. Now we aim to study the first order deformations of a semistable integrable $\SL_0$-connection $(E,\nabla_{\SL_0})$ of rank $r$ and degree $d$ (i.e.~a semistable $V^*$-twisted Higgs bundle), not just inside of $\pi^{-1}(0)=\SM_{\Lambda_{\SL_0}}(r,d)\cong\SM_{V^*}^{\Dol}(r,d)$, but rather inside the $\SL$-Hodge moduli space $\SM_{\Lambda_\SL^{\op{red}}}(r,d)\cong \SM_{\SL}^{\Hod}(r,d)$. So we allow deformations of $(E,\nabla_{\SL_0})$ not only along $\pi^{-1}(0)$ but also to $\pi^{-1}(\lambda)$ for some $\lambda\neq 0$. Recall that here $\pi:\SM_{\SL}^{\Hod}(r,d)\to\CC$ is the projection \eqref{eq:project->lambda}.

\begin{lemma}
\label{lemma:HodgeTangent}
Let $\SL=(V,[\cdot\,,\cdot],\delta)$ be any Lie algebroid on the curve $X$. Then the Zariski tangent space to the $\SL$-Hodge moduli space $\SM_{\Lambda_{\SL}^{\op{red}}}(r,d)$ at a stable point $(E,\nabla_\SL,0)$ lying over the $0$ fibre is
$$T_{(E,\nabla_\SL,0)}\SM_{\SL}^{\Hod}(r,d) \cong \frac{\left \{(c,C,\lambda_\varepsilon) \in \left( \begin{array}{l}
C^1(\SU,\End(E)) \times \\
C^0 (\SU,\End(E)\otimes \Omega_\SL) \times \CC 
\end{array} \right) \middle | \begin{array}{l}
\partial c= 0\\
\partial C = \tilde{\nabla}_\SL c + \lambda_\varepsilon \omega\\
\tilde{\nabla}_\SL C = -\lambda_\varepsilon d_\SL(\nabla_\SL)
\end{array} \right \}}{ \left \{ (\partial \eta ,\tilde{\nabla}_\SL \eta, 0) \, \middle | \,  \eta\in C^0(\SU,\End(E)) \right \}}$$
where $\SU=\{U_\alpha\}$ is an open cover of $X$ such that $E$ is trivial over each open subset $U_\alpha$ and $\omega\in C^1(\SU,\End(E)\otimes \Omega_\SL)$ is some $1$-cocycle.
Moreover, if $\pi: \SM_{\SL}^{\Hod}(r,d) \to \CC$ is the map sending $(E,\nabla_\SL,\lambda)$ to $\lambda$, then its differential $d\pi: T_{(E,\nabla_\SL,0)}\SM_{\SL}^{\Hod}(r,d) \to \CC$ is just $[(c,C,\lambda_\varepsilon)] \mapsto \lambda_\varepsilon$.
\end{lemma}

\begin{proof}
We will proceed analogously to \cite[\S 5.2]{Tor11}. Let $(E,\nabla_\SL,0)\in \pi_{\lambda}^{-1}(0)$. By definition, $(E,\nabla_\SL)$ is a $V^*$-twisted semistable Higgs bundle. Fix an open cover $\SU=\{U_\alpha\}$ of $X$ such that $E$ is trivial over each open subset $U_\alpha$. We will use the usual notation $U_{\alpha\beta}:=U_\alpha\cap U_\beta$, etc. to denote the intersections of the open subsets. For each $\alpha$ and $\beta$, let $g_{\alpha\beta}:U_{\alpha\beta} \to \GL(r,\CC)$ be the transition functions of $E$, and let $G_\alpha$ be the matrix valued function representing the Higgs field $\nabla_\SL$ in the local coordinates over $U_\alpha$.

The first order deformations of the Higgs bundle $(E,\nabla_\SL)$ are given by families of $\Lambda_\SL^{\op{red}}$-modules over each $\Spec(\CC[\varepsilon]/\varepsilon^2) \to \Spec(\CC[\lambda])$. The possible maps $\Spec(\CC[\varepsilon]/\varepsilon^2) \to \Spec(\CC[\lambda])$ are given by the choice of the image of $\varepsilon$, which must be of the form $\lambda_\varepsilon \lambda$ for some $\lambda_\varepsilon \in \CC$. Fix the value $\lambda_\varepsilon$. Then a family over $\Spec(\CC[\varepsilon]/\varepsilon^2) \to \Spec(\CC[\lambda])$ for that parameter $\lambda_\varepsilon$ is a triple $(E',\nabla_\SL',\lambda_\varepsilon \varepsilon)$ such that $E'$ is a vector bundle over $X\times \Spec(\CC[\varepsilon]/\varepsilon^2)$ and $\nabla_\SL'$ is a $(\lambda_\varepsilon \varepsilon,\SL)$-connection on $E'$. Relative to the open cover $\{U_\alpha \times \Spec(\CC[\varepsilon]/\varepsilon^2) \}$, we can write the transition functions of $E'$ as
$$g'_{\alpha\beta}=g_{\alpha\beta}+\varepsilon g_{\alpha\beta}^1,$$
where $g_{\alpha\beta}^1 \in \SO_X(U_{\alpha\beta}) \times \mathfrak{gl}_r$. Similarly, we can write locally $\nabla_\SL'$ over $U_\alpha$ as
$$\nabla_{\SL,\alpha}'=\lambda_\varepsilon \varepsilon d_\SL + G_\alpha+\varepsilon G^1_\alpha,$$
where $G^1_\alpha \in \Omega_\SL(U_\alpha)\otimes \mathfrak{gl}_r$.

Moreover, define the 1-cocycle $c\in C^1(\SU, \End(E))$ in the following way. For each $U_{\alpha\beta}$, let 
\begin{equation}\label{eq:c-cocycle}
(c_{\alpha\beta})^{(\alpha)}=g_{\alpha\beta}^1g_{\beta\alpha},
\end{equation} where we use the notation $(-)^{(\alpha)}$ to denote the matrix with respect to the basis given by the trivialization over $U_\alpha$.

Since $E$ and $E'$ are vector bundles, the following equations must be satisfied:
$$g_{\alpha\beta}g_{\beta\alpha} = 1, \ \ \ g'_{\alpha\beta}g'_{\beta\alpha} = 1,\ \ \ g_{\alpha\beta}g_{\beta\gamma}g_{\gamma\alpha}=1 \ \ \ \text{ and }\ \ \ g'_{\alpha\beta}g'_{\beta\gamma}g'_{\gamma\alpha}=1.$$
A direct computation with the first two equations yields $g_{\beta\alpha}^1=-g_{\beta\alpha}g_{\alpha\beta}^1g_{\beta\alpha}$, thus
\begin{equation}
\label{eq:cClosed1}
c_{\beta\alpha}^{(\alpha)}=g_{\alpha\beta}c_{\beta\alpha}^{(\beta)}g_{\beta\alpha} = g_{\alpha\beta} g_{\beta\alpha}^1 = -g_{\alpha\beta}^1g_{\beta\alpha}=- c_{\alpha\beta}^{(\alpha)}.
\end{equation}
On the other hand, the last couple of equations imply
$g_{\alpha\beta}^1g_{\beta\gamma} g_{\gamma\alpha}+g_{\alpha\beta}g_{\beta\gamma}^1 g_{\gamma\alpha}+g_{\alpha\beta}g_{\beta\gamma} g_{\gamma\alpha}^1=0$.
We can rewrite each summand of the last equation in terms of the cocycle $c$ as 
$$
g_{\alpha\beta}^1g_{\beta\gamma} g_{\gamma\alpha} = g_{\alpha\beta}^1g_{\beta\alpha}  = c_{\alpha\beta}^{(\alpha)},\ \ \ g_{\alpha\beta}g_{\beta\gamma}^1 g_{\gamma\alpha} =g_{\alpha\beta}g_{\beta\gamma}^1 g_{\gamma\beta} g_{\beta\alpha}  = c_{\beta\gamma}^{(\alpha)}\ \ \ \text{and}\ \ \ g_{\alpha\beta}g_{\beta\gamma} g_{\gamma\alpha}^1 =g_{\alpha\gamma}g_{\gamma\alpha}^1  =c_{\gamma\alpha}^{(\alpha)}.
$$
Thus, we obtain
\begin{equation}
\label{eq:cClosed2}
c_{\alpha\beta}^{(\alpha)}+c_{\beta\gamma}^{(\alpha)}+c_{\gamma\alpha}^{(\alpha)}=0,
\end{equation}
so $c\in C^1(\SU,\End(E))$ in \eqref{eq:c-cocycle} is a 1-cocycle.

On the other hand, as $(E,\nabla_\SL)$ is a $V^*$-twisted Higgs bundle and $(E',\nabla_\SL',\lambda_\varepsilon \varepsilon)$ is a ($\lambda_\varepsilon \varepsilon, \SL)$-connection, then we must have $\nabla_{\SL,\beta}=g_{\beta\alpha}\nabla_{\SL,\alpha} g_{\alpha\beta}$ and $\nabla_{\SL,\beta}'=g_{\beta\alpha}\nabla_{\SL,\alpha}' g_{\alpha\beta}$ on $U_{\alpha\beta}$.
Expanding each side of the last expression and taking into account that $\varepsilon^2=0$ we obtain
\begin{equation*}
\begin{split}
\lambda_\varepsilon \varepsilon d_\SL + G_\beta+\varepsilon G^1_\beta &= g'_{\beta\alpha}(\lambda_\varepsilon \varepsilon d_\SL + G_\alpha+\varepsilon G^1_\alpha)g'_{\alpha\beta} \\
&=\lambda_\varepsilon\varepsilon d_\SL + \lambda_\varepsilon \varepsilon g_{\beta\alpha} d_\SL g_{\alpha\beta}
 + g_{\beta\alpha} G_\alpha g_{\alpha\beta}+ \varepsilon \left( g_{\beta\alpha}^1 G_\alpha g_{\alpha\beta} + g_{\beta\alpha}G_\alpha^1g_{\alpha\beta}+g_{\beta\alpha}G_\alpha g_{\alpha\beta}^1\right),
\end{split}
\end{equation*}
hence we conclude that
\begin{equation}
\label{eq:smoothHodgeModuli1}
G_\beta^1=g_{\beta\alpha}^1 G_\alpha g_{\alpha\beta} + g_{\beta\alpha}G_\alpha^1g_{\alpha\beta}+g_{\beta\alpha}G_\alpha g_{\alpha\beta}^1+\lambda_\varepsilon g_{\beta\alpha} d_\SL g_{\alpha\beta}.
\end{equation}

Define the $0$-cocycle $C\in C^0(\SU,\End(E)\otimes \Omega_\SL)$ by taking $$C_\alpha^{(\alpha)}=G_\alpha^1,$$ for each $\alpha$. Then, the equality \eqref{eq:smoothHodgeModuli1} written in terms of the cocycles $c$ and $C$, reads as
\begin{equation*}
\begin{split}
C_\beta^{(\alpha)}&=g_{\alpha\beta} G_\beta^1 g_{\beta\alpha}\\
&= g_{\alpha\beta}g_{\beta\alpha}^1 G_\alpha + G_\alpha^1 + G_\alpha g_{\alpha\beta}^1 g_{\beta\alpha} + \lambda_\varepsilon (d_\SL g_{\alpha\beta})g_{\beta\alpha}\\
&=-c_{\alpha\beta}^{(\alpha)} G_\alpha + C_\alpha^{(\alpha)} + G_\alpha c_{\alpha\beta}^{(\alpha)} + \lambda_\varepsilon (d_\SL g_{\alpha\beta})g_{\beta\alpha}\\
&= C_\alpha^{(\alpha)}+ \left [G_\alpha,c_{\alpha\beta}^{(\alpha)} \right ]+ \lambda_\varepsilon (d_\SL g_{\alpha\beta})g_{\beta\alpha}.
\end{split}
\end{equation*}

Consider the $1$-cochain $\omega\in C^1(\SU,\End(E)\otimes \Omega_\SL)$ defined as
\begin{equation}\label{eq:omega-cocycle}
\omega_{\alpha\beta}^{(\alpha)}=(d_\SL g_{\alpha\beta})g_{\beta\alpha},
\end{equation}
for each $\alpha,\beta$. Thus
\begin{equation}
\label{eq:Cdifferential}
C_\beta^{(\alpha)}-C_\alpha^{(\alpha)} = [G_\alpha,c_{\alpha\beta}^{(\alpha)}]+\lambda_\varepsilon \omega_{\alpha\beta}^{(\alpha)}.
\end{equation}
Observe that
$$(d_\SL g_{\alpha\beta})g_{\beta\alpha} + g_{\alpha\beta}(d_\SL g_{\beta\alpha}) = d_\SL(g_\alpha\beta g_\beta\alpha)=d_\SL(1)=0$$
so $d_\SL g_{\beta\alpha}=-g_{\beta\alpha} (d_\SL g_{\alpha\beta}) g_{\beta\alpha}$, and we get
$$\omega_{\beta\alpha}^{(\alpha)}=g_{\alpha\beta} (d_\SL g_{\beta\alpha}) g_{\alpha\beta} g_{\beta\alpha} = g_{\alpha\beta} (d_\SL g_{\beta\alpha}) = -(d_\SL g_{\alpha\beta}) g_{\beta\alpha} = -\omega_{\alpha\beta}^{(\alpha)}.$$
On the other hand
\begin{equation*}
\begin{split}
0&=d_\SL(1)\\&
=d_\SL(g_{\alpha\beta}g_{\beta\gamma}g_{\gamma\alpha})\\
&=(d_\SL g_{\alpha\beta}) g_{\beta\gamma} g_{\gamma\alpha} + g_{\alpha\beta}(d_\SL g_{\beta\gamma})g_{\gamma\alpha}+g_{\alpha\beta}g_{\beta\gamma}(d_\SL g_{\gamma\alpha})\\
&= (d_\SL g_{\alpha\beta}) g_{\beta\alpha} + g_{\alpha\beta} (d_\SL g_{\beta\gamma}) g_{\gamma \beta} g_{\beta\alpha} + g_{\alpha\gamma} (d_\SL g_{\gamma \alpha}) g_{\alpha\gamma} g_{\gamma\alpha}\\
& =\omega_{\alpha\beta}^{(\alpha)} + \omega_{\beta\gamma}^{(\alpha)} + \omega_{\gamma\alpha}^{(\alpha)}.
\end{split}
\end{equation*}
Hence $\omega$ is in fact a $1$-cocycle.

Finally, flatness of $\nabla_\SL$ and $\nabla_\SL'$ implies that for each $\alpha$ we have $0=\nabla_{\SL,\alpha}^2 = G_\alpha\wedge G_\alpha$ and
$$0=(\nabla_{\SL,\alpha}')^2 = (\lambda_\varepsilon \varepsilon d_\SL + G_\alpha+ \varepsilon G_\alpha^1)^2 = \lambda_\varepsilon \varepsilon d_\SL(G_\alpha) + G_\alpha \wedge G_\alpha + \varepsilon G_\alpha \wedge G_\alpha^1 + \varepsilon G_\alpha^1 \wedge G_\alpha.$$
Denote by $d_\SL(\nabla_\SL) \in C^0(\SU, \End(E)\otimes \Omega_L^2)$ the $0$-cocycle defined locally as $d_\SL(\nabla_\SL)^{(\alpha)}=d_\SL(G_\alpha)$. Then, we can write the previous equation in terms of $C$ and $d_\SL(\nabla_\SL)$ as follows. The flatness equations yield $G_\alpha\wedge G_\alpha^1 + G_\alpha^1 \wedge G_\alpha = -\lambda_\varepsilon d_\SL(G_\alpha)$, so
\begin{equation}
\label{eq:CCurvature}
\tilde{\nabla}_\SL C_\alpha^{(\alpha)} = -\lambda_\varepsilon d_\SL(\nabla_\SL)^{(\alpha)},
\end{equation}
where $\tilde{\nabla}_\SL=[-,\nabla_\SL] : \End(E) \to \End(E)\otimes \Omega_\SL$ is the induced map on $\End(E)$ by $\nabla_\SL$. We can express equations \eqref{eq:cClosed1}, \eqref{eq:cClosed2}, \eqref{eq:Cdifferential} and \eqref{eq:CCurvature} globally as follows. Each deformation of $(E,\nabla_\SL,0)$ is given by a triple $$(c,C,\lambda_\varepsilon),$$ with $c\in C^1(\SU,\End(E))$, $C \in C^0 (\SU,\End(E)\otimes \Omega_\SL)$ and $\lambda_\varepsilon \in \CC$,
such that
\begin{equation}
\label{eq:DeformationSpace}
\left \{ \begin{array}{l}
\partial c= 0\\
\partial C = \tilde{\nabla}_\SL c + \lambda_\varepsilon \omega\\
\tilde{\nabla}_\SL C = -\lambda_\varepsilon d_\SL(\nabla_\SL).
\end{array} \right.
\end{equation}
On the other hand, two such triples $(c,C,\lambda_\varepsilon)$ and $(\bar{c},\bar{C},\bar{\lambda}_\varepsilon)$ give rise to equivalent deformations $(E',\nabla_\SL',\lambda_\varepsilon)$ and $(\overline{E}',\overline{\nabla}'_{\SL},\bar{\lambda}_\varepsilon)$ of $(E,\nabla_\SL,0)$ if and only if $\lambda_\varepsilon=\bar{\lambda}_\varepsilon$ and there exists a $0$-cocycle of local automorphisms $\xi_\alpha: U_\alpha \times \Spec(\CC[\varepsilon]/\varepsilon^2) \to \GL_r$ of the form $\xi_\alpha=\id+\varepsilon \eta_\alpha$ with $\eta_\alpha:U_\alpha \to \mathfrak{gl}_r$ such that
\begin{equation}
\label{eq:eqDeformation1}
\overline{g}_{\alpha\beta}' (g_{\alpha\beta}')^{-1} = \xi_\beta \xi_\alpha^{-1}
\end{equation}
and
\begin{equation}
\label{eq:eqDeformation2}
\overline{\nabla}'_{\SL,\alpha}= \xi_\alpha^{-1} \nabla_{\SL,\alpha}' \xi_\alpha.
\end{equation}
Here, following the previous notation, we write $\overline{E}'$ and $\overline{\nabla}'_{\SL}$ locally in the corresponding trivialization over $\SU$ as
$$\overline{g}'_{\alpha\beta}=\overline{g}_{\alpha\beta}+\varepsilon \overline{g}_{\alpha\beta}^1=g_{\alpha\beta}+\varepsilon \overline{g}_{\alpha\beta}^1\ \ \ \text{ and }\ \ \ \overline{\nabla}'_{\SL,\alpha}=\bar{\lambda}_\varepsilon \varepsilon d_\SL + \overline{G}_\alpha+\varepsilon \overline{G}^1_\alpha=\lambda_\varepsilon \varepsilon d_\SL + G_\alpha+\varepsilon \overline{G}^1_\alpha.$$
We have that $\xi_\beta \xi_\alpha^{-1}=(\id+\varepsilon\eta_\beta)(\id-\varepsilon \eta_\alpha)=\id + \varepsilon \left(\eta_\beta-\eta_\alpha \right)$
and
$$\overline{g}_{\alpha\beta}' (g_{\alpha\beta}')^{-1} = (g_{\alpha\beta}+\varepsilon \overline{g}_{\alpha\beta}^1)(g_{\beta\alpha}+\varepsilon g_{\beta\alpha}^1) = \id+\varepsilon ( g_{\alpha\beta}g_{\beta\alpha}^1 + \overline{g}_{\alpha\beta}^1g_{\beta\alpha}) = \id+\varepsilon(\overline{c}_{\alpha\beta} - c_{\alpha\beta}).$$
From \eqref{eq:eqDeformation1}, we obtain $\overline{c}_{\alpha\beta} - c_{\alpha\beta} = \eta_\beta-\eta_\alpha$,
so $\overline{c}-c=\partial \eta$.
On the other hand,
\begin{equation*}
\begin{split}
\xi_\alpha^{-1} \nabla_{\SL,\alpha}' \xi_\alpha&= \lambda_\varepsilon  \varepsilon d_\SL + \lambda_\varepsilon  \varepsilon (\id-\varepsilon \eta_\alpha) d_\SL (\id+ \varepsilon \eta_\alpha) + (\id-\varepsilon \eta_\alpha) G_\alpha (\id+\varepsilon \eta_\alpha) + \varepsilon (\id-\varepsilon \eta_\alpha) G_\alpha^1 (\id+\varepsilon \eta_\alpha) \\&= \lambda_\varepsilon \varepsilon d_\SL + G_\alpha + \varepsilon \left( G_\alpha\eta_\alpha-\eta_\alpha G_\alpha + G_\alpha^1 \right).
\end{split}
\end{equation*}
Hence, \eqref{eq:eqDeformation2} yields $\overline{G}_\alpha^1 = [G_\alpha,\eta_\alpha] + G_\alpha^1$
or, equivalently, $\overline{C}_\alpha-C_\alpha = [G_\alpha,\eta_\alpha]=\tilde{\nabla}_\SL\eta_\alpha$,
and thus, $\overline{C}-C=\tilde{\nabla}_\SL\eta$.
We finally conclude that the deformation space of $\SM_{\SL}^{\Hod}(r,d)$ at $(E,\nabla_\SL,0)$ is
\begin{equation}\label{eq:Zartangspce}
T_{(E,\nabla_\SL,0)}\SM_{\SL}^{\Hod}(r,d) \cong \frac{\left \{(c,C,\lambda_\varepsilon) \in \left( \begin{array}{l}
C^1(\SU,\End(E)) \times \\
C^0 (\SU,\End(E)\otimes \Omega_\SL) \times \CC 
\end{array} \right) \middle | \begin{array}{l}
\partial c= 0\\
\partial C = \tilde{\nabla}_\SL c + \lambda_\varepsilon \omega\\
\tilde{\nabla}_\SL C = -\lambda_\varepsilon d_\SL(\nabla_\SL)
\end{array} \right \}}{ \left \{ (\partial \eta ,\tilde{\nabla}_\SL \eta, 0) \,  \middle | \, \eta\in C^0(\SU,\End(E)) \right \}}
\end{equation}
and the map $d\pi: T_{(E,\nabla_\SL,0)}\SM_{\SL}^{\Hod}(r,d) \to \CC$ is just $[(c,C,\lambda_\varepsilon)] \mapsto \lambda_\varepsilon$. Notice that a priori we should mod out the right-hand side of \eqref{eq:Zartangspce} by the automorphism group $\Aut(E,\nabla_\SL)$ of $(E,\nabla_\SL)$. However, since $(E,\nabla_\SL)$ is stable, then it is simple by Lemma \ref{lemma:stableHom} (3) and so $\Aut(E,\nabla_\SL)=\CC^*$ acts trivially.
\end{proof}

\begin{remark}\label{rmk:defo-hypercoh}
While the deformation theory of $\SM_{\SL}(r,d)$ is governed by a nice deformation complex (as expectable for this type of deformation problems), we have not been able to provide, in general, a natural cohomological interpretation for the deformation theory of $\SM_{\SL}^{\Hod}(r,d)$.

This can, however, be achieved in certain cases. For instance, suppose $\rk(\SL)=1$ and suppose $E$ is a stable vector bundle over $X$. By Corollary \ref{cor:existsConnection}, $E$ admits an integrable $\SL$-connection $\nabla_{\SL,0}:E\to E\otimes \Omega_\SL$. Let us consider the family $(\pi_X^* E, \lambda \pi_X^* \nabla_{\SL,0},\lambda)$ over $X\times \CC$, where $\pi_X:X\times \CC \to X$ is the projection, and consider the infinitesimal family over $\Spec(\CC[\varepsilon]/\varepsilon^2)$ around $0$ given by $(\pi_X^* E,\varepsilon \pi_X^*\nabla_{\SL,0},\varepsilon)$. We can now express the family locally in a similar way to the previous Lemma. Given an open cover $\{U_\alpha \times \Spec(\CC[\varepsilon]/\varepsilon^2)$, let $g_{\alpha\beta}:U_{\alpha\beta}\to \GL(r,\CC)$ be the transition functions of $E$. As $\varepsilon\nabla_{\SL,0}$ is an $(\varepsilon,\SL)$-connection, we can express it locally over $U_\alpha$ as
$$\varepsilon\nabla_{\SL,0,\alpha}=\varepsilon d_\SL+\varepsilon G_\alpha^1$$
for some $G_\alpha^1\in \Omega_\SL(U_\alpha)\otimes \mathfrak{gl}_r$. As $G_\alpha^1$ comes from an actual $\SL$-connection, we must have 
$$\varepsilon\nabla_{\SL,0,\beta}=g_{\beta\alpha} \varepsilon \nabla_{\SL,0,\alpha} g_{\alpha\beta}$$
on the overlaps $U_{\alpha\beta}$.
Plugging in the local representation of $\nabla_\SL$ yields
$$\varepsilon d_\SL+\varepsilon G_\beta^1=\varepsilon d_\SL+ \varepsilon  g_{\beta\alpha} d_\SL g_{\alpha\beta}+\varepsilon g_{\beta\alpha} G_\alpha^1 g_{\alpha\beta}.$$
Thus,
$$g_{\alpha\beta} G_\beta^1 g_{\beta\alpha}=d_\SL g_{\alpha\beta} g_{\beta\alpha} + G_\alpha^1= \omega_{\alpha\beta}^{(\alpha)}+ G_\alpha^1.$$
Let $\Omega \in C^0(\SU,\End(E)\otimes \Omega_L)$ be the $0$-cocycle defined as $\Omega_\alpha^{(\alpha)}=G_\alpha^1$.
Then $\omega_{\alpha\beta}^{(\alpha)}=\Omega_\beta^{(\alpha)}-\Omega_\alpha^{(\alpha)},$
hence $$\omega=\partial \Omega.$$
Now, since $\SL$ has rank $1$, the integrability condition is automatic and so 
$$T_{(E,\nabla_\SL,0)}\SM_{\SL}^{\Hod}(r,d) \cong \frac{\left \{(c,C,\lambda_\varepsilon) \in \left( \begin{array}{l}
C^1(\SU,\End(E)) \times \\
C^0 (\SU,\End(E)\otimes \Omega_\SL) \times \CC 
\end{array} \right) \middle | \begin{array}{l}
\partial c= 0\\
\partial C = \tilde{\nabla}_\SL c + \lambda_\varepsilon \partial \Omega
\end{array} \right \}}{ \left \{ (\partial \eta ,\tilde{\nabla}_\SL \eta, 0) \,  \middle | \, \eta\in C^0(\SU,\End(E)) \right \}}.$$
Then, the map $(c,C,\lambda_\varepsilon) \mapsto (c,C-\lambda_\varepsilon \Omega, \lambda_\varepsilon)$ induces an isomorphism
\begin{equation*}
\begin{split}
T_{(E,\nabla_\SL,0)}\SM_{\SL}^{\Hod}(r,d)
&\cong \frac{\left \{(c,D,\lambda_\varepsilon) \in \left( \begin{array}{l}
C^1(\SU,\End(E)) \times \\
C^0 (\SU,\End(E)\otimes \Omega_\SL) \times \CC 
\end{array} \right) \middle | \begin{array}{l}
\partial c= 0\\
\partial D = \tilde{\nabla}_\SL c
\end{array} \right \}}{ \left \{ (\partial \eta ,\tilde{\nabla}_\SL \eta, 0) \,  \middle | \, \eta\in C^0(\SU,\End(E)) \right \}} \\
&\cong \HH^1(C^\bullet(E,\nabla_\SL)) \times \CC,
\end{split}
\end{equation*}
yielding the desired cohomological interpretation of the deformation space. It is clear in this case that $\HH^1(C^\bullet(E,\nabla_\SL))$ parameterises the deformations along the fibre $p_\lambda^{-1}(0)$ of the projection $p_\lambda:\SM_{\SL}^{\Hod}(r,d)\to\CC$ from \eqref{eq:project->lambda}, and $\CC$ parameterises the deformations of $\lambda$ i.e.~along the target of $p_\lambda$.

However, a vector bundle $E$ need not admit an integrable $\SL$-connection and, therefore, the associated cocycle $\omega$ in \eqref{eq:omega-cocycle} may not be exact. Moreover, for higher rank $\SL$, the presence of the integrability condition breaks the previous trivialization of the deformation theory and the corresponding cohomological description.

We believe that the somehow unnatural presentation of the deformation theory of Lemma \ref{lemma:HodgeTangent} is a reflection of the fact that the moduli space of Higgs bundles admits a broader range of deformations than the ones considered in this section, as suggested in \cite[Section 7.3]{Tor11} and as described explicitly for rank 1 Lie algebroids in Theorem \ref{thm:classificationLieAlgebroidsRank1}. More precisely, for each family of Lie algebroid structures over $V$, $\SL\to X\times T$ on $T$, we obtain a moduli space $\SM_{\Lambda_\SL}(r,d) \to T$ over $T$. Each family going through the trivial Lie algebroid $(V,0,0)$ gives rise to a deformation of the moduli space $\SM_{V^*}^{\Dol}(r,d)$. More generally, the infinitesimal deformations of the trivial algebroid structure give rise to deformations of $\SM_{V^*}^{\Dol}(r,d)$. We expect that if we considered a versal family which represented the whole space of such infinitesimal deformations (which, for Lie algebroids on a line bundle $L$, would correspond to a family of Lie algebroids over $H^0(L^{-1}\otimes T_X)$) then that space would get indeed a natural cohomological interpretation.

In Lemma  \ref{lemma:HodgeTangent}  we are only considering ``radial sections'' of such deformation space, corresponding to families over $\CC$ of the form \eqref{eq:Llambda}, therefore obtaining ``sections'' or ``cuts'' of the whole deformation space, and these deformations do not seem to exhibit a natural cohomological description anymore.
\end{remark}

The preceding Lemmas \ref{lemma:dimensionTangent} and \ref{lemma:HodgeTangent} now allow us to establish smoothness of the $\SL$-Hodge moduli space under the following conditions.

\begin{lemma}
\label{lemma:smoothHodgeModuli}
Let $r\geq 1$ and $d$ be coprime and  $\SL=(L,[\cdot\,,\cdot],\delta)$ be a Lie algebroid on a smooth complex projective curve of genus $g\ge 2$ such that $\rk(\SL)=1$ and $\deg(\SL)<2-2g$. Then:
\begin{enumerate}
\item $\SM_{\SL}(r,d)$ is a smooth variety, whose connected components have all dimension $1-r^2\deg(L)$;
\item  $\SM_{\SL}^{\Hod}(r,d)$ is a smooth variety, whose connected components have all dimension $2-r^2\deg(L)$. Moreover, the map $\pi:\SM_{\SL}^{\Hod}(r,d)\to\CC$ from \eqref{eq:project->lambda} is a smooth submersion.
\end{enumerate}
\end{lemma}

\begin{proof}
Let us start by proving that $\SM_{\SL}(r,d)$ is a smooth variety of dimension $1-r^2\deg(L)$. Let $(E,\nabla_\SL)\in \SM_{\SL}(r,d)$. Consider the map $\CC^* \to \SM_{\SL}^{\Hod}(r,d)$ given by $t \mapsto (E,t\nabla_\SL,t)$. By Lemma \ref{lemma:limitExists}, the limit of the $\CC^*$-action at zero exists, so this map extends to a curve $\gamma: \CC \to \SM_{\SL}^{\Hod}(r,d)$, which is a section of the map $\pi: \SM_{\SL}^{\Hod}(r,d) \to \CC$. Let $(E_0,\nabla_{\SL,0},0):=\gamma(0)$.

Consider the map $\rho:\CC  \to \ZZ$ given by
$$\rho(\lambda) =\dim \left. \left(\gamma^*T\SM_{\SL}^{\Hod}(r,d) \right) \right |_{\lambda}.$$
We know from \eqref{eq:isomfnonzeroibres} that the $\CC^*$-action produces an isomorphism between any nonzero fibre of $\pi$ and $\pi^{-1}(1)$, yielding an isomorphism
$$\pi^{-1}(\CC^*) \cong \SM_{\SL}(r,d) \times \CC^*.$$
Therefore, for every $\lambda \ne 0$ we have
$$\rho(\lambda)=\dim \left . \left(\gamma^*T\SM_{\SL}^{\Hod}(r,d) \right)\right |_{\lambda} = \dim T_{(E,\nabla_\SL)} \SM_{\SL}(r,d) +1$$
The map $\rho$ is upper semicontinuous, so applying Lemma \ref{lemma:dimensionTangent}, we get
\begin{equation}\label{eq:rho(o)geq}
\rho(0)\ge \dim T_{(E,\nabla_\SL)} \SM_{\SL}(r,d) +1 = 2-r^2\deg(L)+\dim \left( \HH^2(C^\bullet(E,\nabla_\SL)) \right)\ge 2-r^2\deg(L).
\end{equation}
Note that since $r$ and $d$ are coprime, every semistable $\SL$-connection is actually stable, so Lemma \ref{lemma:dimensionTangent} applies at every point of $\SM_{\SL}(r,d)$.

On the other hand, we have that
$$\dim T_{(E_0,\nabla_{\SL,0},0)}\SM_{\SL}^{\Hod}(r,d) = \dim \ker d\pi+ \dim \im d\pi \le \dim \ker d\pi+1.$$
The kernel of $d\pi|_{(E_0,\nabla_{\SL,0},0)}$ can be computed explicitly through our formula for the Zariski tangent space given in Lemma \ref{lemma:HodgeTangent} (which holds since every point is stable again by coprimality of $r$ and $d$)
\begin{equation*}
\begin{split}
\ker d\pi|_{(E_0,\nabla_{\SL,0},0)}& \cong \frac{\left \{(c,C,0) \in \left( \begin{array}{l}
C^1(\SU,\End(E_0)) \times \\
C^0 (\SU,\End(E_0)\otimes \Omega_\SL) \times \CC 
 \end{array} \right) \middle | \begin{array}{l}
\partial c= 0\\
\partial C = \tilde{\nabla}_{\SL,0} c\\
\tilde{\nabla}_{\SL_0} C = 0
\end{array} \right \}}{ \left \{ (\partial \eta ,\tilde{\nabla}_{\SL,0} \eta, 0) \, \middle | \, \eta\in C^0(\SU,\End(E_0)) \right \}}\\
&\cong \HH^1\left (\End(E_0) \stackrel{[-,\nabla_{\SL,0}]}{\longrightarrow} \End(E_0)\otimes \Omega_\SL^1 \right)\\& \cong T_{(E_0,\nabla_{\SL,0})} \SM_{L^{-1}}^{\Dol}(r,d).
\end{split}
\end{equation*}
Using Lemma \ref{lemma:smoothModuliHiggs}, we know that $\SM_{L^{-1}}^{\Dol}(r,d)$ is smooth of dimension $1-r^2\deg(L)$, so
$$\dim \ker d\pi|_{(E_0,\nabla_{\SL,0},0)} =1-r^2\deg(L)$$
and, therefore,
\begin{equation}\label{eq:rho(o)leq}
\rho(0)=\dim T_{(E_0,\nabla_{\SL,0},0)}\SM_{\SL}^{\Hod}(r,d) \le \dim \ker d \pi|_{(E_0,\nabla_{\SL,0},0)} + 1 \le 2-r^2\deg(L).
\end{equation}

From \eqref{eq:rho(o)geq} and \eqref{eq:rho(o)leq}, we conclude that
$$\rho(0)=2-r^2\deg(L),$$
that is, $\HH^2(C^\bullet(E,\nabla_\SL))=0$ for all $(E,\nabla_\SL)\in \SM_{\SL}(r,d)$. 

The upshot is that the deformation theory at $(E,\nabla_\SL)$ is unobstructed and the dimension of the Zariski tangent space $T_{(E,\nabla_\SL)}\SM_{\SL}(r,d)$ is $1-r^2\deg(L)$ for each $(E,\nabla_\SL)$. As a consequence, by \cite{FM98} the moduli space $\SM_{\SL}(r,d)$ is a smooth variety of dimension $1-r^2\deg(L)$, completing the proof of (1).

Let us now consider point (2), i.e.~the regularity of the $\SL$-Hodge moduli space and the map $\pi$. First note that we have the isomorphism
$$\pi^{-1}(\CC^*) \cong \SM_{\SL}(r,d) \times \CC^*, $$
and by (1), $\SM_{\SL}(r,d)$ is smooth, so $\pi^{-1}(\CC^*)$ is also smooth and the map $\pi|_{\pi^{-1}(\CC^*)}:\pi^{-1}(\CC^*)\to \CC^*$ is clearly a smooth submersion. Therefore, it is enough to study the deformation of the elements in the zero fibre of $\pi$ and then check that the dimension of the corresponding Zariski tangent spaces coincides with the expected one and that the differential of the map $\pi$ is surjective at those points.

Let us consider the subvariety
$$\overline{\pi^{-1}(\CC^*)} \subset \SM_{\SL}^{\Hod}(r,d)$$
given by the closure of $\pi^{-1}(\CC^*)$ in the $\SL$-Hodge moduli space.
The $\CC^*$-flow through any point of $\SM_{\SL}^{\Hod}(r,d)$ has a limit at $0$ in $\pi^{-1}(0)$, due to Lemma \ref{lemma:limitExists}, so
$$\pi^{-1}(0)\cap \overline{\pi^{-1}(\CC^*)} \ne \emptyset.$$
By (1) and \eqref{eq:isomfnonzeroibres}, we have $\dim \pi^{-1}(\lambda)=1-r^2\deg(L)$ for every $\lambda\ne 0$. Hence, by semicontinuity, each component of $\pi^{-1}(0)\cap \overline{\pi^{-1}(\CC^*)}$ has dimension at least $1-r^2\deg(L)$. By Lemma \ref{lemma:smoothModuliHiggs} the variety $\pi^{-1}(0)=\SM_{L^{-1}}^{\Dol}(r,d)$ is smooth and connected of dimension $1-r^2\deg(L)$, so we conclude that 
$$\pi^{-1}(0)\cap \overline{\pi^{-1}(\CC^*)} = \pi^{-1}(0)$$
and thus $\overline{\pi^{-1}(\CC^*)} = \SM_{\SL}^{\Hod}(r,d)$.

As $\pi^{-1}(\CC^*) \cong \SM_{\SL}(r,d)\times \CC^*$, then we know that for any $(E',\nabla_\SL',\lambda')\in \pi^{-1}(\CC^*)$,
$$\dim T_{(E',\nabla_\SL',\lambda')}\SM_{\SL}^{\Hod}(r,d) = \dim T_{(E',\nabla_\SL'/\lambda)} \SM_{\SL}(r,d) +1 = 2-r^2\deg(L)$$
so, by semicontinuity, for each $(E,\nabla_\SL,0)\in \pi^{-1}(0)\cap \overline{\pi^{-1}(\CC^*)} = \pi^{-1}(0)$ we have
$$2-r^2\deg(L) \le \dim T_{(E,\nabla_\SL,0)}\SM_{\SL}^{\Hod}(r,d)= 1-r^2\deg(L)+\dim \im d\pi\le 2-r^2\deg(L).$$
Hence, we have
$$\dim T_{(E,\nabla_\SL,0)}\SM_{\SL}^{\Hod}(r,d)= 2-r^2\deg(L)\ \ \ \text{ and }\ \ \ \dim\im d\pi|_{(E,\nabla_\SL,0)}=1,$$
for each $(E,\nabla_\SL,0)\in \pi^{-1}(0)$.
So the map $\pi$ is a smooth submersion with equidimensional fibres and the dimension of the Zariski tangent space of the moduli space $\SM_{\SL}^{\Hod}(r,d)$ at any point is constant and coincides with the dimension of the scheme which is therefore smooth.
\end{proof}

\begin{remark}
We will see in Theorem \ref{thm:homotopygrps} below that, under the stated conditions, $\SM_{\SL}(r,d)$ is actually connected, hence so is $\SM_{\SL}^{\Hod}(r,d)$.
\end{remark}

Combining the previous results yields the desired semiprojectivity and regularity of the $\SL$-Hodge moduli space.

\begin{theorem}
\label{thm:semiprojectiveHodge}
Let $X$ be a smooth complex projective curve of genus $g$. Let $\SL=(L,[\cdot\,,\cdot],\delta)$ be a Lie algebroid on $X$ such that $\rk(\SL)=1$ and $\deg(\SL)<2-2g$. Then the moduli space $\SM_{\SL}^{\Hod}(r,d)\cong \SM_{\Lambda_\SL^{\op{red}}}(r,d)$, with the $\CC^*$-action $t\cdot (E,\nabla_\SL,\lambda)=(E,t\nabla_\SL,t\lambda)$, is a semiprojective variety.

If, moreover, $r$ and $d$ are coprime, $r\ge 1$ and $g\ge 2$, then it is a smooth semiprojective variety and the map $\pi : \SM_{\SL}^{\Hod}(r,d) \to \CC$ from \eqref{eq:project->lambda} is a surjective $\CC^*$-equivariant submersion covering the standard action on $\CC$.
\end{theorem}
\begin{proof}
By the GIT construction of \cite{Si94}, $\SM_{\Lambda_\SL^{\op{red}}}(r,d)$ is a complex quasi-projective variety. Lemma \ref{lemma:limitExists} ensures that for every $(E,\nabla_\SL,\lambda)\in \SM_{\SL}^{\Hod}(r,d)\cong \SM_{\Lambda_\SL^{\op{red}}}(r,d)$ the limit $\lim_{t\to 0} (E,t\nabla_\SL,t\lambda)$ exists. Moreover, the fixed-point set corresponds to the fixed-point set of the $\CC^*$-action in $\pi^{-1}(0)$, which coincides with the moduli space of $L^{-1}$-twisted Higgs bundles. Then Lemma \ref{lemma:fixedProperHiggs} implies that $\SM_{\SL}^{\Hod}(r,d)^{\CC^*}$ is proper. So $\SM_{\SL}^{\Hod}(r,d)$ is a semiprojective variety.
In the coprime case, the smoothness claim follows from Lemma \ref{lemma:smoothHodgeModuli}.
\end{proof}

\begin{remark}\label{rmk:higherrk}
We expect that the above results still hold true for higher rank Lie algebroids $\SL=(V,[\cdot\,,\cdot],\delta)$ with $V$ polystable such that $\mu(V)<2-2g$. Indeed most of the above arguments go through immediately in this situation, except in two related steps. First, Lemma \ref{lemma:dimensionTangent} really requires rank $1$ Lie algebroids, because it is only in that setting that the deformation complex \eqref{eq:def-complex} has only two terms. If $\rk(\SL)=n$, then \eqref{eq:def-complex} has $n$ terms, and it is not clear how to proceed to compute the dimension of $\SM_{\SL}(r,d)$. Similarly, Lemma \ref{lemma:smoothModuliHiggs} also requires the twisting to be a line bundle, and the corresponding result for higher rank twistings is not yet known, by similar reasons (notice that the infinitesimal study carried out in \cite{BR94} is done for any twisting, but it does not take into account the integrability condition on the Higgs field).
\end{remark}

\section{Grothendieck motives of smooth semiprojective varieties}
\label{section:motives}

By the end of this section we will prove that, under certain assumptions on $\SL=(L,[\cdot\,,\cdot],\delta)$, the motivic class of the moduli space of integrable $\SL$-connections $\SM_\SL(r,d)$ equals that of the $L^{-1}$-twisted Higgs bundle moduli space. The corresponding result for $E$-polynomials follows immediately. By using results of Hausel--Rodriguez-Villegas, we can also state the analogous result for their Hodge structures. This will follow from Theorem \ref{thm:semiprojectiveHodge}, together with a general result on the Grothendieck motivic class of any smooth semiprojective variety which will be proved along the way, via Bialynicki-Birula stratifications induced by the given $\CC^*$-action. 

\subsection{Grothendieck ring of varieties, motives and $E$-polynomials}

The main goal of this paper is to compare the class of the moduli spaces $\SM_{\SL}(r,d)$ in the (completion of the) Grothendieck ring of varieties by varying the Lie algebroid $\SL$. In this brief section we recall such ring and its basic properties.

Denote by $\VarC$ the category of quasi-projective varieties over $\CC$. For each $Y \in \VarC$, let $[Y]$ denote the corresponding isomorphism class. Consider the group obtained by the free abelian group on isomorphism classes $[Y]$, modulo the relation 
\[[Y]=[Y']+[Y\setminus Y'],\] where $Y'\subset Y$ is a Zariski-closed subset. In particular, in such group, $$[Y]+[Z]=[Y\sqcup Z],$$ where $\sqcup$ denotes disjoint union.
If we define the product 
\[[Y]\cdot [Z]=[Y\times Z],\]
in this quotient, then we obtain a commutative ring, known as the \emph{Grothendieck ring of varieties} and denoted by $K (\VarC)$.
Then $0=[\emptyset]$ and $1=[\mathrm{Spec}(\CC)]$ are the additive and multiplicative units of this ring. 

The following is an extremely useful property of $K (\VarC)$, which follows directly from the definitions via local trivializations, and which we will repeatedly use without further notice. For a proof, see for example \cite[Proposition 5(ii)]{GS96} or \cite[Proposition 4.6]{G-P23}.

\begin{proposition}
If $\pi:Y\to B$ is an algebraic fibre bundle (thus Zariski locally trivial), with fibre $F$, then $[Y]=[F]\cdot [B]$.
\end{proposition}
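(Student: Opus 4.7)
The plan is a standard induction on the size of a finite Zariski-trivializing open cover of $B$. Since $B$ is quasi-projective, hence Noetherian and in particular quasi-compact, the hypothesis that $\pi$ is Zariski locally trivial produces a \emph{finite} open cover $\{U_1,\ldots,U_n\}$ of $B$ together with isomorphisms $\pi^{-1}(U_i)\cong U_i\times F$ over $U_i$. The base case $n=1$ is immediate: then $Y\cong B\times F$, so by the multiplicative structure of $K(\VarC)$ we have $[Y]=[B\times F]=[B]\cdot[F]$.

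For the inductive step, I would set $V=U_1$ and $W=B\setminus U_1$, and apply the scissor relation to both base and total space:
$$[B]=[V]+[W],\qquad [Y]=[\pi^{-1}(V)]+[\pi^{-1}(W)].$$
The open piece is trivial, $\pi^{-1}(V)\cong V\times F$, contributing $[V]\cdot[F]$. The closed piece $\pi^{-1}(W)\to W$ is the restriction of $\pi$ to the locally closed subvariety $W$, which remains Zariski locally trivial with fiber $F$ because $\{W\cap U_2,\ldots,W\cap U_n\}$ is a trivializing open cover of $W$ of cardinality $n-1$. The induction hypothesis gives $[\pi^{-1}(W)]=[W]\cdot[F]$. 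Combining,
$$[Y]=[V]\cdot[F]+[W]\cdot[F]=([V]+[W])\cdot[F]=[B]\cdot[F],$$
as desired.

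The only mildly delicate point is the extraction of a \emph{finite} trivializing cover, which relies on quasi-compactness inherited from quasi-projectivity; otherwise the argument is a completely routine combination of the scissor relation and the product relation $[U\times F]=[U]\cdot[F]$, so I do not anticipate any genuine obstacle.
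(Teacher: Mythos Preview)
Your proof is correct and is exactly the standard argument the paper has in mind; the paper simply states that the result ``follows directly from the definitions via local trivializations'' without spelling out the induction on the size of a finite trivializing cover, which you have done cleanly.
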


The class of the affine line, sometimes called the \emph{Lefschetz object}, is denoted by 
$$\LL:=[\mathbb{A}^1]=[\mathbb{C}].$$ Of course, $\LL^n=[\mathbb{A}^n]=[\mathbb{C}^n]$.
We will consider the localization $K (\VarC)[\LL^{-1}]$, and then the dimensional completion
 \[
 \hat{K} (\VarC)= \left\{ \sum_{r\geq 0} [Y_r] \, \LL^{-r} \, \middle | \,
 [Y_r]\in K(\VarC) \text{ with }\dim Y_r -r \longrightarrow -\infty \right\}.\]
 Notice that we have a map $K(\VarC)\to \hat{K} (\VarC)$.
 Observe also that $\LL^n-1$ is invertible in $\hat{K} (\VarC)$, for every $n$, with inverse equal to $\LL^{-n}\sum_{k=0}^\infty\LL^{-nk}$. This is the reason why we had to introduce the completion $\hat{K} (\VarC)$: there will be computations in which we will need to invert elements of the form $\LL^n$ or $\LL^n-1$.
 
In this paper, by motive we mean the following. 
 
\begin{definition}
Let $Y$ be a quasi-projective variety. The class $[Y]$ in $K(\VarC)$ or in $\hat{K}(\VarC)$ is called the \emph{motive}, or \emph{motivic class}, of $Y$.
\end{definition}

There are other notions of motive in different, but related, categories, such as Chow motive or Voevodsky motive. These will not appear anywhere in this paper, except in section \ref{sec:ChowVoev}.

The motive $[Y]$ is an important invariant of $Y\in\VarC$, from which it is possible to read off geometric information, such as the $E$-polynomial. 
If $Y$ is $d$-dimensional,  with pure Hodge structure, then its \emph{$E$-polynomial} is defined as
\begin{equation}\label{def:E-pol}
E(Y)=E(Y)(u,v)=\sum_{p,q=0}^d (-1)^{p+q}h_c^{p,q}(Y)u^pv^q,
\end{equation}
where $h_c^{p,q}(Y)$ stands for the dimension of the compactly supported cohomology groups $H_c^{p,q}(Y)$.
For instance, $E(\mathbb{C})=uv$.
\begin{remark}
\label{remark:E-poly}
Actually, the $E$-polynomial can be seen as a ring map
\begin{equation}\label{E-poly}
E:\hat{K}(\VarC)\longrightarrow \ZZ[u,v]\left[\hspace{-.08cm}\left[\frac{1}{uv}\right]\hspace{-.08cm}\right]
\end{equation}
with values in the Laurent series in $uv$, which takes values in $\ZZ[u,v]$ when restricted to $K(\VarC)$. Hence two varieties with the same motive have the same $E$-polynomial. 
In particular, if $Y'\subset Y$ is a closed subvariety, then $E(Y)=E(Y')+E(Y\setminus Y')$ and for an algebraic fibre bundle $Y\to B$ with fibre $F$, we have $E(Y)=E(F)E(B)$. 
\end{remark}

\subsection{Bialynicki-Birula stratifications and motives of smooth semiprojective varieties}

Let now $Y$ be any smooth semiprojective variety; recall Definition \ref{def:semiproj}. Then it admits a canonical stratification as follows. Let
$$Y^{\CC^*}=\bigcup_{\mu\in I} F_\mu$$
be the decomposition of the $\CC^*$-fixed-point loci into connected components. For each $F_\mu$, consider the subsets
$$U_\mu^+=\{ x\in Y \, | \, \lim_{t\to 0} t\cdot x \in F_\mu\}\ \ \ \text{ and }\ \ \ U_\mu^-=\{ x\in Y \, | \, \lim_{t\to \infty} t\cdot x \in F_\mu\}.$$
By (1) of Definition \ref{def:semiproj}, every point in $Y$ belongs to exactly one of the subsets $U_\mu^+$, hence there is a decomposition
$$Y= \bigcup_{\mu\in I} U_\mu^+,$$
called the \emph{Bialynicki-Birula decomposition} of $Y$. 

The arguments in \cite[\S 4]{BB73}, \cite{Kirwan84} and \cite[\S 1]{HRV15} prove the following lemma (see \cite[Appendix A]{HL21} for a compact complete proof).

\begin{lemma}\label{lem:properties-BB-decomp} Using the above notations, the following properties hold.
\begin{enumerate}
\item For every $\mu\in I$, the map $U_\mu^+\to F_\mu$ defined by $x \mapsto \lim_{t\to 0} t\cdot x$ and the map $U_\mu^-\to F_\mu$ given by $x \mapsto \lim_{t\to \infty} t\cdot x$ are Zariski locally trivial fibrations in affine spaces.
\item For every $\mu\in I$, $U_\mu^+$ is a locally closed subset of $Y$.
\item There exists an order of the components $\{\mu_i\}_{i=1}^n$ such that
$$0\subset  U_{\mu_1}^+ \subset \ldots \subset \bigcup_{i\le j} U_{\mu_i}^+ \subset \ldots \subset \bigcup_{i=1}^n U_{\mu_i}^+= Y$$
is a stratification of $Y$.
\end{enumerate}
\end{lemma}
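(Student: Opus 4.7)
The plan is to reduce to the classical Bialynicki-Birula theorem for smooth projective varieties and then extend it to the semiprojective setting using only the two hypotheses of Definition~\ref{def:semiproj}.

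First I would establish the local model around each fixed-point component $F_\mu$. Since $Y$ is smooth and $F_\mu$ is a connected component of the $\CC^*$-fixed locus on a smooth variety, $F_\mu$ is itself smooth, and the $\CC^*$-representation on the tangent bundle $T_Y|_{F_\mu}$ decomposes into weight subbundles
\[T_Y|_{F_\mu}\;\cong\;T_{F_\mu}\oplus N_\mu^+\oplus N_\mu^-,\]
where $N_\mu^+$ (resp.\ $N_\mu^-$) collects the summands of strictly positive (resp.\ strictly negative) $\CC^*$-weight. Using Sumihiro's theorem to cover a neighborhood of $F_\mu$ by $\CC^*$-invariant affine opens, linearizing the action on each chart and applying an equivariant Luna-style slice, I would produce a $\CC^*$-equivariant \'etale neighborhood of $F_\mu$ in $Y$ modeled on the total space of $N_\mu^+\oplus N_\mu^-\to F_\mu$. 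In this local model a point has limit $\lim_{t\to 0}t\cdot x\in F_\mu$ precisely when its $N_\mu^-$-coordinate vanishes (and symmetrically for the limit at $\infty$), yielding (1) locally near $F_\mu$ and showing that $U_\mu^+$ is locally cut out by the vanishing of the $N_\mu^-$-coordinates, hence locally closed, which is the core of~(2).

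Second, I would propagate the local description to all of $U_\mu^+$ by using the $\CC^*$-action itself: given $x\in U_\mu^+$ with $\lim_{t\to 0}t\cdot x=p\in F_\mu$, the flow $t\cdot x$ enters every $\CC^*$-invariant neighborhood of $p$ for $t$ small, so translating the local chart of the previous step by the action gives a Zariski open cover of $U_\mu^+$ on which the retraction $U_\mu^+\to F_\mu$ is identified with the vector bundle projection $N_\mu^+\to F_\mu$; this completes (1) and (2). The same argument applied to the inverted action $t\mapsto t^{-1}$ handles $U_\mu^-$, restricted to those points where the limit at $\infty$ exists. For (3), I would define a partial order on $I$ by $\mu\preceq\nu$ iff $F_\mu\subseteq\overline{U_\nu^+}\setminus U_\nu^+$; properness of $Y^{\CC^*}$ forces this order to satisfy the descending chain condition, and any completion to a total order produces a filtration in which each $\bigcup_{i\le j}U_{\mu_i}^+$ is closed in $Y$, giving the stratification.

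The principal obstacle is the globalization in the second step: the classical Bialynicki-Birula argument uses projectivity of the ambient variety to patch local slices into honest Zariski-locally trivial affine bundles, whereas the semiprojective hypothesis only supplies properness of the fixed locus $Y^{\CC^*}$. What saves us is that the retraction $U_\mu^+\to F_\mu$ has affine-space fibers and a proper base, so equivariant descent along the proper family $F_\mu$ is enough to upgrade \'etale-local triviality to Zariski-local triviality. A secondary technicality concerns $U_\mu^-$: semiprojectivity guarantees existence of $\lim_{t\to 0}$ but not of $\lim_{t\to\infty}$, so $U_\mu^-$ may genuinely be smaller than its projective analogue; however, on the locus where the relevant limit exists the local model from the first step applies verbatim and the argument goes through unchanged.
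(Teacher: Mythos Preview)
The paper does not supply its own proof of this lemma; it simply cites \cite{BB73}, \cite{Kirwan84}, \cite{HRV15}, and points to \cite[Appendix A]{HL19} for a compact argument. So there is no internal proof to compare against; I will comment on whether your sketch stands on its own and aligns with what those references do.

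Your outline for (1) and (2) via Sumihiro and a Luna-type slice is a reasonable route to an \'etale local model, and your remark about $U_\mu^-$ in the semiprojective (non-proper) setting is correct. However, the passage from \'etale to Zariski local triviality is not right as stated: properness of the base $F_\mu$ plays no role in this step, and ``equivariant descent along the proper family'' is not a mechanism that upgrades \'etale triviality to Zariski triviality. The actual reason is that the fibration $U_\mu^+\to F_\mu$ is an affine-space bundle whose structure group lies in $\mathrm{Aff}(\CC^n)=\GL_n\ltimes\CC^n$ (this comes out of the weight decomposition on the slice), and $\mathrm{Aff}(\CC^n)$ is \emph{special} in Serre's sense, so every \'etale torsor for it is already Zariski-locally trivial, over any base. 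Bialynicki-Birula's original argument in fact obtains Zariski triviality directly, and the proof in \cite[Appendix A]{HL19} does not invoke proper descent either.

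For (3), your partial-order idea is in the right spirit, but the relation you write down ($\mu\preceq\nu$ iff $F_\mu\subseteq\overline{U_\nu^+}\setminus U_\nu^+$) needs a proof of acyclicity before it can be extended to a total order, and the descending-chain condition you invoke follows trivially from finiteness of $I$ (which holds because $Y^{\CC^*}$ is Noetherian), not from properness as such. The standard argument, used in the cited references, linearizes the $\CC^*$-action on an ample line bundle and orders the $F_\mu$ by the weight of the action on the fibre; closedness of the partial unions $\bigcup_{i\le j}U_{\mu_i}^+$ then follows from a weight comparison. Your approach can be made to work, but as written it asserts acyclicity rather than proving it.
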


For each $p\in F_\mu$ the tangent space $T_pY$ splits as follows
$$T_pY = T_p(U_\mu^+|_p) \oplus T_p F_\mu \oplus T_p (U_\mu^-|_p).$$
Define
\begin{equation}\label{eq:notat-dim-tangent}
N_\mu^+=\dim T_p(U_\mu^+|_p),\ \ \  N_\mu^0=\dim T_pF_\mu\ \ \ \text{ and }\ \ \ N_\mu^-=\dim T_p(U_\mu^-|_p).
\end{equation}
Clearly $N_\mu^+=\dim(U_\mu^+)-\dim(F_\mu)$ is the rank of the affine bundle $U_\mu^+\to F_\mu$ and, as we assumed that $Y$ is smooth,
\begin{equation}\label{eq:upward+downard+fix=dim}
N_\mu^++N_\mu^0+N_\mu^-=\dim Y.
\end{equation}

\begin{lemma}
\label{lemma:BBMotiveDecomposition}
Let $Y$ be a smooth complex semiprojective variety and consider the above notations. Then the motivic class of $Y$ decomposes as
$$[Y]=\sum_{\mu\in I} \LL^{N_\mu^+}[F_\mu].$$
\end{lemma}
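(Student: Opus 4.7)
The plan is to combine the structural results of Lemma \ref{lem:properties-BB-decomp} with the scissor relations in $K(\VarC)$ and the fact that the class of a Zariski locally trivial affine bundle splits as a product.

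First I would invoke part (3) of Lemma \ref{lem:properties-BB-decomp} to fix an ordering $\mu_1,\ldots,\mu_n$ of the index set $I$ so that $Y_j := \bigcup_{i\le j} U_{\mu_i}^+$ is closed in $Y$ for every $j$, with $Y_0=\emptyset$ and $Y_n = Y$. Then $U_{\mu_j}^+ = Y_j \setminus Y_{j-1}$ is open in $Y_j$ with closed complement $Y_{j-1}$, so the defining scissor relation in the Grothendieck ring yields
\[
[Y_j] = [Y_{j-1}] + [U_{\mu_j}^+].
\]
Iterating and telescoping from $j=0$ to $j=n$ gives
\[
[Y] = \sum_{\mu \in I} [U_\mu^+].
\]

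Next I would compute each class $[U_\mu^+]$. By part (1) of Lemma \ref{lem:properties-BB-decomp}, the map $U_\mu^+ \to F_\mu$, $x \mapsto \lim_{t\to 0} t\cdot x$, is a Zariski locally trivial fibration whose fibers are affine spaces. The dimension of each fiber is $\dim U_\mu^+ - \dim F_\mu = N_\mu^+$, by the definition \eqref{eq:notat-dim-tangent} and smoothness of $Y$ (which makes $U_\mu^+$ and $F_\mu$ smooth too, so dimensions coincide with tangent space dimensions). Hence the fiber is $\mathbb{A}^{N_\mu^+}$. Since Zariski locally trivial fibrations give a product in $K(\VarC)$ (as noted in Section \ref{section:motives}), we obtain
\[
[U_\mu^+] = \LL^{N_\mu^+}\,[F_\mu].
\]

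Substituting this into the telescoped sum yields the claimed formula
\[
[Y] = \sum_{\mu \in I} \LL^{N_\mu^+}\,[F_\mu].
\]
There is no serious obstacle here: the argument is essentially a formal consequence of the Bialynicki-Birula theorem as packaged in Lemma \ref{lem:properties-BB-decomp}. The only point that deserves care is the finiteness of the index set $I$, which follows from the properness of $Y^{\CC^*}$ assumed in Definition \ref{def:semiproj} (a proper variety has finitely many connected components), so that the sum above is finite and the iterated scissor relation is legitimate in $K(\VarC)$ (and hence in $\hat K(\VarC)$).
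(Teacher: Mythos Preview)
Your proof is correct and follows essentially the same approach as the paper's own argument: first use the stratification from Lemma \ref{lem:properties-BB-decomp}(3) together with the scissor relations to obtain $[Y]=\sum_{\mu\in I}[U_\mu^+]$, and then apply Lemma \ref{lem:properties-BB-decomp}(1) to compute each $[U_\mu^+]=\LL^{N_\mu^+}[F_\mu]$. Your added remarks on telescoping and finiteness of $I$ are welcome clarifications but do not alter the strategy.
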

\begin{proof}
As $Y$ is semiprojective, we have a Bialynicki-Birula decomposition which, in virtue of properties (2) and (3) of Lemma \ref{lem:properties-BB-decomp}, forms a stratification
$$0\subset  U_{\mu_1}^+ \subset \ldots \subset \bigcup_{i\le j} U_{\mu_i}^+ \subset \ldots \subset \bigcup_{i=1}^n U_{\mu_i}^+= Y.$$
As the Grothendieck class is additive on closed subvarieties, we have
\begin{equation}\label{eq:1}
[Y]=\sum_{\mu\in I} [U_\mu^+].
\end{equation}
By property (1) of Lemma \ref{lem:properties-BB-decomp}, each $\mu\in I$ is a Zariski locally trivial affine fibration over $F_\mu$ whose fibre has dimension $N_\mu^+$, so we have
\begin{equation}\label{eq:2}
[U_\mu^+] = [\CC^{N_\mu^+}] [F_\mu]=\LL^{N_\mu^+}[F_\mu].
\end{equation}
Now, \eqref{eq:1} and \eqref{eq:2} prove the lemma.
\end{proof}

Given this lemma, we now have the following proposition that is a generalization of \cite[Corollary 1.3.3]{HRV15} to the motivic setup.

\begin{theorem}
\label{thm:semiprojectiveMotive}
Let $Y$ be a smooth complex semiprojective variety together with a surjective $\CC^*$-equivariant submersion $\pi: Y \to \CC$ covering the standard scaling action on $\CC$. Then in $\hat{K}(\VarC)$ we have
$$[\pi^{-1}(0)]=[\pi^{-1}(1)]\ \ \ \text{ and }\ \ \ [Y]=\LL [\pi^{-1}(0)].$$
\end{theorem}

\begin{proof}
Clearly, the fixed-point locus of $Y$ is concentrated in $\pi^{-1}(0)$. As $\pi^{-1}(0)$ is a smooth closed subspace of $Y$, then $\pi^{-1}(0)$ is also a smooth semiprojective variety. Moreover, since $\pi$ is $\CC^*$-equivariant, then the fixed points of the $\CC^*$-action on $Y$ are precisely those of $\pi^{-1}(0)$. Let
$$Y^{\CC^*}= \pi^{-1}(0)^{\CC^*} = \bigcup_{\mu\in I} F_\mu$$
be the decomposition of the fixed-point locus into connected components. As we have discussed above, both $Y$ and $\pi^{-1}(0)$ admit Bialynicki-Birula stratifications of the form
$$Y=\bigcup_{\mu\in I} \tilde{U}_\mu^+\ \ \ \text{ and }\ \ \  \pi^{-1}(0)=\bigcup_{\mu\in I} U_\mu^+,$$
where
$$\tilde{U}_\mu^+ = \left \{ p \in Y  \, \middle |\,  \lim_{t\to 0} t \cdot p \in F_\mu \right \}\ \ \ \text{ and }\ \ \ U_\mu^+ = \left \{ p \in \pi^{-1}(0) \,  \middle | \, \lim_{t\to 0} t \cdot p \in F_\mu \right \}.$$
Moreover, $\tilde{U}_\mu^+$ and $U_\mu^+$ are affine bundles over $F_\mu$ of rank $\tilde{N}_\mu^+$ and $N_\mu^+$ respectively. On the other hand, let
$$\tilde{U}_\mu^- = \left \{ p \in Y \,  \middle | \,  \lim_{t\to \infty} t \cdot p \in F_\mu \right \}\ \ \ \text{ and }\ \ \ U_\mu^- = \left \{ p \in \pi^{-1}(0) \,   \middle | \,  \lim_{t\to \infty} t \cdot p \in F_\mu \right \}.$$
Then $\tilde{U}_\mu^-$ and $U_\mu^-$ are also affine bundles over $F_\mu$, of rank $\tilde{N}_\mu^-$ and $N_\mu^-$ respectively, and we have the following decomposition of the tangent spaces $T_pY$ and $T_p(\pi^{-1}(0))$ at each $p\in F_\mu$,
$$T_pY = T_p (\tilde{U}_\mu^+|_p) \oplus T_p (\tilde{U}_\mu^-|_p) \oplus T_p F_\mu\ \ \ \text{ and }\ \ \ T_p(\pi^{-1}(0))= T_p (U_\mu^+|_p) \oplus T_p (U_\mu^-|_p) \oplus T_p F_\mu.$$
Using the smoothness assumption, this yields
\begin{equation}
\label{eq:semiprojectiveMotive1}
\dim Y = \tilde{N}_\mu^+ + \tilde{N}_\mu^- + \dim(F_\mu)\ \ \ \text{ and }\ \ \ \dim \pi^{-1}(0) = N_\mu^+ + N_\mu^- + \dim(F_\mu).
\end{equation}

Since $\CC^*$-action contracts the points of $Y$ to the $0$ fibre of $\pi^{-1}(0)$, then, for each $\mu\in I$, all the points points $p$ of $Y$ such that $\lim_{t\to \infty} t\cdot p \in F_\mu$ must lie in $\pi^{-1}(0)$. Thus $\tilde{U}_\mu^- = U_\mu^-$ and we have $\tilde{N}_\mu^-=N_\mu^-$. On the other hand, as $\pi:Y\to \CC$ is a submersion of smooth varieties, we have that $\dim Y=\dim \pi^{-1}(0)+1$, so from \eqref{eq:semiprojectiveMotive1} we conclude that for each $\mu$ we have
\begin{equation}\label{eq:tildeN=N+1}
\tilde{N}_\mu^+= N_\mu^+ +1.
\end{equation}
Thus, using the Bialynici-Birula decompositions of $Y$ and $\pi^{-1}(0)$, we can apply Lemma \ref{lemma:BBMotiveDecomposition} to decompose the corresponding motives as $[\pi^{-1}(0)] = \sum_{\mu\in I} \LL^{N_\mu^+} [F_\mu]$, and
\begin{equation}\label{eq:(2) holds}
[Y] = \sum_{\mu\in I} \LL^{\tilde{N}_\mu^+} [F_\mu]=\LL[\pi^{-1}(0)].
\end{equation}

On the other hand, the $\CC^*$-action yields an isomorphism $\pi^{-1}(\CC^*) \cong \pi^{-1}(1) \times \CC^*$, so we can write
$$[Y] = [\pi^{-1}(0)]+[\pi^{-1}(\CC^*)]=[\pi^{-1}(0)]+(\LL-1)[\pi^{-1}(1)],$$
and, by \eqref{eq:(2) holds}, this shows that $[\pi^{-1}(1)]=[\pi^{-1}(0)]$ in $\hat{K}(\VarC)$.
\end{proof}

\subsection{Invariance of the motive and $E$-polynomial with respect to the Lie algebroid structure}

We continue with our fixed base curve $X$, of genus $g\ge 2$. Now that we have established the required regularity conditions and the semiprojectivity of the $\SL$-Hodge moduli space, for $\SL=(L,[\cdot\,,\cdot],\delta)$ of rank $1$ and degree less than $2-2g$, we can address the invariance of the motive with respect to the algebroid structure of $\SL$ by keeping $L$ fixed and varying $\lambda$ in $\CC$. Hence the variation on the Lie algebroid structure we are considering is the one given by \eqref{eq:Llambda}. By \eqref{eq:isomfnonzeroibres}, this is clearly true if one changes the Lie algebroid structure by varying from $\lambda\in\CC^*$ to $\lambda'\in\CC^*$, so the main point is that the motivic class remains unchanged when we go to the trivial algebroid structure, thus $\lambda=0$. This is one of the contents of the next theorem, which follows directly from Theorems \ref{thm:semiprojectiveHodge} and \ref{thm:semiprojectiveMotive}.

\begin{theorem}
\label{thm:equalMotiveHodge}
 Let $\SL=(L,[\cdot\,,\cdot],\delta)$ be a Lie algebroid on $X$ such that $L$ is a line bundle with $\deg(L)<2-2g$. If $r$ and $d$ are coprime, then the following equalities hold in $\hat{K}(\VarC)$,
$$[\SM_{\SL}(r,d)] = [\SM_{L^{-1}}^{\Dol}(r,d)],\ \ \ \ \ [\SM_{\SL}^{\Hod}(r,d)] = \LL[\SM_{L^{-1}}^{\Dol}(r,d)].$$
In particular,
$$E(\SM_{\SL}(r,d)) = E(\SM_{L^{-1}}^{\Dol}(r,d)),\ \ \ \ \  E(\SM_{\SL}^{\Hod}(r,d)) = uv E(\SM_{L^{-1}}^{\Dol}(r,d)).$$
Moreover, we have an isomorphism of Hodge structures $$H^\bullet(\SM_{\SL}(r,d)) \cong H^\bullet(\SM_{L^{-1}}^{\Dol}(r,d))$$ and both $\SM_{\SL}(r,d)$ and $\SM_{\SL}^{\Hod}(r,d)$ have pure mixed Hodge structures.
\end{theorem}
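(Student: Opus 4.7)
The plan is to apply the general semiprojectivity framework developed in Section~\ref{section:motives} directly to the $\SL$-Hodge moduli space. By Theorem \ref{thm:semiprojectiveHodge}, under the hypotheses of this theorem, $\SM_{\Lambda_\SL^{\op{red}}}(r,d)$ is a smooth semiprojective variety and the projection $\pi:\SM_{\Lambda_\SL^{\op{red}}}(r,d)\to\CC$ is a surjective $\CC^*$-equivariant submersion covering the standard scaling action on $\CC$. Hence Proposition~\ref{prop:semiprojectiveMotive} applies verbatim and gives
$$[\pi^{-1}(0)]=[\pi^{-1}(1)]\quad\text{and}\quad [\SM_{\Lambda_\SL^{\op{red}}}(r,d)]=\LL\,[\pi^{-1}(0)]$$
in $\hat{K}(\VarC)$.

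Next I would identify the two special fibers. By the discussion around \eqref{eq:isomfnonzeroibres}, $\pi^{-1}(1)=\SM_{\Lambda_\SL}(r,d)$, and $\pi^{-1}(0)$ is the moduli of semistable $(0,\SL)$-connections, i.e.\ $L^{-1}$-twisted Higgs bundles of rank $r$ and degree $d$, so $\pi^{-1}(0)=\SM_{L^{-1}}(r,d)$. Substituting yields the two claimed motivic equalities, and then applying the ring morphism \eqref{E-poly} to both sides yields the asserted equalities of $E$-polynomials.

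For the Hodge-theoretic statements, the strategy is the one of Hausel--Rodriguez-Villegas \cite{HRV15}: on a smooth semiprojective variety $Y$, the Bialynicki-Birula cells from Lemma~\ref{lem:properties-BB-decomp} form an affine stratification whose strata retract $\CC^*$-equivariantly onto the smooth proper fixed components $F_\mu$; a standard Morse/Leray spectral sequence argument then shows that $H^\bullet(Y)$ is pure, isomorphic as a graded mixed Hodge structure to $\bigoplus_\mu H^{\bullet-2N_\mu^+}(F_\mu)$ with a Tate shift. I would apply this to both $Y=\SM_{\Lambda_\SL^{\op{red}}}(r,d)$ and, via Proposition~\ref{prop:semiprojectiveHiggs}, to the Higgs moduli $\SM_{L^{-1}}(r,d)$; since both have the same fixed-point locus (contained in $\pi^{-1}(0)=\SM_{L^{-1}}(r,d)$) and, by \eqref{eq:tildeN=N+1}, the upward attracting dimensions differ only by the constant shift $\tilde N_\mu^+=N_\mu^++1$ coming from the base $\CC$-direction, one obtains purity of $\SM_{\Lambda_\SL^{\op{red}}}(r,d)$ and an isomorphism $H^\bullet(\SM_{\Lambda_\SL}(r,d))\cong H^\bullet(\SM_{L^{-1}}(r,d))$ of pure mixed Hodge structures (the latter also following from the $\CC^*$-equivariant deformation retraction of $\pi^{-1}(\CC)$ onto $\pi^{-1}(0)$, which is a homotopy equivalence between $\pi^{-1}(1)\simeq\SM_{\Lambda_\SL}(r,d)$ and $\pi^{-1}(0)$ up to the $\CC^*$-orbit structure).

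The main obstacle is the Hodge-theoretic part, since the motivic equalities are an essentially formal consequence of Theorem~\ref{thm:semiprojectiveHodge} and Proposition~\ref{prop:semiprojectiveMotive}. Here one must ensure that the Bialynicki-Birula stratification interacts well with mixed Hodge structures; this is where the smoothness of $\SM_{\Lambda_\SL^{\op{red}}}(r,d)$ (established in Lemma~\ref{lemma:smoothHodgeModuli}) and the properness of the fixed-point locus (inherited from Lemma~\ref{lemma:fixedProperHiggs} via $\pi$-equivariance) are essential, allowing us to invoke the purity results of \cite{HRV15} for semiprojective varieties.
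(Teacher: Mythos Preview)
Your proposal is correct and follows essentially the same approach as the paper: apply Theorem~\ref{thm:semiprojectiveHodge} to get smooth semiprojectivity of $\SM_{\Lambda_\SL^{\op{red}}}(r,d)$ and the submersion $\pi$, then invoke Proposition~\ref{prop:semiprojectiveMotive} for the motivic equalities, and finally appeal to the Hausel--Rodriguez-Villegas results for the Hodge-theoretic statements. The only cosmetic difference is that the paper cites \cite[Corollary~1.3.2 and Corollary~1.3.3]{HRV15} directly for purity and for the isomorphism of Hodge structures between $\pi^{-1}(0)$ and $\pi^{-1}(1)$, whereas you sketch part of the underlying Bialynicki-Birula argument; your parenthetical about a deformation retraction giving a homotopy equivalence between $\pi^{-1}(1)$ and $\pi^{-1}(0)$ is imprecise as stated, but since you also invoke \cite{HRV15} this does not affect the validity of the argument.
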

\begin{proof}
By Theorem \ref{thm:semiprojectiveHodge}, the moduli space $\SM_{\SL}^{\Hod}(r,d)$ is a smooth semiprojective variety for the $\CC^*$-action \eqref{eq:C*-action-LHodgemod}. Moreover, the map $\pi$ from \eqref{eq:project->lambda} is a surjective $\CC^*$-equivariant submersion covering the standard $\CC^*$-action on $\CC$. Then Theorem \ref{thm:semiprojectiveMotive} gives the desired motivic equalities,
$$[\SM_{L^{-1}}^{\Dol}(r,d)]=[\pi^{-1}(0)]=[\pi^{-1}(1)]=[\SM_{\SL}(r,d)]\ \ \ \text{ and }\ \ \ [\SM_{\SL}^{\Hod}(r,d)] = \LL [\pi^{-1}(0)]=\LL[\SM_{L^{-1}}^{\Dol}(r,d)],$$
which yield the corresponding equalities of $E$-polynomials,
$$E(\SM_{\SL}(r,d)) = E(\SM_{L^{-1}}^{\Dol}(r,d))\ \ \ \text{ and }\ \ \ E(\SM_{\SL}^{\Hod}(r,d)) = uv E(\SM_{L^{-1}}^{\Dol}(r,d)).$$
Moreover, by \cite[Corollary 1.3.3]{HRV15}, the fibres $\SM_{L^{-1}}^{\Dol}(r,d)=\pi^{-1}(0)$ and $\SM_{\SL}(r,d)=\pi^{-1}(1)$ have isomorphic cohomology supporting pure mixed Hodge structures. Finally, as $\SM_{\SL}^{\Hod}(r,d)$ is also smooth and semiprojective, its cohomology is also pure by \cite[Corollary 1.3.2]{HRV15}.
\end{proof}

\section{Motives of moduli spaces of twisted Higgs bundles}
\label{section:HiggsPolynomials}

One of the main goals of this paper is to prove that given any two rank $1$ Lie algebroids $\SL$ and $\SL'$, over the genus $g$ curve $X$, such that $\deg(\SL)=\deg(\SL')<2-2g$, we have an equality of motivic classes
\begin{equation}\label{eq:motiv-inv-Liealg}
[\SM_{\SL}(r,d)]=[\SM_{\SL'}(r,d)]\in \hat{K}(\VarC),
\end{equation}
where $d$ is coprime with $r$. This will be achieved in Theorem \ref{thm:equalMotive}.

Theorem \ref{thm:equalMotiveHodge} already says that $$[\SM_{\SL}(r,d)]=[\SM_{L^{-1}}^{\Dol}(r,d)],$$ therefore the motive of the moduli space of $\SL$-connections is invariant with respect to the Lie algebroid structure on $L$. We will now prove that $[\SM_{L^{-1}}^{\Dol}(r,d)]$ is also invariant with respect to the choice of the twisting line bundle $L^{-1}$ as long as we fix its degree. Thus, we prove that for any pair of line bundles $L,L'$ with $\deg(L)=\deg(L')<2-2g$ and any $d$ coprime with $r$ we have
$$[\SM_{L^{-1}}^{\Dol}(r,d)]=[\SM_{(L')^{-1}}^{\Dol}(r,d)].$$
Combining the two results yields \eqref{eq:motiv-inv-Liealg}.

In particular, such equalities imply by \eqref{E-poly} that the $E$-polynomials of these moduli spaces are also equal. Then, we can go further by using results by Maulik--Shen \cite{MS20-chiindependence} or of Groechening, Wyss and Ziegler \cite{GWZ20} which imply that for every line bundle $N\to X$ such that $\deg(N)>2g-2$, and every $d$ and $d'$ coprime with $r$, we have
$$E(\SM_N^{\Dol}(r,d))=E(\SM_N^{\Dol}(r,d')).$$
Hence, this implies, together with the above equality of motivic classes, that
$$E(\SM_{\SL}(r,d))=E(\SM_{\SL'}(r,d')),$$
for every Lie algebroids $\SL$ and $\SL'$ of degree less than $2-2g$ and every $d$ and $d'$ coprime to $r$.

We start by considering special types of Higgs bundles, known as variations of Hodge structure.

\subsection{Variations of Hodge structure and chains}

Let $L$ be a line bundle over the curve $X$. An \emph{$L$-twisted variation of Hodge structure} of \emph{type} $\overline{r}=(r_1,\ldots,r_k)$ and \emph{multidegree} $\overline{d}=(d_1,\ldots,d_k)$ is an $L$-twisted Higgs bundle $(E,\varphi)$ of the form
\begin{equation}\label{eq:VHS}
(E_\bullet,\varphi_\bullet)= \left( \bigoplus_{i=1}^k E_i , \smtrx{0 & 0 & \cdots & 0 & 0\\
\varphi_1 & 0 & \cdots & 0 & 0\\
0 & \varphi_2 & \cdots & 0 & 0\\
\vdots & \vdots & \ddots & \ddots & \vdots \\
0 & 0 & \cdots & \varphi_{k-1} & 0}\right),
\end{equation}
where $E_i$ are vector bundles on $X$, with $\rk(E_i)=r_i$ and $\deg(E_i)=d_i$, and $\varphi_i: E_i \to E_{i+1}\otimes L$ for each $i=1,\ldots,k$, with $\varphi_k=0$.

Let $r=\sum_{i=1}^k r_i$ and $d=\sum_{i=1}^k d_i$ and denote by 
$$\VHS_L(\overline{r},\overline{d})\subset \SM_L^{\Dol}(r,d)$$ the subscheme of the moduli space of $L$-twisted Higgs bundles corresponding to semistable variations of Hodge structure of type $\overline{r}$ and multi-degree $\overline{d}$.

On the other hand, recall that an algebraic \emph{chain} on $X$ is a quiver bundle of type A, hence a sequence of algebraic vector bundles $(E_1,\ldots,E_k)$, together with maps $\varphi_i:E_i\to E_{i+1}$. We denote a chain by the symbol $(\tilde E_\bullet,\tilde\varphi_\bullet)$.

Given real numbers $\alpha=(\alpha_1,\ldots,\alpha_k)$, we define the \emph{$\alpha$-degree} of the chain $(\tilde E_i,\tilde \varphi_i)$ as
\begin{equation}\label{eq:alpha-deg}
\deg_\alpha(\tilde E_\bullet,\tilde \varphi_\bullet)=\sum_{i=1}^k (\deg(E_i)+\rk(E_i)\alpha_i)
\end{equation}
and the \emph{$\alpha$-slope} as
$$\mu_\alpha(\tilde E_\bullet,\tilde \varphi_\bullet)=\frac{\deg_\alpha(\tilde E_\bullet,\tilde \varphi_\bullet)}{\sum_{i=1}^k \rk(E_i)}.$$
We say that $(\tilde E_\bullet,\tilde \varphi_\bullet)$ is of type $\overline{r}=(r_1,\ldots,r_k)$ if $\rk(E_i)=r_i$ for each $i=1,\ldots, k$ and we call $\overline{d}=(\deg(E_1),\ldots,\deg(E_k))$ its \emph{multidegree}.

A \emph{subchain} $(\tilde F_\bullet,\tilde \varphi_\bullet)$ of $(\tilde E_\bullet,\tilde \varphi_\bullet)$ is a collection $(F_1,\ldots,F_k)$ of subbundles of $(E_1,\ldots, E_k)$, i.e.~$F_i\subset E_i$, such that $\varphi_i(F_i)\subset F_{i+1}$, so that $(\tilde F_\bullet,\tilde \varphi_\bullet|_{F_\bullet})$ is itself a chain.
An algebraic chain $(\tilde E_\bullet,\tilde \varphi_\bullet)$ is \emph{(semi)stable} if for any subchain $(\tilde F_\bullet,\tilde \varphi_\bullet|_{F_\bullet})\subset (\tilde E_\bullet,\tilde \varphi_\bullet)$, we have
$$\mu_{\alpha}(\tilde F_\bullet,\tilde \varphi_\bullet|_{F_\bullet}) < \mu_{\alpha}(\tilde E_\bullet,\tilde \varphi_\bullet) \quad (\text{resp. }\le).$$
Denote by $$\HC^\alpha(\overline{r},\overline{d})$$ the moduli space of $\alpha$-semistable algebraic chains on $X$ of type $\overline{r}$ and multidegree $\overline{d}$.

Now, given a variation of Hodge structure $(E_\bullet,\varphi_\bullet)$ of type $\overline{r}=(r_1,\ldots,r_k)$ and multidegree $\overline{d}=(d_1,\ldots,d_k)$, an algebraic chain $(\tilde{E}_\bullet, \tilde{\varphi}_\bullet)$ can be constructed as follows. Take
$$\tilde{E}_i= E_i\otimes L^{i-k}.$$
Then $\varphi_i$ induces a map
$$\tilde{\varphi_i}=\varphi_i\otimes \id_{L^{i-k}} : \tilde{E_i}\longrightarrow \tilde{E}_{i+1},$$
thus $(\tilde{E}_\bullet, \tilde{\varphi}_\bullet)$ is a chain of type $\overline{r}$ and multidegree $\overline{d}_L=(d_1+r_1(1-k)\deg(L),\ldots,d_k)$. This construction is reversible, giving a variation of Hodge structure from an algebraic chain, and hence giving a bijection between these two kinds of objects.

It turns out that their (semi)stability conditions also match, if one chooses a particular set of real numbers $\alpha$. Indeed, if $\alpha_L=((k-1)\deg(L),\ldots, \deg(L),0)$, then the $\alpha_L$-degree \eqref{eq:alpha-deg} of any chain $(\tilde{E}_\bullet,\tilde{\varphi}_\bullet)$, is 
$$\deg_{\alpha_L}(\tilde{E}_\bullet,\tilde{\varphi}_\bullet)=\sum_{i=1}^k \left(\deg(E_i \otimes L^{i-k}) + \rk(E_i)(k-i)\deg(L) \right) = \sum_{i=1}^k \deg(E_i)$$
so
$$\mu_\alpha(\tilde{E}_\bullet,\tilde{\varphi}_\bullet) =\mu(E_\bullet,\varphi_\bullet),$$ 
where $(E_\bullet,\varphi_\bullet)$ is the corresponding variation of Hodge structure.

The proof of the next lemma follows by the exact same argument as in \cite[Proposition 3.5]{ACGP01}, by replacing the canonical line bundle $K_X$ by $L$.

\begin{proposition}
A variation of Hodge structure $(E_\bullet,\varphi_\bullet)$ is (semi)stable (as an $L$-twisted Higgs bundle) if and only if for every choice of subbundles $F_i\subset E_i$ with $\varphi_i(F_i)\subset F_{i+1}$ we have
$$\mu(F_\bullet,\varphi_\bullet|_{F_\bullet}) < \mu(E_\bullet,\varphi_\bullet) \quad (\text{resp. }\le).$$
Hence $(E_\bullet,\varphi_\bullet)$ is (semi)stable if and only if the corresponding chain $(\tilde E_\bullet,\tilde \varphi_\bullet)$ is $\alpha_L$-(semi)stable.
\end{proposition}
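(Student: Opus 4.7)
One direction is immediate: if $(E_\bullet,\varphi_\bullet)$ is (semi)stable as an $L$-twisted Higgs bundle, then in particular the (semi)stability inequality holds when restricted to the class of $\varphi$-invariant subbundles of the form $\bigoplus F_i$ with $F_i\subset E_i$ and $\varphi_i(F_i)\subset F_{i+1}\otimes L$, which are exactly those appearing in the statement. So the real content lies in the converse: assuming the inequality holds for every such ``graded'' invariant subbundle, I want to deduce it for an arbitrary $\varphi$-invariant subbundle $F\subset E=\bigoplus_i E_i$.

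The key step is a standard graded-refinement argument. Given such an $F$, I consider the filtration
\[
0\subset F\cap E^{\leq 1}\subset F\cap E^{\leq 2}\subset\cdots\subset F\cap E^{\leq k}=F,
\]
where $E^{\leq j}=\bigoplus_{i\leq j}E_i$, and for each $i$ I take $F_i\subset E_i$ to be the saturation of the image of $(F\cap E^{\leq i})/(F\cap E^{\leq i-1})$ under the natural injection into $E^{\leq i}/E^{\leq i-1}=E_i$. By construction each $F_i$ is a subbundle of $E_i$, the shape of $\varphi_\bullet$ (which sends the $i$-th summand into the $(i+1)$-st twisted by $L$) gives $\varphi_i(F_i)\subset F_{i+1}\otimes L$, and $\sum_i\rk(F_i)=\rk(F)$. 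Passing to saturations only weakly increases degrees, so $\sum_i\deg(F_i)\geq\deg(F)$, hence
\[
\mu(F_\bullet,\varphi_\bullet|_{F_\bullet})\ \geq\ \mu(F),
\]
with equality precisely when each quotient $(F\cap E^{\leq i})/(F\cap E^{\leq i-1})$ is already a subbundle of $E_i$, i.e.\ when $F=\bigoplus_i F_i$ is itself graded.

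To finish, I would split into two cases. If the inequality above is strict, then by hypothesis $\mu(F)<\mu(F_\bullet)\leq\mu(E_\bullet)=\mu(E)$ and there is nothing more to do. If it is an equality then $F$ itself is one of the graded subbundles to which the hypothesis already applies, so $\mu(F)=\mu(F_\bullet)<\mu(E_\bullet)=\mu(E)$ in the stable case and $\leq$ in the semistable case. The last assertion then follows by combining this equivalence with the observation already made in the paragraph preceding the proposition, namely that the bijection $(E_\bullet,\varphi_\bullet)\mapsto(\tilde E_\bullet,\tilde\varphi_\bullet)$ given by $\tilde E_i=E_i\otimes L^{i-k}$ identifies $\mu(E_\bullet,\varphi_\bullet)$ with $\mu_{\alpha_L}(\tilde E_\bullet,\tilde\varphi_\bullet)$, and sends graded subbundles to subchains. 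The only subtlety to be careful about is ensuring that saturation is done inside $E_i$ (so that $F_i$ really is a subbundle of $E_i$) and that the invariance condition $\varphi_i(F_i)\subset F_{i+1}\otimes L$ is preserved under saturation; this is the main technical point, but it is essentially formal once the correct filtration is set up.
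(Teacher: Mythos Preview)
Your approach is correct and is precisely the standard graded-refinement argument; the paper does not give an independent proof but simply refers to \cite[Proposition 3.5]{ACGP01}, whose argument is exactly the one you outline (with $K_X$ replaced by $L$).

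One small inaccuracy: your characterisation of the equality case is not right. Equality $\sum_i\deg(F_i)=\deg(F)$ does mean that each successive quotient $(F\cap E^{\leq i})/(F\cap E^{\leq i-1})$ is already saturated in $E_i$, but this does \emph{not} force $F=\bigoplus_i F_i$; the extension $0\to F_1\to F\to F_2\to 0$ need not split inside $E$ (take $E_1=E_2=\SO_X$, $\varphi_1=0$, and $F$ the diagonal line). Fortunately the case split is unnecessary: since $\mu(F)\le\mu(F_\bullet,\varphi_\bullet|_{F_\bullet})$ always, applying the hypothesis to the graded subbundle $F_\bullet$ (not to $F$) gives $\mu(F)\le\mu(F_\bullet)<\mu(E_\bullet)$ in the stable case and $\le$ throughout in the semistable case, and you are done.
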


So the following corollary is immediate.

\begin{corollary}
\label{cor:VHSChains}
Fix an algebraic line bundle $L$ over $X$. Let  $\overline{r}=(r_1,\ldots,r_k)$, $\overline{d}=(d_1,\ldots,d_k)$ and $\overline{d}_L=(d_1+r_1(1-k)\deg(L),d_2+r_2(2-k)\deg(L),\ldots,d_k)$. The previously described correspondence between chains and $L$-twisted variations of Hodge structure induces an isomorphism,
$$\VHS_L(\overline{r},\overline{d}) \cong \HC^{\alpha_L}(\overline{r}, \overline{d}_L),$$
for $\alpha_L=((k-1)\deg(L),\ldots, \deg(L),0)$.
\end{corollary}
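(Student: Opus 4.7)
The proof is essentially a bookkeeping exercise that assembles the pieces already laid out just before the statement, so the plan is to make the bijection explicit, check it is algebraic (hence works in families), and invoke the preceding proposition for the stability match.

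First, I would spell out the functorial bijection at the level of objects. The assignment $(E_\bullet,\varphi_\bullet)\mapsto(\tilde E_\bullet,\tilde\varphi_\bullet)$ with $\tilde E_i=E_i\otimes L^{i-k}$ and $\tilde\varphi_i=\varphi_i\otimes\id_{L^{i-k}}$ has as inverse the assignment $(\tilde E_\bullet,\tilde\varphi_\bullet)\mapsto(E_\bullet,\varphi_\bullet)$ with $E_i=\tilde E_i\otimes L^{k-i}$ and $\varphi_i=\tilde\varphi_i\otimes\id_{L^{k-i}}$. Since tensoring each factor by a fixed line bundle is an exact equivalence of categories preserving ranks, and $\rk(\tilde E_i)=r_i$, $\deg(\tilde E_i)=d_i+r_i(i-k)\deg(L)$, this gives the multidegree $\overline{d}_L$ claimed. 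The same construction applies verbatim to flat families parametrised by an arbitrary base $S$ (tensoring with the pullback of $L^{i-k}$ to $X\times S$), and it sends subchains to sub-VHS and vice versa, so it defines an isomorphism at the level of moduli functors.

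Next, I would match semistability. By the proposition quoted immediately before the corollary, an $L$-twisted VHS is (semi)stable as an $L$-twisted Higgs bundle if and only if its slope beats/matches that of every subobject in the VHS category (i.e. sub-VHS), which under the bijection corresponds exactly to subchains of the associated chain. Combining this with the slope computation $\mu_{\alpha_L}(\tilde E_\bullet,\tilde\varphi_\bullet)=\mu(E_\bullet,\varphi_\bullet)$ already established in the text (and which transfers immediately from the total object to any sub-pair since $\alpha_L$ depends only on $L$ and the length $k$), one gets that semistability of $(E_\bullet,\varphi_\bullet)$ is equivalent to $\alpha_L$-semistability of $(\tilde E_\bullet,\tilde\varphi_\bullet)$.

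Finally, I would conclude that the bijection is a scheme isomorphism by appealing to the universal property of the moduli spaces $\VHS_L(\overline r,\overline d)$ (as a closed subscheme of $\SM_L(r,d)$) and $\HC^{\alpha_L}(\overline r,\overline d_L)$: both represent/coarsely represent the corresponding moduli functors, the functorial bijection above matches them, and hence the induced morphism of coarse moduli spaces is an isomorphism. There is no real obstacle here; the only point requiring slight care is ensuring the equivalence is genuinely functorial in families (not merely on isomorphism classes), which follows because tensoring by a line bundle is an exact additive equivalence. The step most worth double-checking is the indexing of $\alpha_L$ and the exact form of $\overline d_L$ in the final component, i.e.\ verifying that $\alpha_k=0$ and that the shift for the last factor is indeed $0$, so that the normalisation chosen is consistent with the convention $\varphi_k=0$.
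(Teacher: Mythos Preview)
Your proposal is correct and follows precisely the approach the paper intends: the paper states the corollary as ``immediate'' from the preceding proposition (which matches Higgs semistability of a VHS with $\alpha_L$-semistability of the associated chain) and the explicit bijection $(E_\bullet,\varphi_\bullet)\leftrightarrow(\tilde E_\bullet,\tilde\varphi_\bullet)$ described just before. You have simply filled in the details the paper omits---the functoriality in families and the appeal to the universal property of the moduli spaces---so there is nothing to compare or correct.
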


\subsection{Independence of the motives of Higgs moduli on the twisting line bundle}

We will use Corollary \ref{cor:VHSChains} to show that the motivic class of the moduli spaces of $L$-twisted Higgs bundles for coprime rank and degree only depend on the degree of the twisting line bundle $L$, whenever it is big enough.

Recall from \eqref{eq:C*-actHiggs} that the moduli space $\SM_L^{\Dol}(r,d)$ has a natural $\CC^*$-action given by scaling the Higgs field.
 Suppose that $d$ is coprime with $r\ge 2$ and that $L$ is a line bundle with $\deg(L)>2g-2$. Then by Proposition \ref{prop:semiprojectiveHiggs}, the moduli space $\SM_L^{\Dol}(r,d)$ is a smooth semiprojective variety. Accordingly, it admits a Bialynicki-Birula stratification
$$\SM_L^{\Dol}(r,d)= \bigcup_{\mu\in I} U_\mu^+,$$
which, by Lemma \ref{lemma:BBMotiveDecomposition}, induces the decomposition 
\begin{equation}\label{eq:motiv-decomp-Higgs}
[\SM_L^{\Dol}(r,d)]=\sum_{\mu\in I} \LL^{N_\mu^+}[F_\mu]
\end{equation}
of its motivic class, where $N_\mu^+=\dim(U_\mu^+)-\dim(F_\mu)$ is the rank of the affine bundle $U_\mu^+\to F_\mu$, corresponding to those Higgs bundles $(E,\varphi)\in \SM_L^{\Dol}(r,d)$ such that $\lim_{t\to 0}(E,t\varphi)$ lies in the $\CC^*$-fixed point set $F_\mu$. The characterization of the fixed points under the $\CC^*$-action carried out by  Simpson in \cite[\S 4]{SimpsonHiggs} also applies to the $L$-twisted case, obtaining the following lemma.

\begin{lemma}
\label{lemma:fixedPointsVHS}
Let $(E,\varphi)$ be any $L$-twisted Higgs bundle such that $(E,\varphi) \cong (E,t\varphi)$ for some $t\in \CC^*$ which is not a root of unity. Then $E$ has the structure of an $L$-twisted variation of Hodge structure \eqref{eq:VHS}. Reciprocally, any $L$-twisted variation of Hodge structure is a fixed point of the $\CC^*$-action.
\end{lemma}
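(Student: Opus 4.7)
The easier converse direction is handled first. Given an $L$-twisted VHS $(E,\varphi)$ written as in \eqref{eq:VHS} with $E=\bigoplus_{i=1}^k E_i$ and $\varphi|_{E_i}=\varphi_i:E_i\to E_{i+1}\otimes L$, define, for each $t\in\CC^*$, the automorphism
\[
g_t:=\bigoplus_{i=1}^k t^i\,\id_{E_i}\in\op{Aut}(E).
\]
A direct computation on each summand shows that $(g_t\otimes\id_L)\circ\varphi\circ g_t^{-1}=t\varphi$, so $g_t:(E,\varphi)\to(E,t\varphi)$ is an isomorphism of $L$-twisted Higgs bundles for \emph{every} $t\in\CC^*$, and the reverse direction follows.

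For the forward direction, let $f:E\to E$ realize the isomorphism $(E,\varphi)\cong(E,t\varphi)$, so $f$ is an $\SO_X$-linear automorphism satisfying
\begin{equation}\label{eq:intertwine}
(f\otimes\id_L)\circ\varphi = t\,\varphi\circ f.
\end{equation}
The plan is to diagonalize $f$ globally, use \eqref{eq:intertwine} to see that $\varphi$ shifts eigenspaces by multiplication by $t$, and then use the fact that $t$ is not a root of unity to produce a finite grading on $E$. Since $X$ is a smooth projective curve, $H^0(\SO_X)=\CC$, and hence the coefficients of the characteristic polynomial of the endomorphism $f$ (which are global sections of $\SO_X$) are constant. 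Consequently, the eigenvalues $\lambda_1,\ldots,\lambda_m\in\CC^*$ of $f_x$ are independent of $x\in X$ and their algebraic multiplicities are constant, so that the generalized eigenspaces $E_{\lambda}:=\ker(f-\lambda\,\id)^N$ (for $N\gg 0$) are subbundles of constant rank, and $E=\bigoplus_\lambda E_\lambda$ is a global splitting.

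The intertwining relation \eqref{eq:intertwine} implies that $(f\otimes\id_L)$ acts on $\varphi(E_\lambda)$ with the single generalized eigenvalue $t\lambda$; therefore $\varphi$ restricts to maps
\[
\varphi_\lambda\colon E_\lambda\longrightarrow E_{t\lambda}\otimes L,
\]
with the convention $E_\mu=0$ for $\mu\notin S:=\{\lambda_1,\ldots,\lambda_m\}$. Fix $\lambda\in S$; since $t$ is not a root of unity and $\lambda\neq 0$, the iterates $\lambda,t\lambda,t^2\lambda,\ldots$ are all distinct, so by finiteness of $S$ there is a smallest $n\geq 1$ with $t^n\lambda\notin S$, whence $\varphi^n|_{E_\lambda}=0$.

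To conclude, partition $S$ into its maximal $t$-strings. For each such string, pick the initial element $\mu_j\in S$ (that is, the unique element with $t^{-1}\mu_j\notin S$), and write the string as $\mu_j,t\mu_j,\ldots,t^{k_j-1}\mu_j$. Setting $k:=\max_j k_j$ and
\[
F_i:=\bigoplus_{\,j:\,k_j\geq i}E_{t^{i-1}\mu_j},\qquad i=1,\ldots,k,
\]
yields a decomposition $E=\bigoplus_{i=1}^k F_i$ for which $\varphi$ restricts to maps $F_i\to F_{i+1}\otimes L$ (and vanishes on $F_k$), which is precisely the shape \eqref{eq:VHS} of an $L$-twisted VHS.

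The main technical point, and the step I expect to require the most care, is the verification that the generalized eigenspaces of $f$ form an honest splitting of $E$ by \emph{sub-bundles} of $X$; this is where the hypothesis that $X$ is projective is crucial, through the identification $H^0(\SO_X)=\CC$. Once that splitting is in place, the rest is bookkeeping: the non-root-of-unity hypothesis on $t$ is exactly what forces the $t$-orbits inside $S$ to be finite strings rather than cycles, and the re-indexing described above converts these strings into the desired graded structure.
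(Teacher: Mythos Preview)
Your proof is correct and is precisely the classical argument of Simpson \cite[\S 4]{SimpsonHiggs} that the paper cites without reproducing; the paper gives no independent proof, so your detailed write-up (generalized eigenspace decomposition of the intertwiner $f$, the shift $\varphi:E_\lambda\to E_{t\lambda}\otimes L$, and the finite $t$-string re-indexing using that $t$ is not a root of unity) is exactly the intended route.
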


Given any multirank $\overline{r}=(r_1,\ldots,r_k)$ and multidegree $\overline{d}=(d_1,\ldots,d_k)$, define 
\begin{equation}\label{eq:notation}
|\overline{r}|=\sum_{j=1}^k r_j,\ \ \ |\overline{d}|=\sum_{j=1}^k d_j\ \ \ \text{ and }\ \ \ \Delta_L=\{(\overline{r},\overline{d}) \, | \,  \VHS_L(\overline{r},\overline{d})\ne \emptyset\}.
\end{equation}
 
The previous lemma says that the semistable $\CC^*$-fixed points are precisely  those in $\VHS_{L}(\overline{r},\overline{d})\subset \SM_{L}^{\Dol}(r,d)$ for each suitable choice of $\overline{r}$ and $\overline{d}$. Thus, we rewrite \eqref{eq:motiv-decomp-Higgs} as
\begin{equation}
\label{eq:EPolHiggs}
[\SM_L^{\Dol}(r,d)]= \sum_{\begin{array}{c}
{\scriptstyle (\overline{r},\overline{d})\in\Delta_L}\\
{\scriptstyle |\overline{r}|=r,\,|\overline{d}|=d}
\end{array}} \LL^{N_{L,\overline{r},\overline{d}}^+} [\VHS_L(\overline{r},\overline{d})],
\end{equation}
where $N_{L,\overline{r},\overline{d}}^+$ is the notation for $N_\mu^+$ in this case. We will also use the notations $N_{L,\overline{r},\overline{d}}^-$ for $N_\mu^-$ and $N_{L,\overline{r},\overline{d}}^0$ for $N_\mu^0$; see \eqref{eq:notat-dim-tangent}.

Next, we will focus on the computation and invariance with respect to $L$ of the ranks $N_{L,\overline{r},\overline{d}}^{\pm}$ in \eqref{eq:EPolHiggs}.
Suppose  that $r$ and $d$ are coprime, so that the moduli space $\SM_L^{\Dol}(r,d)$ is smooth. Following \cite{HitchinHiggs,Kirwan84} and working as in \cite{BGL11}, we will proceed by analysing the Bialynicki-Birula stratification from a Morse-theoretic point of view. The moduli space $\SM_{L}^{\Dol}(r,d)$ has a K\"ahler structure which is preserved by the action of $S^1\subset \CC^*$.  Therefore, the $\CC^*$-action induces a Hamiltonian action of $S^1$, with moment map
$$\mu: \SM_L^{\Dol}(r,d) \longrightarrow\RR,\ \ \ \mu(E,\varphi)=\frac{1}{2}\lVert \varphi \rVert^2,$$
where the $L_2$-norm is given with respect to the (harmonic) metric solving the Hitchin equations corresponding to the stable Higgs bundle $(E,\varphi)$ under the Hitchin-Kobayashi correspondence; cf.~\cite{HitchinHiggs}.

By \cite{F59} the map $\mu$ becomes a perfect Morse-Bott function in $\SM_L^{\Dol}(r,d)$ and we have the following lemma.

\begin{lemma}
\label{lemma:morseIndex}
Suppose that $g\ge 2$, that $r$ and $d$ are coprime and that $\deg(L)>2g-2$. Let $\overline{r}=(r_1,\ldots, r_k)$ and $\overline{d}=(d_1,\ldots, d_k)$ be such that $r=r_1+\cdots+r_k$,  $d=d_1+\cdots+d_k$ and $\VHS_L(\overline{r},\overline{d})$ is non-empty. Then $\VHS_L(\overline{r},\overline{d})$ is a component of the critical point set of $\mu$ and if $M_{L,\overline{r},\overline{d}}$ is its Morse index, then
$M_{L,\overline{r},\overline{d}}=2N_{L,\overline{r},\overline{d}}^-$.
In particular,
$$N_{L,\overline{r},\overline{d}}^++N_{L,\overline{r},\overline{d}}^0+M_{L,\overline{r},\overline{d}}/2=\dim(\SM_{L}^{\Dol}(r,d))=1+r^2\deg(L).$$
\end{lemma}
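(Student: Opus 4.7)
The plan is in three steps.

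First, I would identify $\VHS_L(\overline{r},\overline{d})$ as a union of components of the critical set of $\mu$. Since the $S^1$-action is the restriction of the algebraic $\CC^*$-action $t\cdot(E,\varphi)=(E,t\varphi)$, and $\mu(E,\varphi)=\tfrac12\|\varphi\|^2$ is the associated moment map, the critical set of $\mu$ coincides with the $S^1$-fixed locus, which in turn equals the $\CC^*$-fixed locus because the action is algebraic and the components of the fixed set are closed subvarieties invariant under the full algebraic $\CC^*$. By Lemma \ref{lemma:fixedPointsVHS}, the $\CC^*$-fixed points are precisely the $L$-twisted variations of Hodge structure. Decomposing the fixed locus by type and multidegree yields that each non-empty $\VHS_L(\overline{r},\overline{d})$ is a union of critical components of $\mu$.

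Second, I would compute the Morse index. The key general fact is: for a Hamiltonian $S^1$-action on a K\"ahler manifold with moment map $\mu$, at a critical submanifold $F$ the Hessian of $\mu$ in the normal direction is diagonalized by the weight decomposition of the $S^1$-representation on the normal bundle, and its eigenvalues are (proportional to) those weights. Concretely, if the normal bundle splits into complex subbundles on which $S^1$ acts with weights $\{w_i\}$, then each complex line of weight $w_i$ contributes a real $2$-plane on which the Hessian is positive definite if $w_i>0$ and negative definite if $w_i<0$. In our setting, the positive-real part $\RR_{>0}\subset\CC^*$ flows $\mu$ upward (since $\mu(E,r\varphi)=\tfrac12 r^2\|\varphi\|^2$), so the upward gradient flow of $\mu$ coincides with the $\RR_{>0}$-flow. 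Hence the unstable manifold of $F$ under $-\nabla\mu$ coincides with $U^-_F$, the negative-weight Bialynicki-Birula stratum, and the Morse index of $\mu$ along $F$ equals the real dimension of $U^-_F$ over $F$, namely $2N^-_{L,\overline{r},\overline{d}}$.

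Finally, the closing identity is a direct consequence. Since $r$ and $d$ are coprime and $\deg(L)>2g-2$, Lemma \ref{lemma:smoothModuliHiggs} gives that $\SM_L(r,d)$ is smooth of complex dimension $1+r^2\deg(L)$, and then \eqref{eq:upward+downard+fix=dim} at any point of $\VHS_L(\overline{r},\overline{d})$ yields
\[
N^+_{L,\overline{r},\overline{d}}+N^0_{L,\overline{r},\overline{d}}+N^-_{L,\overline{r},\overline{d}}=1+r^2\deg(L),
\]
which combined with $M_{L,\overline{r},\overline{d}}=2N^-_{L,\overline{r},\overline{d}}$ produces the stated formula. The only delicate point, and the one I would spend the most care verifying, is the sign/convention matching that identifies the upward gradient flow of $\mu$ with the $\RR_{>0}$-flow of the $\CC^*$-action and hence the unstable manifold of the Morse-Bott function with the downward Bialynicki-Birula stratum $U^-$; once this is pinned down, the factor $2$ and the Morse index identity are automatic from the K\"ahler weight decomposition.
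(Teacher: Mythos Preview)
Your proposal is correct and follows essentially the same approach as the paper: both identify the Morse stratification of $\mu$ with the Bialynicki-Birula decomposition via the K\"ahler/Hamiltonian structure. The paper simply cites Kirwan's general results to conclude that $U^-_{\overline{r},\overline{d}}$ coincides with the downward Morse flow, whereas you unpack the same mechanism through the weight decomposition of the normal bundle.

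One small caveat: your parenthetical justification ``since $\mu(E,r\varphi)=\tfrac12 r^2\|\varphi\|^2$'' is not literally correct, because the $L^2$-norm is taken with respect to the harmonic metric solving Hitchin's equations for $(E,r\varphi)$, which varies with $r$. The correct reason that the upward gradient flow of $\mu$ coincides with the $\RR_{>0}$-flow is the general K\"ahler identity $\nabla\mu=J\xi$ (with $\xi$ the infinitesimal generator of the $S^1$-action), which you implicitly use elsewhere; just drop the heuristic formula. With that adjustment your sign/convention check is exactly the right thing to verify, and the rest of the argument goes through as you wrote it.
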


\begin{proof}
$\SM_L^{\Dol}(r,d)$ is smooth by Lemma \ref{lemma:smoothModuliHiggs}. Then, by \cite[Theorem 6.18, Example 9.4 and Corollary 13.2]{Kirwan84}, we conclude that, for each $(\overline{r},\overline{d})$ in the given conditions, the component $\VHS_L(\overline{r},\overline{d})$ of the fixed-point locus $\SM_L^{\Dol}(r,d)^\CC$ is a component the critical point set of $\mu$ and that the affine bundle $U_{\overline{r},\overline{d}}^-\to \VHS_L(\overline{r},\overline{d})$ coincides with the downwards Morse flow of $\mu$. Then, for each point $p\in \VHS_L(\overline{r},\overline{d})$, we have
$$N_{L,\overline{r},\overline{d}}^- = \dim \left ( T_p \left( U_{\overline{r},\overline{d}}^-|_p \right)\right) = \frac{1}{2} \dim_\RR \left (U_{\overline{r},\overline{d}}^-|_p \right) = \frac{1}{2} M_{L,\overline{r},\overline{d}}.$$
The last statement follows from \eqref{eq:upward+downard+fix=dim} and again Lemma \ref{lemma:smoothModuliHiggs}.
\end{proof}

\begin{lemma}
\label{lemma:equalMorseIndex}
Let $L$ and $L'$ be line bundles on $X$ such that $\deg(L)=\deg(L')>2g-2$. Suppose that $g\ge 2$ and that $r$ and $d$ are coprime. Then the Morse index $M_{L,\overline{r},\overline{d}}$ of $\VHS_L(\overline{r},\overline{d})\subset \SM_L^{\Dol}(r,d)$ is the same as the Morse index $M_{L',\overline{r},\overline{d}}$ of $\VHS_{L'}(\overline{r},\overline{d})\subset \SM_{L'}^{\Dol}(r,d)$.
\end{lemma}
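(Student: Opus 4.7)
The plan is to show that the Morse index is a function only of the combinatorial data $(\overline{r},\overline{d})$ together with $\deg(L)$, by expressing it as an Euler characteristic of a weighted piece of the deformation complex. I will carry this out through a weight decomposition at a VHS fixed point, as follows.

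First, I would fix a stable VHS $(E_\bullet,\varphi_\bullet)\in\VHS_L(\overline{r},\overline{d})$ with $E=\bigoplus_{i=1}^k E_i$, and observe that the isomorphism $(E,t\varphi)\cong(E,\varphi)$ is realized by the inner automorphism $\psi_t\in\Aut(E)$ with $\psi_t|_{E_i}=t^i\id_{E_i}$. This produces a $\ZZ$-grading
\[
\End(E)=\bigoplus_{l\in\ZZ}\End(E)_l,\qquad \End(E)_l=\bigoplus_{j-i=l}\Hom(E_i,E_j),
\]
under which $\varphi$ has weight $+1$. Consequently the deformation complex $C^\bullet(E,\varphi):\End(E)\xrightarrow{[\cdot,\varphi]}\End(E)\otimes L$ splits as a direct sum of weighted subcomplexes
\[
C^\bullet(E,\varphi)_l\,:\,\End(E)_l\xrightarrow{[\cdot,\varphi]}\End(E)_{l+1}\otimes L,
\]
and the hypercohomology decomposes accordingly, $\HH^i(C^\bullet(E,\varphi))=\bigoplus_l\HH^i(C^\bullet(E,\varphi)_l)$.

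Next I would invoke the standard Morse-theoretic identification of the downward Bialynicki-Birula cell with the positive-weight part of the Zariski tangent space (see \cite{Kirwan84}): namely, at a VHS fixed point,
\[
N_{L,\overline{r},\overline{d}}^{-}=\sum_{l>0}\dim\HH^1(C^\bullet(E,\varphi)_l),
\]
so by Lemma \ref{lemma:morseIndex} we get $M_{L,\overline{r},\overline{d}}=2\sum_{l>0}\dim\HH^1(C^\bullet(E,\varphi)_l)$. Since $(r,d)=1$ every semistable $(E,\varphi)$ is stable, hence $\HH^0(C^\bullet(E,\varphi))=\CC\cdot\id$ which is concentrated in weight $0$; therefore $\HH^0(C^\bullet(E,\varphi)_l)=0$ for $l\neq 0$. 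Moreover, the smoothness of $\SM_L(r,d)$ established in Lemma \ref{lemma:smoothModuliHiggs}, combined with the argument used in the proof of Lemma \ref{lemma:smoothHodgeModuli}, forces $\HH^2(C^\bullet(E,\varphi))=0$, so in particular $\HH^2(C^\bullet(E,\varphi)_l)=0$ for every $l$.

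With both extreme hypercohomologies vanishing for $l\neq 0$, the middle one is computed by the Euler characteristic:
\[
\dim\HH^1(C^\bullet(E,\varphi)_l)=-\chi(C^\bullet(E,\varphi)_l)=\chi(\End(E)_{l+1}\otimes L)-\chi(\End(E)_l).
\]
A direct Riemann-Roch computation gives
\[
\chi(\End(E)_l)=\sum_{j-i=l}\bigl(r_i d_j-r_j d_i+r_i r_j(1-g)\bigr),\qquad \chi(\End(E)_{l+1}\otimes L)=\sum_{j-i=l+1}\bigl(r_i d_j-r_j d_i+r_i r_j\deg(L)+r_i r_j(1-g)\bigr),
\]
both of which manifestly depend only on $\overline{r}$, $\overline{d}$ and $\deg(L)$. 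Summing over $l>0$ yields that $M_{L,\overline{r},\overline{d}}$ depends only on $(\overline{r},\overline{d},\deg(L))$, so $M_{L,\overline{r},\overline{d}}=M_{L',\overline{r},\overline{d}}$ whenever $\deg(L)=\deg(L')$, as desired.

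The principal delicate step is the Morse-theoretic identification in the second paragraph; the weight decomposition of the deformation complex and the vanishing of the endpoints of the hypercohomology long exact sequence are routine, whereas matching the positive-weight tangent directions with the downward Morse flow requires taking care of the sign convention for the moment map of the $S^1$-action $(E,\varphi)\mapsto(E,e^{i\theta}\varphi)$. Once this is pinned down, everything else is Riemann-Roch.
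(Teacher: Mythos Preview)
Your proposal is correct and follows essentially the same route as the paper: decompose the deformation complex at a VHS by weight, identify the Morse index with $2\sum_{l>0}\dim\HH^1(C^\bullet_l)$, and then reduce to an Euler-characteristic computation via Riemann--Roch that visibly depends only on $(\overline r,\overline d,\deg L)$. The only difference is cosmetic: the paper obtains the identity $M_{L,\overline r,\overline d}=-2\sum_{l\ge 1}\chi(C^\bullet_l)$ by citing \cite[\S 5]{BGL11}, whereas you unpack it by separately arguing $\HH^0(C^\bullet_l)=0$ for $l\neq 0$ (stability) and $\HH^2(C^\bullet_l)=0$ (smoothness); your route to $\HH^2=0$ via Lemma~\ref{lemma:smoothHodgeModuli} is a slightly roundabout citation, since that vanishing is already contained in the proof of Lemma~\ref{lemma:smoothModuliHiggs} (i.e.\ in \cite[Proposition 3.3]{BGL11}).
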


\begin{proof}
Since $r$ and $d$ are coprime, every semistable $L$-twisted Higgs bundle $(E,\varphi)\in \SM_L^{\Dol}(r,d)$ is stable, hence simple. So by \cite[Theorem 2.3]{BR94} (see also Remark 2.8(ii) of loc.~cit.) or by \cite[Theorem 47]{Tor11}, the tangent space to the moduli space $\SM_L^{\Dol}(r,d)$ at $(E,\varphi)$ is isomorphic to $\HH^1(X, C^\bullet(E,\varphi))$, where $C^\bullet(E,\varphi)$ is the following complex
$$C^\bullet(E,\varphi) \, : \, \End(E) \stackrel{[-,\varphi]}{\longrightarrow} \End(E)\otimes L.$$
At a variation of Hodge structure $(E_\bullet,\varphi_\bullet)\in \VHS_L(\overline{r},\overline{d})$, this deformation complex decomposes as
\begin{equation}\label{eq:complx-decomp}
C^\bullet(E,\varphi) = \bigoplus_{l=-k+1}^{k-1} C_l^\bullet(E,\varphi)
\end{equation}
where
$$C_l^\bullet(E,\varphi) : \bigoplus_{j-i=l} \Hom(E_i,E_j) \stackrel{[-,\varphi]}{\longrightarrow} \bigoplus_{j-i=l+1} \Hom(E_i,E_j)\otimes L$$
and, thus, the tangent space decomposes as
$$\HH^1(C^\bullet(E,\varphi)) = \bigoplus_{l=-k+1}^{k-1} \HH^1(C^\bullet_l(E_\bullet,\varphi_\bullet)).$$

Then, the computations in \cite[\S 5]{BGL11} show that if $r$ and $d$ are coprime and $\deg(L)>2g-2$, then the Morse index of $\VHS_{L}(\overline{r},\overline{d})\subset \SM_L^{\Dol}(r,d)$ is
\begin{equation}\label{eq:Morseindex}
M_{L,\overline{r},\overline{d}} = 2\sum_{l=1}^{k-1}\dim(\HH^1(C^\bullet_l(E_\bullet,\varphi_\bullet))) = - 2\sum_{l=1}^{k-1} \chi(C^\bullet_l(E_\bullet,\varphi_\bullet)),
\end{equation}
where, for each $l=1,\ldots,k-1$,
\begin{equation}\label{eq:Eulerchar}
	\begin{split}
	-\chi(C^\bullet_l(E_\bullet,\varphi_\bullet))&= \sum_{i=1}^{k-l-1} \chi (\Hom(E_i,E_{i+l+1}) \otimes L) - \sum_{i=1}^{k-l} \chi(\Hom(E_i,E_{i+l}))\\
&=\sum_{i=1}^{k-l-1} (-r_{i+l+1} d_i + r_i d_{i+l+1}+r_ir_{i+l} \deg(L)+ r_ir_{i+l+1}(1-g))\\
 &\ \ \ - \sum_{i=1}^{k-l} (-r_{i+l} d_i + r_i d_{i+l} + r_ir_{i+l}(1-g)).
	\end{split}
\end{equation}
Thus $\chi(C^\bullet_l(E_\bullet,\varphi_\bullet))$ depends on the degree of $L$, but not on $L$ itself, and hence the same is true for the Morse index.
\end{proof}

\begin{theorem}
\label{thm:equalMotiveHiggs}
Let $X$ be a smooth complex projective curve of genus $g\geq 2$. Let $L$ and $L'$ be line bundles over $X$ such that $\deg(L)=\deg(L')>2g-2$. Assume that the rank $r$ and degree $d$ are coprime. Then the motivic classes of the corresponding moduli spaces
$[\SM_L^{\Dol}(r,d)]$ and $[\SM_{L'}^{\Dol}(r,d)]$ are equal in $K(\VarC)$. Moreover, if $d'$ is any integer coprime with $r$, then $E(\SM_L^{\Dol}(r,d))=E(\SM_{L'}^{\Dol}(r,d'))$.
Finally, if $L=L'=K_X(D)$ for some effective divisor $D$, then there is an actual isomorphism of pure mixed Hodge structures
$H^\bullet(\SM_L^{\Dol}(r,d)) \cong  H^\bullet(\SM_{L'}^{\Dol}(r,d'))$.
\end{theorem}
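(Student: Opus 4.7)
The strategy is to exploit the Bialynicki-Birula decomposition \eqref{eq:EPolHiggs} and show that each summand $\LL^{N^+_{L,\overline{r},\overline{d}}}[\VHS_L(\overline{r},\overline{d})]$ depends on $L$ only through its degree. Setting $\ell := \deg(L) = \deg(L')$, I would first appeal to Corollary \ref{cor:VHSChains} to identify $\VHS_L(\overline{r},\overline{d}) \cong \HC^{\alpha_L}(\overline{r},\overline{d}_L)$, where both the stability parameter $\alpha_L = ((k-1)\ell,\ldots,\ell,0)$ and the shifted multidegree $\overline{d}_L = (d_1 + r_1(1-k)\ell,\ldots,d_k)$ depend only on $\ell$. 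Consequently $\Delta_L = \Delta_{L'}$ in the notation of \eqref{eq:notation}, and for each $(\overline{r},\overline{d})\in\Delta_L$ the composition $\VHS_L(\overline{r},\overline{d})\cong\HC^{\alpha_L}(\overline{r},\overline{d}_L)=\HC^{\alpha_{L'}}(\overline{r},\overline{d}_{L'})\cong \VHS_{L'}(\overline{r},\overline{d})$ gives an isomorphism; in particular the fixed-component motivic classes and their dimensions $N^0_{L,\overline{r},\overline{d}}=N^0_{L',\overline{r},\overline{d}}$ agree.

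To match the remaining exponents $N^+_{L,\overline{r},\overline{d}}$, I would next invoke Lemma \ref{lemma:equalMorseIndex} to conclude $M_{L,\overline{r},\overline{d}} = M_{L',\overline{r},\overline{d}}$, whence $N^-_{L,\overline{r},\overline{d}} = N^-_{L',\overline{r},\overline{d}}$ by Lemma \ref{lemma:morseIndex}. Since Lemma \ref{lemma:smoothModuliHiggs} gives $\dim \SM_L(r,d) = 1 + r^2\ell = \dim \SM_{L'}(r,d)$, the identity $N^+ + N^0 + N^- = \dim \SM_L(r,d)$ (again from Lemma \ref{lemma:morseIndex}) forces $N^+_{L,\overline{r},\overline{d}} = N^+_{L',\overline{r},\overline{d}}$. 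Substituting term by term into \eqref{eq:EPolHiggs} then yields the first claim $[\SM_L(r,d)] = [\SM_{L'}(r,d)]$ in $K(\VarC)$, and applying the $E$-polynomial ring homomorphism \eqref{E-poly} gives $E(\SM_L(r,d)) = E(\SM_{L'}(r,d))$.

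For the statement about $E$-polynomials with a (possibly different) coprime degree $d'$, I would combine the equality just proved with the $\chi$-independence theorem of Maulik--Shen \cite{MS20-chiindependence} (or equivalently of Groechenig--Wyss--Ziegler \cite{GWZ20}), which provides $E(\SM_{L'}(r,d)) = E(\SM_{L'}(r,d'))$ under the hypothesis $\deg(L') > 2g-2$, yielding $E(\SM_L(r,d)) = E(\SM_{L'}(r,d'))$. For the final assertion in the meromorphic/logarithmic case $L = L' = K(D)$, I would appeal to the cohomological refinement of $\chi$-independence, which upgrades the numerical equality to an actual isomorphism $H^\bullet(\SM_{K(D)}(r,d)) \cong H^\bullet(\SM_{K(D)}(r,d'))$ of mixed Hodge structures. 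Purity of both sides is then automatic from semiprojectivity (Proposition \ref{prop:semiprojectiveHiggs}) together with \cite[Corollary 1.3.2]{HRV15}.

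The main obstacle lies not in the Bialynicki-Birula bookkeeping --- which reduces essentially to unwinding Corollary \ref{cor:VHSChains} and applying Lemmas \ref{lemma:morseIndex}, \ref{lemma:equalMorseIndex} and \ref{lemma:smoothModuliHiggs} in sequence --- but in invoking the external $\chi$-independence results in the generality we need (arbitrary twisting line bundles of degree $>2g-2$, rather than the canonical case). A secondary concern is to double-check that the Morse-index computation in Lemma \ref{lemma:equalMorseIndex}, which was expressed via the Euler characteristics \eqref{eq:Eulerchar}, truly depends only on $\ell$ and not on further numerical invariants of $L$; this is visible from inspection of \eqref{eq:Eulerchar}, where $\deg(L)$ enters only through the term $r_i r_{i+l}\deg(L)$.
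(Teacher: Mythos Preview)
Your Bialynicki--Birula argument for the motivic equality $[\SM_L(r,d)]=[\SM_{L'}(r,d)]$ is exactly the paper's: match the fixed-point components via Corollary \ref{cor:VHSChains}, match the Morse indices via Lemma \ref{lemma:equalMorseIndex}, and read off the equality of the exponents $N^+$ from Lemma \ref{lemma:morseIndex} and Lemma \ref{lemma:smoothModuliHiggs}.

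The one place where your proposal leaves a real loose end is the $E$-polynomial step for a different coprime degree $d'$. You invoke Maulik--Shen as if it directly gave $E(\SM_{L'}(r,d))=E(\SM_{L'}(r,d'))$ for an \emph{arbitrary} line bundle $L'$ with $\deg(L')>2g-2$, and then list precisely this generality as the main obstacle. But there is no obstacle: the fix uses nothing beyond what you have already proved. Pick $x_0\in X$, set $m=\deg(L)-(2g-2)>0$, and note that $K_X(mx_0)$ has the same degree as $L$ and $L'$. Your motivic equality (applied twice, once with $L$ and once with $L'$, each paired with $K_X(mx_0)$) gives
\[
E(\SM_L(r,d))=E(\SM_{K_X(mx_0)}(r,d))\quad\text{and}\quad E(\SM_{K_X(mx_0)}(r,d'))=E(\SM_{L'}(r,d')),
\]
while the Maulik--Shen result \cite[Theorem 0.1]{MS20-chiindependence}, which is stated precisely for twists of the form $K_X(D)$ with $D$ effective, supplies the middle link $E(\SM_{K_X(mx_0)}(r,d))=E(\SM_{K_X(mx_0)}(r,d'))$. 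This is exactly how the paper proceeds, so no extension of $\chi$-independence beyond the $K_X(D)$ case is required. The final Hodge-structure isomorphism for $L=L'=K(D)$ is, as you say, directly \cite[Theorem 0.1]{MS20-chiindependence}, and your purity justification via Proposition \ref{prop:semiprojectiveHiggs} and \cite[Corollary 1.3.2]{HRV15} is fine.
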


\begin{proof}
By Corollary \ref{cor:VHSChains}, for each $k=1,\ldots,r$ and each $\overline{r}=(r_1,\ldots,r_k)$ and $\overline{d}=(d_1,\ldots,d_k)$ with $|\overline{r}|=r$ and $|\overline{d}|=d$ (recall \eqref{eq:notation}), we have 
$$\VHS_L(\overline{r},\overline{d}) \cong \HC^{\alpha_L}(\overline{r}, \overline{d}_L),$$
where $\alpha_L=((k-1)\deg(L),\ldots, \deg(L),0)$ and $\overline{d}_L=(d_{L,i})$ with $d_{L,i}=d_i+r_i(i-k)\deg(L)$. As $\deg(L)=\deg(L')$ we have $\alpha_L=\alpha_{L'}$ and $\overline{d}_L=\overline{d}_L'$, so we obtain an isomorphism
\begin{equation}\label{eq:isomVHS-LL'}
\VHS_L(\overline{r},\overline{d}) \cong \HC^{\alpha_L}(\overline{r}, \overline{d}_L) = \HC^{\alpha_{L'}}(\overline{r}, \overline{d}_{L'}) \cong \VHS_{L'}(\overline{r},\overline{d}).
\end{equation}
In particular, we have $\Delta_L=\Delta_{L'}$; cf.~\eqref{eq:notation}. 

On the other hand, as $\SM_L^{\Dol}(r,d)$ is smooth of dimension $1+r^2\deg(L)$, then for each $(\overline{r},\overline{d})\in \Delta_L=\Delta_{L'}$, we have, using Lemma \ref{lemma:morseIndex},
\begin{equation}\label{eq:formula-N+}
N_{L,\overline{r},\overline{d}}^+ = 1+r^2\deg(L) - \dim(\VHS_L(\overline{r},\overline{d}))- M_{L,\overline{r},\overline{d}}/2.
\end{equation}
By Lemma \ref{lemma:equalMorseIndex} and \eqref{eq:isomVHS-LL'} we have $N_{L,\overline{r},\overline{d}}^+ = N_{L',\overline{r},\overline{d}}^+$. 

Therefore, using \eqref{eq:EPolHiggs}, we conclude that
$$
[\SM_L^{\Dol}(r,d)] = \sum_{k=1}^r\sum_{\begin{array}{c}
{\scriptstyle (\overline{r},\overline{d})\in\Delta_L}\\
{\scriptstyle |\overline{r}|=r,\,|\overline{d}|=d}
\end{array}} \LL^{N_{L,\overline{r},\overline{d}}^+} [\VHS_L(\overline{r},\overline{d})]\\
= \sum_{k=1}^r\sum_{\begin{array}{c}
{\scriptstyle (\overline{r},\overline{d})\in\Delta_{L'}}\\
{\scriptstyle |\overline{r}|=r,\,|\overline{d}|=d}
\end{array}} \LL^{N_{L',\overline{r},\overline{d}}^+} [\VHS_{L'}(\overline{r},\overline{d})]=[\SM_{L'}^{\Dol}(r,d)].$$

Then 
\begin{equation}\label{eq:equalEpol-L-twistedsamedegreed}
E(\SM_L^{\Dol}(r,d))=E(\SM_{L'}^{\Dol}(r,d))
\end{equation}
 is direct from \eqref{E-poly}.

Take now any $d'$ also coprime with $r$. To prove that $\SM_L^{\Dol}(r,d)$ and $\SM_{L'}^{\Dol}(r,d')$ have the same $E$-polynomial, we proceed as follows. By \cite[Theorem 0.1]{MS20-chiindependence} (see also \cite[Theorem 7.15]{GWZ20}), 
\begin{equation}\label{eq:equalEpol-K(D)twisted}
E(\SM_{K_X(D)}^{\Dol}(r,d))=E(\SM_{K_X(D)}^{\Dol}(r,d')),
\end{equation}
where $K_X$ denotes the canonical bundle of $X$ and $D$ is an effective divisor.
To prove it for any twisting line bundles $L,L'$ of the same degree (greater than $2g-2$), take any point $x_0\in X$ and let $m=\deg(L)+2-2g>0$. Then $\deg(K_X(mx_0))=\deg(L)= \deg(L')$ so, applying \eqref{eq:equalEpol-L-twistedsamedegreed} and \eqref{eq:equalEpol-K(D)twisted}, yields
$$E(\SM_{L}^{\Dol}(r,d))=E(\SM_{K_X(mx_0)}^{\Dol}(r,d))=E(\SM_{K_X(mx_0)}^{\Dol}(r,d'))=E(\SM_{L'}^{\Dol}(r,d')),$$
as claimed.

Finally, if $L=L'=K_X(D)$ with $D$ effective, the isomorphism $H^\bullet(\SM_L^{\Dol}(r,d)) \cong  H^\bullet(\SM_{L'}^{\Dol}(r,d'))$ of Hodge structures follows immediately from \cite[Theorem 0.1]{MS20-chiindependence}.
\end{proof}

\section{Proof of the main results}
\label{section:mainResults}
\subsection{The complete motivic invariance theorem}

We can combine all the previous invariance results to prove one of our main theorems. Given a Lie algebroid, $\SL$ on the curve $X$, recall that $\SM_{\SL}(r,d)$ denotes the moduli space of semistable integrable $\SL$-connections of rank $r$ and degree $d$ or, equivalently, of semistable $\Lambda_\SL$-modules, where $\Lambda_\SL$ is the split almost polynomial sheaf of rings of differential operators associated to $\SL$, under the equivalence provided by Theorem \ref{thm:equiv:Liealge-sapsrdo}. If $\rk(\SL)=1$, then every $\SL$-connection is automatically integrable, so in that case $\SM_{\SL}(r,d)$ is the moduli space of all semistable $\SL$-connections of rank $r$ and degree $d$.  

\begin{theorem}
\label{thm:equalMotive}
Let $X$ be a smooth projective curve of genus $g\ge 2$ and let $\SL$ and $\SL'$ be any two Lie algebroids on $X$ such that $\rk(\SL)=\rk(\SL')=1$ and $\deg(\SL)=\deg(\SL')<2-2g$. Suppose that $r$ and $d$ are coprime. Then $[\SM_{\SL}(r,d)] = [\SM_{\SL'}(r,d)]$ in $\hat{K}(\VarC)$. Moreover, if $d'$ is any integer coprime with $r$, then $E(\SM_{\SL}(r,d)) = E(\SM_{\SL'}(r,d'))$. Finally, if $L=L'=T_X(-D)$ for some effective divisor $D$, then there is an actual isomorphism of pure mixed Hodge structures $H^\bullet(\SM_{\SL}(r,d)) \cong  H^\bullet(\SM_{\SL'}(r,d'))$.
\end{theorem}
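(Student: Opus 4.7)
The proof is a direct two-step assembly of the previous two main results of this section, namely Theorem \ref{thm:equalMotiveHodge} (invariance of the motive under the algebroid structure, interpolating through the $\SL$-Hodge moduli) and Theorem \ref{thm:equalMotiveHiggs} (invariance of the motive of the $L$-twisted Higgs moduli under the choice of twisting line bundle $L$, as long as $\deg L$ is fixed and large enough). The plan is to factor the comparison $\SM_{\Lambda_\SL}(r,d) \leadsto \SM_{\Lambda_{\SL'}}(r,d)$ through the corresponding twisted Higgs bundle moduli spaces $\SM_{L^{-1}}(r,d)$ and $\SM_{L'^{-1}}(r,d)$, where $L$ and $L'$ are the underlying line bundles of $\SL$ and $\SL'$ respectively.

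First I would verify that the hypotheses of Theorem \ref{thm:equalMotiveHiggs} are satisfied for $L^{-1}$ and $L'^{-1}$: since $\deg(L)=\deg(L')<2-2g$ we have $\deg(L^{-1})=\deg(L'^{-1})>2g-2$, so Lemma \ref{lemma:smoothModuliHiggs} and the semiprojectivity of Proposition \ref{prop:semiprojectiveHiggs} apply. Then Theorem \ref{thm:equalMotiveHodge}, applied separately to $\SL$ and to $\SL'$, gives
\begin{equation*}
[\SM_{\Lambda_\SL}(r,d)] = [\SM_{L^{-1}}(r,d)] \qquad \text{and} \qquad [\SM_{\Lambda_{\SL'}}(r,d)] = [\SM_{L'^{-1}}(r,d)]
\end{equation*}
in $\hat K(\VarC)$, and Theorem \ref{thm:equalMotiveHiggs} gives $[\SM_{L^{-1}}(r,d)]=[\SM_{L'^{-1}}(r,d)]$, since these two line bundles have the same degree. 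Chaining the three equalities yields the desired $[\SM_{\Lambda_\SL}(r,d)]=[\SM_{\Lambda_{\SL'}}(r,d)]$.

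For the statement about $E$-polynomials allowing a different coprime degree $d'$, I would use the stronger part of Theorem \ref{thm:equalMotiveHiggs}, which says that $E(\SM_{L^{-1}}(r,d))=E(\SM_{L'^{-1}}(r,d'))$ whenever $\deg(L^{-1})=\deg(L'^{-1})>2g-2$ and $(r,d)=(r,d')=1$ (this being the place where the results of Maulik--Shen \cite{MS20-chiindependence} and Groechenig--Wyss--Ziegler \cite{GWZ20} enter, as recalled in Section \ref{section:motives}). Combining this with the $E$-polynomial equalities coming from applying the ring morphism $E$ of \eqref{E-poly} to the two motivic equalities obtained from Theorem \ref{thm:equalMotiveHodge} produces the conclusion $E(\SM_{\Lambda_\SL}(r,d))=E(\SM_{\Lambda_{\SL'}}(r,d'))$.

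Finally, for the pure Hodge structure isomorphism in the case $L^{-1}=L'^{-1}=K_X(D)$ with $D$ effective, I would combine the isomorphism $H^\bullet(\SM_{\Lambda_\SL}(r,d)) \cong H^\bullet(\SM_{L^{-1}}(r,d))$ of pure Hodge structures, provided by Theorem \ref{thm:equalMotiveHodge}, with the isomorphism $H^\bullet(\SM_{L^{-1}}(r,d))\cong H^\bullet(\SM_{L'^{-1}}(r,d'))$ of pure Hodge structures provided by the last assertion of Theorem \ref{thm:equalMotiveHiggs} (which directly requires the twist to be of the form $K_X(D)$). There is really no substantial obstacle here: the hard technical work has all been done in the preceding two theorems, and this final statement is essentially bookkeeping. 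The only point requiring mild care is to track that the hypothesis $\deg(\SL)<2-2g$ for the Lie algebroids is precisely the one that ensures the dual line bundles land in the range $\deg>2g-2$ where the twisted Higgs moduli results apply.
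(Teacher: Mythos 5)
Your proposal is correct and follows precisely the paper's own argument: the paper proves this theorem in one line by chaining Theorem \ref{thm:equalMotiveHodge} (passing from $\SM_{\Lambda_\SL}(r,d)$ to $\SM_{L^{-1}}(r,d)$, with the accompanying $E$-polynomial and pure Hodge statements) with Theorem \ref{thm:equalMotiveHiggs} (independence of the twisting line bundle of fixed degree $>2g-2$, including the $d\neq d'$ case via Maulik--Shen and Groechenig--Wyss--Ziegler) and the ring morphism \eqref{E-poly}. Your additional bookkeeping --- verifying that $\deg(L)<2-2g$ translates into $\deg(L^{-1})>2g-2$ so that the Higgs-side hypotheses hold --- is exactly the implicit check in the paper's proof, so there is nothing to correct.
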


\begin{proof}
As announced at the start of this section, this theorem follows directly from Theorems \ref{thm:equalMotiveHodge} and \ref{thm:equalMotiveHiggs}, and by \eqref{E-poly}.

By Theorem \ref{thm:equalMotiveHodge}, the motive of the moduli space is independent of the Lie algebroid structure. Thus, if $L$ and $L'$ are the underlying line bundles to $\SL$ and $\SL'$ respectively and $d$ and $d'$ are coprime with $r$, then
$$[\SM_{\SL}(r,d)]=[\SM_{L^{-1}}^{\Dol}(r,d)] \quad \quad \text{and} \quad \quad  [\SM_{\SL'}(r,d')]=[\SM_{(L')^{-1}}^{\Dol}(r,d')].$$
On the other hand, Theorem \ref{thm:equalMotiveHiggs} implies that the motive of a twisted Higgs moduli space does not depend on the twisting bundle, but rather on the underlying degree. Thus, if $d=d'$ then
$$[\SM_{\SL}(r,d)]=[\SM_{L^{-1}}^{\Dol}(r,d)]=[\SM_{(L')^{-1}}^{\Dol}(r,d)]=[\SM_{\SL'}(r,d)]$$
If we restrict ourselves to $E$ polynomials, then we can also vary the degree $d$, as given $d$ and $d'$ coprime with $r$, Theorem \ref{thm:equalMotiveHodge} implies
$$E\left(\SM_{\SL}(r,d)\right)=E\left(\SM_{L^{-1}}^{\Dol}(r,d)\right)=E\left(\SM_{(L')^{-1}}^{\Dol}(r,d')\right)=E\left(\SM_{\SL'}(r,d')\right)$$
Finally, if $L=L'=T_X(-D)$ for some effective divisor $D$, then Theorems \ref{thm:equalMotiveHodge} and \ref{thm:equalMotiveHiggs} actually give isomorphisms between the following pure mixed Hodge structures:
$$H^\bullet\left(\SM_{\SL}(r,d)\right) \cong H^\bullet\left(\SM_{K_X(D)}^{\Dol}(r,d)\right)\cong H^\bullet\left(\SM_{K_X(D)}^{\Dol}(r,d')\right) \cong  H^\bullet\left(\SM_{\SL'}(r,d')\right)$$
\end{proof}

Hence to compute the motivic class or the $E$-polynomial of $\SM_{\SL}(r,d)$, it is enough to do it for the moduli space $\SM_{K_X(D)}^{\Dol}(r,1)$ of $K_X(D)$-twisted Higgs bundles of rank $r$ and degree $1$, for some divisor $D$ of the appropriate (positive) degree.

\begin{remark}
Notice that, as a consequence of the discussion held in Section \ref{subsection:classification} and, in particular, in the proof of Corollary \ref{cor:classificationLieAlgebroidsRank1Meromorphic}, if either $\SL$ or $\SL'$ are nontrivial Lie algebroids on a line bundle $L$, then one of their anchors $\delta,\delta'\in H^0(L^{-1}\otimes T_X)$ is nonzero and, therefore, that anchor induces an isomorphism $L\cong T_X(-D)$ for an appropriate effective divisor $D$. Thus, the condition $L=L'=T(-D)$ in the Theorem is always satisfied if $\SL$ and $\SL'$ share the same line bundle $L$ and one of them is a nontrivial Lie algebroid structure.
\end{remark}

\subsection{Results on Chow motives and Voevodsky motives}
\label{sec:ChowVoev}

Theorem \ref{thm:equalMotive} shows that, under the stated conditions, there is an equality of motives 
$$[\SM_{\SL}(r,d)] = [\SM_{\SL'}(r,d)]\in \hat{K} (\VarC)$$
in the (completed) Grothendieck ring of varieties.

Nevertheless, the techniques that we use to prove this equality (namely, semiprojectivity of the $\SL$-Hodge moduli space and the exposed relations between the Bialynicki-Birula decompositions of the corresponding moduli spaces of twisted Higgs bundles) also allow us to obtain isomorphisms for other types of invariants. Given a complex scheme $X$ and a ring $R$, let us consider the following.
\begin{itemize}
\item Let $M(X)\in \mathrm{DM}^{\mathrm{eff}}(\CC,R)$ denote the Voevodsky motive of $X$, where $\mathrm{DM}^{\mathrm{eff}}(\CC,R)$ is the category of effective geometric motives as defined by Voevodsky in \cite{V00}.
\item Let $h(X)\in \mathrm{Chow}^{\mathrm{eff}}(\CC,R)$ be the Chow motive of $X$, where $\mathrm{Chow}^{\mathrm{eff}}(\CC,R)$ is the category of effective Chow motives; see for example \cite{M68,S94,dB01}.
\item Let $CH^\bullet(X,R)$ denote the Chow ring of $X$ with coefficients in $R$.
\end{itemize}
Moreover, recall that we say that $X$ has a pure Voevodsky motive if $M(X)$ belongs to the heart of $\mathrm{DM}^{\mathrm{eff}}(\CC,R)$ which is equivalent to $\mathrm{Chow}^{\mathrm{eff}}(\CC,R)$ through Voevodsky's embedding (c.f. \cite[Section 6.3]{HL21}).

\begin{theorem}
\label{thm:equalMotive-Chow-Voevodsky}
Let $X$ be a smooth projective curve of genus $g\ge 2$ and let $\SL$ and $\SL'$ be any two Lie algebroids on $X$ such that $\rk(\SL)=\rk(\SL')=1$ and $\deg(\SL)=\deg(\SL')<2-2g$. Suppose that $r$ and $d$ are coprime. Then, for every ring $R$, the Voevodsky motive of the moduli space $\SM_{\SL}(r,d)$ is pure and we have
\begin{equation*}
\begin{split}
M(\SM_{\SL}(r,d)) &\cong M(\SM_{\SL'}(r,d))\in \mathrm{DM}^{\mathrm{eff}}(\CC,R),\\
h(\SM_{\SL}(r,d)) &\cong h(\SM_{\SL'}(r,d))\in \mathrm{Chow}^{\mathrm{eff}}(\CC,R),\\
\op{CH}^\bullet(\SM_{\SL}(r,d))& \cong \op{CH}^\bullet(\SM_{\SL'}(r,d)).
\end{split}
\end{equation*}
\end{theorem}

\begin{proof}
The proof is analogous to the one in Theorem \ref{thm:equalMotiveHodge} and Theorem \ref{thm:equalMotive}, but we now use the technical theorems from Appendices A and B of \cite{HL21} to perform the necessary computations in $ \mathrm{DM}^{\mathrm{eff}}(\CC,R)$ instead of $\hat{K}(\VarC)$. By Theorem \ref{thm:semiprojectiveHodge}, for every rank one algebroid $\SL=(L,[\cdot\,,\cdot],\delta)$ satisfying the hypothesis of the theorem the moduli space $\SM_{\SL}^{\Hod}(r,d)$ is a smooth quasiprojective semiprojective variety with a $\CC^*$-equivariant submersion $\pi:\SM_{\SL}^{\Hod}(r,d)\to \CC$ such that $\pi^{-1}(0)= \SM_{L^*}^{\Dol}(r,d)$ and $\pi^{-1}(1)=\SM_{\SL}(r,d)$. Then \cite[Theorem B.1]{HL21} and \cite[Corollary B.2]{HL21} yield isomorphisms
$$M( \SM_{L^{-1}}^{\Dol}(r,d))= M(\pi^{-1}(0)) \cong M(\SM_{\SL}^{\Hod}(r,d)) \cong M(\pi^{-1}(1)) =M(\SM_{\SL}(r,d))$$
$$\op{CH}^\bullet( \SM_{L^{-1}}^{\Dol}(r,d))= \op{CH}^\bullet(\pi^{-1}(0)) \cong \op{CH}^\bullet(\SM_{\SL}^{\Hod}(r,d)) \cong \op{CH}^\bullet(\pi^{-1}(1)) =\op{CH}^\bullet(\SM_{\SL}(r,d))$$
As $\SM_{L^{-1}}^{\Dol}(r,d)$ is also smooth and semiprojective by Proposition \ref{prop:semiprojectiveHiggs}, its motive is pure by \cite[Corollary A.5]{HL21}, so the motive of $\SM_{\SL}(r,d)$ is also pure and, therefore, the above isomorphism of Voevodsky motives induces an isomorphism of the Chow motives.
Thus, we can assume without loss of generality that the Lie algebroid structures for $\SL$ and $\SL'$ are trivial (i.e., that we have moduli spaces of twisted Higgs bundles). Furthermore, purity of the Voevodsky motives and representability of the Chow groups as certain spaces of morphisms in $\mathrm{DM}^{\mathrm{eff}}(\CC,R)$ (c.f. \cite[Corollary B.2]{HL21}) imply that it is enough to prove that the Voevodsky motives are isomorphic to conclude the desired isomorphisms between Chow motives or Chow rings. The Bialynicki-Birula decomposition of the moduli spaces of Higgs bundles yield the following motivic decompositions \cite[Theorem A.4]{HL21}, in which we follow the notation from Section \ref{section:HiggsPolynomials}.
$$M(\SM_{L^{-1}}^{\Dol}(r,d))\cong \bigoplus_{k=1}^r \bigoplus_{\begin{array}{c}
{\scriptstyle (\overline{r},\overline{d})\in\Delta_{L^{-1}}}\\
{\scriptstyle |\overline{r}|=r,\,|\overline{d}|=d}
\end{array}} M(\VHS_{L^{-1}}(\overline{r},\overline{d}))\{N_{L^{-1},\overline{r},\overline{d}}^-\}$$
$$M(\SM_{(L')^{-1}}^{\Dol}(r,d))\cong \bigoplus_{k=1}^r \bigoplus_{\begin{array}{c}
{\scriptstyle (\overline{r},\overline{d})\in\Delta_{(L')^{-1}}}\\
{\scriptstyle |\overline{r}|=r,\,|\overline{d}|=d}
\end{array}} M(\VHS_{(L')^{-1}}(\overline{r},\overline{d}))\{N_{(L')^{-1},\overline{r},\overline{d}}^-\}$$
By Corollary \ref{cor:VHSChains} we have $\Delta_{L^{-1}}=\Delta_{(L')^{-1}}$. Calling this set $\Delta$, then Corollary \ref{cor:VHSChains} and Lemma \ref{lemma:equalMorseIndex} imply that for each $(\overline{r},\overline{d})\in \Delta$, we have $\VHS_{L^{-1}}(\overline{r},\overline{d})\cong \VHS_{(L')^{-1}}(\overline{r},\overline{d})$ and $N_{L^{-1},\overline{r},\overline{d}}^-=N_{(L')^{-1},\overline{r},\overline{d}}^-$, so we obtain a term-by-term isomorphism of the previous Voevodsky motives.
\end{proof}

This result can be considered as an extension to moduli spaces of Lie algebroid connections (over $\CC$) of Hoskins and Lehalleur's ``motivic non-abelian Hodge correspondence'' \cite[Theorem 4.2]{HL21}, in which it is proved that there exists an isomorphism between the Voevodsky motives and Chow rings of the de Rham and $K$-twisted Higgs moduli spaces.

\subsection{Further topological properties of moduli spaces of Lie algebroid connections}

Similarly to how we used the smoothness of the moduli space of twisted Higgs bundles to prove the smoothness of the moduli space of $\SL$-connections back in section \ref{section:semiprojectivityHodge}, we can also use the regularity properties and the Bialynicki-Birula stratification of $\SM_{\SL}^{\Hod}(r,d)$ to transfer other known properties of the moduli spaces of twisted Higgs bundles to moduli spaces of $\SL$-connections. As an example, in this section we prove that, under certain conditions, the moduli space of $\SL$-connections is irreducible and compute some of its homotopy groups, by showing that they are isomorphic to the ones of the moduli space of vector bundles. 

Let $\mathrm{\mathbf{M}}(r,d)$ denote the moduli space of semistable vector bundles of rank $r$ and degree $d$ on the curve $X$.

\begin{lemma}
\label{lemma:codimUnstable}
Let $X$ be a smooth projective curve of genus $g\ge 2$ and let $\SL$ be a rank $1$ Lie algebroid on $X$ such that $\deg(\SL)<2-2g$. Suppose that $r\geq 2$ and $d$ are coprime. Then the loci in $\SM_{\SL}(r,d)$ corresponding to semistable $\SL$-connections whose underlying vector bundle is not stable has codimension at least $(g-1)(r-1)$. In particular, with the given bounds on $g$ and $r$ it has codimension at least $1$.
\end{lemma}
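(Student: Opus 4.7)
The plan is to stratify $\SM_{\Lambda_\SL}(r,d)$ by the Harder--Narasimhan (HN) type of the underlying vector bundle and to bound the dimension of each non-trivial stratum. Write $\SS_\tau\subset \SM_{\Lambda_\SL}(r,d)$ for the locally closed subset of stable $\SL$-connections $(E,\nabla_\SL)$ such that $E$ has HN type $\tau=((r_1,d_1),\dots,(r_k,d_k))$, with $\sum r_i=r$, $\sum d_i=d$ and slopes $\mu_1>\cdots>\mu_k$. The open stratum corresponds to $k=1$, and the complement --- our locus of interest --- is the union of $\SS_\tau$ over the $\tau$ with $k\ge2$.

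To bound $\dim\SS_\tau$ for $k\ge 2$, I consider the forgetful assignment $E\mapsto(\op{gr}_1E,\dots,\op{gr}_kE)$ taking values in $\prod_i\mathrm{\mathbf{M}}^{ss}(r_i,d_i)$. Over a choice of graded pieces $(E_1,\dots,E_k)$, a point of $\SS_\tau$ is given by (i) iterated extension data building a bundle $E$ of HN type $\tau$ and (ii) a choice of $\SL$-connection $\nabla_\SL$ on $E$ such that $(E,\nabla_\SL)$ is stable. By Riemann--Roch, $\dim\Ext^1(E_j,E_i)=r_ir_j(g-1)+r_jd_i-r_id_j$ for $i<j$ (with $\Hom(E_j,E_i)=0$ from $\mu_j<\mu_i$ and semistability of the pieces). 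Since $\deg L<2-2g$, the bundle $\End(E)\otimes L^{-1}$ is, after filtering by the HN filtration, a successive extension of semistable bundles of negative slope in the Serre dual, so $H^1(\End(E)\otimes L^{-1})=0$; Riemann--Roch then gives that the space of $\SL$-connections on a fixed $E$ has dimension $r^2(1-g-\deg L)$. Combining these estimates with the dimensions of the Shatz strata in the stack of bundles, and recalling that $\dim\SM_{\Lambda_\SL}(r,d)=1-r^2\deg L$ by Lemma \ref{lemma:smoothHodgeModuli}, one obtains
$$
\codim_{\SM_{\Lambda_\SL}(r,d)}\SS_\tau\;\ge\;\sum_{i<j}\bigl(r_ir_j(g-1)+r_jd_i-r_id_j\bigr).
$$

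Both summands on the right are positive: $r_ir_j(g-1)\ge 0$ and $r_jd_i-r_id_j>0$ for $i<j$ by the HN slope inequality. It then suffices to check the elementary bound $\sum_{i<j}r_ir_j\ge r-1$ for any partition of $r$ into $k\ge 2$ positive parts, which follows from $\sum_{i<j}r_ir_j=\tfrac12\bigl(r^2-\sum_i r_i^2\bigr)\ge r-1$ (with equality only when $\tau$ consists of a single rank--$(r-1)$ piece and a single rank--$1$ piece). Therefore $\codim \SS_\tau\ge(g-1)(r-1)$ for every non-trivial $\tau$, and taking the union over all such $\tau$ yields the claimed codimension bound for the locus of connections with unstable underlying bundle.

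The main obstacle is the careful bookkeeping in the fiber dimension step: one must show that the automorphism groups of the unstable bundles along the HN stratum, together with the stability constraint on the pair $(E,\nabla_\SL)$ (which forces certain second fundamental forms to be nonzero), do not push $\dim\SS_\tau$ above the naive count. This amounts to verifying that the Zariski tangent space to $\SS_\tau$ at a stable point sits inside the correct piece of the deformation complex $C^\bullet(E,\nabla_\SL)$ of \eqref{eq:def-complex}, and is the step requiring the most care.
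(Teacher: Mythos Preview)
Your approach is genuinely different from the paper's, and it contains a concrete error.

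\medskip

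\textbf{The error.} Your claim that $H^1(\End(E)\otimes L^{-1})=0$ for every $E$ underlying a stable $\SL$-connection is false. Serre duality identifies this with $H^0(\End(E)\otimes K_X\otimes L)$, and the HN filtration of $E$ induces pieces $\Hom(E_i,E_j)\otimes K_X\otimes L$ of slope $\mu_j-\mu_i+\deg(K_X\otimes L)$. For $j\ge i$ this is negative (since $\deg L<2-2g$), but for $j<i$ the term $\mu_j-\mu_i$ is \emph{positive} and can exceed $-\deg(K_X\otimes L)$: stability of $(E,\nabla_\SL)$ only forces $\mu_i-\mu_{i+1}\le -\deg L$, hence $\mu_1-\mu_k\le -(k-1)\deg L$, which is strictly larger than $-\deg L-(2g-2)$. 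So the ``fiber dimension'' of $\SL$-connections on a fixed unstable $E$ can exceed $r^2(1-g-\deg L)$, and your naive count overestimates the codimension. You flag this bookkeeping issue yourself at the end, but the problem is not just automorphisms or second fundamental forms: your Riemann--Roch input is simply wrong on the locus of interest, and fixing it essentially requires redoing the hypercohomology analysis of the HN-filtered complex $C^\bullet(E,\nabla_\SL)$ as in \cite{BGL11} from scratch for $\SL$-connections.

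\medskip

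\textbf{What the paper does instead.} The paper avoids this entirely by working in the $\SL$-Hodge moduli space $\SM_{\Lambda_\SL^{\op{red}}}(r,d)$, whose smoothness and semiprojectivity were established in Theorem~\ref{thm:semiprojectiveHodge}. The $\CC^*$-fixed locus lives in $\pi^{-1}(0)=\SM_{L^{-1}}(r,d)$ and coincides with the $\CC^*$-fixed locus there, so both spaces have Bialynicki--Birula decompositions indexed by the same components $\VHS_{L^{-1}}(\overline{r},\overline{d})$. The non-stable-bundle locus $S$ in $\SM_{L^{-1}}(r,d)$ is exactly $\SM_{L^{-1}}(r,d)\setminus U^+_{\{r\},\{d\}}$, and \cite[Proposition~5.4]{BGL11} already gives $\codim(U^+_{\overline{r},\overline{d}})\ge(g-1)(r-1)$ for every nontrivial $(\overline{r},\overline{d})$. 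The relation $\tilde N^+=N^++1$ from the proof of Proposition~\ref{prop:semiprojectiveMotive} then transfers this bound verbatim to the strata $\tilde U^+_{\overline{r},\overline{d}}$ in $\SM_{\Lambda_\SL^{\op{red}}}(r,d)$; finally, $\CC^*$-invariance of the non-stable-bundle locus $\tilde S$ and the product decomposition $\tilde S\cap\pi^{-1}(\CC^*)\cong S'\times\CC^*$ pass the bound down to $S'\subset\SM_{\Lambda_\SL}(r,d)$.

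\medskip

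The upshot: rather than recomputing a stratification of $\SM_{\Lambda_\SL}(r,d)$, the paper lifts the \emph{known} Higgs bound through the Hodge moduli. This sidesteps precisely the delicate $H^1$-computation that your direct approach requires and cannot cheaply obtain.
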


\begin{proof}
Let $\SL=(L,[\cdot\,,\cdot].\delta)$. By Proposition \ref{prop:semiprojectiveHiggs} and Theorem \ref{thm:semiprojectiveHodge} we know that the moduli spaces $\SM_{\SL}^{\Hod}(r,d)$ and $\pi^{-1}(0)=\SM_{L^{-1}}^{\Dol}(r,d)\subset\SM_{\SL}^{\Hod}(r,d)$ are smooth semiprojective varieties for the $\CC^*$-action \eqref{eq:C*-action-LHodgemod} and its restriction \eqref{eq:C*-actHiggs} to $\SM_{L^{-1}}^{\Dol}(r,d)$ respectively. Recall that here $\pi$ is the map \eqref{eq:project->lambda}.

The fixed-point locus of this action is concentrated in $\SM_{L^{-1}}^{\Dol}(r,d)$ and, by Lemma \ref{lemma:fixedPointsVHS}, it corresponds to the subset of variations of Hodge structure (recall \eqref{eq:notation}),
$$\SM_{L^{-1}}^{\Dol}(r,d)^{\CC^*}=\SM_{\SL}^{\Hod}(r,d)^{\CC^*}=\bigcup_{\begin{array}{c}
{\scriptstyle (\overline{r},\overline{d}) \in \Delta_{L^{-1}}}\\
{\scriptstyle |\overline{r}|=r, \, |\overline{d}|=d}
\end{array}} \VHS_{L^{-1}}(\overline{r},\overline{d}).$$
In this decomposition there is a distinguished component, namely the one for which $\overline{r}=\{r\}$ and $\overline{d}=\{d\}$. It parameterises points of the form $(E,0,0)$ with $E$ stable (because $(r,d)=1$), and it is therefore isomorphic to the moduli space $\mathrm{\mathbf{M}}(r,d)$.
Let
$$\SM_{L^{-1}}^{\Dol}(r,d)=\bigcup_{\begin{array}{c}
{\scriptstyle (\overline{r},\overline{d}) \in \Delta_{L^{-1}}}\\
{\scriptstyle |\overline{r}|=r, \, |\overline{d}|=d}
\end{array}} U_{L^{-1},\overline{r},\overline{d}}^+\ \ \ \text{ and }\ \ \ \SM_{\SL}^{\Hod}(r,d)=\bigcup_{\begin{array}{c}
{\scriptstyle (\overline{r},\overline{d}) \in \Delta_{L^{-1}}}\\
{\scriptstyle |\overline{r}|=r, \, |\overline{d}|=d}
\end{array}} \tilde{U}_{L^{-1},\overline{r},\overline{d}}^+$$
be the corresponding Bialynicki-Birula decompositions, hence where
$$U_{L^{-1},\overline{r},\overline{d}}^+ = \left \{ (E,\nabla_\SL,0) \in \SM_{L^{-1}}^{\Dol}(r,d) \, \middle | \, \lim_{t\to 0} (E,t\nabla_\SL)\in \VHS_{L^{-1}}(\overline{r},\overline{d}) \right\},$$ $$\tilde{U}_{L^{-1},\overline{r},\overline{d}}^+ = \left \{ (E,\nabla_\SL,\lambda) \in \SM_{\SL}^{\Hod}(r,d) \, \middle | \, \lim_{t\to 0} (E,t\nabla_\SL,t\lambda)\in \VHS_{L^{-1}}(\overline{r},\overline{d}) \right\}$$
are affine bundles over $\VHS_{L^{-1}}(\overline{r},\overline{d})$ of rank $N^+_{L^{-1},\overline{r},\overline{d}}$ and $\tilde{N}^+_{L^{-1},\overline{r},\overline{d}}$ respectively.

Let us write $U=U_{L^{-1},\{r\},\{d\}}^+$ and $\tilde{U}=\tilde{U}_{L^{-1},\{r\},\{d\}}^+$ to denote the affine bundles lying over $\mathrm{\mathbf{M}}(r,d)$. Let $S$ and $\tilde{S}$ denote the subsets of $\SM_{L^{-1}}^{\Dol}(r,d)$ and $\SM_{\SL}^{\Hod}(r,d)$ respectively corresponding to triples $(E,\nabla_\SL,\lambda)$ with $E$ not stable. If $E$ is a stable vector bundle then for every $(E,\nabla_\SL,\lambda)\in \SM_{\SL}^{\Hod}(r,d)$ we have
$$\lim_{t\to 0} (E,t\nabla_\SL,t\lambda) = (E,0,0)\in\mathrm{\mathbf{M}}(r,d)\subset \SM_{\SL}^{\Hod}(r,d)^{\CC^*},$$
so $S\subset \SM_{L^{-1}}^{\Dol}(r,d)\backslash U$ and $\tilde{S}\subset \SM_{\SL}^{\Hod}(r,d)\backslash \tilde{U}$. Actually, by \cite[Proposition 5.1]{BGL11}, $S=\SM_{L^{-1}}^{\Dol}(r,d)\backslash U$ (the proof is given for the moduli space with fixed determinant, but it also works just for fixed degree) and in \cite[Proposition 5.4]{BGL11} it is proven that $\codim(S)\ge (g-1)(r-1)$ by showing that, if $U_{L^{-1},\overline{r},\overline{d}}^+\neq U$, then
$$
\codim(U_{L^{-1},\overline{r},\overline{d}}^+)\ge (g-1)(r-1).
$$
On the other hand, we know $\tilde{N}_{L^{-1},\overline{r},\overline{d}} = N_{L^{-1},\overline{r},\overline{d}}+1$ (see \eqref{eq:tildeN=N+1}), hence $\dim \tilde{U}_{L^{-1},\overline{r},\overline{d}}^+ = \dim U_{L^{-1},\overline{r},\overline{d}}^+ +1$.
By Lemma \ref{lemma:smoothHodgeModuli}, $\dim \SM_{\SL}^{\Hod}(r,d)=\dim(\SM_{L^{-1}}^{\Dol}(r,d))+1$ so we conclude that
$$\codim(\tilde{U}_{L^{-1},\overline{r},\overline{d}}^+) = \codim(U_{L^{-1},\overline{r},\overline{d}}^+) \ge (g-1)(r-1).$$
Finally, define $S'=\tilde{S}\cap \pi^{-1}(1)$. As $\CC^*$ preserves the stability of the underlying bundle, then $\tilde{S}$ is $\CC^*$-invariant and the restriction of this action to $\tilde{S}$ gives an isomorphism
$$\tilde{S}\cap \pi^{-1}(\CC^*) \cong S'\times \CC^*.$$
Therefore,
$$\dim(S')= \dim (\tilde{S}\cap \pi^{-1}(\CC^*))-1 \le \dim(\tilde{S})-1.$$
Finally, as $\dim \SM_{\SL}(r,d) = \dim \SM_{\SL}^{\Hod}(r,d)-1$, we conclude that $$\codim(S') \ge \codim(\tilde{S}) \ge (g-1)(r-1),$$ completing the proof.
\end{proof}

Consider an $\SL$-connection $(E,\nabla_\SL)$ of rank $r$ and degree $d$ on $X$. Let $\mathbb{E}$ be the $C^\infty$ vector bundle on $X$ underlying the algebraic vector bundle $E$. Note that $\mathbb{E}$ is independent of the choice of the $\SL$-connection, as long as the rank and degree are still $r$ and $d$. Let $\mathcal{G}(\mathbb{E})$ be the unitary gauge group for a fixed Hermitian metric on $\mathbb{E}$. In other words,
$\mathcal{G}(\mathbb{E})=\Omega^0(\mathrm{U}(\mathbb{E}))$ is the space of $C^\infty$-sections of the $C^\infty$-bundle of unitary automorphisms of $\mathbb{E}$.

\begin{theorem}
\label{thm:homotopygrps}
Let $X$ be a smooth complex curve of genus $g\ge 2$ and let $\SL$ be an algebroid on $X$ such that $\rk(\SL)=1$ and $\deg(\SL)<2-2g$. Suppose that $r$ and $d$ are coprime and that $r\ge 2$. Then, $\SM_{\SL}(r,d)$ is connected, hence irreducible. If, moreover, $(r,g)\neq(2,2)$, then its higher homotopy groups are given as follows:
\begin{itemize}
\item $\pi_1(\SM_{\SL}(r,d))\cong H_1(X,\ZZ)\cong\ZZ^{2g}$;
\item $\pi_2(\SM_{\SL}(r,d))\cong \ZZ$;
\item $\pi_k(\SM_{\SL}(r,d))\cong \pi_{k-1}(\mathcal{G}(\mathbb{E}))$, for every $k=3,\ldots,2(g-1)(r-1)-2$.
\end{itemize} 
\end{theorem}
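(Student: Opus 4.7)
The strategy is to reduce the computation of the homotopy groups of $\SM_{\Lambda_\SL}(r,d)$ to that of the moduli space $\mathrm{\mathbf{M}}(r,d)$ of stable vector bundles, by using the forgetful map $(E,\nabla_\SL)\mapsto E$ on the open stable locus together with the codimension bound provided by Lemma \ref{lemma:codimUnstable}. The homotopy groups of $\mathrm{\mathbf{M}}(r,d)$ are classically known from Atiyah--Bott and Daskalopoulos--Uhlenbeck gauge-theoretic methods.

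Let $U\subset\SM_{\Lambda_\SL}(r,d)$ denote the Zariski open locus of $\SL$-connections whose underlying vector bundle is stable. The first step is to exhibit $U$ as an affine bundle over $\mathrm{\mathbf{M}}(r,d)$. For a stable $E$ of slope $d/r$, Corollary \ref{cor:existsConnection} guarantees that the $\SL$-Atiyah class vanishes, so the set of $\SL$-connections on $E$ is a non-empty affine space modelled on $H^0(\End(E)\otimes L^{-1})$. Under the hypothesis $\deg(\SL)<2-2g$, the bundle $\End(E)\otimes L^{-1}$ is semistable of slope $-\deg(\SL)>2g-2$, so Serre duality gives $H^1(\End(E)\otimes L^{-1})=0$ and the fibre dimension is constant, equal to $r^2(1-g-\deg(\SL))$. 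Working \'etale-locally on $\mathrm{\mathbf{M}}(r,d)$ with a universal family, the forgetful map $U\to\mathrm{\mathbf{M}}(r,d)$ is a locally trivial affine bundle in the analytic topology, hence a homotopy equivalence.

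The second step is to compare $U$ with the full moduli space. By Lemma \ref{lemma:smoothHodgeModuli}, $\SM_{\Lambda_\SL}(r,d)$ is smooth, and by Lemma \ref{lemma:codimUnstable} the complement $\SM_{\Lambda_\SL}(r,d)\setminus U$ is a closed complex analytic subset of complex codimension at least $(g-1)(r-1)$. A standard transversality argument (e.g.\ approximating maps from spheres and from discs bounding nullhomotopies in general position away from a complex analytic subset of real codimension $2c$) shows that the inclusion induces isomorphisms
\[
\pi_k(U)\stackrel{\sim}{\longrightarrow}\pi_k(\SM_{\Lambda_\SL}(r,d)),\qquad k\le 2(g-1)(r-1)-2.
\]
For $(g,r)\ne(2,2)$ with $g\ge 2$ and $r\ge 2$ this range includes $k=0,1,2$; for the connectedness statement alone one only needs $2(g-1)(r-1)-2\ge 0$, which holds in general under the given assumptions, and combined with smoothness yields irreducibility via the implication smooth $+$ connected $\Rightarrow$ normal $+$ connected $\Rightarrow$ irreducible.

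The final step is to invoke the classical description of the homotopy groups of $\mathrm{\mathbf{M}}(r,d)$ when $(r,d)=1$: one has $\pi_0=0$, $\pi_1\cong H_1(X,\ZZ)\cong\ZZ^{2g}$, $\pi_2\cong\ZZ$, and $\pi_k(\mathrm{\mathbf{M}}(r,d))\cong\pi_{k-1}(\mathcal{G}(\mathbb{E}))$ for $k\ge 3$, coming from the Morse-theoretic analysis of the Yang--Mills functional on the space of holomorphic structures on $\mathbb{E}$ and the identification of the stable stratum with the Morse minimum; precise statements of this type appear in Atiyah--Bott and Daskalopoulos. Combining these three ingredients yields the claimed isomorphisms. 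The main technical point to verify carefully is the locally trivial affine bundle structure on $U\to\mathrm{\mathbf{M}}(r,d)$ in the absence of a global universal family; this is handled by passing to an \'etale cover, or equivalently by using the classifying-space description of $\mathrm{\mathbf{M}}(r,d)$ as a symplectic quotient where the $\SL$-connection data fits into the same gauge-theoretic framework.
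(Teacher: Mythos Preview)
Your proposal is correct and follows essentially the same route as the paper: exhibit the stable-underlying-bundle locus as an affine bundle over $\mathrm{\mathbf{M}}(r,d)$ via the forgetful map (using Corollary \ref{cor:existsConnection} and the vanishing of $H^1(\End(E)\otimes L^{-1})$), invoke the codimension bound of Lemma \ref{lemma:codimUnstable} together with smoothness to transfer homotopy groups in the stated range, and then quote the known homotopy of $\mathrm{\mathbf{M}}(r,d)$ (the paper cites \cite{DU95}). One simplification: your concern about the absence of a global universal family is unnecessary here, since $(r,d)=1$ guarantees a genuine universal bundle on $X\times\mathrm{\mathbf{M}}(r,d)$, and the paper uses it directly to identify $U$ with a torsor under a global vector bundle over $\mathrm{\mathbf{M}}(r,d)$.
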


\begin{proof}
Let $\SL=(L,[\cdot\,,\cdot].\delta)$. By Lemma \ref{lemma:smoothHodgeModuli} we know that the moduli space $\SM_{\SL}(r,d)$ is a smooth variety whose components are all of the same dimension $1-r^2\deg(\SL)$. Let $S'\subset \SM_{\SL}(r,d)$ be the subspace of $\SL$-connections $(E,\nabla_\SL)$ with $E$  not stable. Define $U'=\SM_{\SL}(r,d)\backslash S'$.

Let us prove that $U'$ is a torsor over $\mathrm{\mathbf{M}}(r,d)$ with contractible fibres, so $\mathrm{\mathbf{M}}(r,d)$ is a deformation retract of $U'$. Consider the forgetful map $\pi_{\mathrm{\mathbf{M}}}:U'\to\mathrm{\mathbf{M}}(r,d)$, given by $\pi_{\mathrm{\mathbf{M}}}(E,\nabla_\SL)= E$. By Corollary \ref{cor:existsConnection} the map is surjective and we have the following explicit description of each fibre
$$\pi_{\mathrm{\mathbf{M}}}^{-1}(E) = \left \{\nabla_\SL: E\to E\otimes L^* \, \middle | \,  \nabla_\SL(fs)=f\nabla_\SL(s) + s\otimes d_\SL(f),\ \forall s\in E, \, \forall f \in \SO_X\right\}.$$
Observe that if $\nabla_\SL,\nabla_\SL' \in \pi_E^{-1}(E)$, then $\nabla_\SL-\nabla_\SL' \in H^0(\End(E)\otimes L^*)$, so $\pi_{\mathrm{\mathbf{M}}}^{-1}(E) $ is an affine space on $H^0(\End(E)\otimes L^*)$. Moreover, 
$$H^1(\End(E)\otimes L^*) \cong H^0(\End(E)\otimes K\otimes L)^* =0$$
because, since $E$ is stable, $\End(E)$ is semistable and so $\End(E)\otimes K_X\otimes L$ is a semistable vector bundle with
$\deg(\End(E)\otimes K_X\otimes L)= r^2 (\deg(K_X)+\deg(L))<0$.
Therefore, by Riemann-Roch, for each $E\in \mathrm{\mathbf{M}}(r,d)$,
$$\dim(\pi_{\mathrm{\mathbf{M}}}^{-1}(E))=\dim H^0(\End(E)\otimes L^*) =1-r^2\deg(L)-g$$
is constant and thus the map $\pi_{\mathrm{\mathbf{M}}}$ is equidimensional. Let $\SE\to X\times \mathrm{\mathbf{M}}(r,d)$ be the universal bundle over $\mathrm{\mathbf{M}}(r,d)$ (i.e., the bundle whose fibre over $X\times \{E\}$ is isomorphic to $E$); it exists since $(r,d)=1$. Let $\pi_X:X\times\mathrm{\mathbf{M}}(r,d)\to X$ be the projection. Then we conclude that $U'$ is a torsor for the vector bundle
$$R(\pi_X)_*(\End (\SE)\otimes \pi_X^* L^*) \longrightarrow \mathrm{\mathbf{M}}(r,d).$$
It follows that the homotopy groups of $U'$ verify $\pi_k(U')\cong\pi_k(\mathrm{\mathbf{M}}(r,d))$ for every $k\geq 0$.

By Lemma \ref{lemma:codimUnstable}, $\codim(S')\geq (g-1)(r-1)$. Since $\SM_{\SL}(r,d)$ is smooth, this implies that $$\pi_k(\SM_{\SL}(r,d))\cong\pi_k(U')\cong\pi_k(\mathrm{\mathbf{M}}(r,d)),$$ for every $k=0,\ldots,2(g-1)(r-1)-2$. The moduli $\mathrm{\mathbf{M}}(r,d)$ is connected, hence so is $\SM_{\SL}(r,d)$, and thus irreducible because it is smooth.
As for the higher homotopy groups, the results follow from \cite[Theorem 3.1]{DU95}.
\end{proof}

\begin{remark}
By taking the trivial Lie algebroid $\SL=(L,0,0)$ in the above theorem, one gets the results for the moduli space of $\SM_{L^{-1}}^{\Dol}(r,d)$ of $L^{-1}$-twisted Higgs bundles. The irreducibility conclusion was proved in \cite{BGL11} by the same arguments, and actually the higher homotopy groups of $\SM_{L^{-1}}^{\Dol}(r,d)$ would also follow directly from \cite{BGL11} by the same argument as above.

On the other hand, improvements on the bound of $k$ for which the isomorphism  $\pi_k(\SM_{\SL}(r,d))\cong \pi_{k-1}(\mathcal{G}(\mathbb{E}))$ holds have been achieved, for twisted Higgs bundles, in certain particular situations (cf.~\cite{H98} and \cite{ZR18}), hence we might expect that such isomorphism also holds, in this generality, for higher values of $k$. 
\end{remark}

\subsection{Consequences for the geometry of moduli spaces of meromorphic or logarithmic connections}\label{irregular connections}

In Section \ref{subsection:classification}, we discussed that the moduli spaces of $\SL$-connections considered in our main Theorem \ref{thm:equalMotive} are either isomorphic to moduli spaces of twisted Higgs bundles, moduli spaces of logarithmic connections and moduli spaces of meromorphic connections. Among these types of moduli spaces, the moduli spaces of meromorphic connections being considered, where no information about its residues or Stokes data is fixed except for the maximum order of the poles at the punctures, stand as the ones whose geometry is less understood (see \cite{Bo13} for studies of the subvarieties corresponding to connections with given fixed Stokes data and monodromies). The new results then shed some light on the geometric properties of these moduli spaces.

Recall that given an effective divisor $D=\sum_{i=1}^n k_i x_i$ on $X$, with $k_i\geq 1$ we denote by $\SM_{\op{conn}}(D,r,d)$ the moduli space of rank $r$ and degree $d$ semistable \emph{singular} connections, with poles of order at most $k_i$ over each $x_i\in D$, and that this moduli space is isomorphic to the moduli space $\SM_{\ST_X(-D)}(r,d)$ of $\ST_X(-D)$-connections, where $\ST_X(-D)\subset\ST_X$ is the subalgebroid of $\ST_X$ with the induced algebroid structure with anchor $i_D:T_X(-D)\hookrightarrow T_X$.

On one hand, by Theorem \ref{thm:homotopygrps}, we know that $\SM_{\op{conn}}(D,r,d)$ is a smooth irreducible variety of dimension $1+r^2(2g-2+\deg(D))$ whose first homotopy groups are the ones given in loc.~cit. Moreover, we have the following direct corollary of Theorem \ref{thm:equalMotive} regarding their motives and $E$-polynomials.

\begin{corollary}
If $D$ and $D'$ are any two effective divisors on $X$ with $\deg(D)=\deg(D')$ and $r$ and $d$ are coprime, then
$[\SM_{\op{conn}}(D,r,d)]=[\SM_{\op{conn}}(D',r,d)]\in \hat{K}(\VarC)$ and $E(\SM_{\op{conn}}(D,r,d))=E(\SM_{\op{conn}}(D',r,d))$.
\end{corollary}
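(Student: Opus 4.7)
The plan is to derive this corollary by a direct application of Theorem \ref{thm:equalMotive} once the relevant Lie algebroids are identified. The key point is that the moduli space $\SM_{\op{conn}}(D,r,d)$ of connections with poles of order at most $k_i$ at each $x_i$ is, by definition (and as noted immediately before the corollary in the excerpt), the moduli space of $\SL$-connections for the Lie subalgebroid $\SL=\ST_X(-D)\subset\ST_X$, whose underlying line bundle is $T_X(-D)$, with Lie bracket inherited from the Lie bracket of vector fields and anchor the natural inclusion $T_X(-D)\hookrightarrow T_X$. Integrability is automatic because $\ST_X(-D)$ has rank $1$, so this really is the full moduli space of (possibly irregular) meromorphic connections with the prescribed pole order, and similarly for $D'$.

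Next I would verify that $\ST_X(-D)$ and $\ST_X(-D')$ satisfy the hypotheses of Theorem \ref{thm:equalMotive}. Both are rank $1$ Lie algebroids, and their degrees are
\[
\deg(\ST_X(-D))=\deg(T_X)-\deg(D)=2-2g-\deg(D),
\]
and analogously for $D'$. Assuming $\deg(D)=\deg(D')>0$ (the case $D=D'=0$ is trivial, since then the corollary asserts only $[\SM_{\op{conn}}(0,r,d)]=[\SM_{\op{conn}}(0,r,d)]$), both degrees are strictly less than $2-2g$ and are equal to each other. Combined with the coprimality assumption $(r,d)=1$ that is already in the hypothesis, this is exactly what Theorem \ref{thm:equalMotive} demands.

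Invoking Theorem \ref{thm:equalMotive} with $\SL=\ST_X(-D)$ and $\SL'=\ST_X(-D')$ therefore yields
\[
[\SM_{\Lambda_{\ST_X(-D)}}(r,d)] = [\SM_{\Lambda_{\ST_X(-D')}}(r,d)]\in\hat K(\VarC)
\]
and (taking $d'=d$ in the $E$-polynomial part of the theorem)
\[
E(\SM_{\Lambda_{\ST_X(-D)}}(r,d)) = E(\SM_{\Lambda_{\ST_X(-D')}}(r,d)).
\]
Substituting the identifications $\SM_{\Lambda_{\ST_X(-D)}}(r,d)=\SM_{\op{conn}}(D,r,d)$ and $\SM_{\Lambda_{\ST_X(-D')}}(r,d)=\SM_{\op{conn}}(D',r,d)$ closes the argument.

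There is no real obstacle here: all the substance is already absorbed by Theorem \ref{thm:equalMotive}, and the only task is the bookkeeping of checking that $\ST_X(-D)$ is a rank $1$ Lie algebroid of sufficiently negative degree. If anything, the one point worth highlighting in the write-up is that the divisors $D,D'$ need not be reduced, so $\SM_{\op{conn}}(D,r,d)$ may parametrise genuinely irregular (wild) connections without fixed Stokes data; this is precisely the content of the corollary and makes it non-trivial even though its proof is a one-line reduction.
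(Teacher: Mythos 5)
Your proof is correct and follows essentially the same route as the paper: identify $\SM_{\op{conn}}(D,r,d)$ with $\SM_{\Lambda_{\ST_X(-D)}}(r,d)$ for the Lie subalgebroid $\ST_X(-D)\subset\ST_X$ and apply Theorem \ref{thm:equalMotive}, since $\deg(\ST_X(-D))=2-2g-\deg(D)<2-2g$. Your explicit handling of the degenerate case $\deg(D)=0$ is a small extra care the paper leaves implicit, but otherwise the arguments coincide.
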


In particular, by taking $D'$ to be a simple divisor, we conclude the following.

\begin{corollary}\label{cor:motivlog=irreg}
If $r$ and $d$ are coprime, then the motivic class and $E$-polynomial of any moduli space of rank $r$ degree $d$ \emph{meromorphic} connections on a smooth projective curve $X$ of genus at least $2$ equals that of any moduli space of rank $r$ degree $d$ \emph{logarithmic} connections on $X$, with singular divisor of the same degree.
\end{corollary}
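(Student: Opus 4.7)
The plan is to derive the corollary as an immediate specialization of the preceding corollary, by choosing $D'$ to be reduced. First I would recall the identification set up just above: a meromorphic connection on $X$ with poles bounded by an effective divisor $D=\sum_i k_ix_i$ is precisely an $\ST_X(-D)$-connection, because the sheaf of vector fields vanishing to order $k_i$ at $x_i$ is the one that sees poles of order at most $k_i$ in the dual, and this identification is compatible with semistability and with the rank and degree of the underlying bundle. Thus $\SM_{\op{conn}}(D,r,d)=\SM_{\Lambda_{\ST_X(-D)}}(r,d)$, and when $D$ is reduced the resulting connections are exactly the logarithmic ones.

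Next I would observe that if $D$ is any effective divisor on $X$ with $\deg(D)\geq 1$, then $\deg(T_X(-D))=2-2g-\deg(D)<2-2g$, so $\ST_X(-D)$ satisfies the hypotheses of the preceding corollary. Since $X$ is a smooth projective curve of genus at least $2$, it has infinitely many closed points, and we may choose an effective reduced divisor $D'$ on $X$ with $\deg(D')=\deg(D)$ (pick $\deg(D)$ distinct points). Then both $\ST_X(-D)$ and $\ST_X(-D')$ are rank one Lie algebroids of the same degree, which is strictly less than $2-2g$.

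Applying the previous corollary with this choice of $D$ and $D'$ gives, for any $r,d$ with $(r,d)=1$,
$$[\SM_{\op{conn}}(D,r,d)]=[\SM_{\op{conn}}(D',r,d)]\in\hat K(\VarC)\quad\text{and}\quad E(\SM_{\op{conn}}(D,r,d))=E(\SM_{\op{conn}}(D',r,d)).$$
Since $D'$ is reduced, $\SM_{\op{conn}}(D',r,d)$ is a moduli space of logarithmic connections with singular divisor of degree $\deg(D)$, which proves the corollary.

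I do not anticipate any substantive obstacle: all the content is already packaged in Theorem \ref{thm:equalMotive} (and the corollary just above), and the only conceptual step is the identification of the $\Lambda_{\ST_X(-D)}$-module moduli with the moduli of meromorphic (resp.\ logarithmic) connections according to whether $D$ is non-reduced or reduced. The coprimality hypothesis on $(r,d)$ and the positivity of $\deg(D)$ are inherited from the hypotheses of the cited corollary; no additional verification is required.
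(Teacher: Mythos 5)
Your proposal is correct and follows essentially the same route as the paper: identify $\SM_{\op{conn}}(D,r,d)$ with $\SM_{\Lambda_{\ST_X(-D)}}(r,d)$ via the subalgebroid $\ST_X(-D)\subset\ST_X$, note that $\deg(T_X(-D))<2-2g$ so Theorem \ref{thm:equalMotive} (in the form of the preceding corollary) applies, and specialize $D'$ to a reduced divisor of the same degree. The paper's proof is exactly this specialization, so no further comparison is needed.
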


\section{Explicit computation of motives and $E$-polynomials for rank $2$ and $3$}\label{classes and polynomials}
\label{section:motiveComputations}

Fix a rank $1$ Lie algebroid $\SL=(L,[\cdot\,,\cdot],\delta)$ on the curve $X$, such that $\deg(L)<2-2g$. In this section, we provide explicit formulae for the motivic classes and $E$-polynomials of the moduli spaces of $\SL$-connections of rank $2$ and $3$ and coprime degree. Theorem \ref{thm:equalMotive} allows us to perform all computations by just considering the trivial Lie algebroid $(L,0,0)$, that is, the moduli space of $L^{-1}$-twisted Higgs bundles of corresponding rank and degree.

\subsection{Recollection of properties of motives}
We first need to introduce some notation and recall, without proof, some facts on the theory of motivic classes in $\hat{K} (\VarC)$. For details, see for example \cite{H07,Kapranov00}.

The symmetric product of a variety gives rise to the \emph{$\lambda$-operator} defined, for each $n\geq 0$, as
\begin{equation}\label{lambda-oper}
\lambda^n: \hat{K} (\VarC)\to \hat{K} (\VarC),\ \ \ \lambda^n([Y])=[\Sym^n(Y)].
\end{equation}
For example $\lambda^n(\LL^k)=\LL^{nk}$.
With these operators, $\hat{K} (\VarC)$ acquires the structure of a 
\emph{$\lambda$-ring}. In particular, the relation
\begin{equation}
\lambda^n([Y]+[Z])=\sum_{i+j=n}\lambda^i([Y])\lambda^j([Z]),
\end{equation} 
holds. 

The motive of our fixed genus $g\geq 2$ curve $X$ splits as $[X]=1+h^1(X)+\LL$, where $h^1(X)\in\hat{K}(\VarC)$ is such that the motive of the Jacobian of $X$ is given by 
\begin{equation}\label{eq:motJac}
[\Jac(X)]=\sum_{i=0}^{2g}\lambda^i(h^1(X)).
\end{equation} 
Define the $\hat{K}(\VarC)$-valued polynomial
\begin{equation}\label{polynomial P}
P_X(x)=\sum_{i=0}^{2g}\lambda^i(h^1(X))x^i\in \hat{K}(\VarC)[\hspace{-.03cm}[x]\hspace{-.03cm}]
\end{equation}
and note that $P_X(1)=[\Jac(X)]$.
Consider also the \emph{zeta function} of $X$, defined as
\[Z(X,x)=\sum_{k\geq 0}\lambda^k([X])x^k\in \hat{K}(\VarC)[\hspace{-.03cm}[x]\hspace{-.03cm}].\]

Using that $\lambda^n(h^1(X))=\LL^{n-g}\lambda^{2g-n}(h^1(X))$, if $n=0,\ldots,2g$, and that $\lambda^n(h^1(X))=0$ if $n>2g$, it follows that
\[\lambda^n([X])=\coeff_{x^0}\frac{Z(X,x)}{x^n}=\coeff_{x^0}\frac{P_X(x)}{(1-x)(1-\LL x)x^n}.\]

\subsection{Motives of $\SM_{\SL}(r,d)$ for $r=2,3$}

Now we move on to the motives of moduli spaces. We want to compute the motive of $\SM_{L^{-1}}^{\Dol}(r,d)$, for $r=2,3$ and $d$ coprime with $r$. This will be done by using the formula \eqref{eq:EPolHiggs}, and so we will need to consider the moduli space of rank $r$ and degree $d$ vector bundles (which we think of consisting of Higgs bundles which are variations of Hodge structure of type $(r)$) and then variations of Hodge structure of type $(1,1)$ for $r=2$ and type $(1,2)$, $(2,1)$ and $(1,1,1)$ for $r=3$. We will not fill the full details of the computations, and leave them to the reader. 

In this section, we use the notation $d_L$ for the degree of the line bundle $L$, so that $d_L<2-2g$.

Recall that $\mathrm{\mathbf{M}}(r,d)$ denotes the moduli space of stable vector bundles of rank $r$ and degree $d$ over the curve $X$.

Let us start with rank $2$ case. Let $d$ be odd. By Example 3.4 of \cite{GPHS11} or equation (3.9), page 41 of \cite{S14}, the motivic class of $\mathrm{\mathbf{M}}(2,d)$ is given by 
\begin{equation}\label{eq:M(2,1)}
[\mathrm{\mathbf{M}}(2,d)]=\frac{[\Jac(X)]P_X(\LL)-\LL^g[\Jac(X)]^2}{(\LL-1)(\LL^2-1)},
\end{equation}
where $P_X$ is the polynomial given in \eqref{polynomial P}. 
Notice that this formula is obtained by the one in \cite{GPHS11}  by multiplying by $\LL-1$ because in loc.\ cit., the stated formula stands for the stack of stable vector bundles, which is a $\CC^*$-gerbe over $\mathrm{\mathbf{M}}(2,d)$.

We move on the motivic class of subvarieties of $\SM_{L^{-1}}^{\Dol}(2,d)$ corresponding to variations of Hodge structure of type $(1,1)$. An $L^{-1}$-twisted-Higgs bundle $(E,\varphi)$ lies in $\VHS_{L^{-1}}((1,1),(d_1,d-d_1))$ if it is stable and
\begin{equation}\label{eq:vhs11}
E=E_1\oplus E_2,\ \  \varphi=\smtrx{0&0\\ \varphi_1&0},
\end{equation} with $E_1,E_2$ line bundles of degree $d_1$ and $d-d_1$ respectively and $\varphi_1:E_1\to E_2\otimes L^{-1}$ nonzero. The fact that $\varphi_1\neq 0$ and stability ($E_2$ is $\varphi$-invariant) impose conditions on the degree $d_1$ and, indeed,
\[
\VHS_{L^{-1}}((1,1),(d_1,d-d_1))\neq\emptyset\Longleftrightarrow d/2<d_1\leq (d-d_L)/2.
\]
In such a case, the map $\left(E_1\oplus E_2,\smtrx{0&0\\ \varphi_1&0}\right)\mapsto(\mathrm{div}(\varphi_1), E_2)$, where $\mathrm{div}(\varphi_1)$ denotes the divisor of the section $\varphi_1\in H^0(E_1^{-1}E_2L^{-1})$,  yields the isomorphism
\begin{equation}\label{eq:VHS11}
\VHS_{L^{-1}}((1,1),(d_1,d-d_1))\cong\Sym^{d-2d_1-d_L}(X)\times\Jac^{d-d_1}(X).
\end{equation} 
Hence, since the ``Jacobian'' of degree $d-d_1$ line bundles on $X$ is isomorphic to the Jacobian $\Jac(X)$ of (degree $0$ line bundles on) $X$, 
\begin{equation}\label{eq:VHS(1,1)}
[\VHS_{L^{-1}}((1,1),(d_1,d-d_1))]=\lambda^{d-2d_1-d_L}([X])[\Jac(X)],
\end{equation}
where we are using the $\lambda$-operations defined in \eqref{lambda-oper}.

Now we consider the rank $3$ case. Fix $d$ coprime with $3$, so that every semistable $L^{-1}$-twisted Higgs bundle is stable. We have
\begin{equation}\label{eq:M(3,1)}
\begin{split}
[\mathrm{\mathbf{M}}(3,d)]=&\frac{[\Jac(X)]}{(\LL-1)(\LL^2-1)^2(\LL^3-1)}\Big(\LL^{3g-1}(1+\LL+\LL^2)[\Jac(X)]^2\\
&-\LL^{2g-1}(1+\LL)^2[\Jac(X)]P_X(\LL)+P_X(\LL)P_X(\LL^2)\Big).
\end{split}
\end{equation}
by \cite[Remark 3.5]{GPHS11} or \cite[Theorem 4.7]{S14}. As in the $r=2$ case, this is obtained by the stated formula in \cite{GPHS11} by multiplying by $\LL-1$.

Consider now variations of Hodge structure of type $(1,2)$ in $\SM_{L^{-1}}^{\Dol}(3,d)$. A stable $L^{-1}$-twisted Higgs bundle $(E,\varphi)$ lies in $\VHS_{L^{-1}}((1,2),(d_1,d-d_1))$ if it is of the form \eqref{eq:vhs11}, with the only difference that now $E_2$ has rank $2$.  Let $I\subset E_2$ be the line bundle such that the saturation of the image of $\varphi_1$ equals $IL^{-1}$. Then both $E_2$ and $E_1\oplus I$ are $\varphi$-invariant subbundles of $E$. Checking the stability for them imposes conditions of $d_1$, and actually we have that 
\begin{equation}\label{eq:non-empty(1,2)}
\VHS_{L^{-1}}((1,2),(d_1,d-d_1))\neq\emptyset\Longleftrightarrow d/3<d_1<d/3-d_L/2.
\end{equation}
Moreover, by Corollary \ref{cor:VHSChains}, there is an isomorphism with the moduli space of $\alpha_{L^{-1}}=(-d_L,0)$-stable chains
\begin{equation}\label{eq:VHS12}
\VHS_{L^{-1}}((1,2),(d_1,d-d_1))\cong \HC^{\alpha_{L^{-1}}}((1,2),(d_1+d_L,d-d_1)).
\end{equation}
Since $d$ is coprime with $3$, then $\alpha_{L^{-1}}$ is not a critical value (i.e.~a value of the stability parameter where semistability changes), so the motive of $\HC^{\alpha_{L^{-1}}}((1,2),(d_1+d_L,d-d_1))$ can be read off from \cite[Example 6.4]{GPHS11}, by adapting the computation to the $L^{-1}$-twisting setting, or, perhaps more directly, from Theorem 3.2 of \cite{S14}, where the author considers the moduli space of $-d_L$-stable triples of type $((2,1)(d-d_1-2d_L,d_1))$ (cf.~\cite{BGG04}), which is isomorphic to the moduli space of $\alpha_{L^{-1}}$-stable chains. From this, we conclude that
\begin{equation}\label{eq:K-class of VHS(1,2)}
\begin{split}
[\VHS_{L^{-1}}((1,2),(d_1,d-d_1))]&=\frac{[\Jac(X)]^2}{\LL-1}\bigg(\LL^{2\lfloor d/3\rfloor-d+d_1+g+1}\lambda^{d-\lfloor d/3\rfloor-2d_1-d_L-1}([X]+\LL^2)\\
&\ \ \ -\lambda^{d-\lfloor d/3\rfloor-2d_1-d_L-1}([X]\LL+1)\bigg).
\end{split}
\end{equation}

The motives of the subvarieties of $\SM_{L^{-1}}^{\Dol}(3,d)$ corresponding to variations of Hodge structure of type $(2,1)$ are directly obtained from the ones of type $(1,2)$ by making use of the isomorphism
\begin{equation}\label{eq:VHS21isomVHS12}
\VHS_{L^{-1}}((2,1),(d_1,d-d_1))\cong \VHS_{L^{-1}}((1,2),(d_1-d,-d_1))
\end{equation}
arising from duality.

Finally, we deal with variations of Hodge structure of type $(1,1,1)$ in $\SM_{L^{-1}}^{\Dol}(3,d)$. Similarly to the previous cases, it follows that
\[
\VHS_{L^{-1}}((1,1,1),(d_1,d_2,d-d_1-d_2))\neq\emptyset\Longleftrightarrow (d_1,d_2)\in\Delta_{-d_L}(d),
\]
where 
\begin{multline}\label{eq:Delta_dl}
\Delta_{-d_L}(d)=\left\{(a,b)\in\ZZ^2\,|\,a-b\leq-d_L,\, a+2b-d\leq-d_L,\, a>d/3,\, a+b>2d/3\right\}\\
=\left\{(a,b)\in\ZZ^2\,|\, \lceil d/3 \rceil \le a \le -d_L+ \lfloor d/3 \rfloor , \max\{d_L+a, \lceil 2d/3 \rceil -a\}\le b\le \lfloor (d-d_L-a)/2\rfloor \}\right\}, 
\end{multline}
and in that case, we have the following isomorphism
\begin{equation}\label{eq:VHS111}
\VHS_{L^{-1}}((1,1,1),(d_1,d_2,d-d_1-d_2))\cong\Sym^{-d_1+d_2-d_L}(X)\times\Sym^{d-d_1-2d_2-d_L}(X)\times\Jac^{d-d_1-d_2}(X).
\end{equation}
Thus, 
\begin{equation}\label{eq:VHS(1,1,1)}
[\VHS_{L^{-1}}((1,1,1),(d_1,d_2,d-d_1-d_2))]=\lambda^{-d_1+d_2-d_L}([X])\lambda^{d-d_1-2d_2-d_L}([X])[\Jac(X)].
\end{equation}

Now we have the promised corollary of Theorem \ref{thm:equalMotive}.

\begin{corollary}\label{cor:K-class moduli rk2,3}
Let $X$ be a smooth projective curve of genus $g\geq 2$ and let $\SL$ be a rank $1$ Lie algebroid on $X$. Write $d_L=\deg(\SL)$ and suppose that $d_L<2-2g$. Then, the following formulas hold in $\hat{K}(\VarC)$.
\begin{enumerate}
\item If $(2,d)=1$,
\begin{equation*}
\begin{split}
[\SM_{\SL}(2,d)] &= \frac{\LL^{-4d_L+4-4g}\Big([\Jac(X)]P_X(\LL)-\LL^g[\Jac(X)]^2\Big)}{(\LL-1)(\LL^2-1)}\\
& \ \ \ + \LL^{-3d_L+2-2g}[\Jac(X)]\sum_{d_1=\lfloor d/2\rfloor+1}^{\lfloor\frac{d-d_L}{2}\rfloor}\lambda^{d-2d_1-d_L}([X]).
\end{split}
\end{equation*}
\item If $(3,d)=1$,
\begin{equation*}
\begin{split}
[\SM_{\SL}(3,d)] &= \frac{\LL^{-9d_L+9-9g}[\Jac(X)]}{(\LL-1)(\LL^2-1)^2(\LL^3-1)}\Big(\LL^{3g-1}(1+\LL+\LL^2)[\Jac(X)]^2\\
& \ \ \ -\LL^{2g-1}(1+\LL)^2[\Jac(X)]P_X(\LL)+P_X(\LL)P_X(\LL^2)\Big)\\
& \ \ \ +\frac{\LL^{-7d_L+5-5g}[\Jac(X)]^2}{\LL-1}\sum_{d_1=\lfloor d/3\rfloor+1}^{\lfloor\frac{d}{3}-\frac{d_L}{2}\rfloor}\bigg(\LL^{2\lfloor d/3\rfloor-d+d_1+g+1}\lambda^{d-\lfloor d/3\rfloor-2d_1-d_L-1}([X]+\LL^2)\\
& \ \ \ -\lambda^{d-\lfloor d/3\rfloor-2d_1-d_L-1}([X]\LL+1)\bigg)\\
& \ \ \ +\frac{\LL^{-7d_L+5-5g}[\Jac(X)]^2}{\LL-1}\sum_{d_1=\lfloor 2d/3\rfloor+1}^{\lfloor\frac{2d}{3}-\frac{d_L}{2}\rfloor}\bigg(\LL^{2\lfloor-d/3\rfloor+d_1+g+1}\lambda^{d-\lfloor-d/3\rfloor-2d_1-d_L-1}([X]+\LL^2)\\
& \ \ \ -\lambda^{d-\lfloor-d/3\rfloor-2d_1-d_L-1}([X]\LL+1)\bigg)\\
& \ \ \ +\LL^{-6d_L+3-3g}[\Jac(X)]\sum_{(d_1,d_2)\in\Delta_{-d_L}(d)}\lambda^{-d_1+d_2-d_L}([X])\lambda^{d-d_1-2d_2-d_L}([X]),
\end{split}
\end{equation*}
with $\Delta_{-d_L}(d)$ defined in \eqref{eq:Delta_dl}.
\end{enumerate}
\end{corollary}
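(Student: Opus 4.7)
By Theorem \ref{thm:equalMotive} it suffices to establish the formulas for the trivial Lie algebroid $(L,0,0)$, in which case $\SM_{\Lambda_\SL}(r,d) = \SM_{L^{-1}}(r,d)$. Since $\deg(L^{-1})=-d_L>2g-2$ and $(r,d)=1$, the moduli space $\SM_{L^{-1}}(r,d)$ is smooth, semiprojective, and fits the hypotheses of Lemma \ref{lemma:morseIndex}. Consequently, the $\CC^*$-action gives the motivic decomposition
\[
[\SM_{L^{-1}}(r,d)] \;=\; \sum_{\substack{(\overline{r},\overline{d})\in\Delta_{L^{-1}}\\|\overline{r}|=r,\,|\overline{d}|=d}} \LL^{N^+_{L^{-1},\overline{r},\overline{d}}}\,[\VHS_{L^{-1}}(\overline{r},\overline{d})],
\]
as in \eqref{eq:EPolHiggs}. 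The strategy is to enumerate, for each relevant type $\overline{r}$, the nonempty strata, compute their motivic class, and determine the exponent $N^+$ via the identity
\[
N^+_{L^{-1},\overline{r},\overline{d}} \;=\; 1-r^2 d_L \;-\; \dim \VHS_{L^{-1}}(\overline{r},\overline{d}) \;-\; \tfrac{1}{2}M_{L^{-1},\overline{r},\overline{d}}
\]
coming from \eqref{eq:formula-N+}, where the Morse index is computed from \eqref{eq:Morseindex}--\eqref{eq:Eulerchar} with $L$ replaced by $L^{-1}$.

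For $r=2$ the only VHS types are $(2)$ and $(1,1)$. The stratum of type $(2)$ is $\mathrm{\mathbf{M}}(2,d)$, of dimension $4g-3$ and with trivial Morse index, so $N^+=4-4d_L-4g$, and its class is \eqref{eq:M(2,1)}. For type $(1,1)$ with splitting $(d_1,d-d_1)$, nonemptiness forces $\lfloor d/2\rfloor+1\le d_1\le \lfloor (d-d_L)/2\rfloor$; the motivic class is given by \eqref{eq:VHS(1,1)} from the description \eqref{eq:VHS11}. A short calculation of $\chi(\Hom(E_1,E_2))$ via Riemann--Roch yields $\tfrac12 M = 2d_1-d-1+g$, and combining with the VHS dimension $d-2d_1-d_L+g$ produces $N^+=2-3d_L-2g$. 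Summing these two contributions gives (1).

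For $r=3$ the types are $(3)$, $(2,1)$, $(1,2)$ and $(1,1,1)$. Type $(3)$ contributes $\LL^{9-9d_L-9g}[\mathrm{\mathbf{M}}(3,d)]$ with $[\mathrm{\mathbf{M}}(3,d)]$ given by \eqref{eq:M(3,1)}; the exponent is $1-9d_L-\dim\mathrm{\mathbf{M}}(3,d)=9-9d_L-9g$. For type $(1,2)$ and $(2,1)$, Corollary \ref{cor:VHSChains} together with the explicit formula \eqref{eq:K-class of VHS(1,2)} (and its dual \eqref{eq:VHS21isomVHS12}) supplies the motivic class; the ranges for $d_1$ come from \eqref{eq:nonempty(1,2)}. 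A computation of the Morse index from \eqref{eq:Eulerchar} with $k=2$, $l=1$, and the relevant ranks $(1,2)$ or $(2,1)$, combined with the dimensions of the strata, yields the common exponent $N^+=5-7d_L-5g$. Finally, the type $(1,1,1)$ stratum is described by \eqref{eq:VHS111} and its class is \eqref{eq:VHS(1,1,1)}; the nonemptiness range is \eqref{eq:Delta_dl}. Here one uses $k=3$ and sums over $l=1,2$ in \eqref{eq:Morseindex}, obtaining after simplification $N^+=3-6d_L-3g$. Collecting all contributions yields (2).

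The computations are routine applications of Riemann--Roch combined with the known motivic classes of the various strata; the main technical point is the careful bookkeeping of the ranges for the multidegrees and the simplification of the exponents $N^+$ obtained from \eqref{eq:formula-N+} after inserting the Morse indices. Verifying that the exponents displayed in the corollary agree with those that arise from this procedure is the only step requiring some care.
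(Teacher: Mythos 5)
Your proposal is correct and follows essentially the same route as the paper's own proof: reduction to the $L^{-1}$-twisted Higgs moduli via Theorem \ref{thm:equalMotive}, the Bialynicki--Birula decomposition \eqref{eq:EPolHiggs}, exponents $N^+$ computed from \eqref{eq:formula-N+} together with the Morse indices \eqref{eq:Morseindex}--\eqref{eq:Eulerchar}, and the stratum classes \eqref{eq:M(2,1)}, \eqref{eq:VHS(1,1)}, \eqref{eq:M(3,1)}, \eqref{eq:K-class of VHS(1,2)}, \eqref{eq:VHS21isomVHS12} and \eqref{eq:VHS(1,1,1)}. Your intermediate values (e.g.\ $\tfrac12 M = 2d_1-d-1+g$ in type $(1,1)$ and the exponents $4-4d_L-4g$, $2-3d_L-2g$, $9-9d_L-9g$, $5-7d_L-5g$, $3-6d_L-3g$) all check out against the stated formulas, and in fact you make explicit some bookkeeping the paper leaves to the reader.
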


\begin{remark}
It is easy to see that the motivic class $[\SM_{\SL}(3,d)]$ is the same by replacing $d$ by $-d$ (for the last sum, use the bijection between $\Delta_{-d_L}(d)$ and $\Delta_{-d_L}(-d)$ given by $(a,b)\mapsto(-d+a+b,-b)$). This is, of course, consequence of the fact that the duality map $(E,\nabla_\SL)\mapsto(E^*,\nabla_\SL^t\otimes \Id_{L^{-1}})$, with $\nabla_\SL^t:E^*\otimes L\to E^*$ the dual map of $\nabla_\SL$, yields an isomorphism between the moduli spaces $\SM_{\SL}(3,d)$ and $\SM_{\SL}(3,-d)$ (and, of course, the mentioned bijection $\Delta_{-d_L}(d)\simeq\Delta_{-d_L}(-d)$ is provided by this duality map).
\end{remark}
\begin{proof}
As we are considering cases in which the rank and the degree are coprime, we can apply Theorem \ref{thm:equalMotive}, and assume without loss of generality that $\SL$ has the trivial Lie algebroid structure $(L,0,0)$. Therefore, $\SM_{\SL}(r,d)$ corresponds to the moduli space of $L^{-1}$-twisted Higgs bundles of rank $r$ and degree $d$.

Then, everything follows from the decomposition \eqref{eq:EPolHiggs}, using the formula \eqref{eq:formula-N+} (with $L$ replaced by $L^{-1}$),
\[N_{L^{-1},\overline{r},\overline{d}}^+ = 1-r^2\deg(L) - \dim(\VHS_{L^{-1}}(\overline{r},\overline{d}))- M_{L^{-1},\overline{r},\overline{d}}/2\]
for the exponents of $\LL$ in each summand. All of them are straightforward, using \eqref{eq:Morseindex} and \eqref{eq:Eulerchar}. We leave the details of the computations for the reader. In rank $2$, use \eqref{eq:VHS11} followed by \eqref{eq:M(2,1)} and \eqref{eq:VHS(1,1)}. 

In rank $3$, use \eqref{eq:VHS12} and \eqref{eq:VHS111},  knowing that, in the $(1,2)$-type, 
\begin{equation*}
\begin{split}
\dim(\VHS_{L^{-1}}(X,(1,2),(d_1,d-d_1)))&=\dim(\HC^{\alpha_{L^{-1}}-ss}((1,2),(d_1+d_L,d-d_1)))\\
&=3g-2-3d_1+d-2d_L
\end{split}
\end{equation*} by Theorem A (2) of \cite{BGG04} and that the dimension of $\dim(\VHS_{L^{-1}}(X,(1,2),(d_1,d-d_1)))$ is computed similarly using \eqref{eq:VHS21isomVHS12}. Then, \eqref{eq:EPolHiggs} becomes 
\begin{equation*}\label{eq:Epolrk3-step1}
\begin{split}
[\SM_{\SL}(3,d)]&=\LL^{-9d_L+9-9g}[\mathrm{\mathbf{M}}(3,d)]\\
& \ \ \ +\LL^{-7d_L+5-5g}\sum_{d_1=\lfloor\frac{d}{3}\rfloor+1}^{\lfloor\frac{d}{3}-\frac{d_L}{2}\rfloor}[\VHS_{L^{-1}}((1,2),(d_1,d-d_1))]\\
& \ \ \ +\LL^{-7d_L+5-5g}\sum_{d_1=\lfloor\frac{2d}{3}\rfloor+1}^{\lfloor\frac{2d}{3}-\frac{d_L}{2}\rfloor}[\VHS_{L^{-1}}((2,1),(d_1,d-d_1))]\\
& \ \ \ +\LL^{-6d_L+3-3g}\sum_{(d_1,d_2)\in\Delta_{-d_L}(d)}[\VHS_{L^{-1}}((1,1,1),(d_1,d_2,d-d_1-d_2))],\\
\end{split}
\end{equation*}
and we obtain the result from \eqref{eq:M(3,1)}--\eqref{eq:VHS(1,1,1)}.
\end{proof}

\begin{remark}
Explicit motivic formulas in the classical case, namely for the moduli spaces of $K$-twisted Higgs bundles (for coprime rank and degree), are known from \cite{GPHS11} (cf.~Theorem $3$ for rank $2,3$ and Theorem $2$ for rank $4$).
\end{remark}

\subsection{$E$-polynomials of $\SM_{\SL}(r,d)$ for $r=2,3$}

We now apply the map \eqref{E-poly} to the motivic formulas of the preceding result. This will give the $E$-polynomials of the corresponding moduli spaces of rank $1$ Lie algebroid connections.

We will skip the details because these formulas are actually already known for the twisted Higgs bundle moduli spaces (hence also for the corresponding Lie algebroid connections moduli as a consequence of Theorem \ref{thm:equalMotiveHodge}) by \cite{CDP11} and the ones we obtain match them.

If $X$ is again our smooth projective curve of genus $g$, it is well-known that  $E(\Jac(X))=(1-u)^g(1-v)^g$, thus 
\[E(\lambda^n([X]))=\coeff_{x^0}\frac{(1-ux)^g(1-vx)^g}{(1-x)(1-uvx)x^n}.\]

In addition, from \eqref{eq:motJac} and \eqref{polynomial P}, we have that, for any $k$,
\begin{equation*}
\begin{split}
E(P_X(\LL^k))&=\sum_{i=0}^{2g}E(\lambda^i(h^1(X)))(u^kv^k)^i=\sum_{i=0}^{2g}\sum_{p+q=i}(-1)^ih^{p,q}(\Jac(X))u^pv^q(u^kv^k)^i\\
&=\sum_{i=0}^{2g}\sum_{p+q=i}(-1)^{p+q}h^{p,q}(\Jac(X))(u^{k+1}v^k)^p(u^kv^{k+1})^q\\
&=E(\Jac(X))(u^{k+1}v^k,u^kv^{k+1})=(1-u^{k+1}v^k)^g(1-u^kv^{k+1})^g.
\end{split}
\end{equation*}
In particular,
\begin{equation}\label{eq:poly PL PL2}
E(P_X(\LL))=(1-u^2v)^g(1-uv^2)^g\ \ \text{ and }\ \ E(P_X(\LL^2))=(1-u^3v^2)^g(1-u^2v^3)^g.
\end{equation}

\begin{corollary}\label{cor:E-poly_r=2,3}
Let $X$ be smooth projective curve of genus $g\ge 2$ and let $\SL$ be a rank $1$ Lie algebroid on $X$. Write $d_L=\deg(\SL)$ and suppose that $d_L<2-2g$.
\begin{enumerate}
\item Let $d$ be odd. Then,
\begin{equation*}
\begin{split}
E(\SM_{\SL}(2,d))& = (uv)^{-4d_L+4-4g} E(\mathrm{\mathbf{M}}(2,d))\\
&\ \ \ + (uv)^{-3d_L+2-2g}(1-u)^g(1-v)^g \coeff_{x^0}\left(\frac{(1-ux)^g(1-vx)^gx^{d_L+1}}{(1-x^2)(1-x)(1-uvx)}\right),
\end{split}
\end{equation*}
where
$$E(\mathrm{\mathbf{M}}(2,d))=\frac{(1-u)^g(1-v)^g(1-u^2v)^g(1-uv^2)^g-(uv)^g(1-u)^{2g}(1-v)^{2g}}{(uv-1)((uv)^2-1)}.$$
\item Let $d$ be coprime with $3$. Then,
\begin{equation*}
\begin{split}
E(\SM_{\SL}(3,d))&= (uv)^{-9d_L+9-9g} E(\mathrm{\mathbf{M}}(3,d))\\
&\ \  +\frac{(1-u)^{2g}(1-v)^{2g}(uv)^{-7d_L+6-4g}}{uv-1}\cdot\coeff_{x^0}\frac{(1-ux)^g(1-vx)^gx^{d_L+2}}{(1-x)(1-uvx)(1-(uv)^2x)(1-uvx^2)}\\
&\ \  -\frac{(1-u)^{2g}(1-v)^{2g}(uv)^{-8d_L+6-5g}}{uv-1}\cdot\coeff_{x^0}\frac{(1-ux)^g(1-vx)^gx^{d_L+2}}{(1-x)(1-uvx)(uv-x)((uv)^2-x^2)}\\
&\ \  +\frac{(1-u)^{2g}(1-v)^{2g}(uv)^{-7d_L+5-4g}}{uv-1}\cdot\coeff_{x^0}\frac{(1-ux)^g(1-vx)^gx^{d_L+1}}{(1-x)(1-uvx)(1-(uv)^2x)(1-uvx^2)}\\
&\ \  -\frac{(1-u)^{2g}(1-v)^{2g}(uv)^{-8d_L+7-5g}}{uv-1}\cdot\coeff_{x^0}\frac{(1-ux)^g(1-vx)^gx^{d_L+1}}{(1-x)(1-uvx)(uv-x)((uv)^2-x^2)}\\
&\ \  +(1-u)^g(1-v)^g(uv)^{-6d_L+3-3g}\cdot\\
&\ \  \cdot\coeff_{x^0y^0}\frac{(1-ux)^g(1-vx)^g(1-uy)^g(1-vy)^gx^{2d_L+2}y^{2d_L+1}(x^{-2d_L}-y^{-d_L})(y^{-2d_L}-x^{-d_L})}{(1-x)(1-uvx)(1-y)(1-uvy)(x-y^2)(y-x^2)},
\end{split}
\end{equation*}
where
\begin{equation*}
\begin{split}
E(\mathrm{\mathbf{M}}(3,d))&=\frac{(1-u)^g(1-v)^g}{(uv-1)((uv)^2-1)^2((uv)^3-1)} \bigg((uv)^{3g-1}(1+uv+(uv)^2)(1-u)^{2g}(1-v)^{2g}  \\
&- (uv)^{2g-1}(1+uv)^2(1-u)^g(1-v)^g(1-u^2v)^g (1-uv^2)^g \\
&+ (1-u^2v)^g(1-uv^2)^g(1-u^3v^2)^g(1-u^2v^3)^g\bigg).
\end{split}
\end{equation*}
\end{enumerate}
\end{corollary}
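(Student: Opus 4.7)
The plan is to apply the $E$-polynomial ring homomorphism $E:\hat{K}(\VarC)\to \ZZ[u,v][\![1/(uv)]\!]$ from \eqref{E-poly} directly to the motivic formulas of Corollary \ref{cor:K-class moduli rk2,3}. Since $E$ is a ring homomorphism, it commutes with the sums and products appearing there, so the task reduces to computing $E$ of each individual ingredient and then assembling the result. Using $E(\LL)=uv$, $E(\Jac(X))=(1-u)^g(1-v)^g$ and the formulas \eqref{eq:poly PL PL2} for $E(P_X(\LL))$ and $E(P_X(\LL^2))$, the $E$-polynomials of $[\mathrm{\mathbf{M}}(2,d)]$ and $[\mathrm{\mathbf{M}}(3,d)]$ follow by direct substitution into \eqref{eq:M(2,1)} and \eqref{eq:M(3,1)}, yielding the closed forms displayed in the statement.

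The remaining work is to turn the finite sums over $d_1$ (and over $(d_1,d_2)\in\Delta_{-d_L}(d)$) into the coefficient-extraction expressions in the statement. First I would record, via the zeta-function identity
\[
E(\lambda^n([X]))=\coeff_{x^0}\frac{(1-ux)^g(1-vx)^g}{(1-x)(1-uvx)x^n},
\]
that the $E$-polynomial of a symmetric product of $X$ depends on $n$ only through the factor $x^{-n}$. Hence a sum of the form $\sum_{d_1}\lambda^{a-2d_1}([X])$ with $d_1$ running over an interval becomes a geometric progression in the $\coeff_{x^0}$ expression, producing a factor of the form $x^{\,\bullet}/(1-x^2)$; analogous geometric sums account for the mixed terms of the form $\lambda^n([X]+\LL^2)$ and $\lambda^n([X]\LL+1)$ that appear in the rank $3$ contribution, which after applying $E$ simply multiply the integrand by $(1+(uv)^2 x)$ or $(uv+x)$ respectively. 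These manipulations convert the rank $2$ sum and each of the two rank $3$ single sums into the claimed single-variable $\coeff_{x^0}$ formulas.

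The main obstacle will be the double sum over $\Delta_{-d_L}(d)$ in the rank $3$ case: this is a sum over a lattice polygon cut out by four inequalities (see \eqref{eq:Delta_dl}), and the naive decomposition into geometric series in $d_1$ and $d_2$ does not give a product of simple generating functions because the two symmetric product factors couple the two indices. The plan here is to introduce two formal variables $x,y$, interpret $\lambda^{-d_1+d_2-d_L}([X])$ and $\lambda^{d-d_1-2d_2-d_L}([X])$ as $\coeff_{x^0}$ and $\coeff_{y^0}$ of the corresponding zeta-function kernels, rewrite the two-step constraint region $\Delta_{-d_L}(d)$ as an intersection of half-lines so that each of the four inequalities contributes one geometric series in $x$ or $y$, and then collect the resulting rational kernel. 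The min/max present in \eqref{eq:Delta_dl} will force me to split the summation region into pieces and combine them via an inclusion/exclusion to obtain the numerator $x^{2d_L+2}y^{2d_L+1}(x^{-2d_L}-y^{-d_L})(y^{-2d_L}-x^{-d_L})$ and the denominator $(1-x)(1-uvx)(1-y)(1-uvy)(x-y^2)(y-x^2)$ displayed in the statement.

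Finally I would verify the formulas by two independent sanity checks: (a) specialising to the classical twist $L=K_X^{-1}(-D)$ and comparing with the known expressions of Garc\'{\i}a-Prada--Heinloth--Schmitt \cite{GPHS11} and Schiffmann for rank $2$ and $3$, and (b) checking invariance under $d\mapsto -d$ (via the bijection $(a,b)\mapsto(-d+a+b,-b)$ on $\Delta_{-d_L}(d)$), as required by the duality isomorphism $\SM_{\Lambda_\SL}(r,d)\simeq \SM_{\Lambda_{\SL}}(r,-d)$. By Theorem \ref{thm:equalMotive}, the left-hand side depends only on $d_L$ and on the congruence class of $d$ modulo $r$, so both checks depend only on the right-hand side.
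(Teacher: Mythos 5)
Your overall route is exactly the paper's: the corollary is proved there simply by applying the ring homomorphism $E$ of \eqref{E-poly} to the motivic formulas of Corollary \ref{cor:K-class moduli rk2,3}, using $E(\LL)=uv$, $E(\Jac(X))=(1-u)^g(1-v)^g$, \eqref{eq:poly PL PL2} and the coefficient-extraction form of $E(\lambda^n([X]))$, with the remaining generating-function manipulations declared standard and left to the reader (with pointers to \cite{G95,GPHS11,SB10,S14}). Your treatment of $E(\mathrm{\mathbf{M}}(2,d))$ and $E(\mathrm{\mathbf{M}}(3,d))$, the rank-$2$ geometric sum giving $x^{d_L+1}/(1-x^2)$, and your two-variable inclusion--exclusion plan for the double sum over $\Delta_{-d_L}(d)$ all match what is required.

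There is, however, one concrete step that would fail as written: the claim that the terms $\lambda^n([X]+\LL^2)$ and $\lambda^n([X]\LL+1)$ ``simply multiply the integrand by $(1+(uv)^2x)$ or $(uv+x)$''. The $\lambda$-operations \eqref{lambda-oper} satisfy $\lambda^n(a+b)=\sum_{i+j=n}\lambda^i(a)\lambda^j(b)$, so generating series \emph{multiply}, and since $\lambda^j(\LL^2)=\LL^{2j}$ and $\lambda^j(1)=1$ for \emph{all} $j$ (not just $j\le 1$), while $\lambda^j([X]\LL)=\lambda^j([X])\LL^j$, one gets
\[
E\big(\lambda^n([X]+\LL^2)\big)=\coeff_{x^0}\frac{(1-ux)^g(1-vx)^g}{(1-x)(1-uvx)(1-(uv)^2x)\,x^n},
\qquad
\sum_{n\ge 0}E\big(\lambda^n([X]\LL+1)\big)x^n=\frac{(1-u^2vx)^g(1-uv^2x)^g}{(1-x)(1-uvx)(1-(uv)^2x)},
\]
i.e.\ a new \emph{geometric} denominator factor $1/(1-(uv)^2x)$, respectively a substitution $x\mapsto uvx$ in the curve kernel (which, after renormalizing $x\mapsto x/(uv)$ to restore the numerator $(1-ux)^g(1-vx)^g$, produces the factors $(uv-x)$ visible in the statement) --- not multiplication by a polynomial. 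Your factors $(1+(uv)^2x)$ and $(uv+x)$ amount to pretending $\lambda^j(\LL^2)=0$ and $\lambda^j(1)=0$ for $j\ge 2$, which is false for the symmetric-product $\lambda$-structure, and they cannot reproduce the denominator factors $(1-(uv)^2x)$, $(uv-x)$, $((uv)^2-x^2)$ that appear in the very formulas you are proving. Relatedly, in the two rank-$3$ single sums you must carry along the weight $\LL^{2\lfloor d/3\rfloor-d+d_1+g+1}$ from Corollary \ref{cor:K-class moduli rk2,3}: its $\LL^{d_1}$ makes the geometric ratio in $d_1$ equal to $uv\,x^2$ rather than $x^2$, which is what produces $(1-uvx^2)$ (and, after the rescaling above, $((uv)^2-x^2)$ in the $[X]\LL+1$ terms) instead of the $(1-x^2)$ your sketch suggests. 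With these corrections the computation goes through and your sanity checks against \cite{GPHS11} and the $d\mapsto -d$ symmetry are sound.
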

\begin{proof}
This follows from Corollary \ref{cor:K-class moduli rk2,3} and from computations which are now standard. We leave the details for the reader, who may see for example \cite{G95}, \cite{GPHS11},  \cite{SB10} (especially Theorems 3.1.4 and 3.5.7) and \cite{S14} for techniques on these computations. Note however that the ones in \cite{SB10} contain some slight inaccuracies (so that the final results stated there, in Theorems 3.1.4 and 3.5.7, are not correct). 
The formulas for the $E$-polynomials of the appropriate powers of the $\lambda$-operations of $[X]+\LL^2$ and of $[X]\LL+1$, which appear in Corollary \ref{cor:K-class moduli rk2,3}, may be found in equation (3.3) of page 39 of \cite{S14} and the ones concerning $P_X(\LL)$ and $P_X(\LL^2)$ are given in \eqref{eq:poly PL PL2}.
\end{proof}

\begin{remark}
The $E$-polynomials of the moduli space of vector bundles of rank $2$ and $3$ and coprime degree $d$ were first computed recursively in \cite[Theorem 1]{RK00}. The one for rank $3$ was also explicitly obtained with different techniques, in Theorem 1.2 of \cite{Mu08} (even though the formula there has a minor inaccuracy on a sign and the one on Theorem 7.1 -- not in Theorem 1.2 -- has an extra $(1+u)^g(1+v)^g$ term which should not be there). The difference in the signs to the above formulas is due to different sign conventions on the definition of the $E$-polynomial, namely to the factor $(-1)^{p+q}$ that we use in \eqref{def:E-pol}.
\end{remark}

\begin{remark}
From Corollary \ref{cor:motivlog=irreg}, we know that the explicit formulas for the motives and $E$-polynomials obtained in Corollaries \ref{cor:K-class moduli rk2,3} and \ref{cor:E-poly_r=2,3} give simultaneously the motives and $E$-polynomials of the moduli spaces of logarithmic and of meromorphic connections.
\end{remark}

\subsection{Motives of maximal components of $\U(2,2)$-Higgs bundles and $\U(3,3)$-Higgs bundles}\label{sec:U(n,n)}

Given a line bundle $L$ over $X$, there is a generalized notion of $L$-twisted $G_\RR$-Higgs bundle over $X$ for any reductive real Lie group $G_\RR$; cf.~\cite{BGG06}. In this generality, $L$-twisted Higgs bundles as in Definition \ref{def:Higgsbundle} are exactly $L$-twisted $\GL(n,\CC)$-Higgs bundles (with $\GL(n,\CC)$ thought of as a real group). These are still algebraic objects over $X$, and there is a corresponding moduli space $\SM_L(G_\RR)$. The name comes from the fact that when $L=K$ and $G_\RR$ is semisimple then $\SM_K(G_\RR)$ is homeomorphic to the character variety of $X$, that is, the space of $G_\RR$-conjugacy classes of reductive representations of $\pi_1(X)$ in $G_\RR$. This homeomorphism is part of the non-abelian Hodge correspondence, which also holds for real groups; cf.~\cite{GGM12}. If $G_\RR$ is not semisimple, then there is a similar homeomorphism by considering instead an appropriate central extension of $\pi_1(X)$.
 
We are interested here in the case of $G_\RR=\U(n,n)$, the group of linear transformations of $\CC^{2n}$ preserving a Hermitian form of signature $(n,n)$.
A ($K$-twisted) $\U(n,n)$-Higgs bundle over $X$ is a tuple $(V,W,\beta,\gamma)$ where $V$ and $W$ are rank $n$ vector bundles and $\beta$ and $\gamma$ are algebraic maps $\beta:W\to V\otimes K$ and $\gamma:V\to W\otimes K$ (see \cite{BGG03}). This can be seen as a usual Higgs bundle $(E,\varphi)$ by taking $E=V\oplus W$ and $\varphi=\smtrx{0&\beta\\ \gamma&0}$. The degree of $V$ and of $W$ fix the topological types of such objects. Moreover, given $d',d''\in\ZZ$, there is a suitable semistability condition allowing the construction of the moduli space $\SM^{d',d''}(\U(n,n))$ of $\U(n,n)$-Higgs bundles over $X$ where $\deg(V)=d'$ and $\deg(W)=d''$. It turns out that $\SM^{d',d''}(\U(n,n))$ is non-empty if and only if $|d'-d''|\leq n(2g-2)$. Here, the difference $d'-d''$ is the so-called \emph{Toledo invariant}. 

The maximal situation $|d'-d''|=n(2g-2)$ is special. Indeed, let $\SM^{\max}(\U(n,n))$ denote the moduli $\SM^{d',d''}(\U(n,n))$ when $d'-d''=n(2g-2)$. Duality provides an isomorphism between $\SM^{\max}(\U(n,n))$ and the minimal Toledo moduli space, namely $d''-d'=n(2g-2)$, hence we can restrict to the maximal case. Now, if $(V,W,\beta,\gamma)\in \SM^{\max}(\U(n,n))$ then, up to isomorphism, it is of the form $(V,V\otimes K^{-1},\beta,\Id_V)$ and hence $\beta\otimes\Id_K:V\to V\otimes K^2$. So $(V,\beta\otimes\Id_K)$ is a $K^2$-twisted ($\GL(n,\CC)$-)Higgs bundle. It is shown in \cite{BGG03} that this correspondence respects (semi)stability conditions and yields an isomorphism 
\begin{equation}\label{eq:CayleyUnn}
\SM^{\max}(\U(n,n))\cong\SM^{\Dol}_{K^2}(n,d')
\end{equation}
where we recall that $\SM^{\Dol}_{K^2}(n,d')$ denotes the moduli space of $K^2$-twisted Higgs bundles of rank $n$ and degree $d'$. Since $\SM^{\Dol}_{K^2}(n,d')$ is connected, then $\SM^{\max}(\U(n,n))$ is also known as the \emph{maximal component} of the moduli space of $\U(n,n)$-Higgs bundles over $X$.

The group $\U(n,n)$ is of Hermitian type and the above correspondence is a particular instance of a so-called Cayley correspondence which holds for any such groups \cite{BGR17}. In fact, there is a  further generalized Cayley correspondence, holding for certain connected components of $G_\RR$-Higgs bundles moduli spaces, for a wider class of real groups $G_\RR$ (see \cite{BCGGO21}), and having very deep connections with higher Teichm{\"u}ller theory \cite{W18}, via the non-abelian Hodge correspondence. It is hence of great interest to study the geometry of such components, which is generally unknown, except when $G_\RR$ is a split real form for which such components are contractible spaces; see \cite{H92}. It turns out that the study we have done so far, immediately gives substantial information of the maximal components $\SM^{\max}(\U(n,n))$ for low rank and certain appropriate invariant. Namely, we obtain the motive and $E$-polynomial of $\SM^{\max}(\U(2,2))$ for $d'$ odd and $\SM^{\max}(\U(3,3))$ for $d'$ coprime to $3$.

\begin{theorem}
\label{thm:UnnMotive}
Let $\SM^{\max}(\U(n,n))$ be the moduli space of $\U(n,n)$-Higgs bundles with maximal Toledo invariant, hence of topological type $(d',d'-n(2g-2))\in\ZZ\times\ZZ$.
 Suppose $n=2$ and $d'$ is odd or $n=3$ and $d'$ coprime to $3$. Then the motive and the $E$-polynomial of $\SM^{\max}(\U(2,2))$ and $\SM^{\max}(\U(3,3))$ are, respectively, given by the corresponding formulas in Corollaries \ref{cor:K-class moduli rk2,3} and \ref{cor:E-poly_r=2,3} with $d_L=4-4g$.
\end{theorem}
\begin{proof}
Apply Corollaries \ref{cor:K-class moduli rk2,3} and \ref{cor:E-poly_r=2,3} to the trivial algebroid on $T_X^2=K^{-2}$, and use \eqref{eq:CayleyUnn}.
\end{proof}

\subsection{Computational verification of Mozgovoy's Conjecture on the motivic ADHM recursion formula in low rank}\label{sec:Mozgovoy}

In \cite[Conjecture 3]{Moz12}, Mozgovoy stated a conjectural explicit formula for the motivic classes of the moduli spaces of twisted Higgs bundles in the completion of the Gothendieck ring of Chow motives over $\CC$ with rational coefficients $\hat{K_0}(\SC\SM_{\CC})$. It is obtained by explicitly solving a motivic version to the ADHM recursion formula conjectured by Chuang, Diaconescu and Pan \cite{CDP11}. In particular, it also implies a conjectural explicit formula for the corresponding $E$-polynomials. More recently, in \cite[Theorem 1.1 and Theorem 4.6]{MG19}, Mozgovoy and O'Gorman proved an analogous formula for the number of rational points of the moduli space of twisted Higgs bundles which agrees with the predictions given by \cite[Conjecture 3]{Moz12}, up to a minor inaccuracy in one of the exponents in loc. cit. which was corrected in \cite[Theorem 1.1]{MG19}.

Our explicit formulas for the motives of the moduli spaces of twisted Higgs bundles described in Corollary \ref{cor:K-class moduli rk2,3} could then be used to verify that \cite[Conjecture 3]{Moz12} holds for motives in rank 2 and 3. Moreover, Theorem \ref{thm:equalMotiveHodge} implies that if Mozgovoy's conjecture is true in general, then his formula also gives the motives of the moduli spaces of $\SL$-connections. The first author developed a MATLAB code capable of handling and simplifying motivic expressions in the sub-$\lambda$-ring of $\hat{K_0}(\SC\SM_{\CC})$ generated by curves (see \cite{A22} for the complete description of the algorithm, the details about the computational simplification of such expressions and some explicit polynomial formulas in low rank for the motives of the moduli spaces described in this article). This has allowed us to compute explicitly and simplify the motivic expressions obtained in Corollary \ref{cor:K-class moduli rk2,3} and the ones obtained from Mozgovoy's formulas (as well as the corresponding $E$-polynomials and Betti numbers), allowing us to verify computationally that the following Conjecture holds when
\begin{itemize}
\item $2\le g\le 11$,
\item $1\le r\le 3$,
\item $d$ is coprime with $r$ and 
\item $2g-1\le -d_\SL \le 2g+18$ (i.e., $1\le p\le 20$ in Mozgovoy's notation, where $-d_\SL=2g-2+p$).
\end{itemize}
Here is the Conjecture. The corresponding notations are explained after the statement.

\begin{conjecture}{\cite[Conjecture 3]{Moz12}}
\label{conj:motivicADHM}
Let $X$ be a smooth complex algebraic curve of genus $g$ and let $[X]=1+h^1(X)+\LL\in \hat{K_0}(\SC\SM_{\CC})$. Let $\SL$ be a rank $1$ Lie algebroid of degree $-(2g-2+p)$ with $p>0$. For each integer $n\ge 1$, let
$$\SH_n(t)=\sum_{\lambda\in \SP_n} \prod_{s\in d(\lambda)} (-t^{a(s)-l(s)} \LL^{a(s)})^p t^{(1-g)(2l(s)+1)}Z(X,t^{h(s)}\LL^{a(s)}).$$
Define $H_r(t)$ for $r\ge 1$ as follows
\begin{equation*}
\begin{split}
\sum_{r\ge 1} H_r(t) T^r &= (1-t)(1-\LL t) \op{PLog} \Bigg (\sum_{n\ge 0} \SH_n(t) T^n \Bigg) \\
&=(1-t)(1-\LL t) \sum_{j\ge 1} \frac{\mu(j)}{j} \psi_j \bigg [\log \bigg ( 1+\sum_{n\ge 1}\SH_n(t) T^n \bigg) \bigg]\\
&=(1-t)(1-\LL t) \sum_{j\ge 1} \sum_{k\ge 1} \frac{(-1)^{k+1}\mu(j)}{jk}  \Bigg ( \sum_{n\ge 1} \psi_j[\SH_n(t)] T^{jn} \Bigg)^k.
\end{split}
\end{equation*}
Then $H_r(t)\in \ZZ[\LL,\lambda^1(h^1(X)),\ldots,\lambda^g(h^1(X)),t^{\pm 1}]$ and
$$[\SM_{\SL}(r,d)] = (-1)^{pr} \LL^{r^2(g-1)+p\frac{r(r+1)}{2}} H_r(1).$$
\end{conjecture}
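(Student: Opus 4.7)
The plan is to verify Conjecture \ref{conj:motivicADHM} in the cases $r\in\{1,2,3\}$ by comparing both sides of the claimed equality as explicit elements of the sub-$\lambda$-ring $R\subset\hat{K}(\VarC)$ generated by $\LL$ and $\lambda^1(h^1(X)),\ldots,\lambda^g(h^1(X))$. The conjecture is not to be established in full generality; rather, the strategy is to produce, for each given tuple $(g,r,d,d_\SL)$ in the stated range, two closed-form expressions in $R$ and to check their equality symbolically.

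First, I would use Theorem \ref{thm:equalMotive} to replace $[\SM_{\Lambda_\SL}(r,d)]$ by the motive of the moduli space of $L^{-1}$-twisted Higgs bundles, so that the left-hand side of the conjecture can be computed using Corollary \ref{cor:K-class moduli rk2,3}. This already gives $[\SM_{\Lambda_\SL}(r,d)]$ as an explicit polynomial in $\LL$, $[\Jac(X)]=P_X(1)$, and the symmetric product classes $\lambda^n([X])$, which in turn can be expanded in terms of $\lambda^i(h^1(X))$ via the zeta function identity $Z(X,x)=P_X(x)/((1-x)(1-\LL x))$. Each $\lambda^n([X])$ is thus realized as the coefficient extraction $\coeff_{x^0}(P_X(x)/((1-x)(1-\LL x)x^n))$, which expresses it as a polynomial in the generators of $R$.

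Second, I would expand the right-hand side of Mozgovoy's formula. For each $n$, the sum over partitions $\lambda\in\SP_n$ is finite, and each summand involves $Z_X(t^{h(s)}\LL^{a(s)})$, which can be re-expressed via $P_X$. Then the plethystic logarithm $\op{PLog}$ is evaluated to the required order in $T$ using its explicit form in terms of M\"obius $\mu(j)$ and Adams operations $\psi_j$; up to order $T^r$, only partitions of size at most $r$ and finitely many terms in the $j,k$ sums contribute. Specializing to $t=1$ after clearing the pole factor $(1-t)(1-\LL t)$ (which is removable by the conjecture's integrality claim $H_r(t)\in\ZZ[\LL,\lambda^1(h^1(X)),\ldots,\lambda^g(h^1(X)),t^{\pm 1}]$) gives an element of $R$. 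Finally, both sides are compared in $R$ after normalizing by the prefactor $(-1)^{pr}\LL^{r^2(g-1)+p r(r+1)/2}$.

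The main obstacle will be purely computational: the combinatorics of the plethystic logarithm and the Adams operations $\psi_j$, which act on $\lambda^i(h^1(X))$ in a nontrivial way, make manual simplification infeasible. The key step is therefore to implement symbolic arithmetic in $R$ that tracks the $\lambda$-ring relations (in particular, the Poincar\'e duality identity $\lambda^n(h^1(X))=\LL^{n-g}\lambda^{2g-n}(h^1(X))$ and the vanishing $\lambda^n(h^1(X))=0$ for $n>2g$) and evaluates $\psi_j[\lambda^i(h^1(X))]$ correctly. Given such a symbolic engine, the verification reduces to equality of two polynomial expressions in finitely many generators and can be checked for each prescribed $(g,r,d,d_\SL)$; the MATLAB implementation mentioned in the paper handles precisely this. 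Beyond the computation itself, no new conceptual input is required, since Corollary \ref{cor:K-class moduli rk2,3} already provides the missing ingredient, namely an explicit motivic formula on the Lie algebroid side that matches, via Theorem \ref{thm:equalMotiveHodge}, the twisted Higgs bundle moduli appearing in Mozgovoy's setup.
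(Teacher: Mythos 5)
Your proposal matches the paper's own treatment: the statement is left as a conjecture, and the paper likewise only \emph{verifies} it for $1\le r\le 3$ (with $2\le g\le 6$, $d$ coprime to $r$, and $2g-1\le -d_\SL\le 2g+6$) by symbolically comparing the explicit motives of Corollary \ref{cor:K-class moduli rk2,3} (transferred from Higgs bundles via Theorem \ref{thm:equalMotiveHodge}) against a finite-order expansion of Mozgovoy's plethystic formula in the sub-$\lambda$-ring generated by $\LL$ and $\lambda^i(h^1(X))$, using a MATLAB implementation that tracks exactly the relations you list. The only point to keep in mind, which the paper makes explicit in the remark following the conjecture, is that the Adams operators $\psi_j$ in the formula must be taken with respect to the \emph{opposite} (special) $\lambda$-ring structure $\sigma$, not the naive symmetric-power structure used to define $Z_X$.
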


\begin{remark}\mbox{}
\begin{enumerate}
\item The formulas from Corollary \ref{cor:K-class moduli rk2,3} hold in $\hat{K}(\VarC)$, but Mozgovoy's formulas were originally stated in $\hat{K_0}(\SC\SM_{\CC})$. The polynomial expressions shown in \cite{A22} were obtained from the simplification of the equations in Corollary \ref{cor:K-class moduli rk2,3} and, therefore, still hold in $\hat{K}(\VarC)$, but in order to simplify Mozgovoy's formulas, the algorithm used in \cite{A22} assumes that the motivic ring admits division by integers (i.e., that the additive group of motives is torsion free). This is why we can only state that Conjecture \ref{conj:motivicADHM} has been verified in $\hat{K_0}(\SC\SM_{\CC})$ and not in $\hat{K}(\VarC)$.
\item This is a slightly modified version of \cite[Conjecture 3]{Moz12} which includes an exponent correction and the explicit logarithmic form from \cite[Theorem 1.1]{MG19} as well as a motivic version of the polynomial dependency on the generators asserted at \cite[Theorem 1.1 and Theorem 4.6]{MG19} ($H_r(t)\in \ZZ[\LL,\lambda^1(h^1(X)),\ldots,\lambda^g(h^1(X)),t^{\pm 1}]$) which was not originally stated for motives at \cite[Conjecture 3]{Moz12}.
\item In virtue of Theorem \ref{thm:equalMotiveHodge}, the Conjecture is stated for Lie algebroid connections instead of twisted Higgs bundles (as it is stated in \cite{Moz12}).
\end{enumerate}
\end{remark}

In the previous Conjecture, $\SP_n$ denotes the set of partitions of $n$. A partition $\lambda\in \SP_n$ is considered as a decreasing sequence of integers $\lambda=(\lambda_1\ge \lambda_2 \ge \cdots \ge \lambda_k>0)$ with $\sum_{i=1}^k \lambda_i=n$ and
$$d(\lambda)=\{(i,j)\in \ZZ^2 | 1\le i, \, 1\le j\le \lambda_i\},$$
$$a(i,j)=\lambda_i-j, \quad \quad l(i,j)=\lambda_j'-i=\max\{l|\lambda_l\ge j\}-i, \quad \quad h(i,j)=a(i,j)+l(i,j)+1.$$
Then, $\psi_j:\hat{K_0}(\SC\SM_{\CC})\to \hat{K_0}(\SC\SM_{\CC})$ denotes the $j$-th Adams operation on $\hat{K_0}(\SC\SM_{\CC})$. At this point, we would like to make a clarification about the $\lambda$-ring structures on $\hat{K_0}(\SC\SM_{\CC})$, as there are two different opposite $\lambda$-ring structures (c.f. \cite{H07}) involved in the formula. On one hand, the space of motives $\hat{K_0}(\SC\SM_{\CC})$ admits the natural $\lambda$-ring structure that we used at the beginning of the section given by
$$\lambda^j([Y])=[\Sym^j([Y]).$$
This is the one used to define the motivic zeta function $Z(X,t)=\sum_{j\ge 0} \lambda^j([X]) t^j$.
However, we can also define an alternative $\lambda$-ring structure $\sigma^j$ on motives as follows,
$$\sum_{j\ge 0} \sigma^j([Y])t^j = \Bigg( \sum_{j\ge 0} \lambda^j([Y]) (-t)^j \Bigg)^{-1}.$$
In \cite{H07} (see also \cite[\S 8]{LL04}) it is proved that $\sigma$ (contrary to $\lambda$) makes $\hat{K_0}(\SC\SM_{\CC})$ a special $\lambda$-ring, meaning that there are universal polynomials $P_j\in \ZZ(X_1,\ldots,X_j,Y_1,\ldots,Y_j)$ and $P_{n,m}\in \ZZ(X_1,\ldots,X_{nm})$ such that
\begin{equation*}
\begin{split}
\sigma^j([Y][Z])&=P_j\left (\sigma^1([Y]),\ldots,\sigma^j([Y]),\sigma^1([Z]),\ldots,\sigma^j([Z]) \right)\\
\sigma^n(\sigma^m([Y]))&=P_{n,m} \left (\sigma^1([Y]),\ldots,\sigma^{nm}([Y]) \right).
\end{split}
\end{equation*}
Then, we can compute recursively the Adams operations $\psi_n: \hat{K_0}(\SC\SM_{\CC}) \to \hat{K_0}(\SC\SM_{\CC})$ for $j\ge 1$ as follows (c.f. \cite{Hop06}),
$$\psi_1(X)=X$$
$$\psi_n(X)=(-1)^{n-1}n\sigma^n([Y])-\sum_{j=1}^{n-1} (-1)^{n-j} \sigma^{n-j}([Y]) \psi_j([Y]) .$$
As $\sigma$ is special, these Adams operations become ring homomorphisms and, moreover, $\psi_n\circ \psi_m=\psi_{nm}$. The Adams operations are then extended to $\hat{K}(\VarC)((t))$ as ring homomorphisms with $\psi_j(t)=t^j$. Finally, the operator $\op{PLog}$ in the formula is the Plethystic logarithm, for such Adams operations (for the opposite structure $\sigma$), defined as
$$\op{PLog}(A)=\sum_{j\ge 1} \frac{\mu(j)}{j} \psi_k[\log(A)].$$

\section{Motives of moduli spaces of $\SL$-connections with fixed determinant}
\label{section:fixedDeterminant}

So far we have considered moduli spaces of flat $\SL$-connections (or $\Lambda$-modules) with fixed rank and degree, but the techniques presented in the previous sections also allow us to obtain analogues of the previous results for moduli spaces of flat $\SL$-connections with fixed determinant. In this section we will define the moduli space of flat $\SL$-connections with fixed determinant and show several results proving the invariance of its motivic class and $E$-polynomial regarding the Lie algebroid structure, providing the necessary changes in the previously exposed arguments to treat the fixed determinant scenario.

\subsection{Twisted Higgs bundles with fixed determinant}

Let $\xi$ be an algebraic line bundle over $X$ of degree $d$ coprime with $r$. Let $\SM_L^{\Dol}(r,\xi)\subset \SM_L(r,d)$ be the moduli space of traceless $L$-twisted Higgs bundles with fixed determinant $\xi$, i.e., the moduli space of pairs $(E,\varphi)$ with $\deg(E)\cong \xi$ and $\varphi\in H^0(\End_0(E)\otimes L)$, where $\End_0(E)$ denotes the endomorphisms of $E$ with trace $0$.

By \cite[Theorem 1.2]{BGL11}, if $g\ge 2$ and $\deg(L)>2g-2$,  $\SM_L^{\Dol}(r,\xi)$ is a smooth irreducible $\CC^*$-invariant closed subvariety of $\SM_L^{\Dol}(r,d)$, so it is a smooth semiprojective variety. Hence the digression held about the structure of the Bialynicki-Birula stratification of $\SM_L^{\Dol}(r,d)$ applies to $\SM_L^{\Dol}(r,\xi)$ as well. The fixed-point locus of the $\CC^*$-action on $\SM_L^{\Dol}(r,\xi)$ clearly corresponds to the intersection of the fixed-point locus of the $\CC^*$-action on $\SM_L^{\Dol}(r,d)$ with $\SM_L^{\Dol}(r,\xi)$. In section \ref{section:HiggsPolynomials} we described a decomposition of the fixed-point locus as
$$\SM_L^{\Dol}(r,d)^{\CC^*}=\bigcup_{\begin{array}{c}
{\scriptstyle (\overline{r},\overline{d})\in\Delta_L}\\
{\scriptstyle |\overline{r}|=r,\,|\overline{d}|=d}
\end{array}} \VHS_L(\overline{r},\overline{d}),$$
thus we have a decomposition
$$\SM_L^{\Dol}(r,\xi)^{\CC^*}=\bigcup_{\begin{array}{c}
{\scriptstyle (\overline{r},\overline{d})\in\Delta_L}\\
{\scriptstyle |\overline{r}|=r,\,|\overline{d}|=d}
\end{array}} \VHS_L(\overline{r},\overline{d}, \xi),$$
where $\VHS_L(\overline{r},\overline{d},\xi)=\VHS_L(\overline{r},\overline{d}) \cap \SM_L^{\Dol}(r,\xi)$.
By construction all variations of Hodge structure have traceless Higgs fields, so 
$$\VHS_L(\overline{r},\overline{d},\xi)=\left \{(E_\bullet,\varphi_\bullet)\in \VHS_L(\overline{r},\overline{d}) \,  \middle | \,  \bigotimes_{i=1}^k \det(E_i) \cong \xi \right\}.$$
On the other hand, we can consider algebraic chains with fixed ``total determinant'' in the following sense. For $\overline{r}$ and $\overline{d}$ such that $|\overline{r}|=r$ and $|\overline{d}|=d$, define
$$\HC^\alpha(\overline{r},\overline{d},\xi) = \left \{(E_\bullet,\varphi_\bullet)\in \HC^\alpha(\overline{r},\overline{d})\,  \middle | \,  \bigotimes_{i=1}^k \det(E_i) \cong \xi \right\}.$$

\begin{lemma}
\label{lemma:VHSChainsFixedDet}
Given $\overline{r}=(r_1,\ldots,r_k)$ and a degree $d$ line bundle $\xi$, consider the line bundle
$$\xi_L= \xi \otimes L^{\otimes ( \sum_{i=1}^k (i-k)r_i)}.$$
Then the isomorphism described in Corollary \ref{cor:VHSChains} induces an isomorphism
$$\VHS_L(\overline{r},\overline{d},\xi) \cong \HC^{\alpha_L}(\overline{r}, \overline{d}_L,\xi_L).$$
\end{lemma}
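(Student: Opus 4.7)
The plan is very short: the statement is essentially a bookkeeping addendum to Corollary \ref{cor:VHSChains}, so I only need to track how the total determinant transforms under the bijection $(E_\bullet,\varphi_\bullet)\leftrightarrow (\tilde E_\bullet,\tilde\varphi_\bullet)$ already established there.

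First, I would recall that Corollary \ref{cor:VHSChains} is proved by sending a VHS of type $\overline{r}$ and multidegree $\overline{d}$ to the chain with components $\tilde E_i = E_i\otimes L^{i-k}$ and maps $\tilde\varphi_i=\varphi_i\otimes\id_{L^{i-k}}$, and this assignment is an isomorphism of moduli spaces. So the only thing to check is that this assignment restricts to a bijection between the two loci cut out by the determinant conditions.

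The key computation is a one-line identity on determinants: since $\rk(E_i)=r_i$, we have
\[
\det(\tilde E_i)=\det(E_i\otimes L^{i-k})=\det(E_i)\otimes L^{(i-k)r_i},
\]
and tensoring over $i=1,\dots,k$ gives
\[
\bigotimes_{i=1}^k\det(\tilde E_i)=\bigotimes_{i=1}^k\det(E_i)\;\otimes\; L^{\sum_{i=1}^k(i-k)r_i}.
\]
Hence $\bigotimes_i \det(E_i)\cong\xi$ if and only if $\bigotimes_i\det(\tilde E_i)\cong\xi\otimes L^{\sum_i(i-k)r_i}=\xi_L$. This shows that the isomorphism of Corollary \ref{cor:VHSChains} identifies the subscheme $\VHS_L(\overline{r},\overline{d},\xi)\subset \VHS_L(\overline{r},\overline{d})$ with the subscheme $\HC^{\alpha_L}(\overline{r},\overline{d}_L,\xi_L)\subset \HC^{\alpha_L}(\overline{r},\overline{d}_L)$.

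There is no real obstacle here: the content of the lemma is entirely captured by the determinant identity above, and the semistability/stability conditions are unaffected because they are inherited from the ambient isomorphism in Corollary \ref{cor:VHSChains}. The only thing worth being careful about is that the definitions of $\VHS_L(\overline{r},\overline{d},\xi)$ and $\HC^{\alpha_L}(\overline{r},\overline{d}_L,\xi_L)$ both use the \emph{total} tensor product of determinants of all chain/VHS components (rather than, say, the determinant of the underlying Higgs bundle), which matches the way the isomorphism acts summand by summand.
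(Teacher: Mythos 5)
Your proposal is correct and matches the paper's proof essentially verbatim: both rest on the single determinant identity $\det(E_i\otimes L^{i-k})=\det(E_i)\otimes L^{(i-k)r_i}$, tensored over $i$, to show the fixed-determinant condition on one side corresponds exactly to the condition $\bigotimes_i\det(\tilde E_i)\cong\xi_L$ on the other. Your added remark that stability is inherited from the ambient isomorphism of Corollary \ref{cor:VHSChains} is implicit in the paper's argument and requires no further justification.
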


\begin{proof}
Given $(E_\bullet,\varphi_\bullet)\in \VHS_L(\overline{r},\overline{d})$, the underlying bundles of its corresponding algebraic chain are $\tilde{E}_i=E_i\otimes L^{i-k}$.
So if $(E_\bullet,\varphi_\bullet)\in \VHS_L(\overline{r},\overline{d},\xi)$, then
$$\bigotimes_{i=1}^k \det(\tilde{E}_i) =  \bigotimes_{i=1}^k \left (\det(E_i) \otimes L^{(i-k)r_i} \right)= \left ( \bigotimes_{i=1}^k \det(E_i) \right) \otimes L^{\otimes ( \sum_{i=1}^k (i-k)r_i)} \cong \xi_L.$$
The converse is analogous.
\end{proof}

\begin{lemma}
\label{lemma:ChainsFixedDetIso}
Fix $\overline{r}=(r_1,\ldots,r_k)$ and $\overline{d}=(d_1,\ldots,d_k)$. Let $\xi$ and $\xi'$ be two line bundles over $X$ of degree $\sum_{i=1}^k d_i$. Then
$$\HC^\alpha(\overline{r},\overline{d},\xi) \cong \HC^\alpha(\overline{r},\overline{d},\xi').$$
\end{lemma}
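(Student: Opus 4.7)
The plan is to construct an explicit isomorphism of the two moduli spaces via tensoring by an appropriate degree-zero line bundle.

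First I would observe that since $\deg(\xi) = \deg(\xi') = \sum_i d_i$, the line bundle $\xi' \otimes \xi^{-1}$ has degree zero. Let $r = |\overline{r}| = \sum_i r_i$. Since the Jacobian $\Jac^0(X)$ is a divisible abelian group (the multiplication-by-$r$ isogeny $\Jac^0(X) \to \Jac^0(X)$ is surjective on closed points), there exists a line bundle $M \in \Jac^0(X)$ such that
$$M^{\otimes r} \cong \xi' \otimes \xi^{-1}.$$

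Next I would define the natural morphism
$$T_M : \HC^\alpha(\overline{r},\overline{d},\xi) \longrightarrow \HC^\alpha(\overline{r},\overline{d},\xi'),\ \ \ (\tilde E_\bullet,\tilde\varphi_\bullet) \longmapsto (\tilde E_\bullet \otimes M,\tilde\varphi_\bullet \otimes \id_M).$$
One must then verify that $T_M$ is well-defined. The rank is preserved because $M$ is a line bundle; the multidegree is preserved because $\deg(M)=0$; and the total determinant transforms as
$$\bigotimes_{i=1}^k \det(\tilde E_i \otimes M) = \left(\bigotimes_{i=1}^k \det(\tilde E_i)\right) \otimes M^{\otimes r} \cong \xi \otimes \xi'\otimes \xi^{-1} \cong \xi',$$
as required. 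Moreover, $\alpha$-(semi)stability is preserved, since the assignment $\tilde F_\bullet \mapsto \tilde F_\bullet \otimes M$ sets up a bijection between subchains of $(\tilde E_\bullet,\tilde\varphi_\bullet)$ and subchains of $(\tilde E_\bullet \otimes M,\tilde \varphi_\bullet \otimes \id_M)$ preserving the $\alpha$-slope (again because $\deg(M)=0$).

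Finally, the inverse morphism is simply $T_{M^{-1}}$, which is manifestly a morphism in the other direction by the same argument. Hence $T_M$ is an isomorphism, completing the proof. There is no real obstacle here: the only nontrivial input is the divisibility of $\Jac^0(X)$, which guarantees the existence of $M$, and everything else is formal.
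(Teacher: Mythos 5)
Your proposal is correct and follows essentially the same route as the paper: the paper likewise takes an $r$-th root $\psi$ of the degree-zero bundle $\xi'\otimes\xi^{-1}$ (your $M$, with existence guaranteed by divisibility of the Jacobian) and tensors the chain by it, verifying that the total determinant becomes $\xi'$ and that the $\alpha$-slope of subchains is unchanged. Your explicit inverse $T_{M^{-1}}$ merely spells out what the paper leaves implicit.
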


\begin{proof}
Let $r=\sum_{i=1}^k r_i$. As $\xi$ an $\xi'$ have the same degree, $\xi'\otimes \xi^{-1}$ has degree zero, so there exists a line bundle $\psi$ such that $\psi^{\otimes r}\cong \xi'\otimes \xi^{-1}$. Given $(E_\bullet,\varphi_\bullet)\in \HC^\alpha(\overline{r},\overline{d},\xi)$, consider the algebraic chain
$$(E_\bullet\otimes \psi, \varphi_\bullet \otimes \id_{\psi}).$$
As $\psi$ has degree zero, tensoring by $\psi$ gives an $\alpha$-slope-preserving correspondence between subchains of $(E_\bullet,\varphi_\bullet)$ and those of $(E_\bullet\otimes \psi,\varphi_\bullet\otimes \id_\psi)$. Thus, $(E_\bullet\otimes \psi,\varphi_\bullet\otimes \id_\psi)$ is $\alpha$-(semi)stable and clearly
$$\bigotimes_{i=1}^k\det(E_\bullet\otimes \psi) = \bigotimes_{i=1}^k ( \det(E_i) \otimes \psi^{r_i}) = \left( \bigotimes_{i=1}^k \det(E_i) \right) \otimes \psi^{\sum_{i=1}^r r_i} \cong \xi \otimes \psi^r \cong \xi',$$
and then tensorization by $\psi$ yields the desired isomorphism.
\end{proof}

\begin{corollary}
\label{cor:isomorphicVHSSL}
Let $\xi$ and $\xi'$ be line bundles on $X$ of degree $d$. Let $L$ and $L'$ be line bundles with $\deg(L)=\deg(L')$. Then
$$\VHS_L(\overline{r},\overline{d},\xi) \cong \VHS_{L'}(\overline{r},\overline{d},\xi')$$
\end{corollary}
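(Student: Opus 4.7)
The plan is to chain together the two preceding lemmas. Specifically, Lemma \ref{lemma:VHSChainsFixedDet} translates the fixed-determinant variations of Hodge structure into chain moduli spaces with fixed total determinant, and Lemma \ref{lemma:ChainsFixedDetIso} then allows us to swap the specific total determinant for any other of the same degree.

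First, I would apply Lemma \ref{lemma:VHSChainsFixedDet} to both sides. This gives isomorphisms
$$\VHS_L(\overline{r},\overline{d},\xi) \cong \HC^{\alpha_L}(\overline{r},\overline{d}_L,\xi_L) \quad \text{and} \quad \VHS_{L'}(\overline{r},\overline{d},\xi') \cong \HC^{\alpha_{L'}}(\overline{r},\overline{d}_{L'},\xi'_{L'}),$$
where $\alpha_L = ((k-1)\deg(L),\ldots,\deg(L),0)$, $\overline{d}_L = (d_i + r_i(i-k)\deg(L))_i$, and $\xi_L = \xi \otimes L^{\otimes(\sum_{i=1}^k (i-k)r_i)}$, with analogous definitions for the primed data.

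Since by hypothesis $\deg(L)=\deg(L')$, we immediately obtain $\alpha_L = \alpha_{L'}$ and $\overline{d}_L = \overline{d}_{L'}$, so the two chain moduli spaces share the same stability parameter and multidegree. Moreover, the line bundles $\xi_L$ and $\xi'_{L'}$ satisfy
$$\deg(\xi_L) = d + \Big(\sum_{i=1}^k (i-k)r_i\Big)\deg(L) = d + \Big(\sum_{i=1}^k (i-k)r_i\Big)\deg(L') = \deg(\xi'_{L'}),$$
so they have the same degree.

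Consequently, Lemma \ref{lemma:ChainsFixedDetIso} provides an isomorphism
$$\HC^{\alpha_L}(\overline{r},\overline{d}_L,\xi_L) \cong \HC^{\alpha_{L'}}(\overline{r},\overline{d}_{L'},\xi'_{L'}),$$
and composing the three isomorphisms yields the desired $\VHS_L(\overline{r},\overline{d},\xi) \cong \VHS_{L'}(\overline{r},\overline{d},\xi')$. There is no real obstacle here: the result is essentially a formal consequence of the two lemmas combined with the observation that the degree of the twisting adjustment $\xi_L$ depends only on $\deg(L)$ and on $(\overline{r},\overline{d})$.
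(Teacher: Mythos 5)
Your proof is correct and follows exactly the paper's own argument: the paper likewise deduces the corollary by combining Lemma \ref{lemma:VHSChainsFixedDet} with Lemma \ref{lemma:ChainsFixedDetIso}, using the observations that $\alpha_L=\alpha_{L'}$ and $\overline{d}_L=\overline{d}_{L'}$ when $\deg(L)=\deg(L')$. Your explicit check that $\deg(\xi_L)=\deg(\xi'_{L'})$ is a welcome detail the paper leaves implicit, but the route is the same.
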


\begin{proof}
This follows from Lemmas \ref{lemma:VHSChainsFixedDet} and \ref{lemma:ChainsFixedDetIso}, using the fact that $\alpha_L=\alpha_{L'}$, $\overline{d}_L=\overline{d}_{L'}$.
\end{proof}

Finally, consider the Bialynicki-Birula decomposition of $\SM_L^{\Dol}(r,\xi)$
$$\SM_L^{\Dol}(r,\xi)=\bigcup_{\begin{array}{c}
{\scriptstyle (\overline{r},\overline{d})\in\Delta_L}\\
{\scriptstyle |\overline{r}|=r,\,|\overline{d}|=d}
\end{array}} U_{\overline{r},\overline{d},\xi}^+$$
where, clearly, $U_{\overline{r},\overline{d},\xi}^+=U_{\overline{r},\overline{d},\xi}\cap \SM_L^{\Dol}(r,\xi)$.
We know that $U_{\overline{r},\overline{d},\xi}^+\to \VHS_L(\overline{r},\overline{d},\xi)$ is an affine bundle of rank $N_{L,\overline{r},\overline{d},\xi}^+$ and, therefore, we have the analogue of equation \eqref{eq:EPolHiggs},
\begin{equation}
\label{eq:EPolHiggsSL}
[\SM_L^{\Dol}(r,\xi)] = \sum_{\begin{array}{c}
{\scriptstyle (\overline{r},\overline{d})\in\Delta_L}\\
{\scriptstyle |\overline{r}|=r,\,|\overline{d}|=d}
\end{array}} \LL^{N_{L,\overline{r},\overline{d},\xi}^+} [\VHS_L(\overline{r},\overline{d},\xi)],
\end{equation}
which can be used in an analogous way to prove the following invariance property of the motivic class of the moduli space of $L$-twisted Higgs bundles with fixed determinant.

\begin{theorem}
\label{thm:equalMotiveHiggsSL}
Let $X$ be a smooth projective curve of genus $g\geq 2$. Let $L$ and $L'$ be line bundles over $X$ such that $\deg(L)=\deg(L')>2g-2$. Assume that $\xi$ and $\xi'$ are line bundles of degree $d$ coprime with the rank $r$. Then the motives of the corresponding moduli spaces $[\SM_L^{\Dol}(r,\xi)]$ and $[\SM_{L'}^{\Dol}(r,\xi')]$ are equal in $K(\VarC)$. Moreover, if $d''$ is any integer coprime with $r$, and $\xi''$ is any line bundle of degree $d''$, then $E(\SM_L^{\Dol}(r,\xi))=E(\SM_{L'}^{\Dol}(r,\xi''))$. 
\end{theorem}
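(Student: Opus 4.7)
The plan is to mirror the proof of Theorem \ref{thm:equalMotiveHiggs} in the fixed-determinant setting, using the Bialynicki--Birula decomposition \eqref{eq:EPolHiggsSL} of $\SM_L(r,\xi)$, which is available because $\SM_L(r,\xi)$ is smooth semiprojective by \cite[Theorem 1.2]{BGL11} (combined with the restriction of the $\CC^*$-action and Lemmas \ref{lemma:limitExistsHiggs} and \ref{lemma:fixedProperHiggs}). The first step is to establish that the affine-bundle rank $N^+_{L,\overline{r},\overline{d},\xi}$ of each Bialynicki--Birula stratum depends only on $\deg(L)$ and $\deg(\xi)=d$ (together with the combinatorial data $r,\overline{r},\overline{d}$), and not on the actual bundles $L$ and $\xi$. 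Via the fixed-determinant analogue of Lemma \ref{lemma:morseIndex}, this reduces to proving the same independence for $\dim \SM_L(r,\xi)$, $\dim \VHS_L(\overline{r},\overline{d},\xi)$ and the Morse index $M_{L,\overline{r},\overline{d},\xi}$. The first is standard and the second follows from Corollary \ref{cor:isomorphicVHSSL}. For the Morse index, the deformation complex at an element of $\VHS_L(\overline{r},\overline{d},\xi)$ is just the traceless version of \eqref{eq:complx-decomp}; since the off-diagonal pieces $C^\bullet_l$ with $l\neq 0$ only involve $\bigoplus_{j-i=l}\Hom(E_i,E_j)$ and $\bigoplus_{j-i=l+1}\Hom(E_i,E_j)\otimes L$ and are therefore unaffected by imposing the traceless condition, the formula \eqref{eq:Morseindex} together with the Euler characteristic calculation \eqref{eq:Eulerchar} carries over verbatim and depends only on $\deg(L)$.

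With these ingredients and Corollary \ref{cor:isomorphicVHSSL} in place, the equality $[\SM_L(r,\xi)] = [\SM_{L'}(r,\xi')]$ in $K(\VarC)$ follows by matching the decomposition \eqref{eq:EPolHiggsSL} for both moduli spaces term by term (note in particular that $\Delta_L=\Delta_{L'}$ as a consequence of Corollary \ref{cor:isomorphicVHSSL}). The corresponding equality of $E$-polynomials is then immediate via \eqref{E-poly}.

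The main obstacle is the second statement, allowing the degree $d''$ of $\xi''$ to differ from $d$: the Bialynicki--Birula decompositions are indexed by partitions of $d$ and $d''$ respectively and cannot be paired up directly. Here we appeal to the $\chi$-independence results of Maulik--Shen \cite{MS20-chiindependence} and Groechenig--Wyss--Ziegler \cite{GWZ20}, in their fixed-determinant formulations (which are part of those works, or follow by the same arguments that establish the variable-determinant statements), to obtain $E(\SM_{K_X(mx_0)}(r,\xi_1))=E(\SM_{K_X(mx_0)}(r,\xi_2))$ for any line bundles $\xi_1,\xi_2$ with degrees coprime to $r$, where $m$ is chosen so that $\deg(K_X(mx_0))=\deg(L)$. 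Composing this bridge with the already-established fixed-degree equality of $E$-polynomials yields $E(\SM_L(r,\xi))=E(\SM_{L'}(r,\xi''))$, completing the proof in a manner parallel to the final step of the proof of Theorem \ref{thm:equalMotiveHiggs}.
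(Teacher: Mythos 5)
Your proposal is correct and is essentially the paper's own argument: the paper likewise runs the fixed-determinant Bialynicki--Birula decomposition \eqref{eq:EPolHiggsSL} in parallel with the proof of Theorem \ref{thm:equalMotiveHiggs}, observing that the traceless deformation complex $C_0^\bullet(E,\varphi)$ decomposes just like \eqref{eq:complx-decomp} with $C_{0,l}^\bullet(E,\varphi)=C_l^\bullet(E,\varphi)$ on the off-diagonal pieces (so the Morse indices are unchanged), and then combines Corollary \ref{cor:isomorphicVHSSL} with a fixed-determinant $\chi$-independence result to vary the degree. The only discrepancy is bibliographic: the fixed-determinant bridge is not part of \cite{MS20-chiindependence} itself --- the paper cites \cite[Theorem 0.5]{MS20-endoscopy} (see also \cite[Corollary 7.17]{GWZ20}) --- though your hedged attribution is harmless since \cite{GWZ20} does contain the statement you need.
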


\begin{proof}
The argument is completely analogous to the one which lead to Theorem \ref{thm:equalMotive}. One just has to use the corresponding fixed determinant versions of the objects involved. Note that the appropriate deformation complex
is $C_0^\bullet(E,\varphi) \, : \, \End_0(E) \stackrel{[-,\varphi]}{\to} \End_0(E)\otimes L$, which decomposes as $C_0^\bullet(E,\varphi)=\bigoplus_{l=-k+1}^{k-1}C_{0,l}^\bullet(E,\varphi)$, just like $C^\bullet(E,\varphi)$ in \eqref{eq:complx-decomp}, but $C_{0,l}^\bullet(E,\varphi)=C_l^\bullet(E,\varphi)$, so the Morse index for the fixed determinant case equals the non-fixed determinant case. 
Moreover, the equality of the $E$-polynomials is also precisely the same argument, but here one has to refer to \cite[Theorem 0.5]{MS20-endoscopy} (see also \cite[Corollary 7.17]{GWZ20}), instead of the references stated in the proof of Theorem \ref{thm:equalMotive}. The details are left to the reader.

\end{proof}

\subsection{Moduli spaces of $\SL$-connections with fixed determinant}

Let $\SL$ be any Lie algebroid. Let $E$ be a rank $r$ vector bundle with determinant $\xi=\det(E)=\Lambda^rE$ and let $\nabla_\SL:E\to E\otimes \Omega_\SL^1$ be an integrable $\SL$-connection on $E$. Then $\nabla_\SL$ induces a map
$$\tr(\nabla_\SL): \xi \longrightarrow \xi\otimes \Omega_\SL^1,$$
defined as follows. For local sections  $s_1,\ldots,s_r$ of $E$,
$$\tr(\nabla_\SL)(s_1\wedge \ldots \wedge s_r)= \sum_{i=1}^r s_1\wedge \ldots \wedge \nabla_\SL(s_i) \wedge \ldots \wedge s_r.$$

Observe that if $v_1,\ldots,v_r$ is a local trivialising basis of $E$ over some open subset of $X$, then if we write $\nabla_\SL$ in that basis as $\nabla_\SL=d_\SL+G$ with $G=(g_{ij})$, we get
$$\tr(\nabla_\SL)(v_1\wedge \ldots \wedge v_r) = \sum_{i=1}^r v_1\wedge \ldots \wedge Gv_i \wedge \ldots \wedge v_r = \sum_{i=1}^r v_1\wedge \ldots \wedge g_{ii} v_i \wedge \ldots \wedge v_r = \tr(G) v_1\wedge \ldots \wedge v_r,$$
justifying the notation ``$\tr(\nabla_\SL)$''. 

\begin{lemma}
\label{lemma:traceMap}
Let $X$ be a smooth projective curve and let $\SL$ be a Lie algebroid on $X$. Let $(E,\nabla_\SL)$ be an integrable $\SL$-connection with $\det(E)\cong \xi$. Then $\tr(\nabla_\SL)$ is an integrable $\SL$-connection on $\xi$. 
\end{lemma}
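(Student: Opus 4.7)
The plan is to verify three claims: (i) the map $\tr(\nabla_\SL):\xi\to\xi\otimes\Omega_\SL^1$ is a well-defined $\CC$-linear map of sheaves; (ii) it satisfies the $\SL$-Leibniz rule of Definition \ref{def:L-connect}; and (iii) its curvature vanishes.

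For (i) and (ii), I would work in a local trivialization of $E$ over an open set $U\subset X$. Write $\nabla_\SL|_U=d_\SL+G$ with $G\in\End(E)(U)\otimes\Omega_\SL^1(U)$ the local connection matrix. The computation in the paragraph just preceding the lemma shows that the induced map on $\det(E)$ is locally $d_\SL+\tr(G)$. To see this descends to a globally defined connection, I would verify the transformation rule: under a change of frame $g\in\GL_r(\SO_X(U))$, the matrix $G$ transforms as $G\mapsto gGg^{-1}+g\,d_\SL(g^{-1})$, so $\tr(G)\mapsto \tr(G)+\tr(g\,d_\SL(g^{-1}))=\tr(G)-d_\SL(\log\det g)$, which is precisely the transition rule for the local connection $1$-form of a connection on the line bundle $\xi=\det(E)$. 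The $\SL$-Leibniz rule for $\tr(\nabla_\SL)$ then follows at once from the Leibniz rule for $\nabla_\SL$: given local sections $s_1,\ldots,s_r$ of $E$ and $f\in\SO_X$, expanding $\tr(\nabla_\SL)(f\,s_1\wedge\cdots\wedge s_r)=\tr(\nabla_\SL)((fs_1)\wedge s_2\wedge\cdots\wedge s_r)$ using Definition \ref{def:L-connect} for $\nabla_\SL$ separates into $f\cdot\tr(\nabla_\SL)(s_1\wedge\cdots\wedge s_r)$ plus the correction $s_1\wedge\cdots\wedge s_r\otimes d_\SL(f)$.

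For (iii), the key identity is that for any matrix-valued $1$-form $G=(G_{ij})$ with entries in $\Omega_\SL^1$, one has $\tr(G\wedge G)=\sum_{i,j}G_{ij}\wedge G_{ji}=0$ in $\Omega_\SL^2$. This follows because swapping indices $i\leftrightarrow j$ and using anticommutativity of the wedge of two $1$-forms in $\Omega_\SL^1=V^*$ shows the sum equals its own negative. Since the curvature of $\nabla_\SL$ is locally $\nabla_\SL^2=d_\SL G+G\wedge G$ and the curvature of the induced connection $\tr(\nabla_\SL)$ is locally $d_\SL(\tr G)+\tr(G)\wedge\tr(G)=d_\SL\tr(G)$ (the wedge square vanishes because $\tr(G)\in\Omega_\SL^1$), we obtain $\mathrm{curv}(\tr(\nabla_\SL))=\tr(d_\SL G)=\tr(d_\SL G+G\wedge G)=\tr(\nabla_\SL^2)$, which is zero by the assumed integrability of $\nabla_\SL$.

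There is no serious obstacle here, but the only step requiring actual care is the well-definedness in (i); an alternative, basis-free route would be to use the equivalence \eqref{eq:intL-conn-repsinDE} and define the induced Lie algebroid representation on $\det E=\Lambda^rE$ directly by the Leibniz extension $\nabla^{\det}_{\SL,v}(s_1\wedge\cdots\wedge s_r)=\sum_i s_1\wedge\cdots\wedge\nabla_{\SL,v}(s_i)\wedge\cdots\wedge s_r$, and then verify that $v\mapsto \nabla^{\det}_{\SL,v}$ is a Lie algebroid map $\SL\to\SD(\det E)$, from which integrability is automatic.
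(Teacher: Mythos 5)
Your proposal is correct and takes essentially the same route as the paper's proof: the Leibniz rule is checked by expanding $\tr(\nabla_\SL)$ on decomposable wedges exactly as in the paper, and flatness comes from the same local computation $\mathrm{curv}(\tr(\nabla_\SL))=d_\SL(\tr G)=\tr(\nabla_\SL^2)=0$, where your index-swap argument for $\tr(G\wedge G)=0$ is just a reindexed version of the paper's observation that the traces of the commutators $[G_i,G_j]$ vanish. The gluing check in (i) and the basis-free alternative via representations \eqref{eq:intL-conn-repsinDE} are fine additions but do not change the substance of the argument.
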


\begin{proof}
It is clear by construction that $\tr(\nabla_\SL)$ is $\CC$-linear, so we need to prove that it satisfies the Leibniz rule and that it is integrable. Let $s_1,\ldots s_r$ be local sections $E$ and $f$ a local algebraic function on $X$. Then, for each $j=1,\ldots,r$, we have
\begin{equation*}
\begin{split}
\tr(\nabla_\SL) (s_1\wedge \ldots \wedge f s_j \wedge \ldots \wedge s_r) &= \sum_{i\ne j} s_1\wedge \ldots \wedge \nabla_\SL(s_i)\wedge \ldots \wedge fs_j \wedge \ldots \wedge s_r\\
& \ \ \ + s_1\wedge \ldots \wedge f \nabla_\SL(s_j) \wedge \ldots \wedge s_r + s_1 \wedge \ldots \wedge s_j \otimes d_\SL(f) \wedge \ldots \wedge s_r\\
&=f \sum_{i=1}^r s_1\wedge \ldots \wedge \nabla_\SL(s_i) \wedge \ldots \wedge s_r + s_1\wedge \ldots \wedge s_r \otimes d_\SL(f) \\
&= f\tr(\nabla_\SL)(s_1\wedge \ldots \wedge s_r) + s_1\wedge \ldots \wedge s_r \otimes d_\SL(f),
\end{split}
\end{equation*}
so $(\xi,\tr(\nabla_\SL))$ is an $\SL$-connection. 

Let us now prove that it is integrable. Suppose $\SL=(V,[\cdot\,,\cdot],\delta)$, with $\rk(V)=k$. We will prove it via a local representation of $\tr(\nabla_\SL)$. Let $U\subset X$ be an open subset such that $E$ and $V$ are trivial bundles over $U$. Write $\nabla_\SL$ locally over $U$ as $\nabla_\SL=d_\SL+G$, where $G$ is an $V^*$-valued $r\times r$ matrix. Let $w_1,\ldots,w_k$ be a trivialising basis of $V^*$ over $U$. Then we can write
$$G=\sum_{i=1}^k G_i \otimes w_i$$
where $G_i$ is an $\SO_X(U)$-valued matrix. Now, we have that, over $U$, the curvature of $\nabla_\SL$ translates into
\begin{equation}
\label{eq:traceMap1}
\nabla_\SL^2= d_\SL+G\wedge G  = d_\SL(G)+ \sum_{i,j=1}^k G_iG_j \otimes w_i\wedge w_j = d_\SL(G) + \sum_{i<j} [G_i,G_j] \otimes w_i\wedge w_j,
\end{equation}
and, since $\tr(\nabla_\SL)= d_\SL+\tr(G)=  d_\SL+\sum_{i=1}^k \tr(G_i) \otimes w_i$, 
$$\tr(\nabla_\SL)^2 =  d_\SL(\tr(G))+ \sum_{i<j} (\tr(G_i)\tr(G_j)-\tr(G_j)\tr(G_i)) \otimes w_i\wedge w_j=d_\SL(\tr(G)) = \tr(d_\SL(G)).$$
From the integrability of $\nabla_\SL$, it follows from \eqref{eq:traceMap1} that $d_\SL(G) = -\sum_{i<j} [G_i,G_j] \otimes w_i\wedge w_j$, hence
$$\tr(d_\SL(G)) = -\sum_{i<j} \tr([G_i,G_j]) \otimes w_i\wedge w_j=0,$$
proving that $(\xi,\tr(\nabla_\SL))$ is integrable.
\end{proof}

Let $(E,\nabla_\SL)\in \SM_{\SL}(r,d)$. As $\xi$ is a line bundle,  $(\xi,\tr(\nabla_\SL))$ is automatically stable, so $(\xi,\tr(\nabla_\SL))\in \SM_{\SL}(1,d)$. As the determinant construction can be clearly done in families, then it defines the following map,
$$\det: \SM_{\SL}(r,d)\longrightarrow \SM_{\SL}(1,d),\ \ \ \det(E,\nabla_\SL)=(\det(E),\tr(\nabla_\SL)).$$

Let $(\xi,\delta)\in \SM_{\SL}(1,d)$ be an integrable $\SL$-connection of rank $1$ and degree $d$. Define
$$\SM_{\SL}(r,\xi,\delta) = {\det}^{-1}(\xi,\delta) \subset \SM_{\SL}(r,d)$$
as the moduli space of $\SL$-connections with fixed determinant $(\xi,\delta)$.

For example, if $\SL=(L,0,0)$is trivial, with $L$ a line bundle, then
$$\SM_{(L,0,0)}(r,\xi,0) = \SM_{L^{-1}}^{\Dol}(r,\xi).$$
If $\ST_X$ is the canonical Lie algebroid on $X$, then $\SM_{\ST_X}(r,\SO_X,0)$ is the moduli space of $\SSL(r,\CC)$-connections on $X$.

The determinant map extends to the $\SL$-Hodge moduli space, obtaining a map
$$\det: \SM_{\SL}^{\Hod}(r,d) \longrightarrow \SM_{\SL}^{\Hod}(1,d),$$
over $\CC$, by taking $\det(E,\nabla_\SL,\lambda) = (\det(E),\tr(\nabla_\SL),\lambda)$. Moreover, this map is $\CC^*$-equivariant for action \eqref{eq:C*-action-LHodgemod}. For each $(\xi,\delta,\lambda)\in \SM_{\SL}^{\Hod}(1,d)$, define
$$\SM_{\SL}^{\Hod}(r,\xi,\delta) = {\det}^{-1}(\CC \cdot (\xi,\delta,\lambda))$$
as the \emph{$\SL$-Hodge moduli space with fixed determinant} $(\xi,\delta)$. Here $\CC \cdot (\xi,\delta,\lambda)$ denotes the closure of the $\CC^*$-orbit of $(\xi,\delta,\lambda)$ in $ \SM_{\SL}^{\Hod}(1,d)$ which, since $\xi$ is a line bundle, is just the set of elements of the form $(\xi,t\delta,t\lambda)$, with $t\in\CC$. Then $\SM_{\SL}^{\Hod}(r,\xi,\delta)$ is clearly a $\CC^*$-invariant closed subvariety of the $\SL$-Hodge moduli space $\SM_{\SL}^{\Hod}(r,d)$ and if $\pi: \SM_{\SL}^{\Hod}(r,\xi,\delta) \to \CC$ is the restriction of the map \eqref{eq:project->lambda}, we have
\begin{itemize}
\item $\pi^{-1}(0) \cong \SM_{L^{-1}}^{\Dol}(r,\xi)$;
\item $\pi^{-1}(1) \cong \SM_{\SL}(r,\xi,\delta)$;
\item $\pi^{-1}(\CC^*)\cong \SM_{\SL}(r,\xi,\delta)\times \CC^*$.
\end{itemize}

The deformation theory for this moduli space is very similar to the deformation for the moduli space of $\SL$-connections with fixed degree computed in \cite[Theorem 47]{Tor11}.

\begin{lemma}
\label{lemma:deformationSL}
The Zariski tangent space to the moduli space $\SM_{\SL}(r,\xi,\delta)$ at a stable point $(E,\nabla_\SL)$ is isomorphic to $\HH^1(C_0^\bullet(E,\nabla_\SL))$,  where $C_0^\bullet(E,\nabla_\SL)$ is the complex
$$C_0^\bullet(E,\nabla_\SL) \, : \, \End_0(E) \stackrel{[-,\nabla_\SL]}{\longrightarrow} \End_0(E)\otimes \Omega_\SL^1\stackrel{[-,\nabla_\SL]}{\longrightarrow} \ldots \stackrel{[-,\nabla_\SL]}{\longrightarrow} \End_0(E)\otimes \Omega_\SL^{\rk(\SL)},$$
and the obstruction for the deformation theory lies in $\HH^2(C_0^\bullet(E,\nabla_\SL))$.
\end{lemma}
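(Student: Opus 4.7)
The plan is to adapt the deformation-theoretic argument of \cite[Theorem 47]{Tor11} for $\SM_{\Lambda_\SL}(r,d)$ by tracking the effect of the determinant/trace on the relevant deformation complexes. The starting observation is that the trace, combined with the construction of Lemma \ref{lemma:traceMap}, induces a surjective morphism of complexes
$$\tr: C^\bullet(E,\nabla_\SL) \longrightarrow C^\bullet(\det E,\tr\nabla_\SL),$$
where the target is $\SO_X \xrightarrow{d_\SL} \Omega_\SL^1 \xrightarrow{d_\SL} \cdots \xrightarrow{d_\SL} \Omega_\SL^{\rk(\SL)}$. Indeed, writing $\nabla_\SL$ locally as $d_\SL+G$, for any local section $A$ of $\End(E)$ we have $[A,\nabla_\SL] = d_\SL A + [A,G]$, and since trace vanishes on commutators, $\tr[A,\nabla_\SL]=d_\SL\tr(A)$, with the analogous identity at higher degrees. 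Surjectivity is clear as $\tr(f\cdot \id_E / r)=f$, and the kernel is exactly $C_0^\bullet(E,\nabla_\SL)$. We thus obtain a short exact sequence of complexes
$$0 \longrightarrow C_0^\bullet(E,\nabla_\SL) \longrightarrow C^\bullet(E,\nabla_\SL) \stackrel{\tr}{\longrightarrow} C^\bullet(\det E,\tr\nabla_\SL) \longrightarrow 0.$$

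Next, the determinant map $\det:\SM_{\Lambda_\SL}(r,d)\to \SM_{\Lambda_\SL}(1,d)$ induces, on the Zariski tangent spaces described by \cite[Theorem 47]{Tor11}, precisely the map $\HH^1(\tr)$. Since $\SM_{\Lambda_\SL}(r,\xi,\delta)$ is the fiber $\det^{-1}(\xi,\delta)$, its tangent space at $(E,\nabla_\SL)$ is $\ker \HH^1(\tr)$. The long exact hypercohomology sequence associated to the above short exact sequence of complexes contains
$$\HH^0(C^\bullet(E,\nabla_\SL)) \stackrel{\tr}{\to} \HH^0(C^\bullet(\det E,\tr\nabla_\SL)) \to \HH^1(C_0^\bullet) \to \HH^1(C^\bullet) \stackrel{\HH^1(\tr)}{\to} \HH^1(C^\bullet(\det E,\tr\nabla_\SL)).$$
Since $(r,d)=1$ forces $(E,\nabla_\SL)$ to be stable, Lemma \ref{lemma:stableHom} gives $\HH^0(C^\bullet(E,\nabla_\SL))=\CC\cdot\id_E$, while $\HH^0(C^\bullet(\det E,\tr\nabla_\SL))=\CC$; the trace sends $\id_E \mapsto r \in \CC^*$, so the first arrow is surjective. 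The connecting map therefore vanishes, yielding the desired isomorphism
$$T_{(E,\nabla_\SL)}\SM_{\Lambda_\SL}(r,\xi,\delta) \cong \HH^1(C_0^\bullet(E,\nabla_\SL)).$$

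For the obstruction theory, the standard argument gives that the obstruction for an infinitesimal deformation to extend to $(E,\nabla_\SL)$ lies in $\HH^2(C^\bullet(E,\nabla_\SL))$, and functoriality of the obstruction with respect to $\tr$ means that any deformation which fixes the determinant $(\xi,\delta)$ must have obstruction image zero in $\HH^2(C^\bullet(\det E,\tr\nabla_\SL))$. By the long exact sequence again, such obstructions factor through $\HH^2(C_0^\bullet(E,\nabla_\SL))$, giving the claim. The main step is the verification that $\tr$ really is a morphism of deformation complexes in arbitrary Lie algebroid rank (a direct local computation using the Chevalley--Eilenberg--de Rham differential and $\tr[-,-]=0$), together with the vanishing of the connecting map in the $\HH^0$--$\HH^1$ portion of the long exact sequence, which is where coprimality of $r$ and $d$ enters crucially via stability.
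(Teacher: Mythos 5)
Your argument is correct in substance, but it takes a genuinely different route from the paper's. The paper works entirely in \v{C}ech terms: following the setup of Lemma \ref{lemma:HodgeTangent}, it writes a first-order deformation through its transition functions $g_{\alpha\beta}+\varepsilon g^1_{\alpha\beta}$ and local connection matrices $G_\alpha+\varepsilon G^1_\alpha$, invokes \cite[Theorem 47]{Tor11} for the cocycle conditions in the unfixed-determinant case, and then checks by hand — a Cramer's-rule expansion of $\det(g_{\alpha\beta}+\varepsilon g^1_{\alpha\beta})$ and the linearization of the trace — that fixing $(\xi,\delta)$ is equivalent to $\tr(c)=0$ and $\tr(C)=0$, i.e.\ to the deformation cocycles lying in the trace-free complex $C_0^\bullet$. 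You instead package the same trace identities into the short exact sequence of complexes $0\to C_0^\bullet(E,\nabla_\SL)\to C^\bullet(E,\nabla_\SL)\xrightarrow{\tr} C^\bullet(\det E,\tr\nabla_\SL)\to 0$ (valid in any algebroid rank, since $\tr[-,-]=0$) and read the tangent space of the fibre of $\det$ off the long exact hypercohomology sequence as $\ker\HH^1(\tr)\cong\HH^1(C_0^\bullet)$. Your route is more structural and reusable — it records the relation between all the hypercohomology groups at once — while the paper's computation is precisely what substantiates the one step you assert without proof, namely that the differential of $\det$ on tangent spaces is $\HH^1(\tr)$: the identity $\det(I+\varepsilon A)=1+\varepsilon\tr(A)$ is exactly the paper's Cramer's-rule step, so that verification should be spelled out rather than taken for granted.

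Two smaller corrections. First, your appeal to stability (hence coprimality of $r$ and $d$) for the surjectivity of $\HH^0(\tr)$ is unnecessary and the closing claim that coprimality ``enters crucially'' here is wrong: $\id_E$ always lies in $\HH^0(C^\bullet(E,\nabla_\SL))$ and $\tr(\id_E)=r\neq 0$ in characteristic zero, so the connecting map vanishes with no hypothesis at all — consistent with the lemma carrying no coprimality assumption (stability is used only later, in Proposition \ref{thm:semiprojectiveHodgeSL}, to kill $\HH^0(C_0^\bullet)$ via Lemma \ref{lemma:stableHom}). Second, your obstruction argument only shows that the obstruction class of a fixed-determinant deformation, viewed in the ambient functor, lifts to $\HH^2(C_0^\bullet)$; to obtain an obstruction theory for the fixed-determinant functor itself, the cleanest fix is to observe that in characteristic zero the decomposition $\End(E)=\End_0(E)\oplus\SO_X\cdot\id_E$ splits your exact sequence of complexes, $C^\bullet\cong C_0^\bullet\oplus(\Omega_\SL^\bullet,d_\SL)$, so deformation data and obstructions decompose canonically and the trace-free summand governs the fixed-determinant problem; this also makes the tangent-space identification immediate.
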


\begin{proof}
The deformations of $\SM_{\SL}(r,\xi,\delta)$ are precisely the deformations of $\SM_{\SL}(r,d)$ which preserve the determinant and trace. Following the same notation as the one used in Lemma \ref{lemma:HodgeTangent}, let $\SU=\{U_\alpha\}$ be a covering of $X$ such that $E$ is trivial over $U_\alpha$. Fix a trivialization of $E$ over $\SU$ and for each $\alpha$ and $\beta$, let $g_{\alpha\beta}:U_{\alpha\beta} \to \GL(r,\CC)$ be the transition functions for $E$ and let $\nabla_{\SL,\alpha}= d_\SL + G_\alpha$ be the local representation of $\nabla_\SL$ over $U_\alpha$.

Let $(E',\nabla_\SL')$ be a deformation of $(E,\nabla_\SL)$ over $X\times \Spec(\CC[\varepsilon]/\varepsilon^2)$ such that the transition functions of $E'$ are
$$g_{\alpha\beta}'=g_{\alpha\beta}+\varepsilon g_{\alpha\beta}^1\ \ \ \text{ and }\ \ \ \nabla_{\SL,\alpha}' = \varepsilon d_\SL + G_\alpha + \varepsilon G_\alpha^1.$$
By \cite[Theorem 47]{Tor11}, $g_{\alpha\beta}'$ and $G_\alpha^1$ correspond to a deformation of $(E,\nabla_\SL)$ in $\SM_{\SL}(r,d)$ if and only if the cocycles $c\in C^1(\SU,\End(E))$ and $C\in C^0(\SU,\End(E)\otimes \Omega_\SL^1)$ defined by $c_{\alpha\beta}^{(\alpha)} = g_{\alpha\beta}^1 g_{\beta\alpha}$ and $C_\alpha^{(\alpha)} = G_\alpha^1$ satisfy $\partial c= 0$, $\partial C = \tilde{\nabla}_\SL c$ and $\tilde{\nabla}_\SL C =0$.

We will prove that $(c,C)$ defines a deformation of $(E,\nabla_\SL)$ in $\SM_{\SL}(r,\xi,\delta)$ if and only if $(c,C)\in C^1(\SU,\End_0(E)) \times C^0(\SU,\End_0(E)\otimes \Omega_\SL^1)$. 
We have $\tr(\nabla_\SL')=\delta=\tr(\nabla_\SL)$ if and only if
$$\tr(G_\alpha)=\tr(G_\alpha^1)=\tr(G_\alpha)+\varepsilon \tr(G_\alpha^1),$$
so $\tr(G_\alpha^1)=0$ and, therefore, $C\in C^1(\SU,\End_0(E)\otimes \Omega_\SL^1)$.
On the other hand, $\det(E')=\xi$ if and only if
$$\det(g_{\alpha\beta})=\det(g_{\alpha\beta}^1) = \det(g_{\alpha\beta}+\varepsilon g_{\alpha\beta}^1 )\\
=\det(g_{\alpha\beta}) + \varepsilon \sum_{i=1}^r \det \left ( (g_{\alpha\beta})_1 | \cdots | (g_{\alpha\beta}^1)_i | \cdots | (g_{\alpha\beta})_r \right),$$
since $\varepsilon^2=0$. Here, $( (g_{\alpha\beta})_1 | \cdots | (g_{\alpha\beta}^1)_i | \cdots | (g_{\alpha\beta})_r )$ denotes the $r\times r$ matrix whose  $i$-th column equals the $i$-th column of $g_{\alpha\beta}^1$ and the other columns are the corresponding ones of $g_{\alpha\beta}$. Set
$$D_i=\det \left ( (g_{\alpha\beta})_1 | \cdots | (g_{\alpha\beta}^1)_i | \cdots (g_{\alpha\beta})_r \right)$$ so that $\det(E')=\xi$ if and only if $\sum_{i=1}^r D_i=0$.
 Let $A$ be such that $g_{\alpha\beta} A = g_{\alpha\beta}^1$.
By Cramer's rule, $D_i = A_{ii} \det(g_{\alpha\beta})$, thus 
$$\sum_{i=1}^r D_i= \det(g_{\alpha\beta}) \tr(A) = \det(g_{\alpha\beta}) \tr( g_{\alpha\beta}^{-1}g_{\alpha\beta}^1)= \det(g_{\alpha\beta}) \tr(c_{\alpha\beta}^{(\alpha)}),$$
and so $\det(E')=\xi$ if and only if $\tr(c)=0$, i.e., if $c\in C^1(\SU,\End_0(E))$.

The rest of the proof is exactly the same as the one of \cite[Theorem 47]{Tor11}, noting that the Zariski tangent space is then $\HH^1(C_0^\bullet(E,\nabla_\SL))/\Aut(E,\nabla_\SL)=\HH^1(C_0^\bullet(E,\nabla_\SL))$ since $\Aut(E,\nabla_\SL)$ is trivial by stability of $(E,\nabla_\SL)$ (the scalar automorphisms that appeared at the end of the proof of Lemma \ref{lemma:HodgeTangent} do not appear in this case because the determinant of $E$ is fixed).
\end{proof}

\begin{proposition}
\label{thm:semiprojectiveHodgeSL}
Let $X$ be a smooth projective curve of genus $g\ge 2$. Let $\SL$ be a Lie algebroid with $\rk(\SL)=1$ and $\deg(\SL)<2-2g$. Take $r$ and $d$ is coprime. Then, for each $(\xi,\delta)\in \SM_{\SL}(1,d)$, the moduli space $\SM_{\SL}^{\Hod}(r,\xi,\delta)$ is a smooth semiprojective variety for the $\CC^*$-action $t\cdot (E,\nabla_\SL,\lambda)=(E,t\nabla_\SL,t\lambda)$. Furthermore, the map $\pi:\SM_{\SL}^{\Hod}(r,\xi,\delta)\to\CC$, $\pi(E,\nabla_\SL,\lambda)=\lambda$ is a surjective submersion and $\SM_{\SL}(r,\xi,\delta)$ is a smooth variety of dimension $\deg(\SL)(1-r^2)$.
\end{proposition}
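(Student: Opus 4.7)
The plan is to mirror the arguments used in Section \ref{section:semiprojectivityHodge} (Lemmas \ref{lemma:smoothHodgeModuli}--Theorem \ref{thm:semiprojectiveHodge}), transferring them to the fixed-determinant setting by systematically replacing the complex $C^\bullet(E,\nabla_\SL)$ with its traceless analogue $C_0^\bullet(E,\nabla_\SL)$ of Lemma \ref{lemma:deformationSL}. The determinant map $\det:\SM_{\Lambda_\SL^{\op{red}}}(r,d)\to\SM_{\Lambda_\SL^{\op{red}}}(1,d)$ is $\CC^*$-equivariant, so $\SM_{\Lambda_\SL^{\op{red}}}(r,\xi,\delta)$ is a closed $\CC^*$-invariant subvariety of $\SM_{\Lambda_\SL^{\op{red}}}(r,d)$.

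Semiprojectivity is essentially automatic: since $\SM_{\Lambda_\SL^{\op{red}}}(r,\xi,\delta)$ is closed and $\CC^*$-invariant inside the semiprojective variety $\SM_{\Lambda_\SL^{\op{red}}}(r,d)$ (Theorem \ref{thm:semiprojectiveHodge}), the existence of limits $\lim_{t\to 0}t\cdot(E,\nabla_\SL,\lambda)$ follows from Lemma \ref{lemma:limitExists} and closedness, and the fixed-point locus is proper as a closed subscheme of the (proper) fixed-point locus of the ambient moduli space.

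For smoothness, I would first verify the dimension at points of $\pi^{-1}(0)\cong\SM_{L^{-1}}(r,\xi)$: this moduli space is smooth of dimension $(r^2-1)(-\deg(L))=\deg(\SL)(1-r^2)$ by \cite[Theorem 1.2]{BGL11}, and its tangent space at $(E,\nabla_\SL)$ is $\HH^1$ of the traceless Higgs complex. Then, following the argument of Lemma \ref{lemma:smoothHodgeModuli}, for any $(E,\nabla_\SL)\in\SM_{\Lambda_\SL}(r,\xi,\delta)$ I would use the section $\gamma:\CC\to\SM_{\Lambda_\SL^{\op{red}}}(r,\xi,\delta)$, $t\mapsto(E,t\nabla_\SL,t)$, and the upper-semicontinuity of the map $\rho(\lambda)=\dim(\gamma^*T\SM_{\Lambda_\SL^{\op{red}}}(r,\xi,\delta))|_\lambda$. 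Since $(r,d)=1$ every semistable object is stable, so by Lemma \ref{lemma:deformationSL}
$$\dim T_{(E,\nabla_\SL)}\SM_{\Lambda_\SL}(r,\xi,\delta)=-\chi(C_0^\bullet(E,\nabla_\SL))+\dim\HH^2(C_0^\bullet(E,\nabla_\SL))=\deg(\SL)(1-r^2)+\dim\HH^2(C_0^\bullet).$$
Combining the lower bound $\rho(0)\geq \deg(\SL)(1-r^2)+1+\dim\HH^2(C_0^\bullet)$ with the upper bound $\rho(0)\leq\dim\ker(d\pi)+1\leq \deg(\SL)(1-r^2)+1$ obtained from the traceless analogue of Lemma \ref{lemma:HodgeTangent} together with smoothness of $\SM_{L^{-1}}(r,\xi)$, I conclude $\HH^2(C_0^\bullet(E,\nabla_\SL))=0$ for all $(E,\nabla_\SL)$, so deformations are unobstructed and $\SM_{\Lambda_\SL}(r,\xi,\delta)$ is smooth of the stated dimension.

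Finally, the fact that $\pi$ is a surjective submersion is obtained exactly as in part (2) of Lemma \ref{lemma:smoothHodgeModuli}: the locus $\pi^{-1}(\CC^*)\cong\SM_{\Lambda_\SL}(r,\xi,\delta)\times\CC^*$ is smooth of dimension $\deg(\SL)(1-r^2)+1$, its closure meets $\pi^{-1}(0)$, and since $\pi^{-1}(0)\cong\SM_{L^{-1}}(r,\xi)$ is smooth, connected, and of dimension $\deg(\SL)(1-r^2)$, semicontinuity forces $\overline{\pi^{-1}(\CC^*)}=\SM_{\Lambda_\SL^{\op{red}}}(r,\xi,\delta)$, equidimensionality of the Zariski tangent space, and surjectivity of $d\pi$ on the zero fiber. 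The only step that requires genuine (rather than formal) work is adapting the explicit \v{C}ech-cocycle computation of Lemma \ref{lemma:HodgeTangent} to the traceless setting, but this is straightforward since the conditions $\tr(c)=0$ and $\tr(C)=0$ (together with $\tr(\eta)=0$ in the equivalence relation) are preserved by all the differentials and brackets appearing in the argument, as already verified in Lemma \ref{lemma:deformationSL}.
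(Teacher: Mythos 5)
Your proposal is correct and takes essentially the same route as the paper: the paper's proof likewise transfers Lemmas \ref{lemma:dimensionTangent}, \ref{lemma:HodgeTangent} and \ref{lemma:smoothHodgeModuli} to the traceless complex $C_0^\bullet(E,\nabla_\SL)$ of Lemma \ref{lemma:deformationSL}, using point (3) of Lemma \ref{lemma:stableHom} to get $\HH^0(C_0^\bullet(E,\nabla_\SL))=0$ and identifying $\ker d\pi$ with $T_{(E,\nabla_\SL)}\SM_{L^{-1}}(r,\xi)$ exactly as you do. One cosmetic correction: in the full tangent space at a point of the zero fiber the trace condition reads $\tr(C)=\lambda_\varepsilon\delta$ rather than $\tr(C)=0$, but since you only use the traceless conditions in computing $\ker d\pi$, where $\lambda_\varepsilon=0$, your argument is unaffected.
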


\begin{proof}
The argument is exactly the same as the one carried on in section \ref{section:semiprojectivityHodge}. The only difference is that the computation of the dimension of the tangent bundle done in Lemma \ref{lemma:dimensionTangent} now becomes
$$\dim T_{(E,\nabla_\SL)} \SM_{\SL}(r,\xi,\delta) = \deg(\SL)(1-r^2)+\dim \left( \HH^2(C_0^\bullet(E,\nabla_\SL)) \right).$$
Notice that here we have to take trace-free endomorphisms, hence by point (3) of Lemma \ref{lemma:stableHom}, $\HH^0(C_0^\bullet(E,\nabla_\SL))=0$.
Taking into account Lemma \ref{lemma:deformationSL}, the deformation theory computed in Lemma \ref{lemma:HodgeTangent} becomes now
$$T_{(E,\nabla_\SL,0)}\SM_{\SL}^{\Hod}(r,\xi,\delta) \cong \frac{\left \{(c,C,\lambda_\varepsilon) \in \left( \begin{array}{l}
C^1(\SU,\End(E)) \times \\
C^0 (\SU,\End(E)\otimes \Omega_\SL) \times \CC 
\end{array} \right) \middle | \begin{array}{l}
\partial c= 0\\
\partial C = \tilde{\nabla}_\SL c + \lambda_\varepsilon \omega\\
\tilde{\nabla}_\SL C = -\lambda_\varepsilon d_\SL(\nabla_\SL)\\
\tr(c)=0\\
\tr(C)=\lambda_\varepsilon \delta
\end{array} \right \}}{ \left \{ (\partial \eta ,\tilde{\nabla}_\SL \eta, 0) \, \middle | \,  \eta\in C^0(\SU,\End_0(E)) \right \}}$$
with $d\pi([(c,C,\lambda_\varepsilon)])=\lambda_\varepsilon$. Then, clearly
$$\ker d\pi \cong  \frac{\left \{(c,C,0) \in \left( \begin{array}{l}
C^1(\SU,\End_0(E)) \times \\
C^0 (\SU,\End_0(E)\otimes \Omega_\SL) \times \CC 
\end{array} \right) \middle | \begin{array}{l}
\partial c= 0\\
\partial C = \tilde{\nabla}_\SL c + \lambda_\varepsilon \omega\\
\tilde{\nabla}_\SL C = -\lambda_\varepsilon d_\SL(\nabla_\SL)
\end{array} \right \}}{ \left \{ (\partial \eta ,\tilde{\nabla}_\SL \eta, 0) \, \middle | \,  \eta\in C^0(\SU,\End_0(E)) \right \}} \cong T_{(E,\nabla_\SL)} \SM_{L^{-1}}^{\Dol}(r,\xi)$$
and the proof proceeds exactly as in Lemma \ref{lemma:smoothHodgeModuli} and Theorem \ref{thm:semiprojectiveHodge}.
\end{proof}

\begin{theorem}
\label{thm:equalMotiveHodgeSL}
Let $\SL=(L,[\cdot\,,\cdot],\delta)$ be Lie algebroid on $X$ such that $L$ is a line bundle with $\deg(L)<2-2g$. If $r$ and $d$ are coprime, then, for each $(\xi,\delta)\in \SM_{\SL}(1,d)$, we have
$$[\SM_{\SL}(r,\xi,\delta)] = [\SM_{L^{-1}}^{\Dol}(r,\xi,\delta)],\ \ \ [\SM_{\SL}^{\Hod}(r,\xi,\delta)] = \LL[\SM_{L^{-1}}^{\Dol}(r,\xi,\delta)]$$
and we have an isomorphism of Hodge structures
$$H^\bullet(\SM_{\SL}(r,\xi,\delta)) \cong H^\bullet(\SM_{L^{-1}}^{\Dol}(r,\xi,\delta))$$
In particular,
$$E(\SM_{\SL}(r,\xi,\delta)) = E(\SM_{L^{-1}}^{\Dol}(r,\xi,\delta)),\ \ \ E(\SM_{\SL}^{\Hod}(r,\xi,\delta)) = uv E(\SM_{L^{-1}}^{\Dol}(r,\xi,\delta)).$$
Moreover, both $\SM_{\SL}(r,\xi,\delta)$ and $\SM_{\SL}^{\Hod}(r,\xi,\delta)$ have pure mixed Hodge structures.
\end{theorem}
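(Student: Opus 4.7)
The plan is to mimic verbatim the strategy used for Theorem \ref{thm:equalMotiveHodge}, replacing the ambient $\SL$-Hodge moduli space by its fixed-determinant closed subvariety $\SM_{\Lambda_\SL^{\op{red}}}(r,\xi,\delta)$ and invoking Proposition \ref{thm:semiprojectiveHodgeSL} in place of Theorem \ref{thm:semiprojectiveHodge}. First I would verify that the restriction $\pi:\SM_{\Lambda_\SL^{\op{red}}}(r,\xi,\delta)\to\CC$ makes the source into a smooth semiprojective variety carrying a surjective $\CC^*$-equivariant submersion covering the standard scaling on $\CC$. Smoothness and surjective submersivity are precisely the content of Proposition \ref{thm:semiprojectiveHodgeSL}. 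Semiprojectivity is inherited from Theorem \ref{thm:semiprojectiveHodge}: since $\SM_{\Lambda_\SL^{\op{red}}}(r,\xi,\delta)$ is a $\CC^*$-invariant closed subvariety of the semiprojective $\SM_{\Lambda_\SL^{\op{red}}}(r,d)$, limits under the $\CC^*$-action continue to exist inside the closed subvariety, and the $\CC^*$-fixed locus is a closed subscheme of the fixed locus of the ambient moduli space, hence proper.

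Once semiprojectivity is in place, I would apply Proposition \ref{prop:semiprojectiveMotive} directly to $\pi:\SM_{\Lambda_\SL^{\op{red}}}(r,\xi,\delta)\to\CC$. Using the identifications $\pi^{-1}(0)\cong \SM_{L^{-1}}(r,\xi)$ and $\pi^{-1}(1)\cong \SM_{\Lambda_\SL}(r,\xi,\delta)$ established just before Lemma \ref{lemma:deformationSL}, this yields the two motivic equalities
\[
[\SM_{L^{-1}}(r,\xi,\delta)]=[\pi^{-1}(0)]=[\pi^{-1}(1)]=[\SM_{\Lambda_\SL}(r,\xi,\delta)]
\]
and
\[
[\SM_{\Lambda_\SL^{\op{red}}}(r,\xi,\delta)]=\LL\,[\pi^{-1}(0)]=\LL\,[\SM_{L^{-1}}(r,\xi,\delta)]
\]
in $\hat{K}(\VarC)$. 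Applying the ring homomorphism \eqref{E-poly} then gives the two $E$-polynomial identities.

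For the statements on mixed Hodge structures, I would invoke \cite[Corollary 1.3.3]{HRV15}, which says that for a smooth semiprojective variety equipped with a $\CC^*$-equivariant submersion to $\CC$, the fibers over $0$ and over $1$ have isomorphic cohomology supporting pure mixed Hodge structures; applied to our $\pi$ this gives $H^\bullet(\SM_{\Lambda_\SL}(r,\xi,\delta))\cong H^\bullet(\SM_{L^{-1}}(r,\xi,\delta))$ as pure Hodge structures, and in particular the purity of $\SM_{\Lambda_\SL}(r,\xi,\delta)$. The purity of the total space $\SM_{\Lambda_\SL^{\op{red}}}(r,\xi,\delta)$ follows from \cite[Corollary 1.3.2]{HRV15}, which asserts that the cohomology of any smooth semiprojective variety is pure.

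The only non-routine step is the first one, namely ensuring that the restriction of the ambient semiprojective structure to the closed subvariety genuinely behaves as required; but since the $\CC^*$-action preserves $\SM_{\Lambda_\SL^{\op{red}}}(r,\xi,\delta)$ and both defining conditions of Definition \ref{def:semiproj} are inherited by closed $\CC^*$-invariant subvarieties, this presents no real obstacle. Everything else is a direct transcription of the proof of Theorem \ref{thm:equalMotiveHodge}.
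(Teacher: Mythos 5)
Your proposal is correct and follows exactly the route the paper intends: the paper's own proof of Theorem \ref{thm:equalMotiveHodgeSL} simply states that it is completely analogous to Theorem \ref{thm:equalMotiveHodge}, i.e.\ apply Proposition \ref{prop:semiprojectiveMotive} to the map $\pi:\SM_{\Lambda_\SL^{\op{red}}}(r,\xi,\delta)\to\CC$, whose smoothness, semiprojectivity and surjective submersivity are supplied by Proposition \ref{thm:semiprojectiveHodgeSL}, and then invoke \cite[Corollaries 1.3.2 and 1.3.3]{HRV15} for the Hodge-theoretic statements. Your remark that semiprojectivity is inherited by the $\CC^*$-invariant closed subvariety is a correct (if slightly redundant, since Proposition \ref{thm:semiprojectiveHodgeSL} already asserts it) verification, and the rest matches the paper's argument step for step.
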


\begin{proof}
The proof is completely analogous to that of Theorem \ref{thm:equalMotiveHodge}. The details are left to the reader.
\end{proof}

Combining this result with Theorem \ref{thm:equalMotiveHiggsSL} and working analogously to Theorem \ref{thm:equalMotive-Chow-Voevodsky}, yields the fixed-determinant version of Theorems \ref{thm:equalMotive} and \ref{thm:equalMotive-Chow-Voevodsky}.

\begin{theorem}
\label{thm:equalMotiveSL}
Let $X$ be a smooth projective curve of genus $g\ge 2$ and let $\SL$ and $\SL'$ be any Lie algebroids on $X$ such that $\rk(\SL)=\rk(\SL')=1$ and $\deg(\SL)=\deg(\SL')<2-2g$. Suppose that $r$ and $d$ are coprime. Let $(\xi,\delta)\in \SM_{\SL}(1,d)$ and $(\xi',\delta')\in \SM_{\SL'}(1,d)$. Then
$$\SI(\SM_{\SL}(r,\xi,\delta)) = \SI(\SM_{\SL'}(r,\xi',\delta'))$$
where $\SI(X)$ denotes one of the following:
\begin{enumerate}
\item The motivic class $[X]\in \hat{K}(\VarC)$;
\item The Voevodsky motive $M(X)\in \op{DM}^{\op{eff}}(\CC,R)$ for any ring $R$. In this case, moreover, the motives are pure;
\item The Chow motive $h(X)\in \op{Chow}^{\op{eff}}(\CC,R)$ for any ring $R$;
\item The Chow ring $\op{CH}^\bullet(X,R)$ for any ring $R$.
\end{enumerate}
Moreover, the mixed Hodge structures of the moduli spaces are pure and if $d''$ is any integer coprime with $r$ and $(\xi'',\delta'')\in \SM_{\SL'}(1,d'')$, then 
$$E(\SM_{\SL}(r,\xi,\delta)) = E(\SM_{\SL'}(r,\xi'',\delta'')).$$
Finally, if $L=L'=T_X(-D)$ for some effective divisor $D$, then we have an actual isomorphism of pure mixed Hodge structures
$$H^\bullet(\SM_{\SL}(r,\xi,\delta)) \cong  H^\bullet(\SM_{\SL'}(r,\xi'',\delta'')).$$
\end{theorem}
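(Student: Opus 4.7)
The proof will follow the exact architecture of Theorems \ref{thm:equalMotive} and \ref{thm:equalMotive-Chow-Voevodsky}, but invoking the fixed-determinant analogues that have already been set up earlier in Section \ref{section:fixedDeterminant}. The key point is that all the technical infrastructure, i.e.\ semiprojectivity of the Hodge moduli (Proposition \ref{thm:semiprojectiveHodgeSL}), the motivic/Hodge-structure comparison between $\SL$-connections and twisted Higgs bundles (Theorem \ref{thm:equalMotiveHodgeSL}), and the invariance of the Higgs side with respect to the twisting (Theorem \ref{thm:equalMotiveHiggsSL}), is already in place.

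The plan for the motivic statement (1) is a two-step reduction: first, apply Theorem \ref{thm:equalMotiveHodgeSL} to both $\SL$ and $\SL'$ to get
\[
[\SM_{\Lambda_\SL}(r,\xi,\delta)] = [\SM_{L^{-1}}(r,\xi)]\quad\text{and}\quad [\SM_{\Lambda_{\SL'}}(r,\xi',\delta')] = [\SM_{(L')^{-1}}(r,\xi')],
\]
noting that the right-hand side depends only on the underlying Higgs-bundle moduli and not on the particular $\delta$, $\delta'$. Second, since $\deg(L^{-1}) = \deg((L')^{-1}) > 2g-2$ and $\deg(\xi) = \deg(\xi') = d$ is coprime to $r$, Theorem \ref{thm:equalMotiveHiggsSL} yields $[\SM_{L^{-1}}(r,\xi)] = [\SM_{(L')^{-1}}(r,\xi')]$ in $\hat{K}(\VarC)$, completing the chain.

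For the Voevodsky and Chow motives and the Chow rings (2)--(4), I would run the exact same two-step reduction but replace the use of the Bialynicki-Birula stratification in $\hat K(\VarC)$ with the refined motivic decompositions of \cite[Appendices A, B]{HL19} as in the proof of Theorem \ref{thm:equalMotive-Chow-Voevodsky}. Concretely, applying \cite[Theorem B.1 and Corollary B.2]{HL19} to the smooth semiprojective variety $\SM_{\Lambda_\SL^{\op{red}}}(r,\xi,\delta)$ together with the submersion $\pi$ onto $\CC$ from Proposition \ref{thm:semiprojectiveHodgeSL} yields isomorphisms
\[
M(\SM_{\Lambda_\SL}(r,\xi,\delta)) \cong M(\SM_{L^{-1}}(r,\xi)), \qquad \op{CH}^\bullet(\SM_{\Lambda_\SL}(r,\xi,\delta)) \cong \op{CH}^\bullet(\SM_{L^{-1}}(r,\xi)),
\]
and purity follows from \cite[Corollary A.5]{HL19} applied to the smooth semiprojective $\SM_{L^{-1}}(r,\xi)$. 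On the Higgs side, the Bialynicki-Birula stratification decomposes $M(\SM_{L^{-1}}(r,\xi))$ into a sum of shifted motives $M(\VHS_{L^{-1}}(\overline{r},\overline{d},\xi))\{N_{L^{-1},\overline{r},\overline{d},\xi}^-\}$; by Corollary \ref{cor:isomorphicVHSSL} and the Morse-index computation (which depends only on $\deg(L)$ and the discrete data, as in Lemma \ref{lemma:equalMorseIndex}), the corresponding summands for $L$ and $L'$ are term-by-term isomorphic.

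For the $E$-polynomial statement with possibly different degrees $d \ne d''$, the plan is to chain the invariance in $\hat K(\VarC)$ just established with the degree-invariance on the Higgs side: choosing $x_0 \in X$ and $m = \deg(L^{-1}) + 2-2g > 0$, the $\chi$-independence result of Maulik--Shen for fixed-determinant moduli \cite[Theorem 0.5]{MS20-endoscopy} (together with \cite[Corollary 7.17]{GWZ20}) gives
\[
E(\SM_{K_X(mx_0)}(r,\xi)) = E(\SM_{K_X(mx_0)}(r,\xi'')),
\]
which combined with the previously established equalities of $E$-polynomials for the same degree produces the desired equality $E(\SM_{\Lambda_\SL}(r,\xi,\delta)) = E(\SM_{\Lambda_{\SL'}}(r,\xi'',\delta''))$. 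Finally, for $L = L' = K(D)$, the isomorphism of pure mixed Hodge structures $H^\bullet(\SM_{\Lambda_\SL}(r,\xi,\delta)) \cong H^\bullet(\SM_{\Lambda_{\SL'}}(r,\xi'',\delta''))$ follows by composing the Hodge-structure isomorphism from Theorem \ref{thm:equalMotiveHodgeSL} with the Maulik--Shen fixed-determinant Hodge isomorphism of \cite[Theorem 0.5]{MS20-endoscopy}. I do not anticipate a real obstacle: the only careful bookkeeping needed is to verify that the fixed-determinant analogue of the Hoskins--Lehaleur decomposition applies verbatim to $\SM_{\Lambda_\SL^{\op{red}}}(r,\xi,\delta)$, which it does since that moduli space is smooth semiprojective with the required $\CC^*$-equivariant submersion to $\CC$.
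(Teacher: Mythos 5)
Your proposal is correct and follows essentially the same route as the paper's own (deliberately terse) proof, which just combines Theorem \ref{thm:equalMotiveHodgeSL} with Theorem \ref{thm:equalMotiveHiggsSL} and argues ``analogously to Theorem \ref{thm:equalMotive-Chow-Voevodsky}''. Your write-up merely makes explicit the same ingredients the paper intends: the two-step reduction through the fixed-determinant Hodge moduli of Proposition \ref{thm:semiprojectiveHodgeSL}, the term-by-term comparison of Bialynicki-Birula pieces via Corollary \ref{cor:isomorphicVHSSL} and the Morse-index invariance, the decompositions of \cite{HL19} for the Voevodsky/Chow statements, and \cite[Theorem 0.5]{MS20-endoscopy} (with \cite[Corollary 7.17]{GWZ20}) for the degree-varying $E$-polynomial and Hodge-structure claims.
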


Finally, we have the analogue of Theorem \ref{thm:homotopygrps} for the fixed determinant case. The proof is exactly the same, since the fixed determinant version of Lemma \ref{lemma:codimUnstable} still holds by the precise same arguments. The only difference is that now one has to use the homotopy groups of the moduli space $\mathrm{\mathbf{M}}(r,\xi)$ of vector bundles of fixed determinant $\xi$, hence use \cite[Theorem 3.2]{DU95}.

\begin{theorem}
\label{thm:homotopygrps-fixed-det}
Let $X$ be a smooth complex curve of genus $g\ge 2$ and let $\SL$ be an algebroid on $X$ such that $\rk(\SL)=1$ and $\deg(\SL)<2-2g$. Suppose that $r$ and $d$ are coprime and that $r\ge 2$. Let $(\xi,\delta)\in \SM_{\SL}(1,d)$. Then, $\SM_{\SL}(r,\xi,\delta)$ is connected, hence irreducible. If, moreover, $(r,g)\neq(2,2)$, then its higher homotopy groups are given as follows:
\begin{itemize}
\item $\pi_1(\SM_{\SL}(r,\xi,\delta))=0$;
\item $\pi_2(\SM_{\SL}(r,\xi,\delta))\cong \ZZ$;
\item $\pi_k(\SM_{\SL}(r,\xi,\delta))\cong \pi_{k-1}(\mathcal{G}(\mathbb{E}))$, for every $k=3,\ldots,2(g-1)(r-1)-2$.
\end{itemize} 
\end{theorem}

\bibliographystyle{alpha}
\bibliography{LieAlgebroidConnectionsMotive}

\newcommand{\etalchar}[1]{$^{#1}$}
\begin{thebibliography}{BCGP{\etalchar{+}}21}

\bibitem[{\'A}CGP01]{ACGP01}
Luis {\'A}lvarez-C{\'o}nsul and Oscar Garc{\'\i}a-Prada.
\newblock Dimensional reduction, {${\rm SL} (2, {\mathbb C})$}-equivariant
  bundles and stable holomorphic chains.
\newblock {\em International Journal of Mathematics}, 12(02):159--201, 2001.

\bibitem[Alf22]{A22}
David Alfaya.
\newblock {Simplification of $\lambda$-ring expressions in the Grothendieck
  ring of Chow motives}.
\newblock {\em Applicable Algebra in Engineering, Communication and Computing},
  2022.

\bibitem[Ati57]{A57}
Michael~F. Atiyah.
\newblock Complex analytic connections in fibre bundles.
\newblock {\em Trans. Amer. Math. Soc.}, 85:181--207, 1957.

\bibitem[BB73]{BB73}
A.~Bialynicki-Birula.
\newblock Some theorems on actions of algebraic groups.
\newblock {\em Annals of Mathematics}, 98(3):480--497, 1973.

\bibitem[BCGP{\etalchar{+}}21]{BCGGO21}
Steven~B. Bradlow, Brian Collier, Oscar Garc{\'{\i}}a-Prada, Peter~B. Gothen,
  and Andr\'{e} Oliveira.
\newblock A general {C}ayley correspondence and higher {T}eichm{\"u}ller
  components.
\newblock {\em arXiv:2101.09377}, 2021.

\bibitem[Ben10]{SB10}
Sandra Bento.
\newblock {\em Topologia do Espa{\c{c}}o Moduli de Fibrados de {Higgs}
  Torcidos}.
\newblock PhD thesis, Faculdade de Ci{\^e}ncias da Universidade do {Porto},
  2010.

\bibitem[BGL11]{BGL11}
I.~{Biswas}, P.~B. {Gothen}, and M.~{Logares}.
\newblock {On moduli spaces of Hitchin pairs}.
\newblock {\em Mathematical Proceedings of the {Cambridge} Philosophical
  Society}, 151:441--457, 2011.

\bibitem[BGM13]{BGM13}
I.~Biswas, T.~G\'omez, and V.~Mu{\~n}oz.
\newblock Automorphisms of moduli spaces of vector bundles over a curve.
\newblock {\em Expositiones Math.}, 31:73--86, 2013.

\bibitem[BGPG03]{BGG03}
Steven~B. Bradlow, Oscar Garc{\'{\i}}a-Prada, and Peter~B. Gothen.
\newblock Surface group representations and {$\mathrm{U}(p,q)$}-{H}iggs
  bundles.
\newblock {\em J. Differential Geom.}, 64(1):111--170, 2003.

\bibitem[BGPG04]{BGG04}
Steven~B. Bradlow, Oscar Garc\'{\i}a-Prada, and Peter~B. Gothen.
\newblock Moduli spaces of holomorphic triples over compact {R}iemann surfaces.
\newblock {\em Math. Ann.}, 328(1--2):299--351, 2004.

\bibitem[BGPG06]{BGG06}
Steven~B. Bradlow, Oscar Garc{\'{\i}}a-Prada, and Peter~B. Gothen.
\newblock Maximal surface group representations in isometry groups of classical
  {H}ermitian symmetric spaces.
\newblock {\em Geom. Dedicata}, 122:185--213, 2006.

\bibitem[BGPR17]{BGR17}
Olivier Biquard, Oscar Garc{\'\i}a-Prada, and Roberto Rubio.
\newblock Higgs bundles, the {T}oledo invariant and the {C}ayley
  correspondence.
\newblock {\em Journal of Topology}, 10(3):795--826, 2017.

\bibitem[Boa02]{Boalch02}
Philip Boalch.
\newblock Quasi-{Hamiltonian} geometry of meromorphic connections.
\newblock {\em Duke Mathematical Journal}, 139:369--405, 2002.

\bibitem[Boa13]{Bo13}
Philip Boalch.
\newblock Geometry of moduli spaces of meromorphic connections on curves,
  {Stokes} data, wild nonabelian {H}odge theory, hyperkahler manifolds,
  isomonodromic deformations, {P}ainlev{\'e} equations, and relations to lie
  theory.
\newblock {\em arXiv:1305.6593}, 2013.

\bibitem[BR94]{BR94}
I.~Biswas and S.~Ramanan.
\newblock {An Infinitesimal Study of the Moduli of Hitchin Pairs}.
\newblock {\em Journal of the London Mathematical Society}, 49(2):219--231,
  1994.

\bibitem[BS12]{BS13}
Christopher~L. Bremer and Daniel~S. Sage.
\newblock Moduli spaces of irregular singular connections.
\newblock {\em International Mathematics Research Notices}, 2013(8):1800--1872,
  2012.

\bibitem[CDP11]{CDP11}
Wu-yen Chuang, Duiliu-Emanuel Diaconescu, and Guang Pan.
\newblock Wallcrossing and cohomology of the moduli space of {H}itchin pairs.
\newblock {\em Commun. Number Theory Phys.}, 5(1):1--56, 2011.

\bibitem[CM04]{CM04}
Jorge Cort{\'{e}}s and Eduardo Mart{\'{\i}}nez.
\newblock {Mechanical control systems on Lie algebroids}.
\newblock {\em {IMA} Journal of Mathematical Control and Information},
  21(4):457--492, 2004.

\bibitem[dBn01]{dB01}
Sebastian del Ba\~{n}o.
\newblock On the {C}how motive of some moduli spaces.
\newblock {\em J. Reine Angew. Math.}, 532:105--132, 2001.

\bibitem[Del70]{De}
P.~Deligne.
\newblock {\em \'{E}quations diff\'{e}rentielles \`a points singuliers
  r\'{e}guliers}.
\newblock Lecture Notes in Mathematics, Vol. 163. Springer-Verlag, Berlin-New
  York, 1970.

\bibitem[DU95]{DU95}
Georgios~D. Daskalopoulos and Karen~K. Uhlenbeck.
\newblock An application of transversality to the topology of the moduli space
  of stable bundles.
\newblock {\em Topology}, 34(1):203--215, 1995.

\bibitem[EK00]{RK00}
Richard Earl and Frances Kirwan.
\newblock The {Hodge} numbers of the moduli spaces of vector bundles over a
  {Riemann} surface.
\newblock {\em The Quarterly Journal of Mathematics}, 51(4):465--483, 12 2000.

\bibitem[Fal93]{Fal93}
Gerd Faltings.
\newblock Stable $g$-bundles and projective connections.
\newblock {\em J. Algebaric Geometry}, 2:507--568, 1993.

\bibitem[FM98]{FM98}
Barbara Fantechi and Marco Manetti.
\newblock Obstruction calculus for functors of {A}rtin rings. {I}.
\newblock {\em J. Algebra}, 202(2):541--576, 1998.

\bibitem[Fra59]{F59}
Theodore Frankel.
\newblock Fixed points and torsion on {K}\"{a}hler manifolds.
\newblock {\em Ann. of Math. (2)}, 70:1--8, 1959.

\bibitem[FSS18]{FSS17}
Roman Fedorov, Alexander Soibelman, and Yan Soibelman.
\newblock Motivic classes of moduli of {H}iggs bundles and moduli of bundles
  with connections.
\newblock {\em Commun. Number Theory Phys.}, 12(4):687--766, 2018.

\bibitem[FSS20]{FSSsigma}
Roman Fedorov, Alexander Soibelman, and Yan Soibelman.
\newblock Motivic {D}onaldson-{T}homas invariants of parabolic {H}iggs bundles
  and parabolic connections on a curve.
\newblock {\em SIGMA Symmetry Integrability Geom. Methods Appl.}, 16:Paper No.
  070, 49, 2020.

\bibitem[Got94]{G95}
Peter~B. Gothen.
\newblock The {B}etti numbers of the moduli space of stable rank {$3$} {H}iggs
  bundles on a {R}iemann surface.
\newblock {\em Internat. J. Math.}, 5(6):861--875, 1994.

\bibitem[GP23]{G-P23}
A.~Gonz{\'a}lez-Prieto.
\newblock Pseudo-quotients of algebraic actions and their application to
  character varieties.
\newblock {\em Communications in Contemporary Mathematics}, in press, 2023.

\bibitem[GPGiR12]{GGM12}
Oscar Garcia-Prada, Peter~B. Gothen, and Ignasi~Mundet i~Riera.
\newblock The {Hitchin}-{Kobayashi} correspondence, {Higgs} pairs and surface
  group representations.
\newblock {\em arXiv:0909.4487}, 2012.

\bibitem[GPHS14]{GPHS11}
Oscar Garc{\'\i}a-Prada, Jochen Heinloth, and Alexander Schmitt.
\newblock On the motives of moduli of chains and {Higgs} bundles.
\newblock {\em Journal of the European Mathematical Society}, 16:2617--2668,
  2014.

\bibitem[GS96]{GS96}
H.~Gillet and C.~Soul{\'e}.
\newblock {Descent, motives and K-theory}.
\newblock {\em J. Reine Angew. Math.}, 478:127--176, 1996.

\bibitem[GWZ20]{GWZ20}
Michael Groechenig, Dimitri Wyss, and Paul Ziegler.
\newblock Mirror symmetry for moduli spaces of {Higgs} bundles via p-adic
  integration.
\newblock {\em Inventiones mathematicae}, 221:505--596, 2020.

\bibitem[Hau98]{H98}
Tam{\'a}s Hausel.
\newblock {\em Geometry of the moduli space of {Higgs} bundles}.
\newblock PhD thesis, Cambridge University, 1998.

\bibitem[Hei07]{H07}
Franziska Heinloth.
\newblock A note on functional equations for zeta functions with values in
  {C}how motives.
\newblock {\em Ann. Inst. Fourier (Grenoble)}, 57(6):1927--1945, 2007.

\bibitem[Hit87]{HitchinHiggs}
N.~J. Hitchin.
\newblock The self-duality equations on a {Riemann} surface.
\newblock {\em Proc. London Math. Soc. Third Series}, 55(1):59--126, 1987.

\bibitem[Hit92]{H92}
Nigel Hitchin.
\newblock Lie groups and {T}eichm\"uller space.
\newblock {\em Topology}, 31(3):449--473, 1992.

\bibitem[HL21]{HL21}
Victoria Hoskins and Simon~Pepin Lehalleur.
\newblock On the {Voevodsky} motive of the moduli space of {Higgs} bundles on a
  curve.
\newblock {\em Selecta Mathematica New Series}, 27(1):11, 2021.

\bibitem[Hop06]{Hop06}
John~R. Hopkinson.
\newblock {\em Universal polynommials in {Lambda} rings and the {K}-theory of
  the infinite loop space tmf}.
\newblock PhD thesis, MIT, 2006.

\bibitem[HRV15]{HRV15}
Tam{\'a}s Hausel and F~Rodriguez-Villegas.
\newblock Cohomology of large semiprojective hyperk{\"a}hler varieties.
\newblock {\em Asterisque}, 2015(370):113--156, 2015.

\bibitem[HT03]{HT03}
Tam\'{a}s Hausel and Michael Thaddeus.
\newblock Mirror symmetry, {L}anglands duality, and the {H}itchin system.
\newblock {\em Invent. Math.}, 153(1):197--229, 2003.

\bibitem[Kap00]{Kapranov00}
M.~Kapranov.
\newblock The elliptic curve in the {S}-duality theory and {Eisenstein} series
  for {Kac-Moody} groups.
\newblock {\em arXiv:math/0001005}, 2000.

\bibitem[Kir84]{Kirwan84}
F.C. Kirwan.
\newblock {\em Cohomology of Quotients in Symplectic and Algebraic Geometry.
  (MN-31), Volume 31}, volume 104.
\newblock Princeton University Press, 1984.

\bibitem[Kri09]{Kr08}
Libor Krizka.
\newblock {Moduli spaces of Lie algebroid connections}.
\newblock {\em Archivum Mathematicum}, 44(5):403--418, 2009.

\bibitem[Kri10]{Kr10}
Libor Krizka.
\newblock {Moduli spaces of flat Lie algebroid connections}.
\newblock {\em arXiv:1012.3180}, 2010.

\bibitem[LL04]{LL04}
Michael Larsen and Valery~A. Lunts.
\newblock Rationality criteria for motivic zeta functions.
\newblock {\em Compositio Mathematica}, 140(6):1537--1560, 2004.

\bibitem[LM12]{LM12}
Serge Lazzarini and Thierry Masson.
\newblock Connections on {Lie} algebroids and on derivation-based
  noncommutative geometry.
\newblock {\em Journal of Geometry and Physics}, 62(2):387--402, 2012.

\bibitem[Man68]{M68}
Ju.~I. Manin.
\newblock Correspondences, motifs and monoidal transformations.
\newblock {\em Mat. Sb. (N.S.)}, 77 (119):475--507, 1968.

\bibitem[MO22]{MG19}
Sergey Mozgovoy and Ronan O'Gorman.
\newblock Counting twisted {H}iggs bundles.
\newblock {\em Mathematical Research Letters}, 29(5):1551--1570, 2022.

\bibitem[Moz12]{Moz12}
Sergey Mozgovoy.
\newblock Solutions of the motivic {ADHM} recursion formula.
\newblock {\em Int. Math. Res. Not. IMRN}, 2012(18):4218--4244, 2012.

\bibitem[MS21]{MS20-endoscopy}
Davesh Maulik and Junliang Shen.
\newblock Endoscopic decompositions and the {Hausel--Thaddeus} conjecture.
\newblock {\em Forum of Mathematics, Pi}, 9, 2021.

\bibitem[MS23]{MS20-chiindependence}
Davesh Maulik and Junliang Shen.
\newblock Cohomological $\chi$-independence for moduli of one-dimensional
  sheaves and moduli of {Higgs} bundles.
\newblock {\em Geometry {\&} Topology}, 27(4):1539--1586, 2023.

\bibitem[Mu{\~{n}}08]{Mu08}
Vicente Mu{\~{n}}oz.
\newblock {Hodge} polynomials of the moduli spaces of rank 3 pairs.
\newblock {\em Geometriae Dedicata}, 136(1):17--46, 2008.

\bibitem[Nit91]{Ni}
N.~Nitsure.
\newblock Moduli space of semistable pairs on a curve.
\newblock {\em Proc. London Math. Soc.}, 62(3):275--300, 1991.

\bibitem[Nit93]{Ni93}
N.~Nitsure.
\newblock Moduli of semistable logarithmic connections.
\newblock {\em J. Amer. Math. Soc.}, 6(3):597--609, 1993.

\bibitem[S{\'a}n14]{S14}
Jonathan S{\'a}nchez.
\newblock {\em Motives of moduli spaces of pairs and applications}.
\newblock PhD thesis, Universidad Complutense, Madrid, 2014.

\bibitem[Sch94]{S94}
A.~J. Scholl.
\newblock Classical motives.
\newblock In {\em Motives ({S}eattle, {WA}, 1991)}, volume~55 of {\em Proc.
  Sympos. Pure Math.}, pages 163--187. Amer. Math. Soc., Providence, RI, 1994.

\bibitem[Sim92]{SimpsonHiggs}
Carlos~T. Simpson.
\newblock {Higgs} bundles and local systems.
\newblock {\em Publications Math\'ematiques de l'IH\'ES}, 75:5--95, 1992.

\bibitem[Sim94]{Si94}
Carlos~T. Simpson.
\newblock Moduli of representations of the fundamental group of a smooth
  projective variety {I}.
\newblock {\em Publi. Math. I.H.E.S.}, 79:47--129, 1994.

\bibitem[Sim95]{Si95}
Carlos~T. Simpson.
\newblock Moduli of representations of the fundamental group of a smooth
  projective variety {II}.
\newblock {\em Publi. Math. I.H.E.S.}, 80:5--79, 1995.

\bibitem[Sim97]{Si97}
Carlos Simpson.
\newblock The {Hodge} filtration on nonabelian cohomology.
\newblock {\em Proceedings of Symposia in Pure Mathematics}, 62.2:217--284,
  1997.

\bibitem[Tor11]{Tor11}
Pietro Tortella.
\newblock {\em {$\Lambda$}-modules and holomorphic {Lie} algebroids}.
\newblock PhD thesis, Scuola Internazionale Superiore di Studi Avanzati, 2011.

\bibitem[Tor12]{Tor12}
Pietro Tortella.
\newblock {{$\Lambda$}-modules and holomorphic Lie algebroid connections}.
\newblock {\em Cent. Eur. J. of Math.}, 10(4):1422 -- 1441, 2012.

\bibitem[Voe00]{V00}
Vladimir Voevodsky.
\newblock Triangulated categories of motives over a field.
\newblock In {\em Cycles, transfers, and motivic homology theories}, volume 143
  of {\em Ann. of Math. Stud.}, pages 188--238. Princeton Univ. Press,
  Princeton, NJ, 2000.

\bibitem[Wie18]{W18}
Anna Wienhard.
\newblock An invitation to higher {T}eichm\"{u}ller theory.
\newblock In {\em Proceedings of the {I}nternational {C}ongress of
  {M}athematicians---{R}io de {J}aneiro 2018. {V}ol. {II}. {I}nvited lectures},
  pages 1013--1039. World Sci. Publ., Hackensack, NJ, 2018.

\bibitem[ZnR18]{ZR18}
Ronald~A. Z\'{u}\~{n}iga Rojas.
\newblock Stabilization of the homotopy groups of the moduli spaces of
  {$k$}-{H}iggs bundles.
\newblock {\em Rev. Colombiana Mat.}, 52(1):9--31, 2018.

\end{thebibliography}

\end{document}